\tikzset{labelsize/.style={font=\scriptsize}}
\tikzset{2cell/.style={-implies,double,double equal sign distance}}
\tikzset{3cell/.style={double equal sign distance, nfold = 3, arrows=-Implies}}
\newcolumntype{N}{@{}m{0pt}@{}}
\newcommand{\labeleditem}[1]{
\item[\text{#1}]\protected@edef\@currentlabel{\text{#1}}\phantomsection
}
\numberwithin{equation}{subsection}
\declaretheorem[style=plain,sibling=equation,name=Theorem]{theorem}
\declaretheorem[style=plain,sibling=theorem,name=Lemma]{lemma}
\declaretheorem[style=plain,sibling=theorem,name=Proposition]{proposition}
\declaretheorem[style=plain,sibling=theorem,name=Corollary]{corollary}
\declaretheorem[style=definition,qed=$\blacksquare$,sibling=theorem,name=Definition]{definition}
\declaretheorem[style=definition,qed=$\blacksquare$,sibling=theorem,name=Remark]{remark}
\declaretheorem[style=definition,sibling=theorem,name=Claim]{claim}
\crefname{theorem}{Theorem}{Theorems}
\crefname{section}{Section}{Sections}
\crefname{subsection}{Subsection}{Subsections}
\crefname{definition}{Definition}{Definitions}
\crefname{notation}{Notation}{Notations}
\crefname{example}{Example}{Examples}
\crefname{remark}{Remark}{Remarks}
\crefname{equation}{}{}
\crefname{construction}{Construction}{Constructions}
\crefname{corollary}{Corollary}{Corollaries}
\crefname{proposition}{Proposition}{Propositions}
\crefname{lemma}{Lemma}{Lemmas}
\crefname{appendix}{Appendix}{Appendices}
\crefname{claim}{Claim}{Claims}
\mathchardef\mhyphen="2D
\newcommand{\Set}{\mathbf{Set}}
\newcommand{\N}{\mathbb{N}}
\newcommand{\G}{\mathbb{G}}
\newcommand{\Pow}{\mathcal{P}}
\newcommand{\op}{\mathrm{op}}
\newcommand{\ar}{\mathop{\mathrm{ar}}}
\newcommand{\arity}{\mathop{\mathrm{ar}}}
\newcommand{\StrCats}[1]{{\mathbf{Str}\mhyphen{#1}\mhyphen\mathbf{Cat}}} 
\newcommand{\WkCats}[1]{{\mathbf{Wk}\mhyphen{#1}\mhyphen\mathbf{Cat}}} 
\newcommand{\mWkCats}[1]{{\mathbf{m}\mhyphen\mathbf{Wk}\mhyphen{#1}\mhyphen\mathbf{Cat}}} 
\newcommand{\enGph}[1]{{#1}\mhyphen\mathbf{Gph}}
\newcommand{\GSet}{\mathbf{GSet}}
\newcommand{\id}[3]{\mathrm{id}_{#1}^{#2}\left(#3\right)} 
\newcommand{\comp}[2]{\mathbin{{\ast}_{#1}^{#2}}}  
\newcommand{\OO}{G}
\newcommand{\kk}{\boldsymbol{k}}
\newcommand{\uu}{\boldsymbol{u}}
\newcommand{\phiinv}{\phi^{\mathrm{rinv}}}
\newcommand{\uurinv}{\boldsymbol{u^{\mathrm{rinv}}}}
\newcommand{\uulinv}{\boldsymbol{u^{\mathrm{linv}}}}
\newcommand{\igen}[1]{i^{#1}} 
\newcommand{\rgen}[1]{r^{#1}} 
\newcommand{\igenb}[1]{i_\partial^{#1}} 
\newcommand{\sourceb}[1]{\sigma_\partial^{#1}} 
\newcommand{\C}[1]{\mathcal{C}^{#1}}
\newcommand{\mC}[1]{\mathrm{m}\mathcal{C}^{#1}} 
\newcommand{\Cst}[1]{\mathsf{C}^{#1}}
\newcommand{\E}[1]{\mathcal{E}^{#1}} 
\newcommand{\EF}[1]{\mathcal{E}_{\mathcal{F}}^{#1}} 
\newcommand{\iF}[1]{i_\FF^{#1}} 
\newcommand{\eomega}[1]{\lambda^{#1}} 
\newcommand{\EFst}[1]{\mathsf{E}_{\mathsf{F}}^{#1}} 
\newcommand{\iFst}[1]{i_{\mathsf{F}}^{#1}} 
\newcommand{\Est}[1]{\mathsf{E}^{#1}} 
\newcommand{\Ept}[1]{\mathsf{E}^{#1}_\star} 
\newcommand{\ipt}[1]{i_\star^{#1}} 
\newcommand{\rpt}[1]{r_\star^{#1}} 
\newcommand{\FF}{\mathcal{F}}
\newcommand{\HH}{\mathcal{H}}
\newcommand{\lunit}[3]{\lambda_{#1}^{#2}(#3)}
\newcommand{\runit}[3]{\rho_{#1}^{#2}(#3)}
\newcommand{\assoc}[5]{\alpha_{#1}^{#2}(#3,#4,#5)}
\newcommand{\lunitinv}[3]{\check\lambda_{#1}^{#2}(#3)}
\newcommand{\runitinv}[3]{\check\rho_{#1}^{#2}(#3)}
\newcommand{\associnv}[5]{\check\alpha_{#1}^{#2}(#3,#4,#5)}
\newcommand{\eqclass}[1]{\left\llbracket#1\right\rrbracket}
\newcommand{\cylarrow}{\curvearrowright}
\newcommand{\proj}[2]{\pi_{#1}^{#2}}
\newcommand{\bridge}[5]{{U}_{#1}^{#2}\left({#3},{#4},{#5}\right)}
\newcommand{\OR}{\mathcal{E}^1_\mathrm{OR}}
\newcommand{\ORst}{\widehat{\omega\mathcal{E}}}
\newcommand{\ORm}[1]{\mathcal{E}^{1,#1}_\mathrm{OR}}
\newcommand{\ORstm}[1]{\widehat{\omega\mathcal{E}}^{(#1)}}
\newcommand{\cosep}[1]{\mathsf{W}^{#1}}
\newcommand{\AInj}{\mathbf{AInj}} 
\newcommand{\marking}[1]{T^{#1}} 
\newcommand{\zig}[1]{\underline{#1}}
\newcommand{\zag}[1]{\overline{#1}}
\newcommand*{\sys}{\Xi}
\newcommand*{\sysm}[1]{\Xi^{#1}}
\newcommand*{\concat}{\mathbin{+\!\!\!+}}
\newcommand{\fulllabel}{\mathrm{fdl}}
\newcommand{\Pre}{\mathrm{Pre}}
\newcommand{\pst}{\operatorname{pst}}
\newcommand{\rinv}{\operatorname{rinv}}
\newcommand{\linv}{\operatorname{linv}}
\newcommand{\ppair}[1]{\langle #1\rangle} 
\newcommand{\Inj}[1]{{#1}\mhyphen\mathbf{Inj}}
\newcommand{\cof}[1]{{#1}\mhyphen\mathrm{cof}}
\newcommand*{\Scell}{\mathtt{\Sigma}} 
\newcommand{\tw}[1]{{\textstyle\frac{C}{F}^{#1}}}
\newcommand*{\twp}{{\textstyle\frac{E}{E}}} 
\newcommand*{\twpn}[1]{{\textstyle\frac{E}{E}^{#1}}} 
\DeclareMathOperator{\colim}{\operatorname{colim}} 
\newcommand*{\FFp}{{\mathcal{D}}} 
\newcommand{\xF}{x_\FF}
\newcommand{\yF}{y_\FF}
\newcommand{\uF}{u_\FF}
\newcommand{\vF}{v_\FF}
\newcommand{\wF}{w_\FF}
\newcommand{\pF}{p_\FF}
\newcommand{\qF}{q_\FF}
\newcommand{\KF}{K_\FF}
\newcommand{\JF}{J_\FF}
\newcommand{\kF}{k_\FF}
\newcommand{\JFst}{J_{\mathsf{F}}}
\newcommand{\KH}{K_\HH}
\newcommand{\xH}{x_\HH}
\newcommand{\yH}{y_\HH}
\newcommand{\uH}{u_\HH}
\newcommand{\vH}{v_\HH}
\newcommand{\pH}{p_\HH}
\newcommand{\qH}{q_\HH}
\newcommand{\rH}{r_\HH}
\newcommand{\kH}{k_\HH}
\newcommand{\ORx}{{x}_\mathrm{OR}}
\newcommand{\ORy}{{y}_\mathrm{OR}}
\newcommand{\ORu}{{u}_\mathrm{OR}}
\newcommand{\ORv}{{v}_\mathrm{OR}}
\newcommand{\ORw}{{w}_\mathrm{OR}}
\newcommand{\ORe}[1]{e^{#1}}
\newcommand{\ORp}[1]{f_p^{#1}}
\newcommand{\ORq}[1]{f_{q}^{#1}}
\newcommand{\ORr}[1]{f_r^{#1}}
\newcommand{\ORep}[1]{g^{#1}}
\newcommand{\ORpp}[1]{h_p^{#1}}
\newcommand{\ORqp}[1]{h_q^{#1}}
\newcommand{\ORrp}[1]{h_r^{#1}}
\newcommand{\unit}{p}
\newcommand{\counit}{q}
\newcommand{\tri}{r}
\begin{document}

\title{\texorpdfstring{$\omega$}{ω}-equifibrations between strict and weak \texorpdfstring{$\omega$}{ω}-categories}
\author{Soichiro Fujii}
\address{National Institute of Informatics, Tokyo, Japan}
\email{s.fujii.math@gmail.com}

\author{Keisuke Hoshino}
  \address{Research Institute for Mathematical Sciences, Kyoto University, Kyoto, Japan}
  \email{hoshinok@kurims.kyoto-u.ac.jp}

\author{Yuki Maehara}
  \address{Tokyo Metropolitan University, Tokyo, Japan}
  \email{ymaehar@tmu.ac.jp}

\date{November 13, 2025}

\keywords{Weak $\omega$-category, $\omega$-equifibration.}
\subjclass[2020]{18N65, 18N30, 18N40, 18N20}

\begin{abstract}
We study \textit{$\omega$-equifibrations} between weak $\omega$-categories in the sense of Batanin--Leinster.
We define $\omega$-equifibrations as a natural weak $\omega$-categorical analogue of isofibrations between categories, and 
show that they can be characterised via the right lifting property with respect to a suitable set $J$ of strict $\omega$-functors. 
The definition of $J$ involves the construction of a certain weak $\omega$-category $\mathcal{E}^1$ which, roughly speaking, is freely generated by an equivalence $1$-cell in a ``coherent'' manner. 
We show that the strict version of $\mathcal{E}^1$ coincides with Ozornova and Rovelli's \textit{coherent walking $\omega$-equivalence} $\widehat{\omega\mathcal{E}}$. 
The $\omega$-equifibrations between strict $\omega$-categories coincide with the fibrations in the folk model structure.
\end{abstract}

\maketitle

\section{Introduction}\label{sec:intro}
The purpose of the present paper is to introduce and study the notion of \emph{$\omega$-equifibration}, which is a weak $\omega$-categorical analogue of isofibrations between ordinary categories.
Here we are adopting the algebraic approach to weak $\omega$-categories pioneered by Batanin \cite{Batanin_98} and Leinster \cite{Leinster_book}, in which they are defined as the Eilenberg--Moore algebras for a suitable monad on the category of globular sets.

The definition of $\omega$-equifibration is the ``obvious'' one, namely a strict $\omega$-functor $f \colon X \to Y$ between weak $\omega$-categories is an $\omega$-equifibration if and only if, for any $(n-1)$-cell $x$ in $X$ and for any equivalence $n$-cell (in the coinductive sense recalled below) $u \colon fx \to y$ in $Y$, there exists an equivalence $n$-cell $\bar u \colon x \to \bar y$ in $X$ such that $f \bar u = u$.
In the special case where both $X$ and $Y$ are ordinary categories, this recovers precisely the isofibrations, or equivalently, precisely the fibrations in the folk model structure on $\mathbf{Cat}$.
With an eye towards constructing a weak $\omega$-categorical analogue of the folk model structure (cf.\ \cite{Lafont_Metayer_Worytkiewicz_folk_model_str_omega_cat,Lack-bicat,Lack-Gray}), our main result provides a characterisation of $\omega$-equifibrations between weak $\omega$-categories via the right lifting property.

A subtle point here is that the above definition of $\omega$-equifibration refers to the \emph{property}, as opposed to a \emph{structure}, of $u$ (or $\bar u$) being an equivalence.
Recall (from \cite{FHM1}, or from \cite{Lafont_Metayer_Worytkiewicz_folk_model_str_omega_cat} in the case of strict $\omega$-categories) that an $n$-cell $u \colon a \to b$ in a weak $\omega$-category is an \emph{equivalence} if and only if there exist an $n$-cell $v \colon b \to a$ and \emph{equivalence} $(n+1)$-cells $p \colon u \comp{}{} v \to \id{}{}{a}$ and $q \colon v \comp{}{} u \to \id{}{}{b}$; this seemingly circular definition is formalised using coinduction (see \cref{def:spherical-eq}).
Although it is not difficult to construct a weak $\omega$-category $\E{n}$ with a specified $n$-cell $i_n$ such that
\begin{center}
	an $n$-cell $u$ in a weak $\omega$-category $Y$ is an equivalence if and only if\\ 
    there exists a strict $\omega$-functor $g\colon \E n\to Y$ with $g(i_n)=u$,
\end{center}
specifying such $g$ necessarily involves \emph{choosing} a witness to the fact that $u$ is an equivalence.
Therefore the lifting property of the form
\[
\begin{tikzcd}[row sep = large]
	\C{n-1}
	\arrow [r]
	\arrow [d, "{s(i_n)}", swap] &
	X
	\arrow [d, "f"]\\
	\E{n}
	\arrow [r, "g", swap] 
	\arrow [ur, dashed] &
	Y
\end{tikzcd}
\]
(where $\C{n-1}$ is the weak $\omega$-category freely generated by a single $(n-1)$-cell and $s(i_n)$ is the strict $\omega$-functor picking out the $(n-1)$-dimensional source of $i_n$) encodes that we can lift not only the equivalence $n$-cell, but also the \emph{chosen} witness (specified by $g$) through $f$, which is in general a stronger condition than $f$ being an $\omega$-equifibration.
In order for this lifting property to characterise precisely the $\omega$-equifibrations, intuitively the weak $\omega$-category $\E{n}$ must be ``coherent'' so that for any equivalence $n$-cell $u$, the choice of corresponding $g$ is essentially unique (although the precise statement of this essential uniqueness is beyond the scope of this paper).

Inspired by various known results (and conjectures) \cite{Lack-bicat,Lack-Gray,hottbook,Rice-inv,HLOR}, we provide two solutions to this problem, or more precisely two characterisations of equivalences that lead to concrete models for such ``coherent'' $\E{n}$.
The first of these two apparently weakens the original definition by allowing the left and right inverses to be distinct.
It still captures the same notion of equivalence essentially for the familiar reason that, if a cell $u \colon a \to b$ has a right inverse $v$ and a left inverse $w$, then we have
\[
v \sim \id{}{}{b} \comp{}{} v \sim (w \comp{}{} u) \comp{}{} v \sim w \comp{}{} (u \comp{}{} v) \sim w \comp{}{} \id{}{}{a} \sim w
\]
which implies that both $v$ and $w$ qualify as a two-sided inverse of $u$.
In contrast, the second characterisation apparently strengthens the original definition by asking for an additional witness for one of the triangular identities.
That this strengthening is only superficial can be deduced from the well-known special case in dimension $2$, namely that any equivalence in a bicategory can be upgraded to an adjoint equivalence.

Our homotopical intuition for why these two characterisations may yield models for ``coherent'' $\E{n}$ is as follows.
In the original definition, where we ask for the equivalence $u$ to have a single inverse $v$ and two witnesses $p \colon u \comp{}{} v \to \id{}{}{a}$ and $q \colon v \comp{}{} u \to \id{}{}{b}$, topologically these cells form a sphere:\footnote{The diagrams \cref{eqn:spherical-eq-diag,eqn:flat-eq-diag} are admittedly somewhat ambiguous. For example, the boundary of $p$ would be more clearly represented as 
\[
\begin{tikzpicture}[baseline = 0]
		\node (1) at (0,0.5) {$x$};
		\node (2) at (3,0) {$y$.};
		\node (3) at (0,-0.5) {$x$};
		
		\draw[->] (1) to node [labelsize, swap, auto] {$\id{1}{}{x}$} (3);
		\draw[->, bend left=10] (1) to node [labelsize, auto] {$u$} (2);
		\draw[->,bend left=10] (2) to node [labelsize, auto] {$v$} (3);
		\draw[2cell] (2,0) to node [labelsize, swap, auto] {$p$} (1,0);
	\end{tikzpicture}
\]
The sole purpose of \cref{eqn:spherical-eq-diag,eqn:flat-eq-diag} is to convey the geometric intuition.}
\begin{equation}\label{eqn:spherical-eq-diag}
\begin{tikzpicture}[baseline = 0]
		\node (1) at (0,0) {$a$};
		\node (2) at (3,0) {$b$};

		\draw[->, bend left=60] (1) to node [labelsize, auto] {$u$} (2);
		\draw[->, bend left=60] (2) to node [labelsize, auto] {$v$} (1);
		\draw[2cell, bend left=20] (2,-0.2) to node [labelsize, auto] {$p$} (1,-0.2);
		\draw[2cell,dashed, bend left=20] (1,0.2) to node [labelsize, auto] {$q$} (2,0.2);
	\end{tikzpicture}
\end{equation}
which is not contractible, and it is this homotopical non-triviality that spoils the essential uniqueness of $g\colon \E{n}\to Y$ corresponding to $u$.
The first characterisation cuts open this sphere by separating the left and right inverses:
\begin{equation}\label{eqn:flat-eq-diag}
\begin{tikzpicture}[baseline = 0]
		\node (1) at (0,0) {$a$};
		\node (2) at (3,0) {$b$};

		\draw[->] (1) to node [labelsize, midway, fill=white] {$u$} (2);
		\draw[->, bend right=60] (2) to node [labelsize, swap, auto] {$w$} (1);
		\draw[->, bend left=60] (2) to node [labelsize, auto] {$v$} (1);
		\draw[2cell] (1,0.4) to node [labelsize, auto] {$q$} (2,0.4);
		\draw[2cell] (2,-0.4) to node [labelsize, auto] {$p$} (1,-0.4);
	\end{tikzpicture}
\end{equation}
whereas the second characterisation attaches an extra $3$-dimensional cell in the interior of the sphere, both of which produce a contractible space.

We then investigate how our results relate to what is known about \emph{strict} $\omega$-categories.
In particular, we prove that a strict $\omega$-functor between strict $\omega$-categories is an $\omega$-equifibration if and only if it is a fibration in the folk model structure constructed by Lafont, M\'etayer, and Worytkiewicz  \cite{Lafont_Metayer_Worytkiewicz_folk_model_str_omega_cat}.
Consequently, by taking either of the two families of models for ``coherent'' $\E{n}$ and reflecting them to the strict $\omega$-categories, we obtain an explicit generating set of folk trivial cofibrations.
Moreover, for the version with separate left and right inverses, the strict $\omega$-categorical reflection of (a certain universal model of) $\E{1}$ turns out to be isomorphic to Ozornova and Rovelli's \textit{coherent walking $\omega$-equivalence} $\widehat{\omega\mathcal{E}}$ \cite{OR}.
As a corollary, we recover the main theorem of \cite{HLOR}, namely that $\ORst$ is weakly equivalent to the terminal strict $\omega$-category in the folk model structure.

\subsection*{Outline of the paper}
After recalling the notion of weak (and strict) $\omega$-category in \cref{sec:prelim}, we show in \cref{sec:equivalence-cells} that two definitions of equivalence cells in a weak $\omega$-category---one via a two-sided inverse and the other via separate left and right inverses---coincide. 
Then in \cref{sec:omega-equifibrations}, we define $\omega$-equifibrations and show that they can be characterised by the right lifting property with respect to a suitable set $J$ of strict $\omega$-functors. 
More precisely, we give a general procedure to turn a suitable set $K$ of morphisms between \emph{marked} weak $\omega$-categories into such a set $J$, and then provide an example $K_\mathcal{F}$ of $K$ based on the definition of equivalences via separate left and right inverses.
(Another example $K_\mathcal{H}$ of $K$ based on ``half-adjoint'' equivalences is given in \cref{appendix-on-halfadjoint-lift-one-step}.)
Because this procedure involves choices of factorisations, the set $J$ is not completely determined by $K$.
However, the \emph{algebraic} small object argument \cite{Garner_understanding} provides a universal one among all possible $J$, and in particular we obtain a universal set $J_\mathcal{F}$ of strict $\omega$-functors from $K_\mathcal{F}$. 
In \cref{presentation-of-En}, we give two explicit presentations of the morphisms in $J_\mathcal{F}$. 
The second of these allows us to show that the strict $\omega$-categorical reflection of the weak $\omega$-category $\mathcal{E}_\mathcal{F}^1$ appearing in $J_\mathcal{F}$ is isomorphic to Ozornova and Rovelli's coherent walking $\omega$-equivalence \cite{OR}. 
Finally, in \cref{sec:fibrations-between-strict}, we show that the $\omega$-equifibrations between strict $\omega$-categories coincide with the fibrations in the folk model structure \cite{Lafont_Metayer_Worytkiewicz_folk_model_str_omega_cat}.

\section{Preliminaries}\label{sec:prelim}
In this section, we briefly review the notions of strict and weak $\omega$-category (the latter in the sense of Leinster~\cite{Leinster_book}, inspired by an earlier work by Batanin \cite{Batanin_98}) and collect necessary results.

\subsection{Globular sets}
\label{subsec:globular-sets}
We start with the notion of globular set, which forms a basis of the notions of strict and weak $\omega$-category.

A \emph{globular set} $X$ consists of 
\begin{itemize}
    \item a set $X_n$ for each natural number $n\geq 0$, and 
    \item functions $s_n,t_n\colon X_{n+1}\to X_n$ for each $n\geq 0$,
\end{itemize}
satisfying 
\begin{equation}\label{eqn:globular-identities}
    s_n\circ s_{n+1}=s_n\circ t_{n+1} \quad \text{and} \quad t_n\circ s_{n+1}=t_n\circ t_{n+1}
\end{equation}
for each $n\geq 0$.
An element of $X_n$ is called an \emph{$n$-cell} of $X$. Given an $n$-cell $u \in X_n$ with $n\geq 1$, we write $u\colon x\to y$ to mean $s_{n-1}(u)=x$ and $t_{n-1}(u)=y$.
Two $n$-cells $x$ and $y$ of $X$ are said to be \emph{parallel} if
\begin{itemize}
    \item $n=0$, or
    \item $n \ge 1$, $s_{n-1}(x)=s_{n-1}(y)$ and $t_{n-1}(x)=t_{n-1}(y)$.
\end{itemize}
Note that \cref{eqn:globular-identities} is equivalent to saying that, for each $u\in X_n$ with $n\geq 2$, the $(n-1)$-cells $s_{n-1}(u)$ and $t_{n-1}(u)$ are parallel.

A \emph{globular map} $f\colon X\to Y$ between globular sets is a family $(f_n\colon X_n\to Y_n)_{n\geq 0}$ of functions commuting with $s_n$ and $t_n$. We often omit the subscript in $f_n$, writing $fu$ for the image of an $n$-cell $u\in X_n$ under $f_n\colon X_n\to Y_n$. 
We denote by $\GSet$ the category of globular sets and globular maps between them. It is clear that $\GSet$ is the presheaf category $[\G^\op,\Set]$ over a suitable category $\G$ generated by the graph 
\[
\begin{tikzpicture}[baseline=-\the\dimexpr\fontdimen22\textfont2\relax ]
      \node(0) at (0,0) {$0$};
      \node(1) at (2,0) {$1$};
      \node(d) at (4,0) {$\cdots$};
      \node(n) at (6,0) {$n$};
      \node(d2) at (8,0) {$\cdots$;};
      
      \draw [->,transform canvas={yshift=3pt}] (0) to node[auto, labelsize] 
      {$\sigma^0$} (1); 
      \draw [->,transform canvas={yshift=-3pt}] (0) to node[auto, swap,labelsize] 
      {$\tau^0$} (1); 
      \draw [->,transform canvas={yshift=3pt}] (1) to node[auto, labelsize] 
      {$\sigma^1$} (d); 
      \draw [->,transform canvas={yshift=-3pt}] (1) to node[auto, 
      swap,labelsize] 
      {$\tau^1$} (d); 
      \draw [->,transform canvas={yshift=3pt}] (d) to node[auto, labelsize] 
      {$\sigma^{n-1}$} (n); 
      \draw [->,transform canvas={yshift=-3pt}] (d) to node[auto, 
      swap,labelsize] 
      {$\tau^{n-1}$} (n); 
      \draw [->,transform canvas={yshift=3pt}] (n) to node[auto, labelsize] 
      {$\sigma^{n}$} (d2); 
      \draw [->,transform canvas={yshift=-3pt}] (n) to node[auto, 
      swap,labelsize] 
      {$\tau^{n}$} (d2);
\end{tikzpicture}
\]
see e.g.\ \cite[Section~2.1]{FHM1}.

For each $n\geq 0$, we write $G^n$ for the representable globular set $\G(-,n)$ and $\iota^n \colon \partial \OO^n \to \OO^n$ for the inclusion of its boundary (i.e., the largest proper globular subset).
The globular set $\partial \OO^n$ represents parallel pairs of $(n-1)$-cells for $n \ge 1$, whereas $\partial \OO^0$ is the initial globular set $\emptyset$.
For any parallel pair $(x,y)$ of $(n-1)$-cells in $X$, we denote the corresponding map by $\ppair{x,y} \colon \partial \OO^n \to X$. 
Note that for $n\geq 1$, an $n$-cell $u$, and a parallel pair $(x,y)$ of $(n-1)$-cells in $X$, we have $u\colon x\to y$ in $X$ if and only if the diagram 
\[
\begin{tikzpicture}[baseline=-\the\dimexpr\fontdimen22\textfont2\relax ]
      \node(00) at (0,1) {$\partial\OO^n$};
      \node(01) at (2,1) {$X$};
      \node(10) at (0,-1) {$\OO^n$};
      
      \draw [->] (00) to node[auto, labelsize] {$\ppair{x,y}$} (01); 
      \draw [->] (00) to node[auto,swap,labelsize] {$\iota^n$} (10);   
      \draw [->] (10) to node[swap,auto,labelsize] {$u$} (01);
\end{tikzpicture}
\]
in $\GSet$ commutes.

We denote the globular map $\G(-,\sigma^{n-1})\colon G^{n-1}\to G^n$ by $\sigma^{n-1}$. Its factorization through $\iota^n$ is written as $\sourceb{n-1}$:
\begin{equation}\label{eqn:sigma-sigma-prime}
    \begin{tikzpicture}[baseline=-\the\dimexpr\fontdimen22\textfont2\relax ]
      \node(00) at (2,1) {$\partial\OO^n$};
      \node(01) at (0,-1) {$\OO^{n-1}$};
      \node(10) at (2,-1) {$\OO^n.$};
      
      \draw [<-] (00) to node[swap,auto, labelsize] {$\sourceb{n-1}$} (01); 
      \draw [->] (00) to node[auto,labelsize] {$\iota^n$} (10);   
      \draw [<-] (10) to node[auto,labelsize] {$\sigma^{n-1}$} (01);
\end{tikzpicture}
\end{equation}

\subsection{Strict and weak \texorpdfstring{$\omega$}{ω}-categories}\label{subsec:strict-and-weak-omega-cats}
For detailed discussions of strict and weak $\omega$-categories, see e.g.\ \cite[Sections~1.4 and 9.2]{Leinster_book}. 
Instead of repeating their definitions, here we will introduce notations and state facts used in this paper.

There are (at least) two different notions of map between strict or weak $\omega$-categories (see e.g.\ \cite{Garner_homomorphisms}), but in this paper we will only use that of \emph{strict $\omega$-functor}.
Whereas this is arguably not the most natural notion of map, 
our primary objective is an extension of the folk model structures to weak $\omega$-categories, and we aim to achieve this in the category of weak $\omega$-categories and strict $\omega$-functors. 
(In the strict \cite{Lafont_Metayer_Worytkiewicz_folk_model_str_omega_cat} or low-dimensional \cite{Lack-bicat,Lack-Gray} cases, the folk model structures have been constructed on categories with strict functors as morphisms.)

The category $\StrCats{\omega}$ (resp.\ $\WkCats{\omega}$)
of \emph{strict $\omega$-categories} and \emph{strict $\omega$-functors} (resp.\ \emph{weak $\omega$-categories} and \emph{strict $\omega$-functors}\footnote{In \cite{Cottrell_Fujii_hom,FHM1,FHM2}, the category of weak $\omega$-categories and strict $\omega$-functors was called 
$\mathbf{Wk}\mhyphen\omega\mhyphen\mathbf{Cat}_\mathrm{s}$, and $\WkCats{\omega}$ referred to the category of weak $\omega$-categories and \emph{weak $\omega$-functors} \cite{Garner_homomorphisms}. Because we will not discuss weak $\omega$-functors in this paper, we have adopted the simpler notation.}) is monadic over $\GSet$, with the corresponding monad $T$ (resp.\ $L$).
By definition of weak $\omega$-category \cite{Leinster_book}, there is a monad morphism $\ar\colon L\to T$ (for \emph{arity}; see e.g.\ \cite[Subsection~2.5]{FHM1} or \cite[Subsection~2.3]{FHM2} for more explanation), and hence we have a commutative triangle 
\begin{equation}\label{eqn:triangle-over-GSet}
\begin{tikzpicture}[baseline=-\the\dimexpr\fontdimen22\textfont2\relax ]
      \node(00) at (0,1) {$\StrCats\omega$};
      \node(01) at (4,1) {$\WkCats{\omega}$};
      \node(10) at (2,-1) {$\GSet$};
      
      \draw [->] (00) to node[auto, labelsize] {inclusion} (01); 
      \draw [->] (00) to node[auto,swap,labelsize] {$U^T$} (10);   
      \draw [<-] (10) to node[swap,auto,labelsize] {$U^L$} (01);
\end{tikzpicture}
\end{equation}
of right adjoint functors. 
Here, the inclusion functor $\StrCats{\omega}\to \WkCats\omega$ is induced by the monad morphism $\ar$ (i.e., it sends $(X,TX\xrightarrow{\xi}X)\in \StrCats{\omega}$ to $(X,LX\xrightarrow{\ar_X}TX\xrightarrow{\xi}X)\in \WkCats{\omega}$), and is 
not only faithful but also full, since each component $\ar_X\colon LX\to TX$ of the monad morphism $\ar$ is an epimorphism in $\GSet$.
(To verify the last statement, observe that $\ar_X$ has, by definition, the right lifting property with respect to $\iota^n\colon \partial G^n\to G^n$ for each $n\geq 1$, and it also has the right lifting property with respect to $\iota^0\colon \partial G^0\to G^0$ because the unit of $T$ induces a bijection $X_0\cong (TX)_0$ and $\ar_X$ commutes with the units. This implies that $\ar_X$ is surjective in every dimension.)
\begin{proposition}\label{Wk-omega-Cat-LFP}
    All categories in \eqref{eqn:triangle-over-GSet} are locally finitely presentable, and all functors in it are finitary right adjoints.
\end{proposition}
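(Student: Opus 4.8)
The plan is to deduce the statement from standard facts about locally finitely presentable categories and their categories of algebras; the only substantive input needed is that the two monads $T$ and $L$ on $\GSet$ are \emph{finitary}, i.e.\ that their underlying endofunctors preserve filtered colimits.

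First I would observe that $\GSet = [\G^\op, \Set]$ is a presheaf category over a small category and hence locally finitely presentable. Next I would check finitariness of $T$ and $L$. For $T$, the free strict $\omega$-category monad, this is classical: an $n$-cell of $TX$ is a formal pasting of cells of $X$ along a finite globular pasting scheme, so it involves only finitely many cells of $X$; equivalently, $TX$ is the filtered colimit of the values of $T$ on the finite globular subsets of $X$ (see \cite{Leinster_book}). For $L$, I would use that the weak $\omega$-category monad is induced by a $T$-operad in Leinster's sense: writing $\mathbf 1$ for the terminal globular set and $L \to T\mathbf 1$ for the underlying collection, the endofunctor underlying the monad $L$ sends $X$ to the pullback $L \times_{T\mathbf 1} TX$ of $L \to T\mathbf 1$ along $T!\colon TX \to T\mathbf 1$ (see \cite{Leinster_book}). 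Since $T$ is finitary and filtered colimits commute with finite limits in the topos $\GSet$, this endofunctor preserves filtered colimits, so $L$ is finitary as well.

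Granting this, the rest is formal. By the standard theory of locally finitely presentable categories (e.g.\ the book of Ad\'amek and Rosick\'y), the Eilenberg--Moore category of a finitary monad on a locally finitely presentable category is again locally finitely presentable, and the monadic forgetful functor creates filtered colimits; applying this to $T$ and $L$ shows that $\StrCats{\omega} \simeq \GSet^T$ and $\WkCats{\omega} \simeq \GSet^L$ are locally finitely presentable and that $U^T$ and $U^L$ are finitary right adjoints. For the inclusion $\StrCats{\omega} \hookrightarrow \WkCats{\omega}$, I would use that it is, by construction, the algebraic functor $\GSet^T \to \GSet^L$ obtained by restriction of scalars along the monad morphism $\ar\colon L \to T$, so that composing it with $U^L$ gives back $U^T$. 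Since $U^L$ creates filtered colimits and $U^T$ preserves them, the inclusion preserves filtered colimits, hence is accessible; and since limits in $\StrCats{\omega}$ and $\WkCats{\omega}$ are created by the forgetful functors to $\GSet$, the same identity $U^L\circ(\text{inclusion}) = U^T$ shows the inclusion preserves limits. An accessible, limit-preserving functor out of a locally presentable category has a left adjoint, so the inclusion is a right adjoint, and it is finitary by the above. (Equivalently one can invoke directly the standard fact that the algebraic functor associated with a morphism of finitary monads on a locally finitely presentable category is a finitary right adjoint.)

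I do not expect a serious obstacle: everything except finitariness of $T$ and $L$ is a formal consequence of locally-presentable-category theory, finitariness of $T$ is well known, and finitariness of $L$ follows from the pullback description of the monad induced by a globular operad together with the commutation of filtered colimits with finite limits. The one place that needs a little care is making the identification of the inclusion $\StrCats{\omega}\hookrightarrow\WkCats{\omega}$ with restriction of scalars along $\ar$ precise enough to obtain the strict equality $U^L\circ(\text{inclusion}) = U^T$ — but this is exactly how the triangle \eqref{eqn:triangle-over-GSet} was set up above.
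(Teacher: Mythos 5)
Your proposal is correct and follows essentially the same route as the paper: $\GSet$ is a presheaf category hence locally finitely presentable, and everything else is a formal consequence of the monads $T$ and $L$ being finitary, which the paper obtains by citing \cite[Theorem~F.2.2]{Leinster_book} and \cite[Proposition~2.2.5]{FHM2} respectively. The only difference is that you prove finitariness of $L$ directly from the pullback description $LX \cong L1 \times_{T1} TX$ of the operad-induced monad together with the commutation of filtered colimits with finite limits in $\GSet$, rather than citing the latter reference; that argument is sound and is in the spirit of the cited result.
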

\begin{proof}
    The category $\GSet$ is a presheaf category and hence is locally finitely presentable. The remaining claims follow from the facts that 
    the monads $T$ and $L$ are finitary; the former is \cite[Theorem~F.2.2]{Leinster_book} and the latter is  \cite[Proposition~2.2.5]{FHM2}.
\end{proof}

The left adjoints of $U^T$ and $U^L$ are denoted by $F^T$ and $F^L$, respectively.
The left adjoint of the inclusion functor $\StrCats\omega\to\WkCats{\omega}$ will be referred to as the \emph{strict $\omega$-categorical reflection}.
For each $n\geq 0$, we write 
\begin{itemize}
    \item $\Cst n$ for the free strict $\omega$-category $F^TG^n$ on $G^n$,
    \item $\partial \Cst n$ for the free strict $\omega$-category $F^T\partial G^n$ on the boundary of $G^n$,
    \item $\C n$ for the free weak $\omega$-category $F^LG^n$ on $G^n$, and
    \item $\partial \C n$  for the free weak $\omega$-category $F^L\partial G^n$ on the boundary of $G^n$.
\end{itemize}
(The letter C stands for ``cell''.)
The images of \eqref{eqn:sigma-sigma-prime} under $F^T$ and $F^L$ are written as 
\begin{equation}\label{eqn:sigma-sigma-prime-cat}
\begin{tikzpicture}[baseline=-\the\dimexpr\fontdimen22\textfont2\relax ]
      \node(00) at (2,1) {$\partial\Cst n$};
      \node(01) at (0,-1) {$\Cst{n-1}$};
      \node(10) at (2,-1) {$\Cst n,$};
      
      \draw [<-] (00) to node[swap,auto, labelsize] {$\sourceb{n-1}$} (01); 
      \draw [->] (00) to node[auto,labelsize] {$\iota^n$} (10);   
      \draw [<-] (10) to node[auto,labelsize] {$\sigma^{n-1}$} (01);
\end{tikzpicture}
\qquad\text{and}\qquad
\begin{tikzpicture}[baseline=-\the\dimexpr\fontdimen22\textfont2\relax ]
      \node(00) at (2,1) {$\partial\C n$};
      \node(01) at (0,-1) {$\C{n-1}$};
      \node(10) at (2,-1) {$\C n$};
      
      \draw [<-] (00) to node[swap,auto, labelsize] {$\sourceb{n-1}$} (01); 
      \draw [->] (00) to node[auto,labelsize] {$\iota^n$} (10);   
      \draw [<-] (10) to node[auto,labelsize] {$\sigma^{n-1}$} (01);
\end{tikzpicture}
\end{equation}
respectively.

The inclusion functor $\StrCats\omega\to \WkCats\omega$ allows us to regard 
strict $\omega$-categories as a special kind of weak $\omega$-categories.
In particular, the operations in a weak $\omega$-category introduced below are also available in a strict $\omega$-category (and are in fact usually included as basic operations in its definition).
Similarly, various facts about these operations in a weak $\omega$-category stated below are also valid in a strict $\omega$-category, 
often for much simpler reasons. 

Thus a \emph{weak $\omega$-category} is an Eilenberg--Moore algebra 
$(X,\xi)$ of the monad $L$, consisting of a globular set $X$ and a globular map $\xi\colon LX\to X$ satisfying the axioms for Eilenberg--Moore algebra. 
(Of course, this just reduces the definition of a weak $\omega$-category to that of the monad $L$; for more details of the latter, see \cite[Section~9.2]{Leinster_book} or \cite[Section~2]{FHM1}.)
We will often denote the weak $\omega$-category $(X,\xi)$ simply by $X$.
As explained in \cite[Definition~2.5.2]{FHM1}, for each weak $\omega$-category $X$, we have the following operations (encoded in the monad $L$). 
\begin{itemize}
    \item Given a natural number $n\geq 1$ and an $(n-1)$-cell $x$ of $X$, we have an $n$-cell $\id{n}{X}{x}\colon x\to x$ of $X$.
    \item Given a natural number $n\geq 1$ and $n$-cells $u\colon x\to y$ and $v\colon y\to z$ of $X$, we have an $n$-cell $u\comp{n-1}{X}v\colon x\to z$ of $X$. 
\end{itemize}
(That is, each weak $\omega$-category has the structure of an \emph{$\omega$-precategory} in the sense of \cite[Definition~1]{Cheng_dual}.)
When $X$ is clear from the context, we omit the superscript.
Since these operations are derived from the monad $L$, any strict $\omega$-functor preserves them (strictly):
\begin{proposition}\label{str-fun-pres-comp-and-id}
    Let $f\colon X\to Y$ be a strict $\omega$-functor between weak $\omega$-categories. Then we have $f\bigl(\id{n}{X}{x}\bigr)=\id{n}{Y}{fx}$ and $f(u\comp{n-1}{X}v)=fu\comp{n-1}{Y}fv$. 
\end{proposition}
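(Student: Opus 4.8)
The plan is to unwind the definitions and use that a strict $\omega$-functor is precisely a morphism of Eilenberg--Moore algebras for $L$, hence commutes with the algebra structure maps. First I would make precise the phrase ``encoded in the monad $L$'' from \cite[Definition~2.5.2]{FHM1}: there is a fixed $n$-cell $e_n\in(LG^{n-1})_n$ such that, for every weak $\omega$-category $(X,\xi_X)$ and every $(n-1)$-cell $x$ of $X$,
\[
\id{n}{X}{x}=\xi_X\bigl((Lx)(e_n)\bigr),
\]
where $x$ also denotes the globular map $G^{n-1}\to X$ picking out the cell $x$; and, writing $P^n$ for the globular set freely generated by a composable pair of $n$-cells, i.e.\ the pushout in $\GSet$ of $G^n\xleftarrow{\tau^{n-1}}G^{n-1}\xrightarrow{\sigma^{n-1}}G^n$, there is a fixed $n$-cell $c_n\in(LP^n)_n$ such that
\[
u\comp{n-1}{X}v=\xi_X\bigl((L\ppair{u,v})(c_n)\bigr)
\]
for every composable pair $(u,v)$, with $\ppair{u,v}\colon P^n\to X$ the globular map classifying the pair.

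Granting this, the argument is immediate. Let $f\colon(X,\xi_X)\to(Y,\xi_Y)$ be a strict $\omega$-functor, i.e.\ a globular map with $f\circ\xi_X=\xi_Y\circ Lf$. Since $f$ is a globular map it respects the relevant sources and targets, so $f\circ x=fx$ as globular maps $G^{n-1}\to Y$, and $f\circ\ppair{u,v}=\ppair{fu,fv}$ as globular maps $P^n\to Y$ (the latter because $P^n$ is a colimit in $\GSet$ and $f$ respects its defining cocone). Using functoriality of $L$ and the algebra-morphism identity, I then compute
\[
f\bigl(\id{n}{X}{x}\bigr)=f\bigl(\xi_X((Lx)(e_n))\bigr)=\xi_Y\bigl((L(f\circ x))(e_n)\bigr)=\xi_Y\bigl((L(fx))(e_n)\bigr)=\id{n}{Y}{fx},
\]
and, in the same way, $f(u\comp{n-1}{X}v)=\xi_Y\bigl((L\ppair{fu,fv})(c_n)\bigr)=fu\comp{n-1}{Y}fv$.

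Conceptually this is just the general fact that algebraic operations for a monad are preserved by algebra morphisms, specialised to the two operations of \cite[Definition~2.5.2]{FHM1}. The main --- and essentially only --- difficulty is the bookkeeping needed to extract the universal cells $e_n$, $c_n$ and the classifying maps from the description of the monad $L$ in \cite{FHM1,Leinster_book}; once that is in place, the two displayed computations conclude the proof, so I do not expect any genuine obstacle.
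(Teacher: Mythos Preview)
Your proposal is correct and is essentially the approach the paper has in mind: the paper's own proof simply refers to the discussion above \cite[Definition~2.5.2]{FHM1}, which explains that these operations are $L$-algebraic and hence preserved by $L$-algebra morphisms, and your argument is a clean spelling-out of exactly that point.
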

\begin{proof}
    See the discussion above Definition~2.5.2 of \cite{FHM1}.
\end{proof}

Moreover, in any weak $\omega$-category $X$, we have a class of \emph{equivalence} cells.\footnote{Equivalence cells are called (weakly or pseudo) \emph{invertible} cells in \cite{Cheng_dual,Cottrell_Fujii_hom,FHM1,FHM2} and \emph{reversible} cells in \cite{Lafont_Metayer_Worytkiewicz_folk_model_str_omega_cat}.}
Since equivalence cells will be studied in detail in \cref{sec:equivalence-cells}, we defer their definition. 
For now, it suffices to state that equivalence cells in a weak $\omega$-category $X$ form a subset of $\coprod_{n\geq 1}X_n$, and writing $x\sim y$ if $x$ and $y$ are (necessarily parallel) $n$-cells of $X$ such that there exists an equivalence $(n+1)$-cell $u\colon x\to y$ in $X$, we have the following:

\begin{proposition}[{\cite[Corollary~3.3.15]{FHM1}; see \cite[Proposition~4.4]{Lafont_Metayer_Worytkiewicz_folk_model_str_omega_cat} for the strict case}]\label{sim-is-eq-rel}
    For any weak $\omega$-category $X$, $\sim$ is an equivalence relation on the set $\coprod_{n\geq 0}X_n$ of its cells. 
\end{proposition}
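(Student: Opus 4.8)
The plan is to verify directly that $\sim$ is reflexive, symmetric and transitive on $\coprod_{n\geq 0}X_n$. Write $\mathrm{Eq}$ for the class of equivalence cells of $X$; recall (from the coinductive definition to be given in \cref{sec:equivalence-cells}) that $\mathrm{Eq}$ is the largest class of cells such that, for every $u\colon a\to b$ in $\mathrm{Eq}$, there exist a cell $v\colon b\to a$ and cells $p\colon u\comp{}{}v\to\id{}{}{a}$ and $q\colon v\comp{}{}u\to\id{}{}{b}$ that again lie in $\mathrm{Eq}$. \emph{Symmetry} is then essentially immediate: if $x\sim y$ is witnessed by $u\colon x\to y$ in $\mathrm{Eq}$ with data $v,p,q$ as above, then $v\colon y\to x$ is itself an equivalence cell, with chosen inverse $u$ and witnesses $q\colon v\comp{}{}u\to\id{}{}{y}$ and $p\colon u\comp{}{}v\to\id{}{}{x}$, both of which lie in $\mathrm{Eq}$ by hypothesis; hence $y\sim x$.

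For \emph{reflexivity} and \emph{transitivity} I would reduce to two closure properties of $\mathrm{Eq}$: (i) every identity cell $\id{n+1}{}{x}$ lies in $\mathrm{Eq}$, and (ii) $\mathrm{Eq}$ is closed under the composition operations $\comp{k}{}$. Indeed, (i) shows that $\id{n+1}{}{x}\colon x\to x$ witnesses $x\sim x$ for every cell $x$ (with $0$-cells handled by identity $1$-cells), while (ii) shows that if $x\sim y$ and $y\sim z$ are witnessed by equivalence cells $u\colon x\to y$ and $u'\colon y\to z$, then $u\comp{}{}u'\colon x\to z$ lies in $\mathrm{Eq}$, so $x\sim z$.

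I would establish (i) and (ii) simultaneously, by coinduction. Let $S$ be the smallest class of cells of $X$ containing all identity cells, all coherence cells of $X$ (the cells arising from the contraction on the operad underlying $L$, e.g.\ the associators and unitors $\alpha,\lambda,\rho$ together with their inverses), and all equivalence cells, and closed under the operations $\id{}{}{-}$ and $\comp{k}{}$. By coinduction it suffices to show that $S$ satisfies the closure condition defining $\mathrm{Eq}$, and then $S\subseteq\mathrm{Eq}$ yields (i) and (ii) at once. This closure condition is checked by structural induction on the generation of $S$: for $w$ an equivalence cell one reuses its given data; for $w$ a coherence cell one takes $\bar w$ to be the corresponding inverse coherence cell and fills the two triangles by further coherence cells (this is exactly where the coherence of the contraction data of $L$ is used); for $w=\id{}{}{a}$ one takes $\bar w=\id{}{}{a}$ and uses unit coherence cells; and for a composite $w=w_1\comp{k}{}w_2$ with $w_i$ carrying data $(\bar w_i,p_i,q_i)$, one takes $\bar w$ to be the appropriate composite of $\bar w_2$ and $\bar w_1$ and builds the witnesses $p,q$ out of whiskered copies of the $p_i,q_i$ and of associativity, unit and interchange coherence cells — all of which remain in $S$ since $S$ is closed under $\comp{k}{}$. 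In the strict case the coherence cells are identities and composition is strictly associative and unital, so this degenerates to a far simpler argument (cf.\ \cite[Proposition~4.4]{Lafont_Metayer_Worytkiewicz_folk_model_str_omega_cat}).

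The main obstacle is the composite case of that induction: exhibiting the witnesses $p$ and $q$ for $w_1\comp{k}{}w_2$ — checking that $\bar w_2\comp{k}{}\bar w_1$ is a two-sided inverse up to the available coherence cells, and that both triangle identities can be filled — is a standard but fiddly diagram chase with the coherence cells of $X$, and one must keep careful track that every auxiliary cell stays inside $S$ for the coinduction to close up. All of this bookkeeping is already carried out in \cite{FHM1}; indeed the statement is \cite[Corollary~3.3.15]{FHM1}, so one could also simply invoke that.
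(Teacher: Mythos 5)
The paper offers no argument for this proposition at all: it simply cites \cite[Corollary~3.3.15]{FHM1} (and \cite[Proposition~4.4]{Lafont_Metayer_Worytkiewicz_folk_model_str_omega_cat} for the strict case), and your outline follows essentially the same route as that cited development — symmetry is indeed immediate from \cref{def:spherical-eq} (the inverse $v$ is itself an equivalence, witnessed by the same $q$ and $p$), and reflexivity and transitivity reduce to the facts that identity cells are equivalences and that equivalences are closed under composition, both established by an induction nested inside the coinduction. The one genuine difference is the packaging of that nested argument. You take the smallest class $S$ containing equivalences, identities and coherence cells and closed under the operations $\comp{k}{}$, and verify $S\subseteq\Phi(S)$ by induction on generation; note that this commits you to checking the $\Phi$-condition not only for equal-dimensional composites but also for whiskered composites $u\comp{k}{}v$ with $\dim u<\dim v$ (your witnesses for a composite already use whiskered copies of $p_i,q_i$, so $S$ must be closed under whiskering), and there the candidate inverse is $u\comp{k}{}\bar v$, while the triangle composite $(u\comp{k}{}v)\comp{}{}(u\comp{k}{}\bar v)$ agrees with $u\comp{k}{}(v\comp{}{}\bar v)$ only up to interchange-type coherence — a case your sketch does not spell out. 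The argument in \cite{FHM1} (via the lattice lemma recalled here as \cref{prop:nuF-is-G-closed-F}, applied to the pasting operator $\pst$) instead inducts over entire pasting diagrams $(\phi,\uu)\in LX$ equipped with ``inverse instructions'', which absorbs this whiskering and associativity bookkeeping uniformly rather than case by case on binary operations. So your plan is sound, but the fiddly composite case is exactly where the pasting-diagram formulation earns its keep; since the statement is literally \cite[Corollary~3.3.15]{FHM1}, deferring that bookkeeping to the reference — as the paper itself does — is the intended resolution.
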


\begin{proposition}[{\cite[Theorem~3.3.7]{FHM1}; see \cite[Proposition~4.4]{Lafont_Metayer_Worytkiewicz_folk_model_str_omega_cat} for the strict case}]\label{sim-is-congruence}
    Let $X$ be a weak $\omega$-category. 
    Suppose that $u,u'\colon x\to y$ and $v,v'\colon y\to z$ are $n$-cells of $X$ with $n\geq 1$. 
    If $u\sim u'$ and $v\sim v'$, then we have $u\comp{n-1}{}v\sim u'\comp{n-1}{}v'$. 
\end{proposition}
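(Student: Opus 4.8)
The plan is to reduce the statement to two one-variable whiskering assertions and then glue them using the transitivity of $\sim$ granted by \cref{sim-is-eq-rel}. It suffices to prove: if $u\sim u'$ then $u\comp{n-1}{}v\sim u'\comp{n-1}{}v$ (right whiskering), and if $v\sim v'$ then $u\comp{n-1}{}v\sim u\comp{n-1}{}v'$ (left whiskering). Indeed, applying right whiskering to $u\sim u'$ and then left whiskering (with $u'$ in place of $u$) to $v\sim v'$ yields $u\comp{n-1}{}v\sim u'\comp{n-1}{}v\sim u'\comp{n-1}{}v'$, and transitivity concludes.

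For the whiskering assertions I would unfold the definition of $\sim$: a witness for $u\sim u'$ is an equivalence $(n+1)$-cell $\phi\colon u\to u'$, and since the $(n-1)$-dimensional target of $\phi$ and the $(n-1)$-dimensional source of $\id{n+1}{}{v}$ are both equal to $y$, the $(n+1)$-cell
\[
\Phi \coloneqq \phi\comp{n-1}{}\id{n+1}{}{v}
\]
is defined; as $\comp{n-1}{}$ is compatible with $n$-dimensional sources and targets, $\Phi$ has $n$-source $u\comp{n-1}{}v$ and $n$-target $u'\comp{n-1}{}v$. So, provided $\Phi$ is itself an equivalence cell, it witnesses $u\comp{n-1}{}v\sim u'\comp{n-1}{}v$; the left whiskering assertion is symmetric, using $\id{n+1}{}{u}\comp{n-1}{}\psi$ for an equivalence $(n+1)$-cell $\psi\colon v\to v'$. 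Everything thus comes down to the closure property that the $\comp{n-1}{}$-composite of an equivalence $(n+1)$-cell with an identity $(n+1)$-cell is again an equivalence $(n+1)$-cell.

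This closure property is the crux, and it is where the coinductive definition of equivalence cell (\cref{def:spherical-eq}) must be invoked. The approach I would take is to exhibit a single set $S$ of cells of $X$ which contains all identity cells and all cells of the form $\phi\comp{n-1}{}\id{n+1}{}{v}$ with $\phi$ an equivalence cell, and which is closed under the defining clause for equivalence cells — that is, every member of $S$ admits a two-sided inverse lying in $S$ together with the two required witness cells again in $S$ — so that coinduction forces $S$ to consist of equivalence cells. Assembling these witnesses forces $S$ to contain also the associativity, unit, and interchange coherence cells of $X$ (which in a weak $\omega$-category hold only up to equivalence), and the bookkeeping needs congruence-type facts one dimension up; so in practice one proves the whole package simultaneously by a single coinduction: identity cells are equivalences, $\comp{k}{}$-composites and whiskerings of equivalences are equivalences, and $\sim$ is a congruence — exactly as carried out in \cite[Section~3.3]{FHM1}.

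The main obstacle is precisely this simultaneous coinduction and the coherence bookkeeping it entails; once the closure property is available, the reduction above is routine. A less computational but more construction-heavy alternative would be to prove the statement first in the weak $\omega$-category freely generated by the generic configuration of two composable equivalence cells, then transport it to $X$ along the strict $\omega$-functor into $X$ induced by the universal property, using \cref{str-fun-pres-comp-and-id} together with the fact that strict $\omega$-functors preserve equivalence cells (which itself follows from \cref{str-fun-pres-comp-and-id} by a straightforward coinduction); but constructing such a free weak $\omega$-category is itself a task of the sort undertaken elsewhere in this paper.
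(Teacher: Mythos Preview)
The paper does not prove this proposition; it is stated in the preliminaries with a bare citation to \cite[Theorem~3.3.7]{FHM1} (closure of equivalence cells under pasting), of which the congruence statement is an immediate instance. Your sketch is correct and is, in outline, exactly the argument of \cite[Section~3.3]{FHM1}: reduce to showing that whiskering an equivalence $(n+1)$-cell by an $n$-cell yields an equivalence, and obtain this as part of a simultaneous coinduction establishing that identities are equivalences and that the class of equivalence cells is closed under all operations (whiskering, composition, and the coherence cells needed to exhibit inverses of composites). Your acknowledgement that this package must be proved in one go, and your pointer to \cite[Section~3.3]{FHM1}, are both on target.

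One small remark of precision: writing $\Phi=\phi\comp{n-1}{}\id{n+1}{}{v}$ presupposes a codimension-$2$ binary composition of $(n+1)$-cells along $(n-1)$-boundary with source and target strictly equal to $u\comp{n-1}{}v$ and $u'\comp{n-1}{}v$. Such an operation does exist (this is \cite[Definition~2.3.5]{FHM2}, recalled in the present paper only in the whiskering form of \cref{subsec:whiskering}), but it is not the basic $\omega$-precategory datum; it is more economical, and closer to what the paper makes available, to use directly the whiskering $\phi\comp{n-1}{}v$ of \cref{subsec:whiskering}, which by definition has the required source and target. Either way, the closure property you need is precisely a special case of the pasting closure theorem you cite.
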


\begin{proposition}[\textnormal{\cite[Corollary~3.3.17]{FHM1}}]
	\label{prop:invariance-of-equivalence}
	Let $X$ be a weak $\omega$-category, $n\geq 1$, and $u,v\colon x\to y$ be a parallel pair of $n$-cells in $X$ such that $u\sim v$. Suppose that $u$ is an equivalence. Then $v$ is an equivalence too. 
\end{proposition}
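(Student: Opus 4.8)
The plan is to argue by coinduction, using the coinductive characterisation of equivalence cells (\cref{def:spherical-eq}). Recall that the equivalences in $X$ form the greatest fixed point $\nu\Phi$ of the monotone operator $\Phi$ on subsets $E\subseteq\coprod_{n\geq 1}X_n$ which sends $E$ to the set of those $n$-cells $w\colon a\to b$ for which there exist an $n$-cell $\bar w\colon b\to a$ and $(n+1)$-cells $p\colon w\comp{}{}\bar w\to\id{}{}{a}$ and $q\colon \bar w\comp{}{}w\to\id{}{}{b}$ with $p,q\in E$. Thus it suffices to produce a post-fixed point $P$ of $\Phi$ (i.e.\ $P\subseteq\Phi(P)$) that contains both $u$ and $v$: then $P\subseteq\nu\Phi$ by coinduction, and in particular $v$ is an equivalence.

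The set I would take is
\[
P=\bigl\{\,w\in{\textstyle\coprod_{n\geq 1}}X_n \ :\ w\sim w'\ \text{for some equivalence }w'\,\bigr\}.
\]
By \cref{sim-is-eq-rel} the relation $\sim$ is reflexive, so every equivalence cell belongs to $P$; in particular $P$ contains $u$, and it contains $v$ because $v\sim u$ with $u$ an equivalence. It remains to verify $P\subseteq\Phi(P)$. Let $w\colon a\to b$ be an $n$-cell with $w\sim w'$ for some equivalence $w'$; since $w\sim w'$ the two cells are parallel, so $w'\colon a\to b$ as well, and as $w'$ is an equivalence we may fix an $n$-cell $\bar w\colon b\to a$ together with equivalence cells $p\colon w'\comp{}{}\bar w\to\id{}{}{a}$ and $q\colon \bar w\comp{}{}w'\to\id{}{}{b}$.

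I claim that $\bar w$, with appropriate witnesses, exhibits $w\in\Phi(P)$. Applying \cref{sim-is-congruence} to $w\sim w'$ and $\bar w\sim\bar w$ gives $w\comp{}{}\bar w\sim w'\comp{}{}\bar w$; moreover the existence of the equivalence cell $p$ means exactly that $w'\comp{}{}\bar w\sim\id{}{}{a}$, so by transitivity of $\sim$ (\cref{sim-is-eq-rel}) we get $w\comp{}{}\bar w\sim\id{}{}{a}$, whence there is an equivalence cell $\tilde p\colon w\comp{}{}\bar w\to\id{}{}{a}$. Symmetrically, using $q$ one obtains an equivalence cell $\tilde q\colon\bar w\comp{}{}w\to\id{}{}{b}$. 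Since $\tilde p$ and $\tilde q$ are equivalences they lie in $P$, so $w\in\Phi(P)$, as needed.

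The proof is essentially self-contained given \cref{sim-is-eq-rel,sim-is-congruence}, and I do not anticipate a serious obstacle beyond choosing the coinductive hypothesis correctly. The point is to coinduct on the predicate ``$\sim$-related to an equivalence'' rather than trying to transport the specific witnesses $p,q$ of $u$ across to $v$: doing it this way, one only ever needs the \emph{existence} of the replacement witnesses $\tilde p,\tilde q$, which transitivity of $\sim$ supplies for free, instead of an explicit construction (which would appear to require the separate fact that a composite of equivalences is again an equivalence).
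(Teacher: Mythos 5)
Your argument is correct. Note that the paper itself does not prove \cref{prop:invariance-of-equivalence} but simply imports it as \cite[Corollary~3.3.17]{FHM1}, so there is no in-paper proof to match; what matters is that your proof is self-contained modulo facts the paper already quotes (\cref{sim-is-eq-rel,sim-is-congruence} and the fixed-point property of $\nu\Phi$ from \cref{subsec:coinduction}), all of which precede the statement in \cite{FHM1} as well, so there is no circularity. Each step checks out: parallelism of $w$ and $w'$, the congruence $w\comp{}{}\bar w\sim w'\comp{}{}\bar w$, the translation between ``there is an equivalence cell $a\to b$'' and ``$a\sim b$'', and the fact that the new witnesses $\tilde p,\tilde q$ are themselves equivalences and hence lie in $P$. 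The one remark worth making is that the coinductive wrapper is dispensable: since $\nu\Phi$ is an actual fixed point, $\nu\Phi=\Phi(\nu\Phi)$, and your verification already produces witnesses $\tilde p,\tilde q\in\nu\Phi$ for the cell $v$ itself (take $w=v$, $w'=u$), so you may conclude $v\in\Phi(\nu\Phi)=\nu\Phi$ directly, with no need to introduce the post-fixed point $P$ at all. Your choice of $P$ costs nothing and is harmless, but the ``predicate'' you coinduct on is never genuinely used in a self-referential way --- which is exactly why the shortcut works. Either way, the statement follows from \cref{sim-is-eq-rel,sim-is-congruence} alone, in line with your closing comment that no pasting-of-equivalences argument is needed beyond what those two propositions already encapsulate.
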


\begin{proposition}[{\cite[Proposition~3.2.5]{FHM1}}]\label{associativity}
    Let $X$ be a weak $\omega$-category and $p\colon x\to y$, $q\colon y\to z$, and $r\colon z\to w$ be $n$-cells of $X$ with $n\geq 1$. Then we have $(p\comp{n-1}{}q)\comp{n-1}{}r\sim p\comp{n-1}{}(q\comp{n-1}{}r)$. 
\end{proposition}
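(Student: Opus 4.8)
The plan is to exhibit a single equivalence $(n+1)$-cell from $(p\comp{n-1}{}q)\comp{n-1}{}r$ to $p\comp{n-1}{}(q\comp{n-1}{}r)$, namely the associator $\assoc{n}{X}{p}{q}{r}$ supplied by the monad $L$. First I would recall that $L$ arises from a globular operad over $T$ carrying a contraction and a system of compositions in Leinster's sense: the binary operations $\comp{n-1}{}$ come from the system of compositions, while the contraction provides, for every pair $\theta,\theta'$ of operations of $L$ that are parallel and have the same arity (i.e.\ that coincide after $\ar\colon L\to T$), a coherence operation one dimension up, lying over an identity $T$-operation and running from $\theta$ to $\theta'$. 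Taking $\theta$ to be ``compose three cells in a row as $(pq)r$'' and $\theta'$ to be ``compose them as $p(qr)$'' --- which have the same arity precisely because composition in a \emph{strict} $\omega$-category is strictly associative --- and evaluating the resulting coherence operation at $(p,q,r)$ yields the desired $(n+1)$-cell $\assoc{n}{X}{p}{q}{r}\colon (p\comp{n-1}{}q)\comp{n-1}{}r\to p\comp{n-1}{}(q\comp{n-1}{}r)$. This works directly in $X$, with no reduction to a free weak $\omega$-category needed.

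Next I would show that $\assoc{n}{X}{p}{q}{r}$ is an equivalence --- and, more efficiently, that every cell obtained by evaluating a contraction coherence operation is an equivalence; call these the \emph{coherence cells} of $X$. Since ``equivalence'' is defined coinductively (\cref{def:spherical-eq}), I would take $S$ to be the set of coherence cells of $X$ and verify that $S$ is a post-fixed point of the operator whose greatest fixed point is the class of equivalence cells. Concretely: given a coherence cell $c\colon a\to b$ coming from a pair $(\theta,\theta')$, the reversed pair $(\theta',\theta)$ is again parallel and of the same arity, so the contraction supplies a coherence cell $\bar c\colon b\to a$; moreover $c\comp{}{}\bar c$ and $\id{}{}{a}$ are parallel and have the same image in $T$ (identities compose to identities there), so the contraction supplies a coherence cell $c\comp{}{}\bar c\to\id{}{}{a}$, and symmetrically one $\bar c\comp{}{}c\to\id{}{}{b}$; and $\bar c$ together with these two witnesses all lie again in $S$. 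Hence every element of $S$, in particular $\assoc{n}{X}{p}{q}{r}$, is an equivalence, and combining with the first paragraph gives $(p\comp{n-1}{}q)\comp{n-1}{}r\sim p\comp{n-1}{}(q\comp{n-1}{}r)$.

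The hard part is the second paragraph. The crux is that $c\comp{}{}\bar c$ and $\id{}{}{a}$ are only parallel and of the same arity, never literally equal, so one genuinely needs a new coherence cell between them --- this is exactly where the weakness of $X$ bites, and why the argument has to be coinductive rather than a finite induction. Making it rigorous means descending to the operad $L$: one must say precisely which operations count as coherence operations, check that this collection is closed under the three constructions above (reversing a pair, and forming coherence operations into the relevant identities), and keep track of images under $\ar$ throughout, using that $L$ is freely generated so that ``same arity'' means exactly ``equal after $\ar$'' and the required coherence operations genuinely exist. The leftover bookkeeping --- that $\assoc{n}{X}{p}{q}{r}$ is natural in strict $\omega$-functors via \cref{str-fun-pres-comp-and-id}, and that passing between $\sim$-related cells preserves being an equivalence via \cref{prop:invariance-of-equivalence} --- is routine. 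In the lowest nontrivial dimension the equivalence claim specialises to the classical fact, recalled in the introduction, that an equivalence in a bicategory can be promoted to an adjoint equivalence.
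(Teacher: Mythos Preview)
Your approach is correct and is essentially the one taken in the cited reference \cite{FHM1}: construct the associator as a contraction cell of the operad $L$ and then show, by exhibiting the set of such cells as a post-fixed point of $\Phi$, that every contraction cell is an equivalence. The paper itself gives no proof beyond the citation, so there is nothing further to compare.

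Two small remarks. First, in the paper's conventions the associator for $n$-cells is written $\assoc{n+1}{X}{p}{q}{r}$ (it is an $(n+1)$-cell), not $\assoc{n}{X}{p}{q}{r}$. Second, your closing sentence is off: the coinductive argument that coherence cells are equivalences does not specialise to the adjoint-equivalence promotion fact; in a bicategory it specialises to the triviality that the associator is invertible because the 2-cell witnesses for invertibility are again coherence cells (indeed identities in $T$). The adjoint-equivalence promotion is the ingredient behind the \emph{half-adjoint} variant of $K$ in \cref{appendix-on-halfadjoint-lift-one-step}, which is a different part of the paper.
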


\begin{proposition}[{\cite[Proposition~3.3.5]{FHM1}}]\label{unit-law}
    Let $X$ be a weak $\omega$-category and $u\colon x\to y$ be an $n$-cell of $X$ with $n\geq 1$. Then we have $\id{n}{}{x}\comp{n-1}{}u\sim u\sim u\comp{n-1}{}\id{n}{}{y}$. 
\end{proposition}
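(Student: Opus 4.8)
\emph{Remark on method.} The statement is recorded as \cite[Proposition~3.3.5]{FHM1}; I outline the argument below.

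The plan is to first reduce to the universal case, namely the free weak $\omega$-category $\C n=F^LG^n$ on a single $n$-cell $c$, with $(n-1)$-dimensional source $s$ and target $t$. It suffices to construct in $\C n$ two equivalence $(n+1)$-cells
\[
\lambda\colon \id{n}{}{s}\comp{n-1}{}c\to c
\qquad\text{and}\qquad
\rho\colon c\comp{n-1}{}\id{n}{}{t}\to c .
\]
Granting this, for an arbitrary $n$-cell $u\colon x\to y$ in a weak $\omega$-category $X$ I would take the classifying strict $\omega$-functor $\hat u\colon \C n\to X$ with $\hat u(c)=u$ (so $\hat u(s)=x$ and $\hat u(t)=y$); by \cref{str-fun-pres-comp-and-id} it carries $\id{n}{}{s}\comp{n-1}{}c$ to $\id{n}{}{x}\comp{n-1}{}u$ and $c$ to $u$. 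Since strict $\omega$-functors send equivalence cells to equivalence cells --- which follows by coinduction from \cref{str-fun-pres-comp-and-id}, as the witnessing inverse and triangle cells for an equivalence are built from composites and identities, all of which $\hat u$ preserves --- the cells $\hat u(\lambda)$ and $\hat u(\rho)$ then exhibit $\id{n}{}{x}\comp{n-1}{}u\sim u\sim u\comp{n-1}{}\id{n}{}{y}$.

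To produce $\lambda$ (the case of $\rho$ being symmetric), I would use the contraction on Leinster's operad. Recall that $L$ is the monad associated to the initial globular operad-with-contraction; write $P$ for its underlying collection, with structure map $\arity_P\colon P\to T1$ into the terminal globular operad, whose $m$-cells are the $m$-dimensional globular pasting diagrams and whose operadic composition is substitution. The cells $\id{n}{}{s}\comp{n-1}{}c$ and $c$ of $\C n$ are the universal labellings of two operations $\theta_\lambda,1_n\in P_n$, where $1_n$ is the unit operation and $\theta_\lambda$ is the operadic composite of the binary-composition operation with (a degenerate operation built from) the identity operation in its first input and the unit operation in its second. Applying $\arity_P$ shows that these two operations have \emph{equal} arity in $T1$: substituting a degenerate $n$-globe into one input of the binary-composition pasting diagram collapses it to a single $n$-globe, since composing with an identity in the strict $\omega$-category $T1$ does nothing. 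Hence $\theta_\lambda$ and $1_n$ form a parallel pair of $n$-cells of $P$ lying over a single arity, and the contraction --- applied over the degenerate $(n+1)$-cell of $T1$ on that arity --- yields an $(n+1)$-operation $\theta_\lambda\to 1_n$; evaluating it at the universal labelling gives the required $\lambda$.

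Finally I would check that $\lambda$ is an equivalence by iterating the contraction coinductively: a two-sided inverse $c\to\id{n}{}{s}\comp{n-1}{}c$ comes from one further application (the two relevant arities again collapsing to a single $n$-globe in $T1$), the triangle-witness $(n+2)$-cells from yet another, and so on; at every stage the sphere to be filled has source and target of equal arity in $T1$, because all the pasting diagrams involved become degenerate there. I expect this last point --- organising the coinduction cleanly and verifying that the arities keep collapsing (equivalently, that $\lambda$ lies over the identity $(n+1)$-cell $\id{n+1}{}{c}$ of $\Cst n$ and that this persists at every level of the witness tower) --- to be the main obstacle; it is precisely here that contractibility of $L$'s operad does the real work, since without it there would be no reason for unitors to exist at all.
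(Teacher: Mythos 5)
The paper gives no proof of this proposition---it is imported wholesale from \cite[Proposition~3.3.5]{FHM1}---so there is no in-text argument to compare yours against. Your sketch is nevertheless essentially the standard argument, and it matches the mechanism this paper itself uses for statements of the same kind: in \cref{appendix-on-quotient-bicategory} such coherence cells are obtained from parallel cells of $L1$ with equal arity via \cite[Proposition~3.2.5]{FHM1}, which is exactly what your contraction-plus-coinduction construction produces in the special case at hand. So the overall route (build $\lambda,\rho$ universally in $\C{n}=F^LG^n$ using Leinster's contraction, show coinductively that cells lying over identity cells are equivalences, then transport into $X$ along the classifying strict $\omega$-functor using \cref{str-fun-pres-comp-and-id,str-functor-pres-eq}) is sound. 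One step should be stated correctly rather than inferred: equality of arities does \emph{not} by itself make $\theta_\lambda$ and $1_n$ a parallel pair of cells of $L1$, so your ``hence'' is a non sequitur as written. Parallelism does hold, but for a different reason: the identity and binary-composition operations of the $L$-algebra $L1$ respect globular boundaries, so $s(\theta_\lambda)=t(\theta_\lambda)=1_{n-1}=s(1_n)=t(1_n)$; the analogous check (parallelism of the \emph{operation} components in $L1$, not merely of the cells of $\C{n}$) recurs at every stage of your witness tower and succeeds for the same reason. Granting that, the coinduction you flag as the main obstacle is routine in the formalism of \cref{subsec:coinduction}: let $S$ be the set of cells of $\C{n}$ whose image under $\ar_{G^n}\colon\C{n}\to\Cst{n}$ is an identity cell; the contraction applied over degenerate cells of $T1$ yields inverses and witnesses again lying in $S$, so $S\subseteq\Phi(S)$ in the sense of \cref{def:spherical-eq}, hence $S\subseteq\nu\Phi$, and $\lambda$ and $\rho$ lie in $S$.
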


\begin{proposition}[{\cite[Proposition~3.2.1]{FHM1}; see \cite[Lemma~4.3]{Lafont_Metayer_Worytkiewicz_folk_model_str_omega_cat} for the strict case}]\label{str-functor-pres-eq}
    Any strict $\omega$-functor between weak $\omega$-categories preserves equivalence cells.
\end{proposition}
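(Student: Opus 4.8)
The plan is to argue by coinduction, using the coinductive characterisation of equivalence cells that will be set up in \cref{sec:equivalence-cells} (see \cref{def:spherical-eq}). Recall that, for a weak $\omega$-category $X$, the set $\mathrm{Eq}_X\subseteq\coprod_{n\geq 1}X_n$ of its equivalence cells is the \emph{largest} subset with the property that every $u\colon a\to b$ in it admits a cell $v\colon b\to a$ together with cells $p\colon u\comp{}{}v\to\id{}{}{a}$ and $q\colon v\comp{}{}u\to\id{}{}{b}$ which again lie in the subset. Equivalently, $\mathrm{Eq}_X$ is the greatest fixed point of the monotone operator $\Phi_X$ on the powerset of $\coprod_{n\geq 1}X_n$ sending a subset $S$ to the set of those cells that admit witnessing data $(v,p,q)$ with $p,q\in S$. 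By the coinduction principle it therefore suffices, in order to show that a strict $\omega$-functor $f\colon X\to Y$ satisfies $f(\mathrm{Eq}_X)\subseteq\mathrm{Eq}_Y$, to produce a subset $S\subseteq\coprod_{n\geq 1}Y_n$ with $f(\mathrm{Eq}_X)\subseteq S$ and $S\subseteq\Phi_Y(S)$.

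Concretely, I would take $S\coloneqq f(\mathrm{Eq}_X)=\{\,fu\mid u\in\mathrm{Eq}_X\,\}$, noting that $f$ preserves the dimension of a cell so that $S$ has the right shape. To check $S\subseteq\Phi_Y(S)$, take any element of $S$, say $fu$ with $u\colon a\to b$ in $\mathrm{Eq}_X$, and choose witnessing data $v\colon b\to a$, $p\colon u\comp{}{}v\to\id{}{}{a}$, $q\colon v\comp{}{}u\to\id{}{}{b}$ with $p,q\in\mathrm{Eq}_X$. Applying $f$ and invoking \cref{str-fun-pres-comp-and-id}, so that $f$ strictly preserves the relevant binary composites and identities, we obtain $fv\colon fb\to fa$, $fp\colon fu\comp{}{}fv\to\id{}{}{fa}$ and $fq\colon fv\comp{}{}fu\to\id{}{}{fb}$, with $fp,fq\in S$ by the definition of $S$. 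Hence $(fv,fp,fq)$ exhibits $fu$ as an element of $\Phi_Y(S)$, so $S\subseteq\Phi_Y(S)$, and coinduction gives $S\subseteq\mathrm{Eq}_Y$, which is the assertion.

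The only step requiring genuine care is the coinductive bookkeeping: one must confirm that the ``witness-extraction'' operation really is captured by a monotone endofunction whose greatest fixed point is the set of equivalence cells --- but this is exactly the content of the coinductive \cref{def:spherical-eq} --- and that the chosen $S$ is a post-fixed point of it, for which \cref{str-fun-pres-comp-and-id} is precisely what is needed. There is no coherence or higher-dimensional complication to resolve separately, since coinduction handles all dimensions at once. Since the definition of equivalence cell is only given in \cref{sec:equivalence-cells}, this proposition is, strictly speaking, proved there; it also appears as \cite[Proposition~3.2.1]{FHM1}.
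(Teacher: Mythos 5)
Your proof is correct. The paper itself offers no argument for this proposition and simply cites \cite[Proposition~3.2.1]{FHM1}; your coinductive argument---showing that the image $f(\nu\Phi_X)$ is a post-fixed point of $\Phi_Y$ by transporting the witnesses $(v,p,q)$ along $f$ using \cref{str-fun-pres-comp-and-id}, and then invoking the universal property of the greatest post-fixed point in \cref{def:spherical-eq}---is exactly the standard argument behind that citation, so it is essentially the same approach.
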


\subsection{Whiskering operations}\label{subsec:whiskering}

Whereas the above simple operations in a weak (or strict) $\omega$-category (endowing it with an $\omega$-precategory structure) usually suffice for the purpose of this paper, sometimes we use the following additional operations in a weak $\omega$-category. 

Let $X$ be a globular set. 
For natural numbers $0\leq n<m$, we write $s_n\colon X_m\to X_n$ for the composite 
\[
\begin{tikzpicture}[baseline=-\the\dimexpr\fontdimen22\textfont2\relax ]
      \node(n) at (0,0) {$X_m$};
      \node(n-1) at (2,0) {$X_{m-1}$};
      \node(d) at (4,0) {$\cdots$};
      \node(m) at (6,0) {$X_n$};
      
      \draw [->] (n) to node[auto, labelsize] 
      {$s_{m-1}$} (n-1);
      \draw [->] (n-1) to node[auto, labelsize] 
      {$s_{m-2}$} (d);
      \draw [->] (d) to node[auto, labelsize] 
      {$s_{n}$} (m);
\end{tikzpicture}
\]
and similarly for $t_n\colon X_m\to X_n$.

Now suppose that $X$ is a weak $\omega$-category. 
We have the following operations, which are special cases of \cite[Definition~2.3.5]{FHM2}. 
\begin{itemize}
    \item Let $1\leq n< m$ be natural numbers. Given an $n$-cell $u$ and an $m$-cell $v\colon s\to t$ of $X$ such that $t_{n-1}(u)=s_{n-1}(v)$, we have an $m$-cell 
    \[u\comp{n-1}{}v\colon u\comp{n-1}{}s\to u\comp{n-1}{}t\]
    of $X$. 
    Similarly, given an $m$-cell $u\colon s\to t$ and an $n$-cell $v$ of $X$ such that $t_{n-1}(u)=s_{n-1}(v)$, we have an $m$-cell 
    \[u\comp{n-1}{}v\colon s\comp{n-1}{}v\to t\comp{n-1}{}v\] 
    of $X$. 
\end{itemize}
Intuitively, these are \emph{whiskering} operations composing e.g.
\[
\begin{tikzpicture}
		\node (0) at (0,0) {$\bullet$};
		\node (1) at (2,0) {$\bullet$};
		\node (2) at (4,0) {$\bullet$.};

		\draw[->] (0) to node [labelsize, auto] {$u$} (1);
		\draw[->, bend right = 30] (1) to node [labelsize, swap,auto] {$t$} (2);
		\draw[->, bend left = 30] (1) to node [labelsize, auto] {$s$} (2);
		\draw[->, 2cell] (3,0.2) to node [labelsize, auto] {$v$} (3,-0.2);
	\end{tikzpicture}
\]
Analogously to \cref{str-fun-pres-comp-and-id,sim-is-congruence}, strict $\omega$-functors preserve whiskerings, and whiskering an $n$-cell to a pair of $m$-cells connected by an equivalence $(m+1)$-cell yields again a pair connected by an equivalence $(m+1)$-cell.
Moreover, we have the following.

\begin{proposition}\label{whiskering-ess-0-surj}
    Let $X$ be a weak $\omega$-category and $u\colon x\to y$ be an equivalence $n$-cell of $X$ with $n\geq 1$.
    \begin{enumerate}
        \item For any $n$-cell $w\colon x\to z$ in $X$, there exists an $n$-cell $v\colon y\to z$ in $X$ such that $w\sim u\comp{n-1}{}v$.
        \item For any $m>n$, any parallel pair of $(m-1)$-cells $s$ and $t$ in $X$ with $s_{n-1}(s)=s_{n-1}(t)=y$, and any $m$-cell $w\colon u\comp{n-1}{}s\to u\comp{n-1}{}t$ in $X$, there exists an $m$-cell $v\colon s\to t$ in $X$ such that $w\sim u\comp{n-1}{}v$.
    \end{enumerate}
    Moreover, in both of these situations, $v$ is an equivalence if and only if $w$ is.
\end{proposition}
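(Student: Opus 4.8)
The plan is to reduce both statements to routine manipulations with a chosen inverse of $u$. Fix a cell $\bar u\colon y\to x$ together with equivalence $(n+1)$-cells $p\colon u\comp{n-1}{}\bar u\to\id{n}{}{x}$ and $q\colon\bar u\comp{n-1}{}u\to\id{n}{}{y}$ exhibiting $u$ as an equivalence. Note that $\bar u$ is then itself an equivalence (its inverse is $u$, witnessed by $q$ and $p$), and recall the standard facts (see \cite{FHM1}, and easily deduced from \cref{associativity,unit-law,sim-is-congruence,prop:invariance-of-equivalence}) that identity cells are equivalences and that equivalence cells are closed under composition; I shall also use the facts, established in \cite{FHM2}, that whiskering an equivalence cell by an arbitrary cell again yields an equivalence cell, and that whiskering is compatible with composition up to $\sim$.

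For (1) I would take $v:=\bar u\comp{n-1}{}w\colon y\to z$. Then
\[
u\comp{n-1}{}v = u\comp{n-1}{}(\bar u\comp{n-1}{}w)\sim(u\comp{n-1}{}\bar u)\comp{n-1}{}w\sim\id{n}{}{x}\comp{n-1}{}w\sim w,
\]
using \cref{associativity}, then $u\comp{n-1}{}\bar u\sim\id{n}{}{x}$ (from $p$) together with \cref{sim-is-congruence}, then \cref{unit-law}; transitivity of $\sim$ (\cref{sim-is-eq-rel}) gives $w\sim u\comp{n-1}{}v$. For the final clause: if $v$ is an equivalence, then $u\comp{n-1}{}v$ is a composite of equivalences, hence an equivalence, so $w$ is an equivalence by \cref{prop:invariance-of-equivalence}; conversely, if $w$ is an equivalence, then $v=\bar u\comp{n-1}{}w$ is a composite of the equivalences $\bar u$ and $w$, hence an equivalence.

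For (2) the idea is the same --- cancel $u$ on the left --- but now the cell $\bar u\comp{n-1}{}(u\comp{n-1}{}s)$ obtained after cancellation differs from $s$ only up to coherence (indeed the two need not even be parallel once $m>n+1$), so the transport back to a cell $s\to t$ must be carried out using the coherence apparatus for whiskering. Concretely, starting from the $m$-cell $\bar u\comp{n-1}{}w\colon\bar u\comp{n-1}{}(u\comp{n-1}{}s)\to\bar u\comp{n-1}{}(u\comp{n-1}{}t)$, one uses the associator and unitor coherence equivalence cells for whiskering, together with $q$ (applied dimension by dimension along the boundary), to identify the operation $\bar u\comp{n-1}{}(u\comp{n-1}{}(-))$ with the identity operation up to a coherent equivalence; conjugating $\bar u\comp{n-1}{}w$ by this coherence produces the desired $v\colon s\to t$. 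That $u\comp{n-1}{}v\sim w$ is then checked by whiskering $v$ by $u$, distributing over the composition using compatibility of whiskering with composition, simplifying $u\comp{n-1}{}(\bar u\comp{n-1}{}w)$ to a cell equivalent to $w$ as in (1) (this time using $p$), and cancelling the images under $u\comp{n-1}{}(-)$ of the coherence cells. The final clause follows exactly as in (1): if $v$ is an equivalence then so is the whiskering $u\comp{n-1}{}v$, hence so is $w$; if $w$ is an equivalence then $\bar u\comp{n-1}{}w$ is an equivalence, hence so is $v$, being built from it by composing with equivalences.

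The base case $m=n+1$ of (2) is clean and essentially identical to (1): there all the cells appearing are automatically parallel, and only \cref{associativity,unit-law,sim-is-congruence} and the compatibility of whiskering with composition (a form of the interchange law) are needed. The genuinely delicate point --- and the step I expect to be the main obstacle --- is the coherence bookkeeping for general $m$: because whiskering in a weak $\omega$-category is associative and unital only up to a coherent choice of equivalence cells, the informal ``cancellation'' of $\bar u\comp{n-1}{}(u\comp{n-1}{}(-))$ against the identity operation must be replaced by an explicit chain of coherence cells, tracked recursively through the boundary, using the machinery of \cite{FHM2}. Organising this carefully --- for instance by induction on $m$ --- is where essentially all of the work of the proof lies.
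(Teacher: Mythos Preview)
Your approach for (1) is correct and complete; taking $v=\bar u\comp{n-1}{}w$ and invoking associativity, the unit law, and congruence is exactly the right elementary argument. The paper, however, does not argue directly at all: it simply cites \cite[Theorem~3.2.14]{FHM2} (via \cite[Remark~2.4.4]{FHM2}), which packages the statement that whiskering by an equivalence cell induces an $\omega$-weak equivalence, so that both (1) and (2) fall out at once.

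For (2), what you sketch---whisker $w$ by $\bar u$ and then conjugate by a tower of coherence cells to repair the boundary---is correct in outline, and your diagnosis that the coherence bookkeeping for general $m>n+1$ is ``where essentially all of the work of the proof lies'' is accurate. But that work is precisely the content of the cited theorem from \cite{FHM2}; you are in effect proposing to reprove that theorem in situ rather than invoke it. This is a legitimate route, but your proposal remains a sketch at the crucial point: the inductive transport along coherence equivalences through the boundary is nontrivial in a weak $\omega$-category (it uses the operadic machinery of \cite{FHM2}), and you have not carried it out.

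For the ``moreover'' clause, your argument establishes the claim for the specific $v$ you construct, which suffices for the downstream applications in this paper. The paper's citations (to \cite[Theorem~3.3.7]{FHM1} for one direction and the proof of \cite[Proposition~3.2.22]{FHM2} for the other) appear to give the stronger statement that \emph{any} $v$ with $w\sim u\comp{n-1}{}v$ is an equivalence if and only if $w$ is.
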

\begin{proof}
    Both (1) and (2) follow from \cite[Theorem~3.2.14]{FHM2}, in view of \cite[Remark~2.4.4]{FHM2} (or from \cite[Lemma~4.6]{Lafont_Metayer_Worytkiewicz_folk_model_str_omega_cat} in the strict case).

    If $v$ is an equivalence, then so is $w$ by \cite[Theorem~3.3.7]{FHM1}. The converse is shown in the last paragraph of the proof of \cite[Proposition~3.2.22]{FHM2}, given \cref{prop:invariance-of-equivalence}.
\end{proof}
The dual statements of \cref{whiskering-ess-0-surj} (concerning post-composition of an equivalence cell) also hold.

\section{Two definitions of equivalence cells}\label{sec:equivalence-cells}

There are two seemingly different ways to define equivalence cells in a weak $\omega$-category $X$. Informally, these two definitions can be stated as follows.
\begin{itemize}
	\item First definition \cite{Cheng_dual,Lafont_Metayer_Worytkiewicz_folk_model_str_omega_cat,FHM1,FHM2}: An $n$-cell $u\colon x\to y$ in $X$ is an equivalence if there exists an $n$-cell $v\colon y\to x$ (a two-sided inverse) in $X$ such that $u\comp{n-1}{}v\sim \id{n}{}{x}$ and $v\comp{n-1}{}u\sim\id{n}{}{y}$. 
	\item Second definition \cite{HLOR}: An $n$-cell $u\colon x\to y$ in $X$ is an equivalence if there exist $n$-cells $v\colon y\to x$ (a right inverse) and $w\colon y\to x$ (a left inverse) in $X$ such that $u\comp{n-1}{}v\sim \id{n}{}{x}$ and $w\comp{n-1}{}u\sim\id{n}{}{y}$. 
\end{itemize}
In this section, we adapt an argument from \cite{Rice-inv} and show that these two definitions in fact define the same class of cells in $X$.
In order to avoid confusion, \emph{in this section only} we use the terms \emph{spherical equivalence} and \emph{flat equivalence} to refer to the notions of equivalence defined in these two manners (see \cref{def:spherical-eq,def:flat-eq}).

\begin{remark}
    Whereas the fact that the two notions of equivalence coincide would probably follow from \cite[Theorem~18]{Rice-inv}, we decided to include a proof for the following reasons.
    \begin{itemize}
        \item In order to apply \cite[Theorem~18]{Rice-inv} to weak $\omega$-categories, one has to show that weak $\omega$-categories \emph{respect the graphical calculus} in the sense of \cite[Definition~7]{Rice-inv}, which we find highly non-trivial.
        Although it is sufficient to only verify the special cases of \cite[Definition~7]{Rice-inv} that are actually used in the proof of \cite[Theorem~18]{Rice-inv}, the reader would then have to go through that proof in order to check that we have indeed covered all the necessary cases, which seems burdensome.
        \item We found the proofs of \cite{Rice-inv} rather hard to follow for those not familiar with type-theoretic treatment of coinduction (such as ourselves). 
        In \cref{subsec:coinduction}, we will state and justify the necessary coinductive proof principles in ordinary mathematical terms.
    \end{itemize}
    Nevertheless, a quick inspection would reveal the similarity between our argument below and that in \cite{Rice-inv}.
\end{remark}

\subsection{Formalising coinduction}\label{subsec:coinduction}
The above two definitions of equivalence cells are \emph{coinductive} in nature (equivalence $n$-cells are defined in terms of the relation $\sim$ between $n$-cells, which are in turn defined via equivalence $(n+1)$-cells), and we will formalise such concepts in lattice-theoretic terms.

Let us fix a complete lattice $A$ throughout this subsection.  
Then any monotone map $F\colon A\to A$ has the largest post-fixed point $\nu F\in A$, that is,
\begin{itemize}
    \item $\nu F\leq F(\nu F)$ holds, and
    \item for any $a \in A$ with $a \leq Fa $, we have $a \leq \nu F$,
\end{itemize}
which is in fact a fixed point $\nu F=F(\nu F)$ (see e.g.\ \cite[Remark~3.1.2]{FHM1}).

In the case of the two definitions of equivalence cells in a weak $\omega$-category $X$, we take $A$ to be the powerset lattice $\Pow\bigl(\coprod_{n\geq 0}X_n\bigr)$ of the set of all cells in $X$.

\begin{definition}[\textnormal{\cite[Remark~3.1.2]{FHM1}}]
	\label{def:spherical-eq}
	Let $X$ be a weak $\omega$-category. Define the monotone map $\Phi\colon \Pow\bigl(\coprod_{n\geq 0}X_n\bigr)\to \Pow\bigl(\coprod_{n\geq 0}X_n\bigr)$ by 
	\begin{multline*}
		\Phi(S)=
		\bigl\{%
			\,(u\colon x\to y)\in X_n\,%
			\big\vert\, n\geq 1,\ \ 
			\exists (v\colon y\to x)\in X_n,
		\\%
			\exists \bigl({p}\colon u\comp{n-1}{}v\to \id{n}{}{x}\bigr)\in S\cap X_{n+1},\ \ 
			\exists \bigl({q}\colon v\comp{n-1}{}u\to \id{n}{}{y}\bigr)\in S\cap X_{n+1}\,
		\bigr\}.
	\end{multline*}
	We say that a cell in $X$ is a \emph{spherical equivalence} if it is in $\nu\Phi\subseteq \coprod_{n\geq 0}X_n$. 
\end{definition}

The adjective ``spherical'' derives from the fact that, for example, a witness for a $1$-cell $u\colon x\to y$ in a weak $\omega$-category $X$ to be a spherical equivalence consists of a $1$-cell $v\colon y\to x$ together with spherical equivalence $2$-cells $p\colon u\comp{0}{}v\to\id{1}{}{x}$ and $q\colon v\comp{0}{}u\to\id{1}{}{y}$, which constitute a spherical shape; see \cref{eqn:spherical-eq-diag}.

\begin{definition}
	\label{def:flat-eq}
	Let $X$ be a weak $\omega$-category. Define the monotone map $\Psi\colon \Pow\bigl(\coprod_{n\geq 0}X_n\bigr)\to \Pow\bigl(\coprod_{n\geq 0}X_n\bigr)$ by 
	\begin{multline*}
		\Psi(S)=
		\bigl\{%
			\,(u\colon x\to y)\in X_n\,%
			\big\vert\, n\geq 1,\ \ 
			\exists (v\colon y\to x)\in X_n,\ \ 
			\exists (w\colon y\to x)\in X_n,
		\\%
			\exists \bigl({p}\colon u\comp{n-1}{}v\to \id{n}{}{x}\bigr)\in S\cap X_{n+1},\ \ 
			\exists \bigl({q}\colon w\comp{n-1}{}u\to \id{n}{}{y}\bigr)\in S\cap X_{n+1}\,
		\bigr\}.
	\end{multline*}
	We say that a cell in $X$ is a \emph{flat equivalence} if it is in $\nu\Psi\subseteq \coprod_{n\geq 0}X_n$. 
\end{definition}

Clearly we have $\Phi(S)\subseteq \Psi(S)$ for each $S\in \Pow\bigl(\coprod_{n\geq 0}X_n\bigr)$, from which it follows $\nu\Phi\subseteq \nu\Psi$, i.e., that every spherical equivalence is a flat equivalence. 

In our previous work \cite{FHM1,FHM2}, we always considered spherical equivalences.
In particular, various facts mentioned in \cref{subsec:strict-and-weak-omega-cats} (including \cref{unit-law,associativity} which will be used below) are to be interpreted as statements regarding spherical equivalences until we prove the two notions to coincide.

In the next subsection, we will exhibit two closure properties that the class of flat equivalences enjoys.
In the language of lattices, these properties can be formulated as the relation $G(\nu F) \le \nu F$ where $G \colon A \to A$ is a suitable monotone map encoding the construction under which the class is to be closed.
The rest of this subsection is devoted to establishing a couple of general results which will be useful when proving such closure properties.

The relation $G(\nu F) \le \nu F$ says that the largest \emph{post}-fixed point $\nu F$ of $F$ is also a \emph{pre}-fixed point of $G$.
It turns out to be useful to consider more general pre-fixed points of $G$.
More precisely, for any element $a \in A$ and monotone map $G\colon A\to A$, we define 
	\[
		\Pre_{a }(G)=\left\{\,
			b\in A
			\,|\,
			a \leq b,\ 
			Gb\leq b\,
		\right\}.
	\]
    
The following proposition formalises an argument used in the proof of \cite[Theorem~3.3.7]{FHM1}.
\begin{proposition}
	\label{prop:nuF-is-G-closed-F}
	Let $F,G\colon A\to A$ be monotone maps.
	If $\Pre_{\nu F}(G)$ is closed under $F$, then we have $G(\nu F)\leq \nu F$.
\end{proposition}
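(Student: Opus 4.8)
The plan is to characterise $\nu F$ via the intersection of the pre-fixed-point set $\Pre_{\nu F}(G)$ and then exploit the hypothesis that this set is closed under $F$. First I would set
\[
c = \bigwedge \Pre_{\nu F}(G),
\]
the infimum in the complete lattice $A$ of all elements $b$ with $\nu F \le b$ and $Gb \le b$. Note that $\Pre_{\nu F}(G)$ is nonempty, since $\top \in A$ belongs to it (indeed $\nu F \le \top$ and $G\top \le \top$), so $c$ is well-defined. The goal $G(\nu F) \le \nu F$ will follow once I show $c = \nu F$, because $G$ is monotone and $c$ is a pre-fixed point of $G$ (this last fact needs a small check, below): then $G(\nu F) = G(c) \le c = \nu F$.

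The two inclusions to establish are $\nu F \le c$ and $c \le \nu F$. The inequality $\nu F \le c$ is immediate: every element of $\Pre_{\nu F}(G)$ is $\ge \nu F$ by definition, hence so is their infimum. For $c \le \nu F$, I would use the universal property of $\nu F$ as the largest post-fixed point of $F$: it suffices to show $c \le F(c)$. This is where the hypothesis enters. By the standard fact that an infimum of pre-fixed points of a monotone map is again a pre-fixed point (for each $b \in \Pre_{\nu F}(G)$ we have $c \le b$, so $Gc \le Gb \le b$; taking the infimum over $b$ gives $Gc \le c$), together with $\nu F \le c$, we get $c \in \Pre_{\nu F}(G)$. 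Now the hypothesis says $\Pre_{\nu F}(G)$ is closed under $F$, so $F(c) \in \Pre_{\nu F}(G)$, and therefore $c = \bigwedge \Pre_{\nu F}(G) \le F(c)$. Hence $c$ is a post-fixed point of $F$, so $c \le \nu F$ by maximality of $\nu F$, completing $c = \nu F$.

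The only step requiring any care is the verification that the infimum of a family of pre-fixed points of $G$ is again a pre-fixed point of $G$, but this is a routine consequence of monotonicity as sketched above; there is no genuine obstacle here. The conceptual heart of the argument is simply recognising that $c = \bigwedge \Pre_{\nu F}(G)$ is simultaneously a post-fixed point of $F$ (using the closure hypothesis) and a pre-fixed point of $G$, and that being sandwiched between $\nu F$ and any pre-fixed point of $G$ pins it down to equal $\nu F$. With $c = \nu F$ in hand and $Gc \le c$ already noted, the desired conclusion $G(\nu F) \le \nu F$ is immediate.
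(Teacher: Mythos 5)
Your proof is correct and follows essentially the same route as the paper: you take the least element $c=\bigwedge\Pre_{\nu F}(G)$ (the paper notes this set is closed under arbitrary meets), use the closure hypothesis plus minimality to get $c\leq Fc$, conclude $c\leq\nu F$ by maximality of $\nu F$, and hence $c=\nu F\in\Pre_{\nu F}(G)$, giving $G(\nu F)\leq\nu F$. The only difference is presentational (you spell out explicitly that the meet of pre-fixed points is a pre-fixed point, which the paper leaves implicit).
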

\begin{proof}
	The subset $\Pre_{\nu F}(G)\subseteq A$ is closed under arbitrary meets in $A$, and hence in particular has the least element $b$. We have $\nu F\leq b$ by the definition of $\Pre_{\nu F}(G)$.
    By assumption, we have $Fb\in \Pre_{\nu F}(G)$. The minimality of $b$ then implies $b\leq Fb$, i.e., that $b$ is a post-fixed point of $F$. It follows that we have $b\leq \nu F$.
    Therefore $\nu F=b\in \Pre_{\nu F}(G)$ and hence in particular $G(\nu F)\leq \nu F$.
\end{proof}
\begin{remark}
	\label{rmk:first-condition-is-redundant}
	In order to check that the assumption of \cref{prop:nuF-is-G-closed-F} is satisfied, we only need to show that for any $b\in \Pre_{\nu F}(G)$, we have $GFb\leq Fb$. The condition $\nu F\leq Fb$ follows from $\nu F\leq b$ since we have $\nu F=F(\nu F)$. 
\end{remark}
We shall also use the following variant. 

\begin{corollary}
	\label{cor:nuF-is-G-closed-F-wedge-id}
	Let $F,G\colon A\to A$ be monotone maps.
	If $\Pre_{\nu F}(G)$ is closed under $F\wedge \mathrm{id}_A\colon A\to A$, then we have $G(\nu F)\leq \nu F$.
\end{corollary}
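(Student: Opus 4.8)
The plan is to deduce \cref{cor:nuF-is-G-closed-F-wedge-id} from \cref{prop:nuF-is-G-closed-F} by checking that, under the new hypothesis, $\Pre_{\nu F}(G)$ is in fact closed not only under $F\wedge\mathrm{id}_A$ but under $F$ itself. So suppose that $\Pre_{\nu F}(G)$ is closed under $F\wedge\mathrm{id}_A$, and let $b\in\Pre_{\nu F}(G)$. By \cref{rmk:first-condition-is-redundant} it suffices to show that $GFb\leq Fb$ (the condition $\nu F\leq Fb$ being automatic). The key observation is that $F\wedge\mathrm{id}_A$ applied to $b$ gives $(F\wedge\mathrm{id}_A)(b)=Fb\wedge b$, and since $b\in\Pre_{\nu F}(G)$ we have both $\nu F\leq b$ and $Fb\wedge b\in\Pre_{\nu F}(G)$ by hypothesis; in particular $G(Fb\wedge b)\leq Fb\wedge b\leq Fb$.

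First I would note that $Fb\wedge b\leq Fb$ gives, by monotonicity of $G$, that $G(Fb\wedge b)\leq GFb$; but this is the wrong direction, so instead I would exploit the definition of $\Pre$ more carefully. The point is that $Fb\wedge b$ lies in $\Pre_{\nu F}(G)$, and we want to conclude something about $Fb$ rather than $Fb\wedge b$. Here the resolution is that $G$ need not be applied to $Fb$ directly; rather, one observes that $F(Fb\wedge b)\leq Fb$ (again by monotonicity, since $Fb\wedge b\leq b$), so applying the closure of $\Pre_{\nu F}(G)$ under $F\wedge\mathrm{id}_A$ to the element $Fb\wedge b$ yields that $(F\wedge\mathrm{id}_A)(Fb\wedge b)=F(Fb\wedge b)\wedge(Fb\wedge b)$ again lies in $\Pre_{\nu F}(G)$. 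Iterating, one builds a decreasing chain inside $\Pre_{\nu F}(G)$; taking the meet of this chain (which stays in $\Pre_{\nu F}(G)$, as that set is closed under arbitrary meets) produces an element $c\leq b$ with $c\leq Fc$, hence $c\leq\nu F$; combined with $\nu F\leq c$ we get $c=\nu F$, so $\nu F\in\Pre_{\nu F}(G)$, which is exactly $G(\nu F)\leq\nu F$.

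Actually a cleaner route, which I would prefer to write up, is to apply \cref{prop:nuF-is-G-closed-F} directly with $F$ replaced by $F\wedge\mathrm{id}_A$: one checks that $\nu(F\wedge\mathrm{id}_A)=\nu F$, since an element $a$ is a post-fixed point of $F\wedge\mathrm{id}_A$ (i.e.\ $a\leq Fa\wedge a$) if and only if $a\leq Fa$ (the condition $a\leq a$ being vacuous), so the post-fixed points of $F\wedge\mathrm{id}_A$ and of $F$ coincide and hence so do their largest ones. Then $\Pre_{\nu(F\wedge\mathrm{id}_A)}(G)=\Pre_{\nu F}(G)$ is by hypothesis closed under $F\wedge\mathrm{id}_A$, so \cref{prop:nuF-is-G-closed-F} applied to the pair $(F\wedge\mathrm{id}_A,G)$ gives $G(\nu(F\wedge\mathrm{id}_A))\leq\nu(F\wedge\mathrm{id}_A)$, i.e.\ $G(\nu F)\leq\nu F$.

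The only real subtlety — and the step I would be most careful about — is the identity $\nu(F\wedge\mathrm{id}_A)=\nu F$, but as noted above it is immediate from unwinding the definition of post-fixed point. Everything else is a formal manipulation, so the corollary follows with essentially no additional work beyond this observation and an invocation of \cref{prop:nuF-is-G-closed-F}.
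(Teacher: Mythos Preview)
Your ``cleaner route'' in the third paragraph is exactly the paper's proof: observe that the post-fixed points of $F$ and of $F\wedge\mathrm{id}_A$ coincide, hence $\nu F=\nu(F\wedge\mathrm{id}_A)$, and apply \cref{prop:nuF-is-G-closed-F} to the pair $(F\wedge\mathrm{id}_A,G)$. The exploratory argument in your first two paragraphs is unnecessary (and the chain-and-meet step has a gap: from $c\leq Fb_n$ for all $n$ you cannot conclude $c\leq Fc$ without further hypotheses on $F$), so you should simply discard it and write up the clean version.
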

\begin{proof}
	This is a special case of \cref{prop:nuF-is-G-closed-F} because we have $\nu F=\nu (F\wedge \mathrm{id}_A)$, the post-fixed points of $F$ being the same as the post-fixed points of $F\wedge \mathrm{id}_A$.
\end{proof}

\subsection{Closure properties of flat equivalences}
    Here we establish various closure properties of flat equivalences. To this end, we need to recall more details about the monad $L$ on $\GSet$ for weak $\omega$-categories. 
	Recall from \cite[Section~2]{FHM1} that, for any globular set $X$, an $n$-cell of the globular set $LX$ is a pair $(\phi,\uu)$ consisting of an $n$-cell $\phi$ of $L1$ (where $1$ is the terminal globular set) and an \emph{$n$-dimensional pasting diagram} $\uu$ in $X$; the latter is given by a suitable list of cells of dimension $\leq n$ in $X$, in an appropriate configuration (see \cite[Definition~2.2.1]{FHM1}). 
    As in \cite[Definition~3.3.6]{FHM1}, for an $n$-dimensional pasting diagram $\uu$ in $X$,
    we denote by $\fulllabel(\uu)$ the set of all $n$-cells in $X$ appearing in the list $\uu$ (called the \emph{full-dimensional labels} in \cite{FHM1}). 
	If $(X,\xi)$ is a weak $\omega$-category, then the structure map $\xi\colon LX\to X$ maps each $(\phi,\uu)\in (LX)_n$ to $\xi(\phi,\uu)\in X_n$. Intuitively, $\xi(\phi,\uu)$ is the (pasting) composite of the cells in $\uu$ according to the \emph{pasting instruction} $\phi$. 

	Throughout the rest of this subsection, we fix a weak $\omega$-category $(X,\xi)$.

	In \cite[Theorem~3.3.7]{FHM1}, it is shown that the class $\nu\Phi$ of all spherical equivalences in $X$ is closed under pasting. More precisely, if we define the monotone map $\pst\colon\mathcal{P}(\coprod_{n\geq 0} X_n)\to\mathcal{P}(\coprod_{n\geq 0} X_n)$ by 
	\[
	\pst(S) = \bigl\{\,\xi(\phi,\uu) \,\big\vert\, n\geq 0,\quad (\phi,\uu) \in (LX)_n, \quad \fulllabel(\uu) \subseteq S\,\bigr\},
	\]
	then we have $\pst(\nu\Phi)\subseteq \nu\Phi$.
	This is proved by showing that $\Pre_{\nu\Phi}(\pst)$ is closed under $\Phi$;  see \cref{prop:nuF-is-G-closed-F}. 

	We first prove that the class $\nu\Psi$ of all flat equivalences in $X$ is also closed under pasting. In light of \cref{prop:nuF-is-G-closed-F}, it suffices to show the following.
\begin{proposition}
	\label{prop:pstclosed}
	$\Pre_{\nu\Psi}(\pst)$ is closed under $\Psi$.
\end{proposition}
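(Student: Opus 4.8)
The plan is to mimic the structure of the proof of \cite[Theorem~3.3.7]{FHM1} (which establishes the analogous statement for $\Phi$ in place of $\Psi$), taking advantage of the fact that the only difference between $\Phi$ and $\Psi$ is that the left inverse $w$ and right inverse $v$ of $u$ are allowed to differ. Concretely, let $b \in \Pre_{\nu\Psi}(\pst)$; by \cref{rmk:first-condition-is-redundant} it suffices to show $\Psi(\pst(b)) \subseteq \pst(b)$, i.e.\ that if $u \colon x \to y$ is an $n$-cell of $X$ admitting a right inverse $v \colon y \to x$, a left inverse $w \colon y \to x$, and witnesses $p \colon u \comp{n-1}{} v \to \id{n}{}{x}$ and $q \colon w \comp{n-1}{} u \to \id{n}{}{y}$ with $p, q \in \pst(b)$, then $u \in \pst(b)$. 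Since $\pst(b)$ already contains $p$ and $q$ and is closed under pasting (as $b \leq \pst(b)$ and $\pst$ is idempotent-like in the relevant sense — more precisely $\pst(\pst(b)) = \pst(b)$ because pasting composites of pasting composites are again pasting composites), the strategy is to exhibit $u$ itself as a pasting composite of cells all lying in $\pst(b)$.

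First I would treat the one-dimensional building blocks. The key observation is that $u$ need not literally be a single labelled cell of a pasting diagram with full-dimensional labels in $\pst(b)$; instead, one constructs an $n$-cell $u'$ that \emph{is} visibly such a pasting composite and satisfies $u \sim u'$, and then invokes the fact that $\nu\Psi = \pst(b)$'s ambient closure properties — or rather, one shows directly that $\pst(b)$ is closed under $\sim$, so that $u \sim u' \in \pst(b)$ forces $u \in \pst(b)$. (Closure of $\pst(b)$ under $\sim$ should follow because $u \sim u'$ means there is an equivalence $(n+1)$-cell $u \comp{?}{} \cdots$, but more simply: if $u'$ is a pasting composite and $u \sim u'$, express $u$ as a pasting composite of $u'$ with unit/whiskering cells witnessing the equivalence — this is where \cref{unit-law}, \cref{associativity}, and the whiskering operations of \cref{subsec:whiskering} enter.) To build $u'$: starting from $p \colon u \comp{n-1}{} v \to \id{n}{}{x}$ and $q \colon w \comp{n-1}{} u \to \id{n}{}{y}$ in $\pst(b)$, one runs the familiar algebraic manipulation
\[
u \sim \id{n}{}{x} \comp{n-1}{} u \sim (u \comp{n-1}{} v) \comp{n-1}{} u \sim u \comp{n-1}{} (v \comp{n-1}{} u),
\]
and separately pushes $q$ and $v$ around; the point is that each $\sim$ here is witnessed by an $(n+1)$-cell that is a pasting composite of $p$, $q$, coherence cells, and identities, and coherence cells $\alpha, \lambda, \rho$ are themselves pasting composites with no full-dimensional labels (so they lie in $\pst(b)$ vacuously), while $p, q \in \pst(b)$ by hypothesis. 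This realises $u$ (up to $\sim$, hence actually) as an element of $\pst(b)$.

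The main obstacle I anticipate is making the "$u \sim u' \Rightarrow u \in \pst(b)$" step rigorous in a way that does not secretly re-prove the full closure-under-$\sim$ for $\nu\Psi$ before it is available. The clean way around this is: rather than arguing via $\sim$, directly produce a single pasting diagram $(\phi, \uu)$ with $\xi(\phi, \uu) = u$ and $\fulllabel(\uu) \subseteq \pst(b)$. Because $p$ and $q$ are given as elements $\xi(\phi_p, \uu_p)$ and $\xi(\phi_q, \uu_q)$ of $\pst(b)$ with full-dimensional labels already in $b \subseteq \pst(b)$, and because $u$ can be written as a composite involving $p$, $q$, and lower-dimensional cells $v, w, x, y$ (which contribute no full-dimensional labels since their dimension is $< n$, except $v, w$ themselves at dimension $n$ — and here one must be careful: $v$ and $w$ are $n$-cells, so they \emph{would} be full-dimensional labels). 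This is the genuinely delicate point: $v$ and $w$ are not assumed to lie in $\pst(b)$. The resolution, following the $\Phi$-case in \cite{FHM1}, is that $v$ and $w$ only ever appear inside $u \comp{n-1}{} v$ and $w \comp{n-1}{} u$, which are related by $p, q$ to identities $\id{n}{}{x}, \id{n}{}{y}$, and identities \emph{are} pasting composites with no full-dimensional labels; so one replaces every occurrence of $u \comp{n-1}{} v$ (resp.\ $w \comp{n-1}{} u$) by $\id{n}{}{x}$ (resp.\ $\id{n}{}{y}$) at the cost of whiskering in $p$ (resp.\ $q$) — and this substitution is exactly what the algebraic calculation above encodes. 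I would carry out this bookkeeping explicitly, checking dimension by dimension that every full-dimensional label introduced is either $p$, $q$, or an $(n+1)$-cell of the form $\id{n+1}{}{(\,\cdot\,)}$ or a coherence cell, all of which lie in $\pst(b)$.
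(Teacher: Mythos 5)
There is a genuine gap, and it occurs at the very first step: you apply \cref{rmk:first-condition-is-redundant} with the roles of the two monotone maps swapped. In \cref{prop:nuF-is-G-closed-F} the present proposition corresponds to $F=\Psi$ and $G=\pst$, so for $S\in\Pre_{\nu\Psi}(\pst)$ what has to be shown is $\pst(\Psi(S))\subseteq\Psi(S)$ (the condition $\nu\Psi\subseteq\Psi(S)$ being automatic); you instead set out to prove $\Psi(\pst(S))\subseteq\pst(S)$. Since every cell is a trivial pasting composite of itself, $\pst$ is inflationary, so $\pst(S)=S$ for any $S\in\Pre_{\nu\Psi}(\pst)$ and your target is the assertion $\Psi(S)\subseteq S$ for every such $S$. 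This is neither sufficient for the proposition (it does not yield $\pst(\Psi(S))\subseteq\Psi(S)$) nor true in general: in the strict $\omega$-category freely generated by $u\colon x\to y$, $v,w\colon y\to x$, $p\colon u\comp{0}{}v\to\id{1}{}{x}$ and $q\colon w\comp{0}{}u\to\id{1}{}{y}$, the cell $u$ is not an equivalence (map to $\mathbf{Cat}$ sending all generating $1$-cells to the constant endofunctor at $0$ of the poset $\{0<1\}$ and $p,q$ to the evident transformations), and if $S$ is the $\pst$-closure of $\nu\Psi\cup\{p,q\}$ then $S\in\Pre_{\nu\Psi}(\pst)$, $p,q\in S$, yet the only $1$-cells of $S$ are identities, so $u\in\Psi(S)\setminus S$.

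As a consequence, the substantive content of the proposition is never engaged: one must show that a pasting composite $\xi(\phi,\uu)$ of cells all lying in $\Psi(S)$ again lies in $\Psi(S)$, i.e.\ one must construct a right inverse and a left inverse of the composite (composites of chosen right, respectively left, $S$-inverses of the full-dimensional labels, with reversed order within each transversal component --- the ``inverse instructions'' adapted from \cite[Theorem~3.3.7]{FHM1}) and then produce the witnessing $(n+1)$-cells in $S$ by induction on the number of full-dimensional cells of $\uu$, using the individual witnesses $p_\ell,q_\ell\in S$, coherence cells (which lie in $S$ because $\nu\Psi\subseteq S$), and the closure $\pst(S)\subseteq S$. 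None of this inductive construction appears in your proposal. Your fallback plan --- exhibiting $u$ on the nose as $\xi(\phi,\uu)$ with $\fulllabel(\uu)\subseteq\pst(b)$ --- also cannot work: in the free example above every pasting decomposition of $u$ has $u$ itself among its full-dimensional labels, and, as you yourself suspect, $u\sim u'$ with $u'\in\pst(b)$ does not force $u\in\pst(b)$; the manipulations via \cref{unit-law,associativity} only change a cell up to $\sim$, which is exactly what cannot be afforded here.
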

\begin{proof}
		We can adapt the proof of \cite[Theorem~3.3.7]{FHM1} to the current situation; the details (using the notation of \cite{FHM1}) are as follows.

		We first modify \cite[Definition~3.3.8]{FHM1}.
		Let $S$ be a set of cells of $X$.
		\begin{itemize}
			\item Given any $n\geq 1$ and any $n$-cell $u\colon x\to y$ of $X$, an $n$-cell $v\colon y\to x$ of $X$ is called a \emph{right $S$-inverse} of $u$ if there exist an $n$-cell $w\colon y\to x$ and $(n+1)$-cells ${p}\colon u\comp{n-1}{}v\to \id{n}{}{x}$ and ${q}\colon w\comp{n-1}{}u\to\id{n}{}{y}$ in $X$, with $p,q\in S$. 
			\item Similarly, an $n$-cell $w\colon y\to x$ of $X$ is called a \emph{left $S$-inverse} of $u$ if there exist an $n$-cell $v\colon y\to x$ and $(n+1)$-cells ${p}\colon u\comp{n-1}{}v\to \id{n}{}{x}$ and ${q}\colon w\comp{n-1}{}u\to\id{n}{}{y}$ in $X$, with $p,q\in S$. 
			\item Let $n\geq 1$, $(\phi,\uu)\in(LX)_n$, and $\kk=\ar(\phi)$. A \emph{right $S$-inverse instruction} of $(\phi,\uu)$ is an $n$-cell $(\phiinv,\uurinv)$ of $LX$ satisfying the following conditions.
			\begin{itemize}
				\item $s^{L1}_{n-1}(\phiinv)=t^{L1}_{n-1}(\phi)$, 
				\item $t^{L1}_{n-1}(\phiinv)=s^{L1}_{n-1}(\phi)$, and
				\item $\uurinv\in(TX)_n$ is obtained from $\uu$ by replacing, for each $(n-1)$-transversal component \cite[Subsection~2.2]{FHM1} $0 \le i \le j \le r$ of $\kk$, the corresponding segment
				\[
					\begin{bmatrix}
						u_{i} & & \dots & & u_{j}\\
						& \underline u_{i+1} & \dots & \underline u_{j} &
					\end{bmatrix}
				\]
				with
				\[
					\begin{bmatrix}
					v_{j} & & \dots & & v_{i}\\
					& \underline u_{j} & \dots & \underline u_{i+1} &
					\end{bmatrix},
				\]
				where $v_{\ell}$ is a right $S$-inverse of $u_{\ell}$ for each $i\le \ell\le j$.
			\end{itemize}
			\item Similarly, define the notion of \emph{left $S$-inverse instruction}.
		\end{itemize}
		Just like the remark immediately after \cite[Definition~3.3.8]{FHM1}, we note that an $n$-cell $u$ in $X$ admits a right $S$-inverse if and only if it admits a left $S$-inverse, if and only if $u\in\Psi(S)$; and an $n$-cell $(\phi,\uu)$ in $LX$ admits a right $S$-inverse instruction if and only if it admits a left $S$-inverse instruction, if and only if $\fulllabel(\uu)\subseteq \Psi(S)$. 

		Now, as in the third paragraph of the proof of \cite[Theorem~3.3.7]{FHM1}, we take $S\in \Pre_{\nu\Psi}(\pst)$. 
		In view of \cref{rmk:first-condition-is-redundant}, it suffices to show that $\pst(\Psi(S))\subseteq \Psi(S)$. 
		To this end, instead of (3.3.12) in \cite{FHM1}, we prove the following statement by induction on the number of $n$-cells in the list $\uu$ (denoted by $\|{\ar(\phi)}\|^{(n)}$ in \cite{FHM1}):
		\begin{equation*}
			\parbox{\dimexpr\linewidth-5em}{for each $n\geq 1$, each $(\phi,\uu)\in(LX)_n$,
			each right $S$-inverse instruction $(\phiinv,\uurinv)$ of $(\phi,\uu)$, and each left $S$-inverse instruction $(\phiinv,\uulinv)$ of $(\phi,\uu)$, there exist $(n+1)$-cells
			\[
				\xi(\phi,\uu)\comp{n-1}{X}\xi(\phiinv,\uurinv)\to \id{n}{X}{s^X_{n-1}\xi(\phi,\uu)}
			\]
			and 
			\[
				\xi(\phiinv,\uulinv)\comp{n-1}{X}\xi(\phi,\uu)\to \id{n}{X}{t^X_{n-1}\xi(\phi,\uu)}
			\]
			in $S$ (or equivalently in $\pst(S)$).}
		\end{equation*}
		This can be done exactly as in the proof of \cite[Theorem~3.3.7]{FHM1}, bearing in mind that because we already know that spherical equivalences (called \emph{invertible cells} in \cite{FHM1}) are flat equivalences, all cells induced by the coherence (\cite[Proposition~3.2.5]{FHM1}) or the unit law (\cite[Proposition~3.3.5]{FHM1}) are flat equivalences and hence in $S$.
\end{proof}

	To show that all flat equivalences are spherical equivalences, we also need to show that every right inverse of a flat equivalence is again a flat equivalence. (Note that the analogous statement for spherical equivalences is trivial.) To state this more precisely, we define the monotone map $\rinv\colon\mathcal{P}(\coprod_{n\geq 0} X_n)\to\mathcal{P}(\coprod_{n\geq 0} X_n)$ as follows.
	\begin{multline*}
		\rinv(S)=
		\bigl\{%
			\,(v\colon y\to x)\in X_n\,%
			\big\vert\, n\geq 1,\ \ 
			\exists (u\colon x\to y)\in X_n,\ 
			\exists (w\colon y\to x)\in X_n,
		\\%
			\exists \bigl({p}\colon u\comp{n-1}{}v\to \id{n}{}{x}\bigr)\in S\cap X_{n+1},\ \ 
			\exists \bigl({q}\colon w\comp{n-1}{}u\to \id{n}{}{y}\bigr)\in S\cap X_{n+1}\,
		\bigr\}
	\end{multline*}
	In the terminology introduced in the proof of \cref{prop:pstclosed}, $\rinv(S)$ is the set of all cells in $X$ which are right $S$-inverses of some cells in $X$.
	(One can also define the monotone map $\linv$ for left inverses similarly, but we do not need it.)
	The claim is that we have $\rinv(\nu\Psi)\subseteq \nu\Psi$. In light of \cref{cor:nuF-is-G-closed-F-wedge-id}, it suffices to show the following.
\begin{proposition}
	\label{prop:linvclosed}
	$\Pre_{\nu\Psi}(\pst\lor\rinv)$ is closed under $\Psi\land\mathrm{id}$.
\end{proposition}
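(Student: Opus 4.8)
The plan is to verify the proposition as stated and then invoke \cref{cor:nuF-is-G-closed-F-wedge-id} with $F=\Psi$ and $G=\pst\lor\rinv$, which will yield $(\pst\lor\rinv)(\nu\Psi)\subseteq\nu\Psi$, equivalently the two inclusions $\pst(\nu\Psi)\subseteq\nu\Psi$ and $\rinv(\nu\Psi)\subseteq\nu\Psi$ (the latter being the one we are really after). By \cref{rmk:first-condition-is-redundant}, applied to $\Psi\land\mathrm{id}$ in place of $F$, I would reduce the proposition to the following: for an arbitrary $S\in\Pre_{\nu\Psi}(\pst\lor\rinv)$, the set $(\Psi\land\mathrm{id})(S)=\Psi(S)\cap S$ is closed under $\pst$ and under $\rinv$. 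The standing facts I expect to use throughout are: since $(\pst\lor\rinv)(S)=\pst(S)\cup\rinv(S)\subseteq S$, the set $S$ is itself closed under $\pst$ and under $\rinv$, and in particular $S\in\Pre_{\nu\Psi}(\pst)$; since $\nu\Psi\subseteq S$ and $\nu\Phi\subseteq\nu\Psi$, every spherical equivalence cell lies in $S$, so in particular every coherence cell supplied by \cref{associativity,unit-law} lies in $S$; and since composites and whiskerings are instances of the operation $\pst$, any cell assembled by pasting cells of $S$ again lies in $\pst(S)\subseteq S$.

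Closure of $\Psi(S)\cap S$ under $\pst$ I expect to be routine: on the one hand $\pst(\Psi(S)\cap S)\subseteq\pst(S)\subseteq S$, and on the other hand $\pst(\Psi(S)\cap S)\subseteq\pst(\Psi(S))\subseteq\Psi(S)$, the last inclusion being part of the statement of \cref{prop:pstclosed} (closure of $\Pre_{\nu\Psi}(\pst)$ under $\Psi$) applied to $S\in\Pre_{\nu\Psi}(\pst)$.

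The heart of the matter is closure of $\Psi(S)\cap S$ under $\rinv$. I would take a cell $v$ that is a right $(\Psi(S)\cap S)$-inverse of some $n$-cell $u\colon x\to y$, so that there are an $n$-cell $w\colon y\to x$ and $(n+1)$-cells $p\colon u\comp{n-1}{}v\to\id{n}{}{x}$ and $q\colon w\comp{n-1}{}u\to\id{n}{}{y}$ with $p,q\in\Psi(S)\cap S$. Closure of $S$ under $\rinv$ gives $v\in S$, so the task is to show $v\in\Psi(S)$. Reading \cref{def:flat-eq} for $v\colon y\to x$, the cell $u$ (in the role of the left inverse) together with the already available $p$ supplies one of the two required witnesses, so it remains to produce an $(n+1)$-cell $v\comp{n-1}{}u\to\id{n}{}{y}$ lying in $S$ (a right inverse of $v$). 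For this I would run the standard ``a left inverse is a right inverse'' cancellation while tracking membership in $S$ at every step. The crucial point is that $q$ is assumed to lie in $\Psi(S)$ (not merely in $S$), so $q$ has a right $S$-inverse $\bar q\colon\id{n}{}{y}\to w\comp{n-1}{}u$, which then lies in $\rinv(S)\subseteq S$. Whiskering $\bar q$ on the right by $v\comp{n-1}{}u$, whiskering $p$ by $w$ and by $u$, and composing vertically with suitable unit-law and associativity cells (all spherical, hence in $S$) and finally with $q$ itself, I obtain a vertical composite
\[
v\comp{n-1}{}u
\;\longrightarrow\;
(w\comp{n-1}{}u)\comp{n-1}{}(v\comp{n-1}{}u)
\;\longrightarrow\;
w\comp{n-1}{}\bigl((u\comp{n-1}{}v)\comp{n-1}{}u\bigr)
\;\longrightarrow\;
w\comp{n-1}{}u
\;\longrightarrow\;
\id{n}{}{y},
\]
in which the first arrow combines a unit-law cell with $\bar q$ whiskered by $v\comp{n-1}{}u$, the second is a composite of (whiskered) associativity cells, the third combines $p$ whiskered by $w$ and $u$ with a whiskered unit-law cell, and the last is $q$. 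Each factor lies in $S$ --- the coherence cells because they are spherical, $\bar q$ because $\bar q\in S$, and the whiskerings because they are pastings of cells of $S$ --- hence so does the composite, and this composite is the sought-after right-inverse witness for $v$. Therefore $v\in\Psi(S)$, completing the verification.

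I expect the only genuine difficulty to lie in this last step, and it is a matter of careful bookkeeping rather than of ideas: one must arrange the cancellation so that the \emph{only} cell ever inverted is $q$, which is in $\Psi(S)$ by hypothesis, every other ingredient being a spherical coherence cell or a pasting of cells already known to lie in $S$. This is precisely why the closure has to be formulated relative to $\pst\lor\rinv$ rather than $\rinv$ alone, and why the dimension shift (the inverse $\bar q$ of $q$ sits one dimension higher than $v$) needs to be handled with a little care; no idea beyond those in the proof of \cite[Theorem~3.3.7]{FHM1} and of \cref{prop:pstclosed} should be required.
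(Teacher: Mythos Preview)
Your proposal is correct and follows essentially the same approach as the paper's proof: you reduce via \cref{rmk:first-condition-is-redundant} to showing $\Psi(S)\cap S$ is closed under $\pst$ and $\rinv$, handle the $\pst$ part using \cref{prop:pstclosed}, and for the $\rinv$ part construct the needed witness $v\comp{n-1}{}u\to\id{n}{}{y}$ in $S$ by inverting $q$ (using $q\in\Psi(S)$) and pasting with coherence cells and $p$. The only cosmetic difference is that the paper first builds an auxiliary cell $r\colon v\to w$ in $S$ and then sets $q'=(r\comp{n-1}{}u)\comp{n}{}q$, whereas you build the composite $v\comp{n-1}{}u\to\id{n}{}{y}$ in one go; the ingredients and the logic are the same.
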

\begin{proof}
	Let $S\in\Pre_{\nu\Psi}(\pst\lor\rinv)= \Pre_{\nu\Psi}(\pst)\cap \Pre_{\nu\Psi}(\rinv)$;
		we want to show $\Psi(S)\cap S\in \Pre_{\nu\Psi}(\pst\lor\rinv)$. 
		By a reason similar to \cref{rmk:first-condition-is-redundant}, it suffices to show
        \[
        \pst(\Psi(S)\cap S)\cup \rinv(\Psi(S)\cap S)\subseteq \Psi(S)\cap S.
        \]
		
		We first show $\pst(\Psi(S)\cap S)\subseteq \Psi(S)\cap S$.
	Since $S$ is in $\Pre_{\nu\Psi}(\pst)$, we have $\pst(S)\subseteq S$, and hence $\pst(\Psi(S)\cap S)\subseteq S$. Now observe that  \cref{prop:pstclosed} implies $\Psi(S)\in \Pre_{\nu\Psi}(\pst)$. In particular, we have $\pst(\Psi(S))\subseteq\Psi(S)$.
	Thus we obtain $\pst(\Psi(S)\cap S)\subseteq\Psi(S)\cap S$.

		Next observe that $\rinv(\Psi(S)\cap S)\subseteq S$ follows from $S\in \Pre_{\nu\Psi}(\rinv)$. 
		
	It remains to show $\rinv(\Psi(S)\cap S)\subseteq\Psi(S)$.
	Suppose we are given an $n$-cell $v\colon y\to x$ in $\rinv(\Psi(S)\cap S)$. By the definition of $\rinv$, this means that we have 
	\begin{multline*}
		(u\colon x\to y)\in X_n,\ \ 
		(w\colon y\to x)\in X_n,
		\\
		\bigl({p}\colon u\comp{n-1}{}v\to \id{n}{}{x}\bigr)\in \Psi(S)\cap S\cap X_{n+1},\ \ \text{and}\ \ 
		\bigl({q}\colon w\comp{n-1}{}u\to \id{n}{}{y}\bigr)\in \Psi(S)\cap S\cap X_{n+1}.
	\end{multline*}
	Our aim is to show $v\in\Psi(S)$.
	Since we already have $p\in S$, it remains to construct $q'\colon v\comp{n-1}{} u \to \id{n}{}{x}$ in $S$.
	Thanks to $\pst(S)\subseteq S$, it suffices to find $r\colon v\to w$ in $S$, since we can then set
	$q'=(r\comp{n-1}{}u)\comp{n}{}q$. We define $r$ as the composite of the following string of $(n+1)$-cells in $X$.
	\[
		\begin{tikzcd}
			v
			\ar[r,"\text{(unit)}"]
			&
			{\id{n}{}{y}\comp{n-1}{}v}
			\ar[r,"(i)"]
			&
			{(w\comp{n-1}{}u)\comp{n-1}{}v}
			\ar[r,"\text{(assoc)}"]
			&
			{w\comp{n-1}{}(u\comp{n-1}{}v)}
			\ar[r,"(ii)"]
			&
			{w\comp{n-1}{}\id{n}{}{x}}
			\ar[r,"\text{(unit)}"]
			&
			w
		\end{tikzcd}
	\]
	Here, the cells labeled by $\text{(unit)}$ are spherical equivalences witnessing \cref{unit-law}, whereas the cell labeled by $\text{(assoc)}$ is a spherical equivalence witnessing \cref{associativity}.
	They are in $S$ because all spherical equivalences are flat equivalences and we have $\nu \Psi\subseteq S$ by the assumption.
		The cell  $(ii)$ is $w\comp{n-1}{}p$, which is in $S$ by $p\in S$ and $\pst(S)\subseteq S$.
		Finally, to define the cell $(i)$, observe that $q$ admits a right $S$-inverse (in the sense defined in the proof of \cref{prop:pstclosed}) $q_R\colon \id{n}{}{y}\to w\comp{n-1}{}u$ since $q\in \Psi(S)$.
		We have $q_R\in \rinv(S)\subseteq S$. 
		The cell $(i)$ is defined to be $q_{R}\comp{n-1}{}v$, which is in $S$ by $\pst(S)\subseteq S$. 
		Using $\pst(S)\subseteq S$ once again, we see that the composite $(n+1)$-cell $r$ is in $S$.
\end{proof}
We record the closure properties of the class of flat equivalences we have established. 
\begin{corollary}
	\label{cor:bi-inv-closed-under-pst-and-linv}
	We have $\pst(\nu\Psi)\subseteq \nu\Psi$ and $\rinv(\nu\Psi)\subseteq \nu\Psi$.
\end{corollary}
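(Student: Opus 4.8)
The plan is to derive both inclusions mechanically from \cref{prop:pstclosed,prop:linvclosed} together with the fixed-point machinery of \cref{subsec:coinduction}, working throughout in the complete lattice $A=\Pow\bigl(\coprod_{n\geq 0}X_n\bigr)$ with $F=\Psi$ (so that $\nu F=\nu\Psi$ is the set of flat equivalences).

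For the first inclusion $\pst(\nu\Psi)\subseteq\nu\Psi$, I would apply \cref{prop:nuF-is-G-closed-F} with $G=\pst$. The map $\pst$ is monotone since enlarging $S$ only enlarges the defining condition $\fulllabel(\uu)\subseteq S$, and the hypothesis that $\Pre_{\nu\Psi}(\pst)$ is closed under $\Psi$ is exactly \cref{prop:pstclosed}; the conclusion $G(\nu F)\leq\nu F$ is then the desired $\pst(\nu\Psi)\subseteq\nu\Psi$. For the second inclusion $\rinv(\nu\Psi)\subseteq\nu\Psi$, I would instead apply \cref{cor:nuF-is-G-closed-F-wedge-id} with $G=\pst\lor\rinv$, which is again monotone. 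Here the hypothesis that $\Pre_{\nu\Psi}(\pst\lor\rinv)$ is closed under $\Psi\land\mathrm{id}$ is exactly \cref{prop:linvclosed}, so one obtains $(\pst\lor\rinv)(\nu\Psi)\subseteq\nu\Psi$, and since $\rinv(S)\subseteq(\pst\lor\rinv)(S)$ for every $S$ this gives $\rinv(\nu\Psi)\subseteq\nu\Psi$ at once.

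I do not expect any genuine obstacle in this step: all of the substantive content resides in \cref{prop:pstclosed,prop:linvclosed}, and the corollary merely packages those results through the lattice-theoretic lemmas. The only things meriting an explicit word are the (immediate) monotonicity of $\pst$, $\rinv$ and their join; one may also remark that the first application of \cref{prop:nuF-is-G-closed-F} is in fact subsumed by the second, since $\pst\subseteq\pst\lor\rinv$ already yields $\pst(\nu\Psi)\subseteq(\pst\lor\rinv)(\nu\Psi)\subseteq\nu\Psi$, so that recording both inclusions separately is purely for the convenience of later citation.
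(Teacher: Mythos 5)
Your proposal is correct and matches the paper's argument, which proves both inclusions at once by applying \cref{cor:nuF-is-G-closed-F-wedge-id} with $G=\pst\lor\rinv$ via \cref{prop:linvclosed} (exactly the subsumption you note at the end). The separate first application of \cref{prop:nuF-is-G-closed-F} with $G=\pst$ is harmless but, as you observe, redundant.
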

\begin{proof}
	By \cref{cor:nuF-is-G-closed-F-wedge-id,prop:linvclosed}.
\end{proof}

\subsection{Flat equivalences are spherical equivalences}
We are now ready to show that the two definitions of equivalence cells agree.
\begin{proposition}
	\label{prop:bi-inv-are-inv}
	A cell in a weak $\omega$-category is a flat equivalence if and only if it is a spherical equivalence.
\end{proposition}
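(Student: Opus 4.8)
The plan is to prove the one non-trivial implication, namely that every flat equivalence is a spherical equivalence, i.e.\ $\nu\Psi\subseteq\nu\Phi$; the reverse inclusion $\nu\Phi\subseteq\nu\Psi$ was already observed right after \cref{def:flat-eq}. Since $\nu\Phi$ is by definition the largest post-fixed point of $\Phi$, it suffices to show that $\nu\Psi$ is itself a post-fixed point of $\Phi$, that is, $\nu\Psi\subseteq\Phi(\nu\Psi)$.

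So I would fix an $n$-cell $u\colon x\to y$ in $\nu\Psi=\Psi(\nu\Psi)$ and unpack the definition of $\Psi$: there are $n$-cells $v,w\colon y\to x$ together with $(n+1)$-cells $p\colon u\comp{n-1}{}v\to\id{n}{}{x}$ and $q\colon w\comp{n-1}{}u\to\id{n}{}{y}$, both lying in $\nu\Psi$. I then claim that $v$ itself is a two-sided inverse witnessing $u\in\Phi(\nu\Psi)$: the cell $p$ already serves as the first required witness, so all that is missing is an $(n+1)$-cell $q'\colon v\comp{n-1}{}u\to\id{n}{}{y}$ lying in $\nu\Psi$. Realising the informal computation $v\sim w$ from the introduction, I would first produce a flat equivalence $r\colon v\to w$ exactly as in the proof of \cref{prop:linvclosed}, namely as the composite
\[
v\xrightarrow{\text{(unit)}}\id{n}{}{y}\comp{n-1}{}v\xrightarrow{q_R\comp{n-1}{}v}(w\comp{n-1}{}u)\comp{n-1}{}v\xrightarrow{\text{(assoc)}}w\comp{n-1}{}(u\comp{n-1}{}v)\xrightarrow{w\comp{n-1}{}p}w\comp{n-1}{}\id{n}{}{x}\xrightarrow{\text{(unit)}}w,
\]
where $q_R$ is a right $\nu\Psi$-inverse of $q$, which exists and lies in $\nu\Psi$ because $q\in\Psi(\nu\Psi)$ and $\rinv(\nu\Psi)\subseteq\nu\Psi$ by \cref{cor:bi-inv-closed-under-pst-and-linv}. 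Then I would set $q'=(r\comp{n-1}{}u)\comp{n}{}q$, an $(n+1)$-cell with the correct boundary $v\comp{n-1}{}u\to\id{n}{}{y}$, and check it lies in $\nu\Psi$. This proves $u\in\Phi(\nu\Psi)$, hence $\nu\Psi\subseteq\nu\Phi$; combined with $\nu\Phi\subseteq\nu\Psi$ this gives $\nu\Phi=\nu\Psi$, which is the assertion.

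The main obstacle is entirely confined to verifying that $q'$ is a genuine \emph{flat equivalence}, i.e.\ lies in $\nu\Psi$, rather than merely an $(n+1)$-cell connecting $v\comp{n-1}{}u$ and $\id{n}{}{y}$. This is precisely where the closure properties of the previous subsection do the work: the cells labelled $\text{(unit)}$ and $\text{(assoc)}$ are spherical equivalences (witnessing \cref{unit-law,associativity}), hence flat equivalences; and the whiskered cells $q_R\comp{n-1}{}v$, $w\comp{n-1}{}p$, $r\comp{n-1}{}u$, together with the binary composites assembling $r$ and $q'$, remain in $\nu\Psi$ because whiskering and binary composition are instances of pasting and $\pst(\nu\Psi)\subseteq\nu\Psi$ by \cref{cor:bi-inv-closed-under-pst-and-linv}. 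Without these closure properties one could only conclude $v\comp{n-1}{}u\sim\id{n}{}{y}$ as an instance of $\sim$, which does not by itself produce a witness inside $\Phi(\nu\Psi)$; everything else in the argument is immediate.
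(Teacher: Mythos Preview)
Your proposal is correct and follows essentially the same approach as the paper: both reduce to showing $\nu\Psi\subseteq\Phi(\nu\Psi)$, construct the flat equivalence $r\colon v\to w$ exactly as in the proof of \cref{prop:linvclosed}, and set $q'=(r\comp{n-1}{}u)\comp{n}{}q$, invoking \cref{cor:bi-inv-closed-under-pst-and-linv} to conclude $q'\in\nu\Psi$. Your write-up is in fact somewhat more explicit than the paper's, which simply refers back to the earlier proof for the construction of $r$ and cites the corollary twice without spelling out the individual applications of $\pst(\nu\Psi)\subseteq\nu\Psi$.
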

\begin{proof}
	It suffices to show that all flat equivalences in a weak $\omega$-category $X$ are spherical equivalences. For this, it suffices to show $\nu\Psi\subseteq\Phi(\nu\Psi)$. 
	Suppose that $u\colon x\to y$ is a flat equivalence $n$-cell in $X$. Thus we have $n$-cells $v,w\colon y\to x$ and flat equivalence $(n+1)$-cells $p\colon u\comp{n-1}{}v\to \id{n}{}{x}$ and $q\colon w\comp{n-1}{}u\to \id{n}{}{y}$ in $X$. 
	We now construct a flat equivalence $(n+1)$-cell $v\comp{n-1}{}u\to \id{n}{}{y}$. 
	The $(n+1)$-cell $r\colon v\to w$ constructed as in the proof of \cref{prop:linvclosed} is a flat equivalence by \cref{cor:bi-inv-closed-under-pst-and-linv}. Hence so is $(r\comp{n-1}{}u)\comp{n}{}q\colon v\comp{n-1}{}u\to \id{n}{}{y}$, again by \cref{cor:bi-inv-closed-under-pst-and-linv}. Therefore we have $u\in \Phi(\nu\Psi)$ as desired. 
\end{proof}

Having now shown that spherical equivalences and flat equivalences coincide, we may call them simply \emph{equivalences} in what follows. 

Finally we define the notion of \emph{inverse} of an equivalence cell.

\begin{proposition}
	\label{prop:right-inv-iff-left-inv}
	Let $X$ be a weak $\omega$-category, $n\geq 1$, and $u\colon x\to y$ be an equivalence $n$-cell in $X$.
	Then, for any $n$-cell $v\colon y\to x$ in $X$, the following conditions are equivalent.
	\begin{enumerate}
		\item We have $u\comp{n-1}{}v\sim\id{n}{}{x}$ in $X$.
		\item We have $v\comp{n-1}{}u\sim\id{n}{}{y}$ in $X$.
	\end{enumerate}
	Moreover, these conditions imply that $v$ is an equivalence in $X$.
\end{proposition}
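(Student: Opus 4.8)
The plan is to reduce everything to the classical fact that inverses of an equivalence are unique up to $\sim$. First I would extract a genuine two-sided inverse from the hypothesis. Since $u$ is an equivalence, i.e.\ $u\in\nu\Phi=\Phi(\nu\Phi)$, there exist an $n$-cell $v'\colon y\to x$ and equivalence $(n+1)$-cells $p\colon u\comp{n-1}{}v'\to\id{n}{}{x}$ and $q\colon v'\comp{n-1}{}u\to\id{n}{}{y}$; in particular $u\comp{n-1}{}v'\sim\id{n}{}{x}$ and $v'\comp{n-1}{}u\sim\id{n}{}{y}$. Reading the same data $q,p$ in the opposite roles shows $v'\in\Phi(\nu\Phi)=\nu\Phi$, so $v'$ is itself an equivalence (using \cref{prop:bi-inv-are-inv} to speak of a single notion of equivalence).

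The heart of the argument is the computation already displayed in the introduction. Assuming condition (1), I would run
\[
v\sim\id{n}{}{y}\comp{n-1}{}v\sim(v'\comp{n-1}{}u)\comp{n-1}{}v\sim v'\comp{n-1}{}(u\comp{n-1}{}v)\sim v'\comp{n-1}{}\id{n}{}{x}\sim v',
\]
where the first and last steps use the unit law (\cref{unit-law}), the middle step uses associativity (\cref{associativity}), and the remaining two steps use the congruence property (\cref{sim-is-congruence}) applied to $v'\comp{n-1}{}u\sim\id{n}{}{y}$ and to the hypothesis $u\comp{n-1}{}v\sim\id{n}{}{x}$ respectively (with $v$, resp.\ $v'$, in the other slot via reflexivity); transitivity of $\sim$ (\cref{sim-is-eq-rel}) gives $v\sim v'$. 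Symmetrically, assuming condition (2), the chain
\[
v\sim v\comp{n-1}{}\id{n}{}{x}\sim v\comp{n-1}{}(u\comp{n-1}{}v')\sim(v\comp{n-1}{}u)\comp{n-1}{}v'\sim\id{n}{}{y}\comp{n-1}{}v'\sim v'
\]
again yields $v\sim v'$, now using $u\comp{n-1}{}v'\sim\id{n}{}{x}$ and the hypothesis $v\comp{n-1}{}u\sim\id{n}{}{y}$.

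Once $v\sim v'$ is established from either hypothesis, the two implications close up at once: from (1) one gets $v\comp{n-1}{}u\sim v'\comp{n-1}{}u\sim\id{n}{}{y}$ by \cref{sim-is-congruence}, which is (2); from (2) one gets $u\comp{n-1}{}v\sim u\comp{n-1}{}v'\sim\id{n}{}{x}$, which is (1). Finally, since $v\sim v'$ and $v'$ is an equivalence, \cref{prop:invariance-of-equivalence} shows that $v$ is an equivalence, proving the ``moreover'' clause. I do not expect a genuine obstacle here; the only points requiring care are the source/target bookkeeping ensuring that every composite $\comp{n-1}{}$ is defined and parallel to the identity cell it is compared with, and invoking \cref{sim-is-congruence} with a reflexivity $v\sim v$ (or $v'\sim v'$) in the passive slot. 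The mild conceptual point that $v'$ is itself an equivalence is dispatched by the ``read the witnesses backwards'' observation in the first step.
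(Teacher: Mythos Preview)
Your proof is correct and follows essentially the same route as the paper: extract a two-sided inverse $v'$ of $u$ and run the standard ``inverses are unique'' chain using \cref{unit-law}, \cref{associativity}, and \cref{sim-is-congruence}. The only organisational difference is that the paper computes $v\comp{n-1}{}u\sim\id{n}{}{y}$ directly (the chain $v\comp{n-1}{}u\sim\id{n}{}{y}\comp{n-1}{}(v\comp{n-1}{}u)\sim\cdots\sim v'\comp{n-1}{}u\sim\id{n}{}{y}$) rather than first isolating $v\sim v'$, and then gets the ``moreover'' clause immediately from \cref{def:spherical-eq} (once both (1) and (2) hold, $u$ itself serves as a two-sided inverse of $v$), which is slightly more economical than your route via $v\sim v'$ and \cref{prop:invariance-of-equivalence}.
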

\begin{proof}
	By \cref{def:spherical-eq}, there exists an $n$-cell $v'\colon y\to x$ with $u\comp{n-1}{}v'\sim\id{n}{}{x}$ and $v'\comp{n-1}{}u\sim\id{n}{}{y}$. 
	Given $u\comp{n-1}{}v\sim\id{n}{}{x}$, we can derive $v\comp{n-1}{}u\sim \id{n}{}{y}$ as follows.
	\begin{flalign*}
		&&v\comp{n-1}{}u 
		&\sim \id{n}{}{y}\comp{n-1}{}(v\comp{n-1}{}u) &\text{(by \cref{unit-law})}\\
		&&&\sim (v'\comp{n-1}{}u)\comp{n-1}{}(v\comp{n-1}{}u)&\text{(by \cref{sim-is-congruence})} \\
		&&&\sim v'\comp{n-1}{}\bigl((u\comp{n-1}{}v)\comp{n-1}{}u\bigr)& \text{(by \cref{associativity,sim-is-congruence,sim-is-eq-rel})}\\
		&&&\sim v'\comp{n-1}{}\bigl(\id{n}{}{x}\comp{n-1}{}u\bigr)& \text{(by \cref{sim-is-congruence})} \\
		&&&\sim v'\comp{n-1}{}u& \text{(by \cref{unit-law,sim-is-congruence})}\\
		&&&\sim \id{n}{}{y}.& 
	\end{flalign*}
	The converse direction is similar.

	The second statement is immediate from \cref{def:spherical-eq}.
\end{proof}
\begin{definition}
	Let $X$ be a weak $\omega$-category and $u\colon x\to y$ be an equivalence $n$-cell in $X$, with $n\geq 1$. An $n$-cell $v\colon y\to x$ in $X$ is called an \emph{inverse} of $u$ if the equivalent conditions of \cref{prop:right-inv-iff-left-inv} are satisfied.
\end{definition}

\begin{proposition}[\textnormal{\cite[Corollary~3.3.16]{FHM1}}]
	\label{prop:uniqueness-of-inverse}
	Let $X$ be a weak $\omega$-category, $n\geq 1$, $u\colon x\to y$ be an equivalence $n$-cell in $X$, and $v,v'\colon y\to x$ be inverses of $u$. Then we have $v\sim v'$.
\end{proposition}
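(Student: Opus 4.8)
The plan is to reproduce, at the level of the relation $\sim$, the familiar argument that a two-sided inverse is unique up to equivalence (exactly the computation already sketched in the introduction for $1$-cells). Since $v$ and $v'$ are both inverses of $u$, \cref{prop:right-inv-iff-left-inv} guarantees that all four relations $u\comp{n-1}{}v\sim\id{n}{}{x}$, $v\comp{n-1}{}u\sim\id{n}{}{y}$, $u\comp{n-1}{}v'\sim\id{n}{}{x}$, and $v'\comp{n-1}{}u\sim\id{n}{}{y}$ hold; of these I would only use $v'\comp{n-1}{}u\sim\id{n}{}{y}$ and $u\comp{n-1}{}v\sim\id{n}{}{x}$.

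The core of the proof is then the chain
\begin{align*}
v &\sim \id{n}{}{y}\comp{n-1}{}v \sim (v'\comp{n-1}{}u)\comp{n-1}{}v\\
&\sim v'\comp{n-1}{}(u\comp{n-1}{}v) \sim v'\comp{n-1}{}\id{n}{}{x} \sim v',
\end{align*}
whose five links are justified, in order, by: \cref{unit-law}; \cref{sim-is-congruence} applied to $\id{n}{}{y}\sim v'\comp{n-1}{}u$ (using reflexivity of $\sim$ on the other factor $v$); \cref{associativity} with the triple $v',u,v$; \cref{sim-is-congruence} applied to $u\comp{n-1}{}v\sim\id{n}{}{x}$; and \cref{unit-law} once more. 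Concatenating these via transitivity and symmetry of $\sim$ (\cref{sim-is-eq-rel}) yields $v\sim v'$.

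There is essentially no genuine obstacle here: $u$, $v$, and $v'$ are all $n$-cells (with $n\geq 1$) composed along their common $(n-1)$-dimensional boundary, so every composite above is an instance of the basic $\omega$-precategory operation $\comp{n-1}{}$, and \cref{sim-is-congruence}, \cref{associativity}, and \cref{unit-law} apply directly, with no need for whiskering. The only point requiring a little care is the bookkeeping—checking that each appeal to \cref{sim-is-congruence} is fed a genuinely parallel pair of cells and that source/target match up so that consecutive terms in the chain are parallel—but this is automatic, since $\sim$ only ever relates parallel cells and the displayed composites are manifestly well-formed.
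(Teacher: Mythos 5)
Your proof is correct: the displayed chain is well-formed (all composites are along the common $(n-1)$-boundary), each link is justified by the cited propositions, and the definition of inverse does supply both $u\comp{n-1}{}v\sim\id{n}{}{x}$ and $v'\comp{n-1}{}u\sim\id{n}{}{y}$ via \cref{prop:right-inv-iff-left-inv}. The paper itself gives no proof of \cref{prop:uniqueness-of-inverse}, citing it from earlier work, but your argument is exactly the standard one and coincides with the chain the paper uses in its proof of \cref{prop:right-inv-iff-left-inv}, so nothing further is needed.
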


\section{\texorpdfstring{$\omega$-equifibrations}{ω-equifibrations}}\label{sec:omega-equifibrations}
In this section, we define the class of $\omega$-equifibrations between weak $\omega$-categories and show that it can be characterised by the right lifting property with respect to a set $J$ of morphisms in $\WkCats{\omega}$. 
Whereas formally we will deal with weak $\omega$-categories, 
the corresponding result for strict $\omega$-categories can be obtained by replacing ``weak'' by ``strict'' (and $\WkCats{\omega}$ by $\StrCats\omega$) throughout. (Alternatively, one can apply the strict $\omega$-categorical reflection functor $\WkCats{\omega}\to\StrCats\omega$ to $J$; see \cref{reflecting-En}.)

\subsection{\texorpdfstring{The definition of $\omega$-equifibrations}{The definition of ω-equifibrations}}

The following definition generalises the notion of isofibration between categories to the context of weak $\omega$-categories.
Analogous classes of fibrations between 2-categories \cite{Lack-2-cat}, bicategories \cite{Lack-bicat}, and Gray-categories \cite{Lack-Gray} have been investigated by Lack.

\begin{definition}
	\label{def:equifibration}
	We say that a strict $\omega$-functor $f \colon X \to Y$ between weak $\omega$-categories is an \emph{$\omega$-equifibration}
	if for each $n\geq 1$, $(n-1)$-cell $x$ in $X$, and equivalence $n$-cell $u \colon fx \to y$ in $Y$, there exists an equivalence $n$-cell $\bar u \colon x \to \bar y$ in $X$ such that $f\bar u = u$.
\end{definition}

Although it may seem arbitrary that we consider equivalences $u$ with domain (rather than codomain) $fx$ in the above definition, this asymmetry is only superficial.

\begin{proposition}
\label{prop:fibration-symmetric}
	Let $f\colon X\to Y$ be an $\omega$-equifibration between weak $\omega$-categories.
    Then for each $n\geq 1$, $(n-1)$-cell $x$ in $X$, and equivalence $n$-cell $v\colon y\to fx$ in $Y$, there exists an equivalence $n$-cell $\bar v\colon \bar y\to x$ in $X$ such that $f\bar v=v$.
\end{proposition}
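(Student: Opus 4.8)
The plan is to reduce to the orientation already handled by \cref{def:equifibration} by passing to inverses, and then to straighten out the resulting mismatch by invoking the lifting property once more, one dimension higher. First I would use that $v\colon y\to fx$ is an equivalence to choose an inverse $u\colon fx\to y$ of $v$ in $Y$ (such an $n$-cell exists by \cref{def:spherical-eq}, and $u$ is itself an equivalence by \cref{prop:right-inv-iff-left-inv}). Since $u$ is an equivalence $n$-cell with domain $fx$, \cref{def:equifibration} yields an equivalence $n$-cell $\bar u\colon x\to\bar y$ in $X$ with $f\bar u=u$; applying $f$ to the source/target data of $\bar u$ shows that $f\bar y=y$.

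Next I would choose an inverse $\bar v_0\colon\bar y\to x$ of $\bar u$ in $X$ (again using that $\bar u$ is an equivalence, together with \cref{def:spherical-eq,prop:right-inv-iff-left-inv}). Applying $f$ and using that strict $\omega$-functors preserve composites and identities (\cref{str-fun-pres-comp-and-id}) as well as equivalence cells, hence the relation $\sim$ (\cref{str-functor-pres-eq}), one checks that $f\bar v_0\colon y\to fx$ is an inverse of $u$ in $Y$. Since $v$ is also an inverse of $u$, \cref{prop:uniqueness-of-inverse} gives $f\bar v_0\sim v$; fix an equivalence $(n+1)$-cell $\theta\colon f\bar v_0\to v$ in $Y$ witnessing this.

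Now $\theta$ is an equivalence $(n+1)$-cell whose domain is $f$ applied to the $n$-cell $\bar v_0$ of $X$, so \cref{def:equifibration} (with $n+1$ in place of $n$) applies once more, producing an equivalence $(n+1)$-cell $\bar\theta\colon\bar v_0\to\bar v$ in $X$ with $f\bar\theta=\theta$. I claim $\bar v$ is the required cell: it is an $n$-cell $\bar y\to x$, being parallel to $\bar v_0$; it is an equivalence by \cref{prop:invariance-of-equivalence}, since $\bar v_0\sim\bar v$ via $\bar\theta$ and $\bar v_0$ is an equivalence; and $f\bar v$, being the target of $f\bar\theta=\theta$, equals $v$.

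The only genuine obstacle is the one this strategy is designed to overcome: naively lifting an inverse of $v$ and then inverting inside $X$ only produces a cell whose image is $\sim v$, rather than equal to $v$, so the equality $f\bar v=v$ demanded by the statement fails on the nose; the remedy is the additional application of the defining lifting property in dimension $n+1$, which replaces the $\sim$-witness by an honest equality. In particular, no appeal to a dual form of the lifting property, nor to the dual of \cref{whiskering-ess-0-surj}, is needed.
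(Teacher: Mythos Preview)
Your proof is correct and follows essentially the same approach as the paper: lift an inverse $u$ of $v$, take an inverse $\bar v_0$ of $\bar u$ in $X$, use \cref{prop:uniqueness-of-inverse} to get an equivalence $(n+1)$-cell $f\bar v_0\to v$, and then lift that in dimension $n+1$ to obtain $\bar v$. The paper's argument is the same up to notation (writing $\bar v'$ for your $\bar v_0$ and $p$ for your $\theta$).
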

\begin{proof}
	Let $u\colon fx \to y$ be an inverse of $v$.
	Since $u$ is also an equivalence, there exists an equivalence $n$-cell $\bar u\colon x\to\bar y$ in $X$ such that $f\bar u=u$.
	Take an inverse $\bar v'$ of $\bar u$ in $X$. Since $f$ preserves equivalence $(n+1)$-cells by \cref{str-functor-pres-eq}, it sends inverses of $\bar u$ to inverses of $f\bar u=u$.
	Therefore, $f\bar v'$ and $v$ are both inverses of $u$.
	By \cref{prop:uniqueness-of-inverse}, there exists an equivalence $(n+1)$-cell $p\colon f\bar v'\to v$ in $Y$,
	which we can lift to an equivalence $(n+1)$-cell $\bar p\colon \bar v' \to \bar v$ in $X$. In particular, we have $f\bar v=v$.
	Thanks to \cref{prop:invariance-of-equivalence}, $\bar v$ is an equivalence in $X$ since $\bar v'$ is. 
\end{proof}

As a sanity check, we show the expected identity 
\[
\{\,\text{trivial fibrations}\,\}=\{\,\text{weak equivalences}\,\}\cap\{\,\text{fibrations}\,\}.
\]
We first need to recall the notions of trivial fibration and ($\omega$-)weak equivalence.

\begin{definition}\label{def:trivial-fibration}
     A \emph{trivial fibration} $f \colon X \to Y$ between weak $\omega$-categories is a strict $\omega$-functor that has the right lifting property against $\iota^n\colon \partial\C n\to \C n$ (see \cref{subsec:strict-and-weak-omega-cats}) for each $n\geq 0$.
     Equivalently, $f$ is a trivial fibration if
    \begin{itemize}
        \item for each $0$-cell $y$ in $Y$, there exists a $0$-cell $x$ in $X$ such that $fx = y$, and
        \item for each $n \ge 1$, parallel pair of $(n-1)$-cells $x,x'$ in $X$, and $n$-cell $u \colon fx \to fx'$ in $Y$, there exists an $n$-cell $\bar u \colon x \to x'$ such that $f\bar u = u$.\qedhere
    \end{itemize}
\end{definition}

\begin{definition}[{\cite[Definition~3.1.2]{FHM2}, see also \cite[Definition~4.7]{Lafont_Metayer_Worytkiewicz_folk_model_str_omega_cat} for the strict case}]\label{def:weak-equivalence}
    We say that a strict $\omega$-functor $f \colon X \to Y$ between weak $\omega$-categories is an \emph{$\omega$-weak equivalence} if
    \begin{itemize}
        \item for each $0$-cell $y$ in $Y$, there exists a $0$-cell $x$ in $X$ such that $fx \sim y$, and
        \item for each $n \ge 1$, parallel pair of $(n-1)$-cells $x,x'$ in $X$, and $n$-cell $u \colon fx \to fx'$ in $Y$, there exists an $n$-cell $\bar u \colon x \to x'$ such that $f\bar u \sim u$.\qedhere
    \end{itemize}
\end{definition}

\begin{proposition}\label{triv-fib-iff-wk-eq-and-fib}
	For a strict $\omega$-functor $f\colon X\to Y$ between weak $\omega$-categories, the following conditions are equivalent.
	\begin{itemize}
		\item[(1)] $f$ is a trivial fibration.
		\item[(2)] $f$ is both an $\omega$-weak equivalence and an $\omega$-equifibration.
	\end{itemize}
\end{proposition}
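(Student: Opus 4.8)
The plan is to prove the two implications separately; the implication $(1)\Rightarrow(2)$ carries essentially all the content. That a trivial fibration is an $\omega$-weak equivalence is immediate: comparing \cref{def:trivial-fibration} with \cref{def:weak-equivalence}, the latter is obtained from the former by weakening each required equation $f\bar u=u$ to $f\bar u\sim u$, which holds by reflexivity of $\sim$. The real point is that a trivial fibration $f\colon X\to Y$ is an $\omega$-equifibration, and I would derive this from the following lemma: \emph{trivial fibrations reflect equivalence cells}, i.e.\ if $\bar u$ is a cell in $X$ with $f\bar u$ an equivalence in $Y$, then $\bar u$ is an equivalence in $X$. Granting the lemma, given an $(n-1)$-cell $x$ in $X$ and an equivalence $n$-cell $u\colon fx\to y$ in $Y$, I would first use the right lifting property of $f$ against $\iota^{n-1}$ (which for $n=1$ is surjectivity on $0$-cells) to choose an $(n-1)$-cell $\bar y$ in $X$ with the same boundary as $x$ and $f\bar y=y$; this is possible because $fx$ and $y$, being parallel, have the same boundary. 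Then I would use the right lifting property of $f$ against $\iota^n$ to choose an $n$-cell $\bar u\colon x\to\bar y$ with $f\bar u=u$, and finally the lemma gives that $\bar u$ is an equivalence.

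I would prove the lemma by coinduction. Write $\Phi_X$ and $\Phi_Y$ for the monotone maps of \cref{def:spherical-eq} associated with $X$ and $Y$, and set $T=\{\,\bar u\in\coprod_{m\geq 1}X_m \mid f\bar u\in\nu\Phi_Y\,\}$. Since $\nu\Phi_X$ is the largest post-fixed point of $\Phi_X$, it suffices to show $T\subseteq\Phi_X(T)$. Let $\bar u\colon x\to y$ be a cell in $T$, of dimension $m$. From $f\bar u\in\nu\Phi_Y=\Phi_Y(\nu\Phi_Y)$ we get an $m$-cell $V\colon fy\to fx$ and equivalence $(m+1)$-cells $P\colon f\bar u\comp{m-1}{}V\to\id{m}{}{fx}$ and $Q\colon V\comp{m-1}{}f\bar u\to\id{m}{}{fy}$ in $Y$. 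Using that $f$ is a trivial fibration, lift $V$ to $\bar v\colon y\to x$, then lift $P$ to $\bar p\colon \bar u\comp{m-1}{}\bar v\to\id{m}{}{x}$ and $Q$ to $\bar q\colon \bar v\comp{m-1}{}\bar u\to\id{m}{}{y}$; the boundary data needed to set up these lifting problems hold on the nose by \cref{str-fun-pres-comp-and-id} (for instance $f(\bar u\comp{m-1}{}\bar v)=f\bar u\comp{m-1}{}f\bar v=f\bar u\comp{m-1}{}V$ and $f(\id{m}{}{x})=\id{m}{}{fx}$). Since $f\bar p=P$ and $f\bar q=Q$ are equivalences in $Y$, both $\bar p$ and $\bar q$ lie in $T$, so $\bar u\in\Phi_X(T)$, as required.

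The converse $(2)\Rightarrow(1)$ is essentially formal, combining \cref{def:weak-equivalence}, \cref{def:equifibration}, and the definition of $\sim$. For the object-surjectivity clause of \cref{def:trivial-fibration}: given a $0$-cell $y$ in $Y$, the $\omega$-weak equivalence property produces a $0$-cell $x$ in $X$ with $fx\sim y$, i.e.\ an equivalence $1$-cell $w\colon fx\to y$ in $Y$; the $\omega$-equifibration property applied with $n=1$ lifts $w$ to an equivalence $1$-cell $\bar w\colon x\to\bar y$ in $X$, and then $\bar y$ is a $0$-cell with $f\bar y=y$. For the cell-lifting clause: given a parallel pair $x,x'$ of $(n-1)$-cells in $X$ and an $n$-cell $u\colon fx\to fx'$ in $Y$, the $\omega$-weak equivalence property produces an $n$-cell $\bar u_0\colon x\to x'$ with $f\bar u_0\sim u$, i.e.\ an equivalence $(n+1)$-cell $\omega\colon f\bar u_0\to u$ in $Y$; applying the $\omega$-equifibration property in dimension $n+1$ to the $n$-cell $\bar u_0$ lifts $\omega$ to an equivalence $(n+1)$-cell $\bar\omega\colon \bar u_0\to\bar u$ in $X$, and then $\bar u$ (being parallel to $\bar u_0$, hence with source $x$ and target $x'$) satisfies $f\bar u=u$.

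The only non-routine step is the reflection-of-equivalences lemma, and the subtlety there is that being an equivalence is a \emph{coinductive} property: one cannot lift a single finite witness and be done, but must instead organise the lifting into a coinduction as above. What makes this go through cleanly is precisely that strict $\omega$-functors preserve identities and composites on the nose (\cref{str-fun-pres-comp-and-id}), so the parallelism conditions needed to formulate the lifting problems against $\iota^{m+1}$ hold exactly rather than only up to equivalence. Everything else is routine bookkeeping.
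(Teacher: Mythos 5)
Your proof is correct, and its overall shape matches the paper's: the implication $(2)\Rightarrow(1)$ is argued exactly as in the paper (weak equivalence gives an approximate lift $\bar u_0$ with $f\bar u_0\sim u$, and the equifibration property lifts the connecting equivalence cell to correct it on the nose), including the separate treatment of the $0$-cell case. The difference is in $(1)\Rightarrow(2)$. The paper does not prove reflection of equivalences from scratch: it notes that a trivial fibration is an $\omega$-weak equivalence and then invokes the cited result that $\omega$-weak equivalences reflect equivalence cells (\cite[Proposition~3.2.2]{FHM1} or \cite[Proposition~3.1.7]{FHM2}), after which the equifibration property is dispatched with ``we can easily see''. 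You instead prove the needed reflection lemma directly for trivial fibrations by exhibiting $T=\{\bar u\mid f\bar u\in\nu\Phi_Y\}$ as a post-fixed point of $\Phi_X$, using the strict lifting property against $\iota^{m}$ and $\iota^{m+1}$ at each coinductive stage, and you also spell out the two-step lift (first $\bar y$ against $\iota^{n-1}$, then $\bar u$ against $\iota^{n}$) that the paper leaves implicit. Your coinductive argument is sound --- the parallelism conditions for each lifting problem hold on the nose by \cref{str-fun-pres-comp-and-id}, as you note --- so what you gain is a self-contained proof that does not lean on the external reflection result (which is stated there for the more general class of $\omega$-weak equivalences and whose proof is more involved); what the paper's route buys is brevity by reusing machinery already established in \cite{FHM1,FHM2}.
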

\begin{proof}
	If $f$ is a trivial fibration, then it is clearly an $\omega$-weak equivalence. In particular, by \cite[Proposition~3.2.2]{FHM1} or \cite[Proposition~3.1.7]{FHM2}, $f$ reflects equivalence cells. Using this, we can easily see that $f$ is an $\omega$-equifibration. 
	
    Conversely, suppose that $f$ is both an $\omega$-weak equivalence and an $\omega$-equifibration.
    We fix $n\geq 1$ and prove that $f$ has the right lifting property with respect to $\iota^n\colon\partial \C{n}\to \C{n}$; the case $n=0$ can be proven similarly.
    Let $x,x'$ be a parallel pair of $(n-1)$-cells in $X$ and let $u\colon fx\to fx'$ be an $n$-cell in $Y$.
    Then, since $f$ is an $\omega$-weak equivalence, there exist an $n$-cell $\overline u' \colon x\to x'$ in $X$ and an equivalence $(n+1)$-cell $v\colon f\overline u' \to u$ in $Y$.
    Moreover, since $f$ is an $\omega$-equifibration, there exists an equivalence $(n+1)$-cell $\overline v\colon \overline u' \to \overline u$ in $X$ with $f\overline v=v$.
    In particular, we have $f\overline u=u$.
    This completes the proof.
\end{proof}

\subsection{Marked weak \texorpdfstring{$\omega$}{ω}-categories}
\label{subsec:marked-weak-omega-cats}
In the introduction, we argued that the $\omega$-equifibrations are difficult to characterise via the right lifting property because their definition refers to the \emph{property}, rather than a \emph{structure}, of certain cells being equivalences.
It turns out that such a characterisation is easy to obtain if one can encode properties of cells as extra structure.
This is our motivation for introducing the following notion.

\begin{definition}
	A \emph{marked weak $\omega$-category} $(X,tX)$ is a weak $\omega$-category $X$ equipped with distinguished subsets $tX_n \subseteq X_n$ of \emph{marked} cells for each $n \ge 1$.
	A strict $\omega$-functor $f\colon X\to Y$ between the underlying weak $\omega$-categories of marked weak $\omega$-categories $(X,tX)$ and $(Y,tY)$ is \emph{marking-preserving} if for each $n\ge 1$ and each $x\in tX_n$, we have $fx\in tY_n$ 
\end{definition}
Let $\mWkCats{\omega}$ be the category of marked weak $\omega$-categories and marking-preserving strict $\omega$-functors between them. 
The forgetful functor $\mWkCats{\omega}\to \WkCats{\omega}$ mapping $(X,tX)$ to $X$ has both left and right adjoints. In particular, the left adjoint $(-)^\flat\colon \WkCats{\omega}\to \mWkCats{\omega}$ maps each weak $\omega$-category $X$ to $X^\flat=(X,\emptyset)$. 
(The right adjoint $(-)^\sharp\colon \WkCats{\omega}\to \mWkCats{\omega}$ maps $X$ to $X^\sharp=(X,\coprod_{n\geq 1}X_n)$, but we shall not use this.)
Thanks to \cref{str-functor-pres-eq}, we have another functor $(-)^\natural\colon \WkCats{\omega}\to \mWkCats{\omega}$ mapping each weak $\omega$-category $X$ to the marked weak $\omega$-category $X^\natural=(X,eX)$, where $eX$ is the set of all equivalence cells in $X$.

\begin{remark}
    In the above definition of marked weak $\omega$-category $(X,tX)$, we do not require the set $tX$ to contain the identity cells nor be closed under compositions.
    Thus, even if the underlying weak $\omega$-category $X$ happens to be a strict $\omega$-category, the pair $(X,tX)$ may not be an $\infty$-marked $\infty$-category in the sense of \cite[Definition 2.15]{Henry_Loubaton_inductive}.
\end{remark}

Recall from \cref{subsec:strict-and-weak-omega-cats} that $\C{n}\in \WkCats{\omega}$ is the weak $\omega$-category freely generated by the representable globular set $G^n$ for each $n\ge 0$.

\begin{definition}
	For each $n\ge 1$, we write $\mC{n}\in \mWkCats{\omega}$ for the marked weak $\omega$-category $(\C{n},\{c_n\})$ where $c_n$ is the image of the unique $n$-cell in $G^n$ under the inclusion $G^n\to \C{n}$.
    We refer to $c_n$ as the \emph{fundamental $n$-cell} of $\C{n}$.
\end{definition}
For each $n\ge 1$, the strict $\omega$-functor $\sigma^{n-1}\colon \C{n-1}\to \C n$ as in \cref{eqn:sigma-sigma-prime-cat} gives rise to the marking-preserving strict $\omega$-functor
$\sigma^{n-1}\colon (\C{n-1})^\flat\to \mC{n}$.
The following is the easy characterisation of $\omega$-equifibrations alluded to at the beginning of this subsection.

\begin{proposition}
	\label{prop:equifibrations-via-marked-RLP}
	A strict $\omega$-functor $f\colon X\to Y$ between weak $\omega$-categories is an $\omega$-equifibration if and only if the morphism $f^\natural \colon X^\natural \to Y^\natural$ in $\mWkCats{\omega}$ has the right lifting property with respect to the morphism  $\sigma^{n-1}\colon (\C{n-1})^\flat\to\mC{n}$ for each $n\ge 1$.
\end{proposition}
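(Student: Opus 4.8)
The plan is to unwind all the universal properties in sight and observe that the asserted lifting condition is a verbatim reformulation of \cref{def:equifibration}; the whole proof is bookkeeping.

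First I would record the relevant hom-set bijections. Since $\C{n-1}=F^LG^{n-1}$ is free and $(-)^\flat$ is left adjoint to the forgetful functor $\mWkCats{\omega}\to\WkCats{\omega}$, for any marked weak $\omega$-category $(Z,tZ)$ we have
\[
\mWkCats{\omega}\bigl((\C{n-1})^\flat,(Z,tZ)\bigr)\cong\WkCats{\omega}(\C{n-1},Z)\cong Z_{n-1},
\]
so a morphism $(\C{n-1})^\flat\to X^\natural$ is nothing but an $(n-1)$-cell $x$ of $X$ (no marking condition arises, since $(\C{n-1})^\flat$ has empty marking). On the other hand, the marking of $\mC{n}$ is the singleton $\{c_n\}$, and under $\WkCats{\omega}(\C n,Z)\cong Z_n$ the fundamental cell $c_n$ corresponds to the generic $n$-cell; hence a morphism $\mC{n}\to X^\natural$ (resp.\ $\mC{n}\to Y^\natural$) is exactly an equivalence $n$-cell of $X$ (resp.\ of $Y$). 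Finally, using \eqref{eqn:sigma-sigma-prime-cat}, functoriality of $F^L$, and the Yoneda lemma, precomposition along $\sigma^{n-1}\colon\C{n-1}\to\C n$ sends an $n$-cell to its $(n-1)$-dimensional source, while postcomposition along $f^\natural$ sends a cell $c$ of $X$ to $fc$.

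Next I would translate a lifting square
\[
\begin{tikzcd}[row sep=large]
(\C{n-1})^\flat \arrow[r] \arrow[d,"\sigma^{n-1}"'] & X^\natural \arrow[d,"f^\natural"]\\
\mC{n} \arrow[r] \arrow[ur,dashed] & Y^\natural
\end{tikzcd}
\]
into cell data: the top map is an $(n-1)$-cell $x$ of $X$, the bottom map is an equivalence $n$-cell $u$ of $Y$, and commutativity of the square reads $s_{n-1}(u)=fx$, i.e.\ $u\colon fx\to y$ with $y=t_{n-1}(u)$. A diagonal filler is an equivalence $n$-cell $\bar u$ of $X$; the upper triangle commutes iff $s_{n-1}(\bar u)=x$, i.e.\ $\bar u\colon x\to\bar y$, and the lower triangle commutes iff $f\bar u=u$. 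Therefore $f^\natural$ has the right lifting property against $\sigma^{n-1}$ for every $n\ge 1$ if and only if for every $n\ge 1$, every $(n-1)$-cell $x$ in $X$, and every equivalence $n$-cell $u\colon fx\to y$ in $Y$, there is an equivalence $n$-cell $\bar u\colon x\to\bar y$ in $X$ with $f\bar u=u$, which is precisely \cref{def:equifibration}.

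The only points requiring (minor) care are the bookkeeping of the adjunction and Yoneda isomorphisms and the observation that marking-preservation for a morphism out of $\mC n$ amounts to ``the image of $c_n$ is an equivalence cell'' while imposing nothing on morphisms out of $(\C{n-1})^\flat$; I do not anticipate any genuine obstacle.
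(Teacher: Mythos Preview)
Your proposal is correct and is precisely the intended argument: the paper states this proposition without proof, treating it as an immediate unpacking of the definitions (``the easy characterisation of $\omega$-equifibrations alluded to at the beginning of this subsection''), and your bookkeeping of the adjunctions $(-)^\flat\dashv\text{forgetful}$ and $F^L\dashv U^L$ together with the marking conventions on $\mC{n}$ and $X^\natural$ is exactly that unpacking.
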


We conclude this subsection by establishing the following result, which will be used later when we perform the small object argument in $\mWkCats{\omega}$.

\begin{proposition}
	\label{mwkcats-lfp}
	The category $\mWkCats{\omega}$ is locally finitely presentable.
\end{proposition}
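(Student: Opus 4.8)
The plan is to apply the standard recognition theorem: a cocomplete category that admits a strong generator consisting of finitely presentable objects is locally finitely presentable. As the generator I would take the marked weak $\omega$-categories $\mC{n}$ for $n\ge 1$ together with the flatly-marked free cells $(\C{n})^\flat$ for $n\ge 0$. All the ingredients needed are either in \cref{Wk-omega-Cat-LFP} (that $\WkCats{\omega}$ is locally finitely presentable and $U^L$, $F^L$ are finitary) or are formal consequences of the adjunctions $(-)^\flat\dashv V\dashv(-)^\sharp$, where $V\colon\mWkCats{\omega}\to\WkCats{\omega}$ is the forgetful functor.

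\textbf{Cocompleteness and filtered colimits.} First I would describe colimits in $\mWkCats{\omega}$ explicitly: given a diagram $D$, form the colimit $X$ of its underlying weak $\omega$-categories in $\WkCats{\omega}$ and equip it with the marking $tX_n=\bigcup_i\operatorname{im}\bigl(tD(i)_n\to X_n\bigr)$; checking the universal property shows this is the colimit, so $\mWkCats{\omega}$ is cocomplete (completeness follows dually, replacing unions of images by intersections of preimages, though we will not need it). Since $V$ is a left adjoint it preserves all colimits; combined with the fact that $U^L$ is finitary and that evaluation $\GSet\to\Set$ is cocontinuous, this gives that, in a filtered colimit $(Y,tY)=\colim_i(Y_i,tY_i)$, one has $Y_n=\colim_i Y_{i,n}$ in $\Set$ and $tY_n=\bigcup_i\operatorname{im}(tY_{i,n}\to Y_n)$.

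\textbf{Finite presentability and strong generation.} By $(-)^\flat\dashv V$ we have $\mWkCats{\omega}\bigl((\C{n})^\flat,(Y,tY)\bigr)\cong\WkCats{\omega}(\C{n},Y)\cong Y_n$, which is finitary in $(Y,tY)$ by the previous paragraph, so each $(\C{n})^\flat$ is finitely presentable. For $\mC{n}$ one checks that $\mWkCats{\omega}\bigl(\mC{n},(Y,tY)\bigr)$ is naturally identified with the set $tY_n$ of marked $n$-cells; in a filtered colimit the comparison map $\colim_i tY_{i,n}\to tY_n$ is surjective by the description of $tY_n$ above and injective because each $tY_{i,n}\hookrightarrow Y_{i,n}$ is injective and two marked cells that coincide in $Y_n=\colim_i Y_{i,n}$ already coincide at some finite stage, hence in $tY_{k,n}$. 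Thus $\mC{n}$ is finitely presentable. Finally, the family is jointly conservative: if a morphism $f\colon(X,tX)\to(Y,tY)$ induces bijections on all $\mWkCats{\omega}\bigl((\C{n})^\flat,-\bigr)$, then $Vf$ is bijective on every cell set and hence an isomorphism of weak $\omega$-categories (a cell-wise bijective strict $\omega$-functor has a strict $\omega$-functor as inverse); if it also induces bijections on all $\mWkCats{\omega}\bigl(\mC{n},-\bigr)$, then $f(tX_n)=tY_n$ for all $n$, so $(Vf)^{-1}$ is marking-preserving and $f$ is an isomorphism. The recognition theorem then yields the claim.

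\textbf{Main obstacle and an alternative.} The only points needing genuine care are the verification that the union-of-images marking really is the colimit marking (the comparison map into a test object is marking-preserving precisely because of this choice) and the injectivity step in the filtered-colimit computation for $\mC{n}$; everything else is bookkeeping. A more machine-driven alternative, which I would mention but not pursue in detail, is to realise $\mWkCats{\omega}$ as the strict pullback (equivalently, iso-comma) of the functors $\WkCats{\omega}\to\prod_{n\ge1}\Set$, $X\mapsto(X_n)_{n\ge1}$, and $\prod_{n\ge1}\mathbf{Mono}\xrightarrow{\operatorname{cod}}\prod_{n\ge1}\Set$, where $\mathbf{Mono}\subseteq\Set^{\to}$ is the (locally finitely presentable, being reflective in $\Set^{\to}$ and closed under filtered colimits) full subcategory of monomorphisms; both functors are finitary and the second is an isofibration, so closure of locally finitely presentable categories under such iso-commas finishes the argument.
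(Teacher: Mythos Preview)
Your main approach---building an explicit strong generator $\{(\C{n})^\flat \mid n \ge 0\} \cup \{\mC{n} \mid n \ge 1\}$ of finitely presentable objects and invoking the recognition theorem---is correct and genuinely different from the paper's. The paper instead exhibits $\mWkCats{\omega}$ as the pullback of $\WkCats{\omega} \to \Set^{\N}$ (via $\GSet$) against the subobject fibration $\mathbf{Sub}(\Set^{\N}) \to \Set^{\N}$, observes that this is a bipullback because the subobject fibration is an isofibration, and then applies Bird's closure theorem for locally presentable categories under bilimits of finitely accessible right adjoints. Your hands-on approach has the advantage of producing an explicit set of finitely presentable generators, while the paper's is shorter and leverages existing machinery. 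Amusingly, the ``alternative'' you sketch at the end \emph{is} essentially the paper's proof: $\prod_{n\ge1}\mathbf{Mono}$ with its codomain projection is precisely the subobject fibration of $\Set^{\N}$.

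One small caveat: the recognition theorem (e.g.\ Ad\'amek--Rosick\'y, Theorem~1.20) is stated for strong generators in the sense of detecting proper subobjects, which only agrees with joint conservativity once the category has equalizers. So you \emph{do} need the completeness you say you ``will not need''---a trivial repair, since your dual description of limits is correct, or alternatively you can verify the subobject-detecting condition directly, which is just as easy with your generating set.
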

\begin{proof}
	Observe that $\mWkCats{\omega}$ fits into the following pullback square:
	\[
		\begin{tikzcd}[column sep=large]
			\mWkCats\omega
			\ar[d]
			\ar[rr]
			\ar[drr,phantom,"\lrcorner"very near start]
				&
					&
					\mathbf{Sub}(\Set^\N)
					\ar[d]
			\\
			\WkCats\omega
			\ar[r]
				&
				\GSet
				\ar[r,"{X\mapsto(n\mapsto X_{n+1})}"']
					&
					\Set^{\N}
		\end{tikzcd}
	\]
    (The functor from $\GSet$ to $\Set^\N$ forgets $X_0$; recall that, by definition, marked cells in a marked weak $\omega$-category have positive dimension.)
	Here, the right vertical functor is the subobject fibration of $\Set^\N$, which is in particular an isofibration.
	Therefore the pullback square is a bipullback square \cite{Joyal-Street-bipullback}.
        By \cref{Wk-omega-Cat-LFP},
	all functors in the cospan defining the pullback are finitely accessible right adjoint functors between locally finitely presentable categories.
	This concludes that $\mWkCats\omega$ is locally finitely presentable by \cite[Theorem~2.17]{Bird-thesis}.
\end{proof}

\subsection{Characterising \texorpdfstring{$\omega$}{ω}-equifibrations via the right lifting property in \texorpdfstring{$\WkCats{\omega}$}{Wk-ω-Cats}}
\label{subsec:omega-equifib-via-RLP}
In this subsection, we turn \cref{prop:equifibrations-via-marked-RLP} into a characterisation of $\omega$-equifibrations via the right lifting property in $\WkCats{\omega}$ (rather than $\mWkCats{\omega}$) assuming the existence of a (small) set $K$ of morphisms in $\mWkCats{\omega}$ satisfying conditions \ref{K1}--\ref{K3} stated below.
We will present an example of such a set $K$ in \cref{subsec:example-of-K}, and another example in \cref{appendix-on-halfadjoint-lift-one-step}. 

First we introduce the notation used in the statements of the conditions. 
Let $K$ be a class of morphisms in a category $\mathbf{C}$ with a terminal object $1$. 
\begin{itemize}
	\item We write $K^\pitchfork$ (resp.~$^\pitchfork K$) for the class of all morphisms in $\mathbf{C}$ having the right (resp.~left) lifting property with respect to $K$.
	\item We write $\cof{K}$ for the class $^\pitchfork(K^\pitchfork)$.
	\item We say that an object $X$ in $\mathbf{C}$ is \emph{$K$-injective} if the unique morphism $X\to 1$ is in $K^\pitchfork$, and define $\Inj{K}$ to be the full subcategory of $\mathbf{C}$ consisting of all $K$-injective objects. 
\end{itemize}

Now let $K$ be a set of morphisms in $\mWkCats{\omega}$. We consider the following conditions on $K$.
\begin{itemize}
	\labeleditem{(K1)}\label{K1} For any $\omega$-equifibration $f\colon X\to Y$ between weak $\omega$-categories, the morphism $f^\natural\colon X^\natural \to Y^\natural$ in $\mWkCats{\omega}$ is in $K^\pitchfork$. 
\end{itemize}
Note that the unique strict $\omega$-functor $X\to 1$ to the terminal weak $\omega$-category $1$ is an $\omega$-equifibration for any weak $\omega$-category $X$, and $1^\natural=1^\sharp$ is the terminal object in $\mWkCats{\omega}$.
Thus, if \ref{K1} holds, then the functor $(-)^\natural\colon \WkCats{\omega}\to\mWkCats{\omega}$ factors through the full subcategory $\Inj{K}$ of $\mWkCats{\omega}$. The following condition \ref{K2} is stated under this assumption.
\begin{itemize}
	\labeleditem{(K2)}\label{K2} The functor $(-)^\natural\colon \WkCats{\omega}\to\Inj{K}$ is right adjoint to the forgetful functor $\Inj{K}\to \WkCats{\omega}$.
	\labeleditem{(K3)}\label{K3} For each $n \ge 0$, the object $(\C{n})^\flat$ in $\mWkCats{\omega}$ is in $\Inj{K}$.
\end{itemize}

For the rest of this subsection, we fix a (small) set $K$ of morphisms in $\mWkCats{\omega}$ satisfying the conditions \ref{K1}--\ref{K3}. 
For each $n\geq 1$, we further fix an object $\E{n}\in \Inj{K}$ and a morphism 
$\igen{n}\colon \mC{n}\to  \E{n}$ in $\cof{K}$. 
Such data can be obtained, for example, by applying the 
small object argument with respect to $K$ to the unique morphism $\mC{n}\to 1$ in $\mWkCats{\omega}$, where $1$ is the terminal object of $\mWkCats{\omega}$.
(\cref{mwkcats-lfp} ensures that we may indeed apply the small object argument in this category.)
Define a set $J$ of morphisms in $\WkCats{\omega}$ by 
\[
J=\{\,\C{n-1}\xrightarrow{\sigma^{n-1}}\C{n}\xrightarrow{\igen{n}}\E{n}\mid n\geq 1\,\}.
\]

\begin{theorem}
	\label{RLP-iff-fibration}
	Let $f\colon X\to Y$ be a strict $\omega$-functor between weak $\omega$-categories. Then $f$ is an $\omega$-equifibration if and only if $f$ has the right lifting property with respect to $J$. 
\end{theorem}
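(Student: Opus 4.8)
The plan is to deduce \cref{RLP-iff-fibration} from \cref{prop:equifibrations-via-marked-RLP} by transferring the right lifting property along the adjunction between the forgetful functor $U\colon \mWkCats{\omega}\to \WkCats{\omega}$ and $(-)^\natural$ (which is available as a right adjoint \emph{onto} $\Inj{K}$ by \ref{K2}), together with a standard retract argument using the factorisation $\igen{n}\in \cof{K}$. Concretely, fix a strict $\omega$-functor $f\colon X\to Y$. By \cref{prop:equifibrations-via-marked-RLP}, $f$ is an $\omega$-equifibration if and only if $f^\natural\colon X^\natural\to Y^\natural$ has the right lifting property in $\mWkCats{\omega}$ against every $\sigma^{n-1}\colon (\C{n-1})^\flat\to\mC{n}$ with $n\ge 1$. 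By \ref{K1}, if $f$ is an $\omega$-equifibration then $f^\natural$ is in $K^\pitchfork$; since $K^\pitchfork$ is closed under the (left) cancellation implicit in $\cof K$-liftings, $f^\natural$ lifts against every composite $(\C{n-1})^\flat\xrightarrow{\sigma^{n-1}}\mC{n}\xrightarrow{\igen{n}}\E{n}$, and then a straightforward diagram chase (using the lift against $\sigma^{n-1}$ to fill the lower triangle and the lift against $\igen{n}\in\cof K$ to fill the upper one) shows $f^\natural$ lifts against $\sigma^{n-1}$ itself; conversely $\igen{n}\in\cof K$ is a composite whose first factor is $\sigma^{n-1}$, so lifting against $\sigma^{n-1}$ is part of lifting against the composite. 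The upshot is: $f$ is an $\omega$-equifibration iff $f^\natural$ lifts in $\mWkCats{\omega}$ against each $(\C{n-1})^\flat\xrightarrow{}\E{n}$, the second factor of each element of $J$.

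Next I would use the adjunction $U\dashv(-)^\natural$ restricted to $\Inj K$. The key point is that the domain and codomain of every member of $J$ — namely $\C{n-1}$ and $\E{n}$ — have images $(\C{n-1})^\flat\in\Inj K$ (by \ref{K3}) and $\E{n}\in\Inj K$ (chosen that way). For an object $W$ with $W^\flat\in\Inj K$, there is a canonical identification $\mathrm{Hom}_{\mWkCats{\omega}}(W^\flat, Z^\natural)\cong \mathrm{Hom}_{\WkCats{\omega}}(W,Z)$ natural in $W$ and $Z$: by \ref{K2} this hom-set is $\mathrm{Hom}_{\Inj K}(W^\flat, Z^\natural)\cong\mathrm{Hom}_{\WkCats\omega}(U(W^\flat),Z)=\mathrm{Hom}_{\WkCats\omega}(W,Z)$, using $U(W^\flat)=W$. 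Thus a commutative square from $(\C{n-1})^\flat\to\E{n}$ to $f^\natural\colon X^\natural\to Y^\natural$ in $\mWkCats{\omega}$ corresponds bijectively — compatibly with diagonal fillers, since the correspondence is natural and $f^\natural$ sits over $f$ — to a commutative square from $\sigma^{n-1}$-composed-with-$\igen{n}$ (regarded now as a morphism $\C{n-1}\to\E{n}$ in $\WkCats\omega$) to $f$ in $\WkCats{\omega}$. Hence $f^\natural$ lifts in $\mWkCats{\omega}$ against $(\C{n-1})^\flat\to\E{n}$ for all $n\ge1$ if and only if $f$ lifts in $\WkCats\omega$ against $J$. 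Combining with the previous paragraph completes the proof.

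The routine parts are the two diagram chases (the $\cof K$-cancellation and the filler-compatibility of the hom-set bijection); the genuinely substantive input is packaged into conditions \ref{K1}--\ref{K3}, whose verification for an actual $K$ is postponed to \cref{subsec:example-of-K} and \cref{appendix-on-halfadjoint-lift-one-step}. The step I expect to require the most care is checking that the adjunction bijection $\mathrm{Hom}_{\mWkCats{\omega}}(W^\flat,Z^\natural)\cong\mathrm{Hom}_{\WkCats\omega}(W,Z)$ is compatible with lifting problems in the precise sense needed — i.e.\ that the counit/unit of the restricted adjunction $U\dashv(-)^\natural|_{\Inj K}$ interacts correctly with $f$ and $f^\natural$ so that a filler on one side produces a filler on the other — since this is where the fact that $(-)^\natural$ is only a \emph{relative} right adjoint (onto $\Inj K$, not all of $\mWkCats\omega$) could in principle cause trouble; one must make sure every object appearing in the relevant squares actually lies in $\Inj K$, which is exactly what \ref{K3} and the choice $\E n\in\Inj K$ guarantee.
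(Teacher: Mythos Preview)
Your overall strategy matches the paper's: reduce to \cref{prop:equifibrations-via-marked-RLP}, show that lifting $f^\natural$ against $\sigma^{n-1}\colon(\C{n-1})^\flat\to\mC{n}$ is equivalent to lifting $f^\natural$ against the composite $\igen{n}\sigma^{n-1}\colon(\C{n-1})^\flat\to\E{n}$, and then pass to $\WkCats{\omega}$ via the adjunction of \ref{K2}. The adjunction step is fine (both $(\C{n-1})^\flat$ and $\E{n}$ lie in $\Inj{K}$, so the hom-set bijection for $(-)^\natural$ transports lifting problems and fillers as you describe), and your argument that ``equifibration $\Rightarrow$ $f^\natural$ lifts against the composite'' is the correct two-step chase: first lift along $\sigma^{n-1}$ using \cref{prop:equifibrations-via-marked-RLP}, then lift along $\igen{n}$ using $\igen{n}\in\cof{K}$ and $f^\natural\in K^\pitchfork$ from \ref{K1}.

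There is, however, a genuine gap in the converse direction. You write ``$\igen{n}\in\cof K$ is a composite whose first factor is $\sigma^{n-1}$, so lifting against $\sigma^{n-1}$ is part of lifting against the composite.'' This is not valid: having the right lifting property against a composite $h\circ k$ does \emph{not} in general give the right lifting property against the first factor $k$. Concretely, suppose you are given a square with $\sigma^{n-1}$ on the left and $f^\natural$ on the right; to invoke the hypothesis that $f^\natural$ lifts against $\igen{n}\sigma^{n-1}$, you first need to extend the bottom map $u\colon\mC{n}\to Y^\natural$ along $\igen{n}\colon\mC{n}\to\E{n}$. This is exactly what the paper does: since $Y\to 1$ is an $\omega$-equifibration, \ref{K1} gives $Y^\natural\in\Inj{K}$, and then $\igen{n}\in\cof{K}$ provides an extension $v\colon\E{n}\to Y^\natural$ with $v\igen{n}=u$. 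Now one applies the assumed lift against the composite to obtain $w\colon\E{n}\to X^\natural$, and $w\igen{n}\colon\mC{n}\to X^\natural$ is the desired diagonal filler for the original square. Once you insert this step (and untangle the slightly scrambled presentation of the two directions in your first paragraph), your argument becomes the paper's.
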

\begin{proof}
	We show that the following three conditions on $f$ are equivalent.
	\begin{itemize}
		\item[(a)] The morphism $f^\natural \colon X^\natural \to Y^\natural$ in $\mWkCats{\omega}$ has the right lifting property (in $\mWkCats{\omega}$) with respect to $\sigma^{n-1}\colon (\C{n-1})^\flat\to \mC{n}$ for each $n\geq 1$. 
		\item[(b)] The morphism $f^\natural \colon X^\natural \to Y^\natural$ in $\mWkCats{\omega}$ has the right lifting property (in $\mWkCats{\omega}$) with respect to $\igen{n}\sigma^{n-1}\colon (\C{n-1})^\flat\to \E{n}$ for each $n\geq 1$.
		\item[(c)] The morphism $f \colon X\to Y$ in $\WkCats{\omega}$ has the right lifting property (in $\WkCats{\omega}$) with respect to $\igen{n}\sigma^{n-1}\colon \C{n-1}\to \E{n}$ for each $n\geq 1$.
	\end{itemize}
	Notice that this suffices because $f$ is an $\omega$-equifibration if and only if $f$ satisfies (a) by \cref{prop:equifibrations-via-marked-RLP}. 
	The equivalence of (b) and (c) follows from \ref{K2}, because the object $(\C{n-1})^\flat$ is in $\Inj{K}$ by \ref{K3}, whereas the object $\E{n}$ is in $\Inj{K}$ by construction. Therefore it remains to show the equivalence of (a) and (b).

	First suppose that (a) holds. Take any $n\geq 1$ and any  commutative diagram 
	\[
	\begin{tikzcd}[row sep = small]
		(\C{n-1})^\flat
		\arrow [r, "x"]
		\arrow [d, "\sigma^{n-1}", swap]
			&
			X^\natural
			\arrow [dd, "f^\natural"]
		\\
		\mC{n}
		\ar[d, "\igen{n}"']
			&
		\\
		\E{n}
		\arrow [r,"u"']
			&
			Y^\natural
	\end{tikzcd}
	\]
	in $\mWkCats{\omega}$. By (a), we obtain a morphism $v\colon \mC{n}\to X^\natural$ making the diagram 
	\[
	\begin{tikzcd}[row sep = small]
		(\C{n-1})^\flat
		\arrow [r, "x"]
		\arrow [d, "\sigma^{n-1}", swap]
			&
			X^\natural
			\arrow [dd, "f^\natural"]
		\\
		\mC{n}
		\ar[d, "\igen{n}"']
		\ar[ur, dotted, "v"]
			&
		\\
		\E{n}
		\arrow [r,"u"']
			&
			Y^\natural
	\end{tikzcd}
	\]
	commute. By $\igen{n}\in \cof{K}$ and $f^\natural\in K^\pitchfork$ (\ref{K1}), we obtain a morphism $w\colon \E{n}\to X^\natural$ making the diagram 
	\[
	\begin{tikzcd}[row sep = small]
		(\C{n-1})^\flat
		\arrow [r, "x"]
		\arrow [d, "\sigma^{n-1}", swap]
			&
			X^\natural
			\arrow [dd, "f^\natural"]
		\\
		\mC{n}
		\ar[d, "\igen{n}"']
		\ar[ur, "v"]
			&
		\\
		\E{n}
		\arrow [r,"u"']
		\ar[uur, dotted, "w"']
			&
			Y^\natural
	\end{tikzcd}
	\]
	commute. Therefore (b) holds.

	Now suppose that (b) holds and take any $n\geq 1$ and any commutative square 
	\begin{equation}
	\label{eqn:s-fnatural}
		\begin{tikzcd}
			(\C{n-1})^\flat
			\ar[r,"x"]
			\ar[d,"\sigma^{n-1}"']
				&
				X^\natural
				\ar[d,"f^\natural"]
			\\
			\mC{n}
			\ar[r,"u"]
				&
				Y^\natural
		\end{tikzcd}
	\end{equation}
	in $\mWkCats{\omega}$.
	By $\igen{n}\in \cof{K}$ and $Y^\natural\in \Inj{K}$, there exists a morphism $v$ in $\mWkCats{\omega}$ making the following triangle commute.
	\[
		\begin{tikzcd}
			\mC{n}
			\ar[r,"u"]
			\ar[d,"\igen{n}"']
				&
				Y^\natural
			\\
			\E{n}
			\ar[ru,dotted,"v"']
				&
		\end{tikzcd}
	\]
	Thus we obtain the commutative square
	\[
		\begin{tikzcd}
			(\C{n-1})^\flat
			\ar[r,"x"]
			\ar[d,"\igen{n}\sigma^{n-1}"']
				&
				X^\natural
				\ar[d,"f^\natural"]
			\\
			\E{n}
			\ar[r,"v"]
				&
				Y^\natural
		\end{tikzcd}
	\]
	in $\mWkCats{\omega}$, which admits a diagonal filler $w\colon \E{n}\to X^\natural$ by (b). 
	The morphism $w\igen{n}\colon \mC{n}\to X^\natural$ provides a diagonal filler for \cref{eqn:s-fnatural}, and hence (a) holds.
\end{proof}

\subsection{An example of \texorpdfstring{$K$}{K}}
\label{subsec:example-of-K}
In this subsection, we construct a set $K$ of morphisms in $\mWkCats{\omega}$ satisfying conditions \ref{K1}--\ref{K3}.
\begin{definition}
	\label{def:Fn}
	For each $n \ge 1$, we define a marked weak $\omega$-category $(\FF^n,t\FF^n)$.
    (The letter F stands for ``flat (equivalence)''.)
	The underlying weak $\omega$-category $\FF^n$ is constructed as follows:
	\begin{itemize}
		\item start with $\C{n}$, and call its fundamental $n$-cell $\uF \colon \xF \to \yF$,
		\item freely adjoin $n$-cells $\vF \colon \yF \to \xF$ and $\wF \colon \yF \to \xF$, that is,
		take the colimit of the solid part of the following diagram in $\WkCats{\omega}$:
		\[
		\begin{tikzcd}
			\partial\C{n}
			\ar[r,"\ppair{\yF,\xF}"]
			\ar[d,hook]
				&
				\C{n}
				\ar[d,dashed,"\uF"]
					&
					\partial\C{n}
					\ar[l,"\ppair{\yF,\xF}"']
					\ar[d,hook]
			\\
			\C{n}
			\ar[r,dashed,"\vF"]
				&
				\FFp^n
					&
					\C{n}
					\ar[l,dashed,"\wF"']
		\end{tikzcd}
		\]
        and
		\item freely adjoin $(n+1)$-cells $\pF \colon \uF\comp{n-1}{}\vF\to  \id{n}{}{\xF}$ and $\qF\colon \wF \comp{n-1}{} \uF \to \id{n}{}{\yF}$.
	\end{itemize}
	The marked cells are $\uF$, $\pF$, and $\qF$.
	Notice that there exists a unique marking-preserving strict $\omega$-functor $\kF^n\colon \mC{n}\to \FF^n$, which maps the fundamental $n$-cell $c_n$ of $\mC{n}$ to $\uF$.
\end{definition}

We claim that the set 
$\KF=\{\kF^n\colon \mC{n}\to\FF^n\,|\,n\ge1\}$
satisfies conditions \ref{K1}--\ref{K3}.
We first verify \ref{K1}.

\begin{proposition}
    \label{K-satisfies-K1-flat}
	For any $\omega$-equifibration $f \colon X \to Z$ between weak $\omega$-categories,
	the marking-preserving strict $\omega$-functor $f^\natural\colon X^\natural\to Z^\natural$ is in $\KF^\pitchfork$.
\end{proposition}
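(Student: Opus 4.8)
The plan is to fix a lifting problem against $\kF^n$ and construct a diagonal filler by hand. Unravelling \cref{def:Fn} and the definition of $(-)^\natural$, such a problem amounts to the following data: an equivalence $n$-cell $u_X\colon x_X\to y_X$ in $X$, and, writing $u_Z = fu_X$, $x_Z = fx_X$, $y_Z = fy_X$, a pair of $n$-cells $v_Z,w_Z\colon y_Z\to x_Z$ in $Z$ together with equivalence $(n+1)$-cells $p_Z\colon u_Z\comp{n-1}{}v_Z\to\id{n}{}{x_Z}$ and $q_Z\colon w_Z\comp{n-1}{}u_Z\to\id{n}{}{y_Z}$. A filler is the data of $n$-cells $v_X,w_X\colon y_X\to x_X$ in $X$ with $fv_X = v_Z$, $fw_X = w_Z$, together with equivalence $(n+1)$-cells $p_X\colon u_X\comp{n-1}{}v_X\to\id{n}{}{x_X}$ and $q_X\colon w_X\comp{n-1}{}u_X\to\id{n}{}{y_X}$ with $fp_X = p_Z$, $fq_X = q_Z$; by the freeness of $\FF^n$ on these generators this assembles into the required marking-preserving strict $\omega$-functor $\FF^n\to X^\natural$, marking-preservation being automatic once $p_X$ and $q_X$ are produced as equivalences. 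Note that $u_Z$ is an equivalence by \cref{str-functor-pres-eq}, so $v_Z$ and $w_Z$ are in fact \emph{inverses} of $u_Z$ by \cref{prop:right-inv-iff-left-inv}.

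First I would lift $v_Z$ (and symmetrically $w_Z$). Choose an inverse $v_X^0$ of $u_X$ (\cref{def:spherical-eq}); then $fv_X^0$ is an inverse of $u_Z$, hence $fv_X^0\sim v_Z$ by uniqueness of inverses (\cref{prop:uniqueness-of-inverse}). Lifting a witnessing equivalence $(n+1)$-cell $fv_X^0\to v_Z$ along the $n$-cell $v_X^0$ via the $\omega$-equifibration property yields an equivalence $(n+1)$-cell in $X$ out of $v_X^0$; its target $v_X$ satisfies $fv_X = v_Z$, is parallel to $v_X^0$, and is $\sim v_X^0$, hence is again an inverse of $u_X$ by \cref{sim-is-congruence,sim-is-eq-rel,prop:invariance-of-equivalence,unit-law}.

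The remaining step---producing $p_X$, and symmetrically $q_X$---is the crux. The difficulty is that the boundary of $p_X$ is forced to be the parallel pair $(u_X\comp{n-1}{}v_X,\ \id{n}{}{x_X})$ of $n$-cells, which are $\sim$-related (as $v_X$ is an inverse of $u_X$, by \cref{unit-law}) and which $f$ carries onto the boundary of $p_Z$; so what is needed is a lift of the equivalence $(n+1)$-cell $p_Z$ with its \emph{entire} boundary prescribed. The $\omega$-equifibration property, whether used directly (\cref{def:equifibration}) or in its codomain form (\cref{prop:fibration-symmetric}), only lifts an equivalence cell with \emph{one} endpoint prescribed; and because composition in $X$ is only weakly unital and associative, one cannot lift with one endpoint free and then compose with a coherence cell to correct the other endpoint, since that would change the image under $f$. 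I therefore expect the argument to go as follows: produce from the given data an equivalence $(n+1)$-cell $r_X\colon u_X\comp{n-1}{}v_X\to\id{n}{}{x_X}$ in $X$ for which $fr_X\sim p_Z$ in $Z$---this is the delicate point, where one must exploit that $v_X$ and $v_Z$ were chosen compatibly together with the cancellation properties of whiskering by the equivalence $u_X$ (\cref{whiskering-ess-0-surj})---and then lift a witnessing equivalence $(n+2)$-cell $fr_X\to p_Z$ along the $(n+1)$-cell $r_X$ via the $\omega$-equifibration property. The resulting $(n+1)$-cell $p_X$ is then $\sim r_X$, hence has the same boundary $(u_X\comp{n-1}{}v_X,\ \id{n}{}{x_X})$, is an equivalence by \cref{prop:invariance-of-equivalence}, and satisfies $fp_X = p_Z$ on the nose. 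Symmetrically one obtains $q_X$, and hence the filler.

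The main obstacle is precisely this last step: hitting $p_Z$ (and $q_Z$) exactly while the boundary of the desired lift is fully constrained. Everything else is a routine combination of the $\omega$-equifibration property with uniqueness of inverses; the real work lies in exhibiting the candidate cell $r_X$ with $fr_X\sim p_Z$, which is where the weak coherence of $X$ and $Z$ must be handled with care.
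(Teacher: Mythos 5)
Your high-level architecture is the right one, and it matches the paper's final rectification step: construct a cell in $X$ with the fully prescribed boundary whose image is merely $\sim$-related to the given cell, then lift a witnessing higher equivalence cell through the $\omega$-equifibration and take its target, so that the target maps to the given cell on the nose. The lifting of $v_Z$ (and $w_Z$) is also fine as stated. But there is a genuine gap exactly where you flag it: you never construct the cell $r_X\colon u_X\comp{n-1}{}v_X\to\id{n}{}{x_X}$ with $fr_X\sim p_Z$; you only say you ``expect'' it can be produced. That construction is the actual content of the paper's proof, and it is not a routine consequence of what you have set up.

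Moreover, the order of your choices may block this step altogether. You fix the comparison equivalence $fv_X^0\to v_Z$ using only \cref{prop:uniqueness-of-inverse}, i.e.\ arbitrarily, and then lift it to obtain $v_X$. With $v_X$ fixed this way, the natural candidate $r_X=(u_X\comp{n-1}{}\bar\theta^{-1})\comp{n}{}\bar p'_0$ (where $\bar\theta\colon v_X^0\to v_X$ is your lifted cell and $\bar p'_0$ your chosen witness for $v_X^0$) has image $(u_Z\comp{n-1}{}\theta^{-1})\comp{n}{}f\bar p'_0$, and there is no reason this should be $\sim p_Z$: two parallel equivalence $(n+1)$-cells with the same boundary need not be $\sim$-related (a nontrivial automorphism of $u_Z\comp{n-1}{}v_Z$ or of $\id{n}{}{x_Z}$ is the obstruction), and such a discrepancy cannot be lifted with both endpoints prescribed. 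The paper avoids this by building the compatibility with $p$ into the choice of the comparison cell: using \cref{whiskering-ess-0-surj} (and its dual) it chooses $v^\dag\colon fu\comp{n-1}{}v\to fu\comp{n-1}{}f\bar v'$ with $v^\dag\comp{n}{}f\bar p'\sim p$, then $v^\ddag\colon v\to f\bar v'$ with $fu\comp{n-1}{}v^\ddag\sim v^\dag$, lifts $v^\ddag$ by \cref{prop:fibration-symmetric}, and only then forms the candidate $(u\comp{n-1}{}\bar v^\ddag)\comp{n}{}\bar p'$, whose image is $\sim p$ by construction. To complete your argument you must likewise interleave the choice of the comparison cell for $v$ with the data $p_Z$ (and similarly for $w$ and $q_Z$), rather than choosing it first by uniqueness of inverses and hoping to repair matters afterwards.
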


\begin{proof}
	Consider a commutative square in $\mWkCats{\omega}$, with $n\geq 1$:
	\[
	\begin{tikzcd}[row sep = large]
		\mC{n}
		\arrow [d,"\kF^n"']
		\arrow [r,"u"] &
		X^\natural
		\arrow [d, "f^\natural"] \\
		\FF^n
		\arrow [r] &
		Z^\natural
	\end{tikzcd}
	\]
	Such data are equivalent to:
	\begin{itemize}
		\item an equivalence $n$-cell $u \colon x \to y$ in $X$,
		\item $n$-cells $v \colon fy \to fx$ and $w\colon fy\to fx$ in $Z$, and
		\item equivalence $(n+1)$-cells $p \colon fu\comp{n-1}{Z}v\to  \id{n}{Z}{fx}$ and $q \colon w \comp{n-1}{Z} fu \to \id{n}{Z}{fy}$ in $Z$.
	\end{itemize}
	We wish to extend $u$ to a quintuple $(u,\bar v, \bar w, \bar p, \bar q)$ of cells of suitable types in $X$
	(so that it corresponds to a marking-preserving map $\FF^n \to X^\natural$)
	such that $f$ sends it to $(fu,v,w,p,q)$. Observe that, since $u\colon x\to y$ is an equivalence in $X$, we can choose 
	\begin{itemize}
		\item $n$-cells $\bar v' \colon y \to x$ and $\bar w'\colon y\to x$ in $X$, and
		\item equivalence $(n+1)$-cells $\bar p' \colon u\comp{n-1}{X}\bar v'\to  \id{n}{X}{x}$ and $\bar q' \colon \bar w' \comp{n-1}{X} u \to \id{n}{X}{y}$ in $X$.
	\end{itemize}
    
    Of course there is no guarantee that $f \bar v' = v$ (or any of the analogous equations for the other cells) holds, which is why we are calling it $\bar v'$ rather than $\bar v$.
    Nevertheless, we can rectify this defect and turn it into a proper lift $\bar v$ as follows; part of the following argument is visualised in \cref{lifting-v}.
    
	Since $f\bar p'\colon fu\comp{n-1}{Z}f\bar v'\to \id{n}{Z}{fx}$ (which indeed has this type by \cref{str-fun-pres-comp-and-id}) is an equivalence in $Z$ by \cref{str-functor-pres-eq}, (the dual of) \cref{whiskering-ess-0-surj} (1) implies that there exists an equivalence $(n+1)$-cell
	\[
	v^\dag \colon fu \comp{n-1}{Z} v \to fu \comp{n-1}{Z} f \bar v'
	\]
	in $Z$ satisfying
    \begin{equation}\label{v-dag}
        v^\dag \comp{n}{Z} f \bar p' \sim p.
    \end{equation}
	Similarly, since $fu\colon fx\to fy$ is an equivalence in $Z$, \cref{whiskering-ess-0-surj} (2) implies that there exists an equivalence $(n+1)$-cell
	\[
	v^\ddag \colon v\to f \bar v'
	\]
	in $Z$ satisfying
    \begin{equation}\label{v-ddag}
        fu \comp{n-1}{Z} v^\ddag \sim v^\dag.
    \end{equation}
	Since $f$ is an $\omega$-equifibration, \cref{prop:fibration-symmetric} allows us to lift $v^\ddag$ to an equivalence $(n+1)$-cell in $X$, which we denote as $\bar v^\ddag \colon \bar v \to \bar v'$.
	This completes the lifting of $v$.
	
	\begin{figure}
	$\begin{tikzpicture}[baseline = 40]
		\node (1ul) at (0,3) {$fx$};
		\node (1ur) at (3,3) {$fy$};
		\node (1lr) at (3,0) {$fx$};

		\draw[->] (1ul) to node [labelsize, auto] {$fu$} (1ur);
		\draw[->] (1ul) to node [labelsize, auto, swap] {$\id{n}{Z}{fx}$} (1lr);
		\draw[->, bend right = 30] (1ur) to node [labelsize, auto, swap] {$f\bar v'$} (1lr);
		\draw[->, bend left = 30] (1ur) to node [labelsize, auto] {$v$} (1lr);
		\draw[2cell] (2.2,2.5) to node [labelsize,swap, auto] {$f\bar p'$} (1.7,2);
		\draw[2cell, dashed] (3.3,1.5) to node [labelsize, auto,swap] {$\exists v^\ddag$} node [labelsize, auto] {$\sim$} (2.7,1.5);
	\end{tikzpicture}
	\qquad\sim\qquad
	\begin{tikzpicture}[baseline = 40]
		\node (1ul) at (0,3) {$fx$};
		\node (1ur) at (3,3) {$fy$};
		\node (1lr) at (3,0) {$fx$};

		\draw[->] (1ul) to node [labelsize, auto] {$fu$} (1ur);
		\draw[->] (1ul) to node [labelsize, auto, swap] {$\id{n}{Z}{fx}$} (1lr);
		\draw[->, bend left = 30] (1ur) to node [labelsize, auto] {$v$} (1lr);
		\draw[->, 2cell] (2.6,2.4) to node [labelsize, swap,auto] {$p$} (2.1,1.9);
	\end{tikzpicture}$
		\caption{Lifting \texorpdfstring{$v$}{v}}
		\label{lifting-v}
	\end{figure}

	Next, we rectify the $(n+1)$-cell $\bar p'$.
	Observe that we have an equivalence $(n+1)$-cell
	\[
	(u \comp{n-1}{X} \bar v^\ddag)\comp{n}{X} \bar p' \colon u \comp{n-1}{X} \bar v\to \id{n}{X}{x}
	\]
	in $X$, and moreover \eqref{v-ddag} and \eqref{v-dag} provide equivalence $(n+2)$-cells
	\[
	f\bigl((u \comp{n-1}{X} \bar v^\ddag)\comp{n}{X} \bar p'\bigr) =(fu \comp{n-1}{Z} v^\ddag)\comp{n}{Z} f\bar p' \sim v^\dag \comp{n}{Z} f \bar p' \sim p
	\]
	in $Z$.
	Since $f$ is an $\omega$-equifibration, we can lift the composite of the latter to an equivalence $(n+2)$-cell
	\[
	(u \comp{n-1}{X} \bar v^\ddag)\comp{n}{X} \bar p' \sim \bar p
	\]
	in $X$.
	The resulting $(n+1)$-cell $\bar p$ is an equivalence in $X$ by \cref{prop:invariance-of-equivalence}, and this completes the lifting of $p$.

    In a similar manner, one can lift $w$ and $q$. 
\end{proof}

Next we verify \ref{K2}. 

\begin{proposition}
	\label{injectivity-detects-equivalences}\leavevmode
    \begin{enumerate}
        \item If a marked weak $\omega$-category $(X,tX)$ is $\KF$-injective, then all cells in $tX$ are equivalences in the underlying weak $\omega$-category $X$.
        \item For any weak $\omega$-category $X$, the marked weak $\omega$-category $X^\natural$ is $\KF$-injective.
    \end{enumerate}
\end{proposition}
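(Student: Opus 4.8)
The plan is to prove both parts by translating ``$\KF$-injectivity'' into a statement about the universal property of the marked weak $\omega$-categories $\FF^n$ from \cref{def:Fn}, and then matching the resulting data against the monotone operators $\Phi$, $\Psi$ of \cref{def:spherical-eq,def:flat-eq}. The key observation throughout is that, by the construction of $\FF^n$, a strict $\omega$-functor $\FF^n\to Y$ amounts to a quintuple $(u,v,w,p,q)$ with $u\colon x\to y$ and $v,w\colon y\to x$ in $Y_n$ and $p\colon u\comp{n-1}{}v\to\id{n}{}{x}$, $q\colon w\comp{n-1}{}u\to\id{n}{}{y}$ in $Y_{n+1}$; that it restricts along $\kF^n$ to $u$; and that (when $Y$ carries a marking) the functor is marking-preserving into $Y$ iff $u,p,q$ are marked, with no condition on $v,w$. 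Likewise a marking-preserving $\mC{n}\to Y$ is just a marked $n$-cell of $Y$.

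For part (2), I would unwind the above with $Y=X^\natural$: marking-preserving maps $\mC{n}\to X^\natural$ are exactly equivalence $n$-cells $u\colon x\to y$ in $X$, and marking-preserving maps $\FF^n\to X^\natural$ are quintuples $(u,v,w,p,q)$ as above in which $u$, $p$, $q$ are equivalences. Hence $X^\natural$ is $\KF$-injective precisely when every equivalence $n$-cell $u$ extends to such a quintuple. But this is immediate: since $u$ is a (spherical) equivalence, $\nu\Phi=\Phi(\nu\Phi)$ supplies an $n$-cell $v\colon y\to x$ together with equivalence $(n+1)$-cells $p\colon u\comp{n-1}{}v\to\id{n}{}{x}$ and $q\colon v\comp{n-1}{}u\to\id{n}{}{y}$, and the quintuple $(u,v,v,p,q)$ does the job.

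For part (1), I would argue dually. Assume $(X,tX)$ is $\KF$-injective; since the terminal object of $\mWkCats{\omega}$ is terminal, this says that for each $n\ge1$ every marking-preserving $\mC{n}\to(X,tX)$ — equivalently every $u\in tX_n$ — extends along $\kF^n$ to a marking-preserving $\FF^n\to(X,tX)$, hence comes equipped with $n$-cells $v,w$ in $X$ and \emph{marked} $(n+1)$-cells $p\colon u\comp{n-1}{}v\to\id{n}{}{x}$ and $q\colon w\comp{n-1}{}u\to\id{n}{}{y}$. Now set $S:=\bigcup_{n\ge1}tX_n$, viewed as an element of $\Pow\bigl(\coprod_{n\ge0}X_n\bigr)$ with empty $0$-dimensional part. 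The preceding observation says exactly that $S\subseteq\Psi(S)$: each $u\in S\cap X_n$ admits such $v,w$ and such $p,q\in S\cap X_{n+1}$. By the coinduction principle, $S\subseteq\nu\Psi$, i.e. every cell of $tX$ is a flat equivalence, and then \cref{prop:bi-inv-are-inv} upgrades this to: every cell of $tX$ is an equivalence in $X$.

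I do not expect a genuine obstacle here; the work is essentially bookkeeping. The one point that needs care is in part (1): the cells witnessing injectivity assemble into a post-fixed point of $\Psi$ (the flat, separate-left-and-right-inverse operator), not of $\Phi$, since this is the shape built into $\FF^n$. Consequently the conclusion ``all marked cells are equivalences'' genuinely depends on the coincidence of flat and spherical equivalences proved in \cref{prop:bi-inv-are-inv}, and it is worth flagging this dependence explicitly rather than passing over it.
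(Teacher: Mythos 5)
Your proof is correct and is essentially the paper's own argument: the paper's proof consists precisely of the observation that $(X,tX)$ is $\KF$-injective if and only if $tX\subseteq\Psi(tX)$, which is what you establish by unwinding maps out of $\FF^n$, deducing (1) by coinduction together with \cref{prop:bi-inv-are-inv} and (2) by producing the witnesses from the spherical-equivalence structure (with $w=v$). Your explicit flagging of the dependence on the coincidence of flat and spherical equivalences is a fair point that the paper leaves implicit.
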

\begin{proof}
	For any marked weak $\omega$-category $(X,tX)$,  the containment $tX\subseteq\Psi(tX)$ holds if and only if $(X,tX)$ is $\KF$-injective, which implies both statements.
\end{proof}

Finally, \ref{K3} is obvious because, for each $n\ge 1$, $\mC{n}$ has a marked cell. In fact, for any weak $\omega$-category $X$, the object $X^\flat$ in $\mWkCats{\omega}$ is in $\Inj{\KF}$ simply because there is no morphism $\mC{n}\to X^\flat$ in $\mWkCats{\omega}$.

\section{Presentations of \texorpdfstring{$\E{n}$}{En}}\label{presentation-of-En}

Let $\KF=\{\kF^n\colon \mC{n}\to\FF^n\,|\,n\ge1\}$ be the set of morphisms in $\mWkCats{\omega}$ constructed in \cref{subsec:example-of-K}.
Then \cref{RLP-iff-fibration} provides a characterisation of $\omega$-equifibrations in terms of a family of $\KF$-injective marked weak $\omega$-categories $\E{n}$, but we said almost nothing about the structure of $\E{n}$ in \cref{subsec:omega-equifib-via-RLP} aside from pointing out that they can be obtained using the small object argument.
The aim of this section is to provide an explicit presentation of a certain universal model of $\E{n}$, which we denote by $\EF{n}$.

In fact, we provide \emph{two} presentations of $\EF{n}$.
The first presentation (given in \cref{def:EF-n-m,def:E-n-omega} and justified in \cref{presentation-1-justification}) essentially traces the steps of the algebraic small object argument outlined in \cite[Section 6]{Garner_understanding} (although we do not explicitly prove that the individual steps of our construction actually coincide with those in \cite{Garner_understanding}) whereas the second (given in \cref{def:system-of-witnesses-as-cocone,def:C-n-F} and justified in \cref{presentation-2-justification}) constructs everything in one go.
We will also deduce from the latter presentation that the strict $\omega$-categorical reflection of $\EF{1}$ is isomorphic to the \emph{coherent walking $\omega$-equivalence} $\ORst$ constructed in \cite{OR} (which was later shown in \cite{HLOR} to be indeed weakly equivalent to the terminal strict $\omega$-category in the folk model structure \cite{Lafont_Metayer_Worytkiewicz_folk_model_str_omega_cat}).

Let us first make precise the sense in which our $\EF{n}$ is universal.

\begin{definition}
    An \emph{algebraically $\KF$-injective object} is a pair $(X,\kappa^X)$ consisting of
    a marked weak $\omega$-category $X$ and a family $\kappa^X=\bigl(\kappa^X(n;x)\colon \FF^n\to X\bigr)_{n\geq1,x\in tX_n}$
    of marking-preserving strict $\omega$-functors such that, for each $n\geq1$ and each $x\in tX_n$, the following diagram commutes:
    \[
    	\begin{tikzcd}
    		\mC{n}
    		\ar[r,"c_n\mapsto x"]
    		\ar[d,"\kF^n"']
    			&
    			X
    		\\
    		\FF^n
    		\ar[ur,dotted,"\kappa^X(n;x)"']
    	\end{tikzcd}
    \]
    A morphism of algebraically $\KF$-injective objects $(X,\kappa^X)\to(Y,\kappa^Y)$ is a marking-preserving strict $\omega$-functor $f\colon X\to Y$ satisfying $f\circ\kappa^X(n;x)=\kappa^Y(n;fx)$.
    We write $\KF\mhyphen\AInj$ for the category of algebraically $\KF$-injective objects and morphisms between them.
\end{definition}

By virtue of \cref{mwkcats-lfp}, we can apply the algebraic small object argument
(see \cite[Theorem~4.4]{Garner_understanding} or \cite[Proposition~16]{Bourke_Garner_1}) to the set $\KF$,
and obtain the algebraic weak factorization system on $\mWkCats{\omega}$ cofibrantly generated by $\KF$ (in the sense of \cite[5.2]{Bourke_Garner_1}).
An algebraically $\KF$-injective object is precisely an \emph{algebraically fibrant object} in the sense of \cite[2.3]{Bourke_Garner_1}
with respect to this algebraic weak factorization system.
In particular, the forgetful functor $\KF\mhyphen\AInj\to\mWkCats{\omega}$ has a left adjoint, which gives \emph{algebraically $\KF$-injective replacements}
of marked weak $\omega$-categories.

\begin{definition}\label{def:E-F-n}
    For each $n \ge 1$, we write $(\EF{n},\kappa)$ for the algebraically $\KF$-injective replacement of $\mC{n}$, and $\iF{n} \colon \mC{n} \to \EF{n}$ for the unit map.
\end{definition}

\begin{lemma}
    For each $n \ge 1$, the unit map $\iF{n}\colon\mC{n}\to\EF{n}$ is in $\cof{\KF}$.
\end{lemma}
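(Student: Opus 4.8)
The plan is to identify $\iF{n}$ with a component of the functorial factorisation produced by the algebraic small object argument, and then read off the conclusion from general properties of algebraic weak factorisation systems.

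First I would unpack the construction of the algebraically $\KF$-injective replacement. Applying the algebraic small object argument to $\KF$ (permissible since $\mWkCats{\omega}$ is locally finitely presentable by \cref{mwkcats-lfp}) produces an algebraic weak factorisation system $(\mathsf{L},\mathsf{R})$ on $\mWkCats{\omega}$ that is cofibrantly generated by $\KF$. The left adjoint to the forgetful functor $\KF\mhyphen\AInj\to\mWkCats{\omega}$ sends a marked weak $\omega$-category $Y$ to the object obtained by $(\mathsf{L},\mathsf{R})$-factoring the unique morphism $Y\to 1$ as $Y\xrightarrow{\mathsf{L}(Y\to1)} EY \to 1$, with the unit of the adjunction being precisely the left factor $\mathsf{L}(Y\to1)$. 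Taking $Y=\mC{n}$, this shows that $\iF{n}=\mathsf{L}(\mC{n}\to1)$ carries a free $\mathsf{L}$-coalgebra structure.

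Next I would invoke two standard facts about the algebraic weak factorisation system cofibrantly generated by $\KF$: every morphism underlying an $\mathsf{L}$-coalgebra has the left lifting property with respect to every morphism underlying an $\mathsf{R}$-algebra; and the underlying ordinary weak factorisation system of $(\mathsf{L},\mathsf{R})$ is the one cofibrantly generated by $\KF$, so that its right class is exactly $\KF^\pitchfork$ and consequently every $g\in\KF^\pitchfork$ underlies an $\mathsf{R}$-algebra. Combining the two, $\iF{n}$ has the left lifting property with respect to every member of $\KF^\pitchfork$, i.e.\ $\iF{n}\in{}^\pitchfork(\KF^\pitchfork)=\cof{\KF}$.

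The one point that carries genuine content is the claim that right lifting against $\KF$ already furnishes an $\mathsf{R}$-algebra structure (``algebraic freeness'' of the generated algebraic weak factorisation system); I would simply cite \cite{Garner_understanding} (or \cite{Bourke_Garner_1}) for this, as it is exactly what the algebraic small object argument is designed to deliver. Everything else amounts to translating between the statements at the level of the algebraic weak factorisation system and those at the level of ordinary lifting properties.
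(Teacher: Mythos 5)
Your proposal is correct and follows essentially the same route as the paper's proof: both identify $\iF{n}$ with the left factor produced by the algebraic small object argument (i.e.\ a morphism in the image of the comonad $\mathsf{L}$, equivalently a free $\mathsf{L}$-coalgebra) and then conclude via the fact that the underlying weak factorisation system of the awfs cofibrantly generated by $\KF$ has left class $\cof{\KF}$, citing Garner and Bourke--Garner for the algebraic-freeness content. The paper additionally records an alternative argument via its explicit presentation of $\EF{n}$ (where $\iF{n}$ is exhibited as a transfinite composite of pushouts of members of $\KF$), but your argument matches its primary proof.
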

\begin{proof}
    The monad $R$ on $\mWkCats{\omega}$ induced by the forgetful functor $\KF\mhyphen\AInj\to\mWkCats{\omega}$ and its left adjoint is the dual of the \emph{cofibrant replacement comonad} in the sense of \cite[3.1]{Bourke_Garner_2},
    obtained by suitably restricting the monad $\mathsf{R}$ (on the $\mWkCats{\omega}^\mathbf{2}$) associated with the algebraic weak factorisation system $(\mathsf{L},\mathsf{R})$ cofibrantly generated by $\KF$.
    By the definition of algebraic weak factorisation system,
    each component of the unit of the monad $R$ lies in the  image of the comonad $\mathsf{L}$; see e.g.\ \cite[2.1]{Bourke_Garner_1}.
    In particular, it lies in the left class of the \emph{underlying weak factorisation system} (in the sense of \cite[2.6]{Bourke_Garner_1}) of $(\mathsf{L},\mathsf{R})$, which in this case coincides with $\cof{\KF}$.

    An alternative proof can be obtained at the end of \cref{subsec:presentation-1}: \cref{presentation-1-justification} shows that $i^n_\mathcal{F}=\lambda^0$, the latter being constructed from members of $K_\mathcal{F}$ by pushouts and transfinite composition.
\end{proof}

Combining \cref{RLP-iff-fibration} and the content of \cref{subsec:example-of-K}, we obtain the following.

\begin{proposition}
	\label{RLP-iff-fibration-version-F}
	Let $f\colon X\to Y$ be a strict $\omega$-functor between weak $\omega$-categories. Then $f$ is an $\omega$-equifibration if and only if $f$ has the right lifting property with respect to
    \[
        \JF=\{\,\C{n-1}\xrightarrow{\sigma^{n-1}}\C{n}\xrightarrow{\iF{n}}\EF{n}\mid n\geq 1\,\}.
    \]
\end{proposition}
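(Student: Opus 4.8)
The plan is to deduce this as an immediate application of \cref{RLP-iff-fibration}, taking the set $K$ there to be $\KF$ and, for each $n \ge 1$, taking the $\KF$-injective object $\E{n}$ and the morphism $\igen{n}\colon \mC{n}\to\E{n}$ in $\cof{\KF}$ to be $\EF{n}$ and the unit map $\iF{n}$ respectively. With these substitutions, the set $J$ produced by \cref{RLP-iff-fibration} is literally $\JF$, so once the hypotheses of that theorem are met there is nothing further to prove.

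First I would recall from \cref{subsec:example-of-K} that $\KF$ satisfies conditions \ref{K1}--\ref{K3}: \ref{K1} is \cref{K-satisfies-K1-flat}; \ref{K2} follows from \cref{injectivity-detects-equivalences} together with \cref{str-functor-pres-eq} (strict $\omega$-functors preserve equivalences), which makes $(-)^\natural$ right adjoint to the forgetful functor $\Inj{\KF}\to\WkCats{\omega}$; and \ref{K3} is the elementary observation that $(\C{n})^\flat$ admits no morphism out of $\mC{n}$, since $\mC{n}$ has a marked cell while $(\C{n})^\flat$ has none. Then I would supply the two remaining pieces of data that \cref{RLP-iff-fibration} requires for each $n \ge 1$. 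That $\iF{n}$ lies in $\cof{\KF}$ is exactly the Lemma established just above. That $\EF{n}$ lies in $\Inj{\KF}$ follows from its being (the underlying marked weak $\omega$-category of) an algebraically $\KF$-injective object: by definition such an object comes equipped with a family of marking-preserving strict $\omega$-functors $\kappa(m;x)\colon \FF^m \to \EF{n}$, one for each $m\ge 1$ and each marked $m$-cell $x$, each extending the classifying map $c_m\mapsto x$ along $\kF^m$; in particular every lifting problem of some $\kF^m$ against $\EF{n}\to 1$ has a solution, which is precisely $\KF$-injectivity of $\EF{n}$.

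With all hypotheses verified, \cref{RLP-iff-fibration} applied to $\KF$, $\EF{n}$, and $\iF{n}$ yields the stated characterisation, since the resulting set $J$ is $\JF$. I do not anticipate a substantive obstacle here: the genuine work --- constructing $\KF$ and the lifting argument behind \ref{K1}, establishing \ref{K2}--\ref{K3}, and identifying the algebraic $\KF$-injective replacement $\EF{n}$ --- has already been carried out, and this proposition is essentially bookkeeping. The only point that merits a line of care is the passage from \emph{algebraic} $\KF$-injectivity to plain $\KF$-injectivity of $\EF{n}$, needed so that $\EF{n}$ qualifies as an instance of the object $\E{n}$ in \cref{RLP-iff-fibration}.
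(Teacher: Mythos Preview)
The proposal is correct and takes essentially the same approach as the paper, which simply states that the result follows by combining \cref{RLP-iff-fibration} with the content of \cref{subsec:example-of-K}. Your write-up spells out in more detail which ingredients supply which hypotheses (including the passage from algebraic $\KF$-injectivity to plain $\KF$-injectivity and the Lemma giving $\iF{n}\in\cof{\KF}$), but the logical structure is identical.
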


Since we will be dealing with various colimits in $\mWkCats{\omega}$, let us record the following observation.

\begin{remark}
	\label{rem:colimits-in-m-WkCats-omega}
    The forgetful functor $\mWkCats\omega\to\WkCats\omega$ admits a right adjoint (see \cref{subsec:marked-weak-omega-cats}), and thus preserves colimits.
    In other words, for any small diagram $X\colon I\to\mWkCats\omega$,
    the underlying weak $\omega$-category of $\colim X$ may be computed in $\WkCats{\omega}$.
    The marking $t(\colim X)$ is given by
    $\bigcup_{i\in I}\lambda^i(tX(i))$
    where $\lambda^i\colon X(i)\to\colim X$ is the leg of the colimiting cocone at $i\in I$.
\end{remark}

\subsection{Presenting \texorpdfstring{$\EF{n}$}{EFn} by unravelling the algebraic small object argument}\label{subsec:presentation-1}
    Fix $n\geq1$.
    In this subsection, we present the marked weak $\omega$-category $\EF{n}$ as the colimit of an infinite sequence in the manner outlined in \cite[Section 6]{Garner_understanding}.
    This sequence comprises of the following family of marked weak $\omega$-categories.
    
\begin{definition}\label{def:EF-n-m}
	For each $m\in\N$, we define
	\begin{itemize}
		\item a marked weak $\omega$-category $\EF{n,m}=(\EF{n,m},t\EF{n,m})$, whose set of marked $(n+m)$-cells is denoted by $\marking{n,m} = (t\EF{n,m})_{n+m}$,
		\item a marking-preserving strict $\omega$-functor $e^{m+1}\colon\EF{n,m}\to\EF{n,{m+1}}$, and
		\item a marking-preserving strict $\omega$-functor $f^{m+1}\colon\coprod_{u\in \marking{n,m}}\FF^{n+m}\to\EF{n,{m+1}}$
	\end{itemize} 
	inductively as follows. 
	\begin{itemize}
		\item %
			Set $\EF{n,0}=\mC{n}$ (so that $\marking{n,0} = \{c_n\}$).
		\item %
			For $m \ge 0$, the marked weak $\omega$-category $\EF{n,{m+1}}$ is defined by the following pushout in $\mWkCats{\omega}$:
			\begin{equation}
				\label{E-n-m-pushout}
				\begin{tikzcd}[column sep = huge]
					\displaystyle\coprod_{u\in \marking{n,m}}\mC{n+m}
					\ar[r,"{(u,c_{n+m})\mapsto u}"]
					\ar[d]
					\ar[dr,phantom,"\ulcorner"very near end]
						&
						\EF{n,m}
						\ar[d,"{e^{m+1}}"]
					\\
					\displaystyle\coprod_{u\in \marking{n,m}}\FF^{n+m}
					\ar[r,"f^{m+1}"']
						&
						\EF{n,m+1}
				\end{tikzcd}
			\end{equation}
            Thus the restriction of $f^{m+1}$ to the $u$-th summand, which we denote by $f^{m+1}_u$, sends $\uF$ to $e^{m+1}(u)$. \qedhere
	\end{itemize}
\end{definition}

\begin{remark}\label{marked-cells-in-E-n-m}
By \cref{rem:colimits-in-m-WkCats-omega}, the underlying weak $\omega$-category of $\EF{n,m+1}$ for $m \ge 0$ can be obtained by computing the pushout \eqref{E-n-m-pushout} in $\WkCats{\omega}$, and a cell in $\EF{n,m+1}$ is marked if and only if it is the image of a marked cell under either $f^{m+1}$ or $e^{m+1}$.
In fact, one can easily check by induction on $m$ that
\begin{itemize}
    \item $\EF{n,m+1}$ has no marked $k$-cells for $k < n$,
    \item a $k$-cell in $\EF{n,m+1}$ with $n \le k < n+m$ is marked if and only if it is the image of a marked $k$-cell under $e^{m+1}$ if and only if it is the image of a marked $k$-cell under $e^{m+1} \circ \dots \circ e^{k-n+1}$,
    \item an $(n+m)$-cell in $\EF{n,m+1}$ is marked if and only if it is the image of a marked $(n+m)$-cell under $f^{m+1}$ if and only if it is the image of a marked $(n+m)$-cell under $e^{m+1}$,
    \item an $(n+m+1)$-cell in $\EF{n,m+1}$ is marked if and only if it is the image of a marked $(n+m+1)$-cell under $f^{m+1}$, and
    \item $\EF{n,m+1}$ has no marked $k$-cells for $k > n+m+1$. \qedhere
\end{itemize}
\end{remark}

The following lemma provides an even more explicit description of the marked cells in $\EF{n,m+1}$.

\begin{lemma}
	\label{lem:E-n-m-pushout-markings}
	For any $m\geq 0$ and $n\leq k\leq n+m$, the functions 
    \begin{equation}\label{eqn:maps-between-marked-cells}
        (te^{m+1})_k\colon (t\EF{n,m})_k\to (t\EF{n,m+1})_k \qquad \text{and}\qquad
    (tf^{m+1})_{n+m+1}\colon \coprod_{u\in \marking{n,m}}(t\FF^{n+m})_{n+m+1}\to (t\EF{n,m+1})_{n+m+1}
    \end{equation}
    (between sets of marked $k$- and $(n+m+1)$-cells)
    are bijective.
\end{lemma}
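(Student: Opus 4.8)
The plan is to separate the asserted bijectivity into surjectivity, which is pure bookkeeping, and injectivity, which carries the content. By \cref{rem:colimits-in-m-WkCats-omega}, the marking of the pushout \eqref{E-n-m-pushout} equals $(te^{m+1})(t\EF{n,m})\cup(tf^{m+1})(\coprod_u t\FF^{n+m})$. Since $\FF^{n+m}$ has marked cells only in dimensions $n+m$ and $n+m+1$ (\cref{def:Fn}), while $\EF{n,m}$ has marked cells only in dimensions $n,\dots,n+m$ (\cref{marked-cells-in-E-n-m}), one reads off: for $n\le k<n+m$ the marking of $\EF{n,m+1}$ in dimension $k$ is $(te^{m+1})_k\bigl((t\EF{n,m})_k\bigr)$; in dimension $n+m$ the $f^{m+1}$-part contributes only $\{f^{m+1}_u(\uF)\mid u\in\marking{n,m}\}=\{e^{m+1}(u)\mid u\}$ by commutativity of \eqref{E-n-m-pushout}, hence again equals $(te^{m+1})_{n+m}\bigl((t\EF{n,m})_{n+m}\bigr)$; and in dimension $n+m+1$ the $e^{m+1}$-part is empty, so the marking is $(tf^{m+1})_{n+m+1}\bigl(\coprod_u\{\pF^u,\qF^u\}\bigr)$. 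Hence both maps in \eqref{eqn:maps-between-marked-cells} are surjective (uniformly in $m\ge 0$), and it remains to prove (a) $(e^{m+1})_k$ is injective on $(t\EF{n,m})_k$ for $n\le k\le n+m$, and (b) $(f^{m+1})_{n+m+1}$ is injective on $\coprod_u\{\pF^u,\qF^u\}$.

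For the injectivity I would unfold \eqref{E-n-m-pushout}. By \cref{def:Fn}, on underlying weak $\omega$-categories $\kF^{n+m}\colon\C{n+m}\to\FF^{n+m}$ is a composite of four pushouts: two along $\iota^{n+m}$ (adjoining $\vF,\wF$) and two along $\iota^{n+m+1}$ (adjoining $\pF,\qF$). Pasting these with \eqref{E-n-m-pushout}, the leg $e^{m+1}$ factors as $\EF{n,m}\to Y_1\to Y_2\to Y_3\to\EF{n,m+1}$, where $\EF{n,m}\to Y_1$ and $Y_1\to Y_2$ are pushouts of $\coprod_u\iota^{n+m}$ adjoining the new $(n+m)$-cells $\vF^u,\wF^u$, and $Y_2\to Y_3$, $Y_3\to\EF{n,m+1}$ are pushouts of $\coprod_u\iota^{n+m+1}$ adjoining the new $(n+m+1)$-cells $\pF^u,\qF^u$. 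The structural input I would invoke, from the theory of free weak $\omega$-categories and globular cell attachments (\cite{Leinster_book,FHM1,FHM2}), and in the spirit of the cellularity already used for the small object argument, is that a pushout of $\coprod_u\iota^{p}$ is injective on cells (indeed bijective in dimensions $<p$) and that the freshly adjoined $p$-generators are pairwise distinct and lie outside the image of the pushout leg. Granting this, each of the four legs is injective on cells, hence so is $e^{m+1}$, which gives (a). For (b), $f^{m+1}(\pF^u)$ is the image under the injective leg $Y_3\to\EF{n,m+1}$ of the fresh $(n+m+1)$-generator of $Y_3$ indexed by $u$, while $f^{m+1}(\qF^u)$ is the fresh $(n+m+1)$-generator of $\EF{n,m+1}$ indexed by $u$ (adjoined in the final pushout). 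Thus the $f^{m+1}(\pF^u)$ are pairwise distinct and all lie in the image of $Y_3\to\EF{n,m+1}$, the $f^{m+1}(\qF^u)$ are pairwise distinct and all lie outside that image, and the whole family consists of distinct cells.

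The substantial point is exactly the structural input above: controlling the cells of a weak $\omega$-category built by attaching globular cells. In the strict case this is transparent via computads, but in the Batanin--Leinster setting an $(n+m+1)$-cell of the pushout is presented as a pasting composite $(\phi,\uu)$, and one must rule out that a freshly adjoined generator coincides with such a composite of previously available cells, or with another fresh generator. Concretely I expect this to be settled either through the general fact that relative $\{\iota^p\}$-cell complexes are monomorphisms in $\WkCats{\omega}$ (which the cited structural results on free weak $\omega$-categories yield), or, where finer separation of specific generators is needed, by producing a strict $\omega$-functor out of $\EF{n,m+1}$ — equivalently a cocone under the pushout diagram — that keeps the relevant generators apart. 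Once this is in hand, the reduction to injectivity, the pasting of pushouts, and the generator bookkeeping are all routine.
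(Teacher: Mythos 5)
Your surjectivity bookkeeping is fine and is essentially the paper's: jointly surjective images in the pushout plus the dimension count on where $\EF{n,m}$ and $\FF^{n+m}$ carry marked cells. The problem is the injectivity half. Your argument rests entirely on the ``structural input'' that a pushout in $\WkCats{\omega}$ along a coproduct of boundary inclusions $\iota^{p}\colon\partial\C{p}\to\C{p}$ is injective on cells, with the freshly adjoined $p$-generators pairwise distinct and outside the image of the pushout leg. That statement is not in the sources you point to: \cite{Leinster_book,FHM1,FHM2} describe the cells of a \emph{free} algebra $LX$ on a globular set (as pairs $(\phi,\uu)$), whereas here you need to control the cells of a \emph{pushout of algebras}, which is not computed on underlying globular sets (only filtered colimits are created by $U^L$, cf.\ \cref{Wk-omega-Cat-LFP}). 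A pushout of $L$-algebras is a quotient of a free algebra, and ruling out that the quotient identifies a fresh generator with an old cell, or two fresh generators with each other, is exactly the content of the lemma; there is also no retraction available in general (e.g.\ attaching the $p$-cell to $\partial\C{p}$ itself), so injectivity of a single cell attachment is not formal. Asserting it ``in the spirit of the cellularity already used for the small object argument'' is circular: the small object argument never needs such injectivity.

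The paper proves precisely the needed distinctness by hand, along the lines of your fallback remark, and this is the part your proposal leaves unexecuted. It introduces the strict $\omega$-category $\cosep{k}$ whose underlying globular set classifies sets of $k$-cells, checks that $\cosep{k}\to 1$ is a trivial fibration, and then separates marked cells by explicit strict $\omega$-functors: \cref{distinguishing-a-in-tF} rewrites $\EF{n,m+1}$ as an iterated pushout isolating one generating $(n+m+1)$-cell $a$ and sends everything else to $\star$, which gives injectivity of $(tf^{m+1})_{n+m+1}$; \cref{distinguishing-a-b-in-tE} then proves by induction on $m$ that distinct marked $k$-cells of $\EF{n,m}$ can be separated by maps to $\cosep{k}$ that moreover extend over $e^{m+1}$, the inductive step being a lifting of the cofibration $u_{\FF}\colon\C{n+m+1}\to\FF^{n+m+1}$ against the trivial fibration $\cosep{k}\to 1$. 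Note also that the paper only needs (and only proves) injectivity on \emph{marked} cells, while your route goes through the stronger claim that $e^{m+1}$ is injective on all cells, which is harder still and nowhere justified. As it stands, the crucial step of your proof is an unproven assumption, so the argument is incomplete.
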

\begin{proof}
    First observe that the pushout square \eqref{E-n-m-pushout} induces the commutative square
    \begin{equation}\label{eqn:E-n-m+1-k}
		\begin{tikzcd}[column sep = huge]
				\displaystyle\coprod_{u\in \marking{n,m}}\bigl(t(\mC{n+m})\bigr)_{k}
				\ar[r]
				\ar[d]
						&
						(t\EF{n,m})_k
						\ar[d,"{(te^{m+1})_k}"]
				\\
				\displaystyle\coprod_{u\in \marking{n,m}}(t\FF^{n+m})_k
				\ar[r,"(tf^{m+1})_k"']
						&
						(t\EF{n,m+1})_k
		\end{tikzcd}
	\end{equation}
    of functions between sets of marked $k$-cells for each $k\geq 1$, in which the functions $(te^{m+1})_k$ and $(tf^{m+1})_k$ are \emph{jointly} surjective by \cref{rem:colimits-in-m-WkCats-omega} (or by \cref{marked-cells-in-E-n-m}).
    
    We begin with the proof of the surjectivity of the functions \eqref{eqn:maps-between-marked-cells}.
    When $k<n+m$, the function $(te^{m+1})_k$ is surjective because the set $\coprod_{u\in \marking{n,m}}(t\FF^{n+m})_k$ is empty. 
    Similarly, the function $(tf^{m+1})_{n+m+1}$ is surjective because the set $(t\EF{n,m})_{n+m+1}$ is empty. Finally, when $k=n+m$, both the top horizontal arrow and the left vertical arrow in \eqref{eqn:E-n-m+1-k} are bijections, and hence both $(te^{m+1})_{n+m}$ and $(tf^{m+1})_{n+m}$ are surjective.

    To show the injectivity of the functions \eqref{eqn:maps-between-marked-cells}, we introduce the following strict $\omega$-categories.
    
\begin{definition}
	For $k \ge 1$, we write $\cosep{k}$ for the following strict $\omega$-category.
	Its cells are given by
	\[
	\cosep{k}_l = \begin{cases}
		\{\star\} & l < k,\\
		\{w_k,\star\} & l=k,\\
		\{w_k,\star\}^2 & l > k,
	\end{cases}
	\]
	and the source and target operations are given by the identity except for
	\begin{gather*}
	s_{k-1}(w_k) = s_{k-1}(\star) = t_{k-1}(w_k) = t_{k-1}(\star) = \star,\\
	s_k = \pi_1 \quad \text{and} \quad t_k = \pi_2.
	\end{gather*}
    In other words, the underlying globular set of $\cosep{k}$ is the one that classifies ``sets of $k$-cells'', in the sense that for any globular set $X$, giving a globular map $g \colon X \to \cosep{k}$ is equivalent to giving a subset of $X_k$ corresponding to $g^{-1}(w_k)$. This follows from the fact that the underlying globular set of $\cosep{k}$ is obtained by applying the right adjoint of the ``evaluation at $k$'' functor $\GSet\to \Set$, mapping each $X\in\GSet$ to $X_k\in \Set$, to the two-element set.

    The strict $\omega$-category structure on $\cosep{k}$ is given as follows.
	Each $l$-cell shares its name with the identity $(l+1)$-cell on it except for
	\[
	\id{k+1}{}{w_k} = (w_k,w_k) \quad \text{and} \quad \id{k+1}{}{\star} = (\star, \star).
	\]
	Note that we have only identity cells in all positive dimensions but $k$ and $k+1$.
	The compositions of $k$-cells are given by
	\[
	x \comp{l}{} y = \begin{cases}
		\star & x=y=\star,\\
		w_k & \text{otherwise}
	\end{cases}
	\]
	for all $0 \le l < k$.
        The compositions of $(k+1)$-cells are given by
        \[
        (x_1,x_2) \comp{l}{} (y_1,y_2) = \begin{cases}
            (x_1 \comp{l}{} y_1, x_2 \comp{l}{} y_2) & 0 \le l < k,\\
            (x_1, y_2) & l=k \text{ (and hence $x_2 = y_1$).}
        \end{cases}
        \]
        The compositions of $l$-cells for $l>k$ are determined by these because all $l$-cells are identity cells.
\end{definition}
    Observe that the unique strict $\omega$-functor $\cosep{k}\to 1$ from $\cosep{k}$ to the terminal weak $\omega$-category $1$ is a trivial fibration in the sense of \cref{def:trivial-fibration}.
    This can be seen by transposing lifting problems along suitable adjunctions as follows. 
    Suppose that we are given the lifting problem in $\WkCats{\omega}$ as in the left below. 
    Using the adjunction $F^L\dashv U^L$ and the fact that $\iota_n\colon\partial\C{n}\to \C{n}$ is in the image of $F^L$, we see that this lifting problem is equivalent to the lifting problem in $\GSet$ as in the right below. 
    \[
		\begin{tikzcd}
			\partial\C{n}
			\ar[r]
			\ar[d,"\iota^n"']
				&
				\cosep{k}
				\ar[d]
			\\
			\C{n}
			\ar[r]
				&
				1
		\end{tikzcd}\qquad\qquad
        \begin{tikzcd}
			\partial \OO^{n}
			\ar[r]
			\ar[d,"\iota^n"']
				&
				U^L\cosep{k}
				\ar[d]
			\\
			\OO^{n}
			\ar[r]
				&
				1
		\end{tikzcd}
    \]
    Using the adjunction consisting of the ``evaluation at $k$'' functor $\GSet\to \Set$ and its right adjoint, and the fact that  $U^L\cosep{k}\to 1$ is in the image of this right adjoint, we see that the latter lifting problem is equivalent to a trivial lifting problem in $\Set$, and thus can be solved.

    \begin{claim}
        \label{distinguishing-a-in-tF}
		Let $m \ge 0$, and let $a,b$ be distinct marked $(n+m+1)$-cells in $\coprod_{u\in \marking{n,m}}\FF^{n+m}$.
		Then there exists a strict $\omega$-functor $g \colon \EF{n,m+1} \to \cosep{n+m+1}$ such that $gf^{m+1}(a) = w_{n+m+1}$ and $gf^{m+1}(b) = \star$.
    \end{claim}
    
    \begin{proof}[Proof of \cref{distinguishing-a-in-tF}]
	Observe that (the underlying weak $\omega$-category of) $\EF{n,m+1}$ can be written as the iterated pushout
	\[
	\begin{tikzcd}[column sep = huge]
		\displaystyle\coprod_{u\in \marking{n,m}}\C{n+m}
		\ar[r,"{(u,c_{n+m})\mapsto u}"]
		\ar[d]
		\ar[dr,phantom,"\ulcorner"very near end] &
		\EF{n,m}
		\ar[d] & \\
		X
		\ar[r] &
		Y
		\arrow [d] &
		\partial\C{n+m+1}
		\arrow [d]
		\arrow [l]
		\ar[dl,phantom,"\urcorner"very near end] \\
		& \EF{n,m+1} &
		\C{n+m+1}
		\arrow [l, "a"]
	\end{tikzcd}
	\]
	where $X$ is ``$\coprod_{u\in \marking{n,m}}\FF^{n+m}$ with $a$ removed''.
	The desired $g$ can be thus obtained as the unique strict $\omega$-functor induced by $Y \xrightarrow{!}\Cst{0}\xrightarrow{\star}\cosep{n+m+1}$ and $\C{n+m+1}\xrightarrow{w_{n+m+1}}\cosep{n+m+1}$.
	This completes the proof of \cref{distinguishing-a-in-tF}.
    \end{proof}
    
\begin{claim}
	\label{distinguishing-a-b-in-tE}
	For any $m \ge 0$ and any distinct marked $k$-cells $a,b$ in $\EF{n,m}$ with $k \ge 1$, there exist
    \begin{enumerate}
        \item a strict $\omega$-functor $g \colon \EF{n,m} \to \cosep{k}$ such that $g(a) = w_k$ and $g(b) = \star$, and
        \item a strict $\omega$-functor $h \colon \EF{n,m+1} \to \cosep{k}$ such that $he^{m+1}(a) = w_k$ and $he^{m+1}(b) = \star$.
    \end{enumerate}
    \end{claim}\begin{proof}[Proof of \cref{distinguishing-a-b-in-tE}]
        We proceed by induction on $m$.
	The base case $m=0$ is vacuous since $\EF{n,0}=\mC{n}$ has only one marked cell.
	For the inductive step, let $m \ge 0$ and let $a,b$ be distinct marked $k$-cells in $\EF{n,m+1}$.
    Note that we have $n\leq k\leq n+m+1$.

    We first prove (1) for $m+1$, that is, the existence of a suitable strict $\omega$-functor $g \colon \EF{n,m+1} \to \cosep{k}$ (note the index).
    In the case $k\leq n+m$, there exist (necessarily distinct) marked $k$-cells $a',b'$ in $\EF{n,m}$ with $e^{m+1}(a')=a$ and $e^{m+1}(b')=b$.
    By the inductive hypothesis (2), we have a strict $\omega$-functor $h' \colon \EF{n,m+1} \to \cosep{k}$ such that $h'(a') = w_k$ and $h'(b') = \star$, and we can simply take $g = h'$.
    In the case $k=n+m+1$, there exist (necessarily distinct) marked $k$-cells $a',b'$ in $\coprod_{u\in \marking{n,m}}\FF^{n+m}$ such that $f^{m+1}(a') = a$ and $f^{m+1}(b') = b$.
    The desired $g$ can be thus obtained by applying \cref{distinguishing-a-in-tF} to the pair $a',b'$.

    Next we prove (2) for $m+1$, that is, the existence of a suitable strict $\omega$-functor $h \colon \EF{n,m+2} \to \cosep{k}$.
    Since we have already constructed $g \colon \EF{n,m+1} \to \cosep{k}$, it suffices to construct a strict $\omega$-functor $h_u \colon \FF^{n+m+1} \to \cosep{k}$ satisfying $h_u(\uF) = g(u)$ for each $u \in \marking{n,m+1}$.
    This can be achieved by solving the following lifting problem: 
    		\[\begin{tikzcd}
			\C{n+m+1}
			\ar[r,"g(u)"]
			\ar[d,"u_\mathcal{F}"']
				&
				\cosep{k}
				\ar[d]
			\\
			\FF^{n+m+1}
			\ar[r]
				&
				1
		\end{tikzcd}\]
        in which the strict $\omega$-functor $u_\mathcal{F}\colon \C{n+m+1}\to \FF^{n+m+1}$ is a \emph{cofibration} (i.e., is in $\cof{I}$ with $I=\{\iota^n\colon\partial \C{n}\to \C{n}\mid n\geq 0\}$) and $\cosep{k}\to 1$ is a trivial fibration (i.e., is in $I^\pitchfork$).
    This completes the proof of \cref{distinguishing-a-b-in-tE}.
    \end{proof}
    The injectivity of the functions \eqref{eqn:maps-between-marked-cells} follows from (2) of \cref{distinguishing-a-b-in-tE} and \cref{distinguishing-a-in-tF}. 
    This completes the proof of \cref{lem:E-n-m-pushout-markings}.
\end{proof}

Now we consider the colimit of the marked weak $\omega$-categories $\EF{n,m}$, corresponding to the final result of the algebraic small object argument.

\begin{definition}\label{def:E-n-omega}
    We write $\EF{n,\omega}$ for the colimit of the sequence
    \begin{equation}
\label{dgm:EF-sequence}
    \begin{tikzcd}
        \EF{n,0}
        \ar[r,"{e^{1}}"]
        &
        \EF{n,1}
        \ar[r,"{e^{2}}"]
        &
        \EF{n,2}
        \ar[r,"{e^{3}}"]
        &
        \cdots
    \end{tikzcd}
\end{equation}
    in $\mWkCats\omega$, and write $(\eomega{m} \colon \EF{n,m} \to \EF{n,\omega})_{m\geq0}$ for the colimiting cocone.
\end{definition}

Again by \cref{rem:colimits-in-m-WkCats-omega}, the underlying weak $\omega$-category of $\EF{n,\omega}$ can be computed as the colimit of the sequence in $\WkCats\omega$, and the marking $t\EF{n,\omega}$ is given by the union $\bigcup_{m \ge 0}\eomega{m}(t\EF{n,m})$.
In particular, all marked cells in $\EF{n,\omega}$ have dimension at least $n$,
and $\eomega{0}(c_n)$ is the only marked $n$-cell in $\EF{n,\omega}$.
In fact, we can say the following about the marked cells in $\EF{n,\omega}$.

\begin{lemma}
    \label{lem:ti-omega-m-monomorphism}
    The (graded) function $t\eomega{m}\colon t\EF{n,m}\to t\EF{n,\omega}$
    is a monomorphism for each $m\geq0$.
    Consequently, $\eomega{m} \colon \EF{n,m} \to \EF{n,\omega}$ induces a bijection $\marking{n,m} \cong (t\EF{n,\omega})_{n+m}$ for each $m \ge 0$.
\end{lemma}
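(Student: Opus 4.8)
The plan is to reduce the statement to two ingredients: that each transition map in the sequence \eqref{dgm:EF-sequence} is injective on markings, and that the underlying globular set of the colimit $\EF{n,\omega}$ is a filtered (sequential) colimit, so that any identification occurring in it is already witnessed at a finite stage.

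First I would show that, for every $m\geq 0$, the graded function $te^{m+1}\colon t\EF{n,m}\to t\EF{n,m+1}$ is a monomorphism. By \cref{marked-cells-in-E-n-m} (applied with $m-1$ in place of $m$ when $m\geq 1$; the case $m=0$ is immediate since $\EF{n,0}=\mC{n}$ has a single marked cell, in dimension $n$), all marked cells of $\EF{n,m}$ have dimension between $n$ and $n+m$. Hence $(t\EF{n,m})_k=\emptyset$ for $k$ outside $[n,n+m]$, where injectivity of $(te^{m+1})_k$ is vacuous, whereas for $n\leq k\leq n+m$ the function $(te^{m+1})_k$ is even a bijection by \cref{lem:E-n-m-pushout-markings}. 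Thus $te^{m+1}$ is injective in every dimension, and consequently so is every composite $te^{m'}\circ\dots\circ te^{m+1}\colon t\EF{n,m}\to t\EF{n,m'}$ with $m'\geq m$.

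Next I would invoke \cref{rem:colimits-in-m-WkCats-omega} together with the finitariness of $U^L$ (\cref{Wk-omega-Cat-LFP}) to see that, for each $k$, the set $(\EF{n,\omega})_k$ is the filtered colimit in $\Set$ of the sets $(\EF{n,m})_k$ along the transition maps, with the $\eomega{m}$ as colimiting cocone. Given $x,y\in(t\EF{n,m})_k$ with $\eomega{m}(x)=\eomega{m}(y)$, the explicit description of filtered colimits of sets produces some $m'\geq m$ with $(e^{m'}\circ\dots\circ e^{m+1})(x)=(e^{m'}\circ\dots\circ e^{m+1})(y)$; since these are marked cells (each $e^{j}$ being marking-preserving) and $te^{m'}\circ\dots\circ te^{m+1}$ is injective by the previous paragraph, it follows that $x=y$. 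Hence $t\eomega{m}$ is a monomorphism.

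For the final clause I would check that $(t\eomega{m})_{n+m}\colon \marking{n,m}\to(t\EF{n,\omega})_{n+m}$ is bijective. Injectivity follows from the monomorphism just proved. For surjectivity, recall from \cref{rem:colimits-in-m-WkCats-omega} that $(t\EF{n,\omega})_{n+m}=\bigcup_{m'\geq 0}\eomega{m'}\bigl((t\EF{n,m'})_{n+m}\bigr)$; the summands with $m'<m$ are empty because the marked cells of $\EF{n,m'}$ have dimension at most $n+m'<n+m$, so it suffices to treat $m'\geq m$. For such $m'$, iterating \cref{lem:E-n-m-pushout-markings} in the single dimension $n+m$ (which satisfies $n+m\leq n+j$ for every $m\leq j\leq m'-1$) shows that $(te^{m'}\circ\dots\circ te^{m+1})_{n+m}\colon \marking{n,m}\to(t\EF{n,m'})_{n+m}$ is a bijection, so $\eomega{m'}\bigl((t\EF{n,m'})_{n+m}\bigr)=\eomega{m}(\marking{n,m})$ by cocone compatibility. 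Therefore $(t\EF{n,\omega})_{n+m}=\eomega{m}(\marking{n,m})$, as required. The argument is in essence dimension bookkeeping; the one point genuinely relying on something nontrivial is the passage to a filtered colimit in $\Set$, which rests on the monad $L$ being finitary.
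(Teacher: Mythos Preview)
Your proof is correct and follows essentially the same route as the paper's: both reduce to the filtered-colimit description of the underlying sets (via finitariness of $L$) together with the injectivity of the transition maps on markings supplied by \cref{lem:E-n-m-pushout-markings}. Your version is slightly more explicit in spelling out the dimension bookkeeping (using \cref{marked-cells-in-E-n-m} to confine marked cells to the range $[n,n+m]$ before invoking the lemma) and in arguing surjectivity for the second clause, whereas the paper simply cites \cref{marked-cells-in-E-n-m} for the latter; but the underlying argument is the same.
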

\begin{proof}
    By \cref{Wk-omega-Cat-LFP}, the (filtered) colimit defining $\EF{n,\omega}$ is created by the forgetful functor to $\GSet$.
    Therefore we have $(\EF{n,\omega})_k=\coprod_{m\in\N}(\EF{n,m})_k/{\sim}$,
    where $(m,x)\sim(m',x')$ if and only if there exists $\bar m\geq \max(m,m')$ such that $e^{\bar m/m}(x)=e^{\bar m/m'}(x')$ where $e^{\bar m/m}\colon\EF{n,m}\to\EF{n,\bar m}$ is the composite $e^{\bar m}\circ\cdots\circ e^{m+1}$.
    Hence, given $x,x'\in t\EF{n,m}$ with $\eomega{m}(x)=\eomega{m}(x')$, we can pick $\bar m\geq m$ such that $e^{\bar m/m}(x)=e^{\bar m/m}(x')$.
    Since $te^{\bar m/m}$ is a monomorphism by \cref{lem:E-n-m-pushout-markings}, we can conclude $x=x'$.
    This completes the proof of the first assertion, and the second assertion follows from \cref{marked-cells-in-E-n-m}.
\end{proof}

Finally we are ready to prove the main result of this subsection.

\begin{proposition}\label{presentation-1-justification}
	There is a structure $\kappa$ of algebraically $\KF$-injective object on $\EF{n,\omega}$	such that 
    \[
    \mC{n} = \EF{n,0} \xrightarrow{\eomega{0}} \EF{n,\omega}
    \]
    exhibits $(\EF{n,\omega},\kappa)$ as the algebraically $\KF$-injective replacement of $\mC{n}$. 
    In particular, we have $\EF{n,\omega}\cong \EF{n}$.
\end{proposition}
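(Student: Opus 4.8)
The plan is to show directly that $\EF{n,\omega}$, carrying a suitable structure $\kappa$, enjoys the universal property characterising the algebraically $\KF$-injective replacement of $\mC{n}$ (see \cref{def:E-F-n}): for every algebraically $\KF$-injective object $(Y,\kappa^Y)$ and every morphism $\phi_0\colon\mC{n}\to Y$ in $\mWkCats\omega$, there is a unique morphism $\bar\phi\colon(\EF{n,\omega},\kappa)\to(Y,\kappa^Y)$ of algebraically $\KF$-injective objects with $\bar\phi\circ\eomega{0}=\phi_0$. Since the value of a left adjoint is unique up to isomorphism, this simultaneously proves the stated characterisation of $\eomega{0}$ and yields $\EF{n,\omega}\cong\EF{n}$.

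First I would define the structure $\kappa$. By \cref{lem:ti-omega-m-monomorphism}, for each $m$ the leg $\eomega{m}$ induces a bijection $\marking{n,m}\cong(t\EF{n,\omega})_{n+m}$, and (by \cref{def:E-n-omega}) every marked cell of $\EF{n,\omega}$ arises this way; hence a marked $k$-cell $x$ of $\EF{n,\omega}$ has $k=n+m$ for a unique $m\geq0$ and equals $\eomega{m}(\tilde x)$ for a unique $\tilde x\in\marking{n,m}$. I set $\kappa(n+m;x)=\eomega{m+1}\circ f^{m+1}_{\tilde x}\colon\FF^{n+m}\to\EF{n,\omega}$. This is marking-preserving, being a composite of marking-preserving maps, and the required triangle with $\kF^{n+m}$ commutes because $f^{m+1}_{\tilde x}$ sends $\uF$ to $e^{m+1}(\tilde x)$ (\cref{def:EF-n-m}), so that $\kappa(n+m;x)\circ\kF^{n+m}$ classifies $\eomega{m+1}(e^{m+1}(\tilde x))=\eomega{m}(\tilde x)=x$. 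Thus $(\EF{n,\omega},\kappa)$ is an algebraically $\KF$-injective object.

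For the existence of $\bar\phi$, I would build marking-preserving strict $\omega$-functors $\phi_m\colon\EF{n,m}\to Y$ with $\phi_{m+1}\circ e^{m+1}=\phi_m$ and $\phi_{m+1}\circ f^{m+1}_u=\kappa^Y(n+m;\phi_m(u))$ for all $u\in\marking{n,m}$, by induction on $m$ starting from $\phi_0$. At each step the maps $\phi_m$ and $\bigl(\kappa^Y(n+m;\phi_m(u))\bigr)_{u}$ agree after restriction along the span defining the pushout \eqref{E-n-m-pushout}: this uses the defining triangle for $\kappa^Y$ together with the fact that a marking-preserving map out of $\mC{n+m}$ is determined by the marked cell it picks out. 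So the pushout produces $\phi_{m+1}$, which is marking-preserving by the explicit description of the markings of $\EF{n,m+1}$ in \cref{marked-cells-in-E-n-m}. The $\phi_m$ form a cocone over \eqref{dgm:EF-sequence}, inducing $\bar\phi\colon\EF{n,\omega}\to Y$ with $\bar\phi\circ\eomega{m}=\phi_m$; it is marking-preserving by \cref{rem:colimits-in-m-WkCats-omega}, and it is a morphism of algebraically $\KF$-injective objects since for $x=\eomega{m}(\tilde x)$ we get $\bar\phi\circ\kappa(n+m;x)=\bar\phi\circ\eomega{m+1}\circ f^{m+1}_{\tilde x}=\phi_{m+1}\circ f^{m+1}_{\tilde x}=\kappa^Y(n+m;\phi_m(\tilde x))=\kappa^Y(n+m;\bar\phi(x))$, and $\bar\phi\circ\eomega{0}=\phi_0$. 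Uniqueness is the dual bookkeeping: given any such $\psi$, one shows $\psi\circ\eomega{m}=\phi_m$ by induction on $m$, the inductive step checking that $\psi\circ\eomega{m+1}$ satisfies the two equations characterising $\phi_{m+1}$ out of the pushout \eqref{E-n-m-pushout} — using that $\psi$ is a morphism of algebraically $\KF$-injective objects and that $\eomega{m+1}\circ f^{m+1}_u=\kappa(n+m;\eomega{m}(u))$ — whence $\psi=\bar\phi$ because $\EF{n,\omega}$ is the colimit of \eqref{dgm:EF-sequence}.

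The step I expect to require the most care is not any single calculation but the verification, repeated at every stage, that the maps in sight ($\kappa$, each $\phi_m$, and $\bar\phi$) are marking-preserving; this is precisely where the explicit analysis of the markings of $\EF{n,m}$ and $\EF{n,\omega}$ in \cref{lem:E-n-m-pushout-markings}, \cref{marked-cells-in-E-n-m}, and \cref{lem:ti-omega-m-monomorphism} is used. Everything else is the routine ``assemble a map out of a transfinite composite of pushouts one pushout at a time'' argument, carried out in $\mWkCats\omega$ with colimits computed as in \cref{rem:colimits-in-m-WkCats-omega}.
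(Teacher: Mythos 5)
Your proposal is correct and follows essentially the same route as the paper's proof: you define $\kappa$ via the bijections of \cref{lem:ti-omega-m-monomorphism} by $\kappa(n+m;x)=\eomega{m+1}\circ f^{m+1}_{\tilde x}$, build the cocone $(\phi_m)$ inductively through the pushouts \eqref{E-n-m-pushout}, verify the morphism condition for $\bar\phi$ by the same computation, and prove uniqueness by the same induction on $m$. The only cosmetic difference is that you phrase the universal property in terms of morphisms $\mC{n}\to Y$ rather than marked $n$-cells of $Y$, which are the same data.
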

\begin{proof}
    We must specify, for each marked $(n+m)$-cell $u_\omega$ in $\EF{n,\omega}$ with $m \ge 0$, a strict $\omega$-functor
    \[
    \kappa(n+m;u_\omega) \colon \FF^{n+m} \to\EF{n,\omega}
    \]
    that sends $\uF$ to $u_\omega$.
    Given such $u_\omega$, \cref{lem:ti-omega-m-monomorphism} provides unique $u \in \marking{n,m}$ such that $\eomega{m}(u)=u_\omega$.
    We set
    \[
    \kappa(n+m;u_\omega)=\eomega{m+1}\circ f^{m+1}_{u},
    \]
    and this defines an algebraically $\KF$-injective object $(\EF{n,\omega},\kappa)$.

    We now prove that $(\EF{n,\omega},\kappa)$ enjoys the desired universal property.
    Let $(Z,\kappa^Z)$ be an algebraically $\KF$-injective object and $z$ be a marked $n$-cell in $Z$.
    We wish to show that there exists a unique morphism
    \[
    g \colon (\EF{n,\omega},\kappa)\to(Z,\kappa^Z)
    \]
    in $\KF\mhyphen\AInj$ such that $g \circ \eomega{0}(c_n) = z$.
    Define a cocone $(g^m)_{m\in\N}$ under the sequence \cref{dgm:EF-sequence} in $\mWkCats{\omega}$ inductively as follows.
	\begin{itemize}
		\item %
			$g^{0}\colon\mC{n}\to Z$ is the unique morphism that sends $c_n$ to $z$.
		\item %
			Define $\bar g^{m+1}\colon\coprod_{u\in \marking{n,m}}\FF^{n+m}\to Z$ so that its restriction to the $u$-th summand is
            \[
            \kappa^Z\bigl(n+m;g^m(u)\bigr)\colon\FF^{n+m}\to Z,
            \]
		and define $g^{m+1}$ as the morphism induced by $g^m$ and $\bar g^{m+1}$ via the universality of the pushout square \eqref{E-n-m-pushout}.
	In other words, the strict $\omega$-functor $g^{m+1}$ is the unique one satisfying $g^{m+1}\circ e^{m+1}=g^{m}$ and	$g^{m+1}\circ f^{m+1}_u=\kappa^Z\bigl(n+m;g^m(u)\bigr)$ for all $u \in \marking{n,m}$.
	\end{itemize}
    This cocone induces a marking-preserving strict $\omega$-functor $g \colon \EF{n,\omega} \to Z$, and we claim that this is the desired morphism in $\KF\mhyphen\AInj$.

    For $g$ to be a morphism in $\KF\mhyphen\AInj$, we must have
    \[
    g\circ \kappa(n+m;u_\omega)=\kappa^Z\bigl(n+m;g(u_\omega)\bigr)
    \]
    for each marked $(n+m)$-cell $u_\omega$ in $\EF{n,\omega}$ with $m \ge 0$.
    Let $u \in \marking{n,m}$ be the unique $(n+m)$-cell provided by \cref{marked-cells-in-E-n-m} such that $\eomega{m}(u) = u_\omega$.
    The desired equality can be verified as
    \begin{align*}
		g\circ\kappa(n+m;u_\omega)
		&= g\circ \eomega{m+1}\circ f^{m+1}_{u}
		\tag*{\text{(definition of $\kappa$)}}
		\\
		&= g^{m+1}\circ f^{m+1}_{u}
		\tag*{\text{(definition of $g$)}}
		\\
		&= \kappa^Z\bigl(n+m;g^{m}(u)\bigr)
		\tag*{\text{(definition of $g^{m+1}$)}}
		\\
		&= \kappa^Z\bigl(n+m;g \circ \eomega{m}(u)\bigr)
		\tag*{\text{(definition of $g$)}}
		\\
		&= \kappa^Z\bigl(n+m;g(u_\omega)\bigr).
    \end{align*}

    It remains to prove the uniqueness of $g$.
    To this end, let $g' \colon (\EF{n,\omega},\kappa)\to(Z,\kappa^Z)$ be a morphism in $\KF\mhyphen\AInj$ such that $g' \circ \eomega{0}(c_n) = z$.
    We will prove by induction on $m \ge 0$ that we necessarily have $g' \circ \eomega{m} = g^m$.
    The base case $g'\circ \eomega{0} = g^0$ is precisely the assumption $g \circ \eomega{0}(c_n) = z$.
    For the inductive step, let $m \ge 0$ and suppose that $g' \circ \eomega{m} = g^m$ holds.
    Since $\EF{n,m+1}$ is defined as the pushout \eqref{E-n-m-pushout}, in order to prove $g' \circ \eomega{m+1} = g^{m+1}$, it suffices to show that
    \begin{itemize}
        \item $g' \circ \eomega{m+1} \circ f^{m+1}_u = g^{m+1} \circ f^{m+1}_u$ holds for each $u \in \marking{n,m}$, and
        \item $g' \circ \eomega{m+1} \circ e^{m+1} = g^{m+1} \circ e^{m+1}$ holds.
    \end{itemize}
    These conditions can be verified as
	\begin{align*}
		g'\circ \eomega{m+1}\circ f^{m+1}_u
		&= g'\circ \kappa\bigl(n+m;\eomega{m}(u)\bigr)
		\tag*{\text{(definition of $\kappa$)}}
		\\
		&= \kappa^Z\bigl(n+m;g' \circ \eomega{m}(u)\bigr)
		\tag*{\text{($g'$ is a morphism in $\KF\mhyphen\AInj$)}}
		\\
		&= \kappa^Z\bigl(n+m;g^m(u)\bigr)
		\tag*{(inductive hypothesis)}
            \\
            &= g^{m+1} \circ f^{m+1}_u
            \tag*{\text{(definition of $g^{m+1}$)}}
	\end{align*}
    and
	\begin{align*}
		g' \circ \eomega{m+1}\circ e^{m+1}
		&= g' \circ \eomega{m}
            \tag*{(cocone condition)}
		\\
		&= g^m
		\tag*{(inductive hypothesis)}
            \\
            &= g^{m+1}\circ e^{m+1}
            \tag*{(cocone condition)}.
	\end{align*}
    Thus we can conclude $g' = g$, and this completes the proof.
\end{proof}

\subsection{Suspension construction}\label{subsec:suspension}
Before giving another presentation of $\EF{n}$, we need to introduce and analyse a \emph{suspension} construction for (marked) weak $\omega$-categories.
In order to reassure that it is compatible with the familiar suspension construction for strict $\omega$-categories, we first recall the following construction from \cite{Cottrell_Fujii_hom}.

\begin{remark}\label{hom-valued-in-graphs}
    Given a weak $\omega$-category $X$ and a pair of 0-cells $x,y$ in $X$, we can take the \emph{hom weak $\omega$-category} $X(x,y)$.
    Varying $x$ and $y$, we obtain a $\WkCats{\omega}$-enriched graph, and this extends to a functor which fits into middle row of the following diagram:
    \begin{equation}
		\label{dgm:suspension-compatibility-enriched-graph}
		\begin{tikzcd}
			\StrCats\omega
			\ar[r]
			\ar[d]
			\ar[rd,draw=none,"\cong"description]
				&
				\enGph{(\StrCats\omega)}
				\ar[d]
			\\
			\WkCats\omega
			\ar[r]
			\ar[d]
			\ar[rd,draw=none,"\cong"description]
				&
				\enGph{(\WkCats\omega)}
				\ar[d]
			\\
			\GSet
			\ar[r,"\sim"']
				&
				\enGph{\GSet},
		\end{tikzcd}
	\end{equation}
    where the other two horizontal maps are the more familiar hom functors.
    This is essentially the main result of \cite[Section 3]{Cottrell_Fujii_hom}, but a clarification is in order.

    The authors of \cite{Cottrell_Fujii_hom} use the category $\enGph{\omega}$ of $\omega$-graphs (informally speaking, ``$\omega$-fold'' enriched graphs) instead of $\GSet$ as the underlying category of the monads $T$ and $L$ (so that, in particular, the bottom horizontal map in \eqref{dgm:suspension-compatibility-enriched-graph} becomes an isomorphism).
    We can obtain \eqref{dgm:suspension-compatibility-enriched-graph} by transferring the corresponding (strictly commutative) diagram for $\enGph{\omega}$ along an equivalence $\GSet\simeq\enGph{\omega}$ (cf.~\cite[Appendix~F]{Leinster_book}).
    Such a diagram can be extrapolated from \cite{Cottrell_Fujii_hom} as follows.
    The perimeter of \cite[(19)]{Cottrell_Fujii_hom} is a commutative square
    \[
    \begin{tikzcd}
        \lceil L \rceil
        \arrow [r]
        \arrow [d] &
        L
        \arrow [d] \\
        (\enGph{T^{(\omega)}})1
        \arrow [r] &
        T^{(\omega)}1
    \end{tikzcd}
    \]
    of \emph{globular operads} \cite{Batanin_98,Leinster_book} (or \emph{$T^{(\omega)}$-operads} in \cite{Cottrell_Fujii_hom}), which can be thought of as representing certain monads on $\GSet$ or $\enGph{\omega}$ (see \cite[Subsection 2.1]{Cottrell_Fujii_hom} for a summary or \cite{Leinster_book} for full details).
    Unravelling their notations \cite[Subsections 2.2, 2.4, and Section 3]{Cottrell_Fujii_hom}, one can check that
    \begin{itemize}
        \item $T^{(\omega)}1$ corresponds to the monad for strict $\omega$-categories,
        \item $(\enGph{T^{(\omega)}})1$ corresponds to the monad for $\StrCats{\omega}$-enriched graphs,
        \item $L$ corresponds to the monad for weak $\omega$-categories, and
        \item $\lceil L \rceil$ corresponds to the monad for $\WkCats{\omega}$-enriched graphs.
    \end{itemize}
    Taking the categories of algebras thus yields the upper half of \eqref{dgm:suspension-compatibility-enriched-graph}, and the lower half simply witnesses that these functors commute with the forgetful functors to the base category of the monads.

    According to the lower half of \eqref{dgm:suspension-compatibility-enriched-graph}, the $n$-cells in the hom weak $\omega$-category $X(x,y)$ can be identified with the $(n+1)$-cells in $X$ whose 0-dimensional boundary is given by the pair $(x,y)$.
    As we observed in \cite[Corollary~2.5.6]{FHM1}, the weak $\omega$-category structure on $X(x,y)$ is such that
    \begin{itemize}
        \item $\id{n}{X(x,y)}{z} = \id{n+1}{X}{z}$ for any $(n-1)$-cell $z$ in $X(x,y)$, and
        \item $u \comp{n-1}{X(x,y)} v = u \comp{n}{X} v$ for any composable pair of $n$-cells $u,v$ in $X(x,y)$.\qedhere
    \end{itemize}
\end{remark}

\begin{proposition}
	\label{suspension-compatibility}
	Taking each bipointed weak $\omega$-category $(x,y) \colon \C{0}+\C{0} \to X$ to the hom weak $\omega$-category $X(x,y)$ extends to a functor $\C{0}+\C{0}/\WkCats\omega\to\WkCats\omega$ which fits into the following diagram:
    \begin{equation}
    \label{dgm:sliced-suspension-compatibility-enriched-graph}
		\begin{tikzcd}
			\Cst{0}+\Cst{0}/\StrCats\omega
			\ar[r,"\mathrm{hom}"]
			\ar[d]
			\ar[rd,draw=none,"\cong"description]
				&
				\StrCats\omega
				\ar[d]
			\\
			\C{0}+\C{0}/\WkCats\omega
			\ar[r,"\mathrm{hom}"]
			\ar[d]
			\ar[rd,draw=none,"\cong"description]
				&
				\WkCats\omega
				\ar[d]
			\\
			G^0+G^0/\GSet
			\ar[r,"\mathrm{hom}"']
				&
				\GSet
		\end{tikzcd}
    \end{equation}
	Moreover, all functors in this diagram are right adjoints.
\end{proposition}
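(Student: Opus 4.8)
The plan has three parts: fix the three ``hom'' functors appearing in \eqref{dgm:sliced-suspension-compatibility-enriched-graph}, read off commutativity of the two squares from \cref{hom-valued-in-graphs}, and verify that every functor in the diagram is a right adjoint by exhibiting it as an accessible limit-preserving functor between locally finitely presentable categories.

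First, a morphism $f\colon(X;x,y)\to(X';x',y')$ of $\C{0}+\C{0}/\WkCats\omega$ is a strict $\omega$-functor $f\colon X\to X'$ with $fx=x'$ and $fy=y'$; by \cref{str-fun-pres-comp-and-id} it restricts to a strict $\omega$-functor $X(x,y)\to X'(x',y')$, evidently functorially. This gives the middle horizontal functor, and the top and bottom ones are its strict and globular analogues (the bottom one sends $(A;a,b)$ to the globular set whose $n$-cells are the $(n+1)$-cells of $A$ over $(a,b)$). Commutativity of the lower square is then exactly the identification, recorded in \cref{hom-valued-in-graphs}, of the underlying globular set of $X(x,y)$ with this fibre of cells, naturally in $(X;x,y)$. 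For the upper square I would use, again from \cref{hom-valued-in-graphs}, that for a strict $\omega$-category $X$ the hom strict $\omega$-category $X(x,y)$ and the hom weak $\omega$-category of the image of $X$ in $\WkCats\omega$ have the same underlying globular set and the same derived operations $\id{n+1}{}{z}$ and $\comp{n}{}$, and hence coincide as objects of $\WkCats\omega$; naturality is clear. (Equivalently, both squares are obtained by slicing the two squares of \eqref{dgm:suspension-compatibility-enriched-graph} over $\C{0}+\C{0}$, $\Cst{0}+\Cst{0}$, and $G^0+G^0$, then post-composing with the ``evaluate at the distinguished ordered pair of objects'' functors out of the resulting slices of enriched-graph categories, using that $\GSet\simeq\enGph\GSet$ carries $G^0$ to the one-object enriched graph with initial hom-object.)

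For the right-adjointness claims the argument is uniform. Every object in \eqref{dgm:sliced-suspension-compatibility-enriched-graph} is locally finitely presentable: the three categories $\StrCats\omega$, $\WkCats\omega$, $\GSet$ are so by \cref{Wk-omega-Cat-LFP}, and coslices of locally finitely presentable categories remain locally finitely presentable. Moreover every functor in the diagram preserves small limits and filtered colimits. For the two right-hand vertical functors this is part of \cref{Wk-omega-Cat-LFP}. The two left-hand vertical functors are, up to reindexing along a canonical comparison map of objects, the functors induced on coslices by the right-hand ones, and both ``take the coslice'' and ``reindex along a morphism'' preserve limits and filtered colimits, since limits and filtered colimits in a coslice are created by the forgetful functor to the ambient category. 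For the three ``hom'' functors, \cref{Wk-omega-Cat-LFP} gives that limits and filtered colimits of (weak or strict) $\omega$-categories are created by the monadic finitary functors to $\GSet$, so combined with the description of the lower square already obtained, the composite of a ``hom'' functor with the relevant forgetful functor to $\GSet$ sends $(X;x,y)$ to the fibre of $(n+1)$-cells of $X$ over $(x,y)$ --- a construction manifestly computed pointwise from the underlying data --- and so it too preserves small limits and filtered colimits. An accessible limit-preserving functor between locally presentable categories has a left adjoint, and this completes the proof.

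The step I expect to be the main obstacle is the precise interface with \cref{hom-valued-in-graphs}: one has to verify that the isomorphisms of \eqref{dgm:suspension-compatibility-enriched-graph} restrict correctly to the relevant coslices --- concretely, that under $\GSet\simeq\enGph\GSet$ the coslice $G^0+G^0/\GSet$ is identified, compatibly with the vertical enriched-graph functors, with the comma category of enriched graphs equipped with an ordered pair of objects, and that evaluation at that pair is natural along those vertical functors. Once this bookkeeping is done, the commutativity of the two squares is formal and the rest is the routine argument above.
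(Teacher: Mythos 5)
Your proposal is essentially correct, but one of your stated justifications for the first assertion does not work as written. \cref{str-fun-pres-comp-and-id} only says that a strict $\omega$-functor preserves identities and binary compositions; this is far from enough to conclude that $f$ restricts to a strict $\omega$-functor (i.e.\ a morphism of $L$-algebras) $X(x,y)\to X'(x',y')$, and likewise ``same underlying globular set and same derived identities and compositions'' does not force two weak $\omega$-category structures to coincide, since an $L$-algebra structure carries much more data than these operations. What does work is exactly your parenthetical argument: the functoriality of $\mathrm{hom}$ and the commutativity (up to isomorphism) of both squares are obtained by coslicing the diagram \eqref{dgm:suspension-compatibility-enriched-graph} of \cref{hom-valued-in-graphs} and evaluating at the distinguished ordered pair of objects. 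That is also all the paper does for the first assertion, which it dispatches by citing \cref{hom-valued-in-graphs}; so treat the coslicing argument as your primary one, not as an aside.

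For the adjointness assertion you take a genuinely different route. The paper uses the monadicity of the vertical functors over $\GSet$ and its coslice together with the adjoint lifting theorem of Barr--Wells to reduce the whole diagram to the bottom horizontal functor, whose left adjoint (the globular suspension $A\mapsto A'$) it writes down explicitly. You instead verify that all six categories are locally presentable (coslices of locally finitely presentable categories are again such, e.g.\ because $c/\mathbf{C}$ is monadic over $\mathbf{C}$ via the finitary monad $c\sqcup{-}$), that every functor in the diagram preserves limits and filtered colimits (using that these are computed on underlying objects in coslices, that the forgetful functors to $\GSet$ create them by \cref{Wk-omega-Cat-LFP}, and that the globular $\mathrm{hom}$ is computed fibrewise), and then invoke the adjoint functor theorem for accessible limit-preserving functors between locally presentable categories. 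This is a valid alternative and arguably more uniform; its cost is that it is purely existential, whereas the paper's reduction also produces the explicit left adjoint of the bottom functor, which is precisely the formula used immediately afterwards to identify $\Sigma\C{n}\cong\C{n+1}$ and $\Sigma\partial\C{n}\cong\partial\C{n+1}$ (though that computation can of course be carried out separately under your approach as well).
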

\begin{proof}
    The first assertion follows from \cref{hom-valued-in-graphs}, so we only need to prove the second assertion.
    We know that all categories in \eqref{dgm:sliced-suspension-compatibility-enriched-graph} are cocomplete and moreover the functors
    \[
    \StrCats{\omega} \to \GSet, \quad \WkCats{\omega} \to \GSet
    \]
    and their sliced variants are all monadic.
    Hence, by \cite[Theorem~3.7.3.(b)]{Barr_Wells_TTT}, it suffices to show that the bottom horizontal functor is a right adjoint.
    It indeed admits a left adjoint, which sends a globular set $A$ to the bipointed globular set $(\star,\star') \colon G^0+G^0 \to A'$ given by
    \[
    A'_n = \begin{cases}
        \{\star,\star'\} & n=0,\\
        A_{n-1} & n \ge 1
    \end{cases}
    \]
    where $s_0(a)=\star$ and $t_0(a)=\star'$ for all $a \in A'_1$.
\end{proof}

\begin{remark}
    We do not know whether the hom-suspension adjunction for weak $\omega$-categories in \cref{suspension-compatibility} coincides with the version constructed by Benjamin and Markakis \cite{Benjamin_Markakis_hom}; see the ``related work'' section of that paper.
\end{remark}

\begin{definition}
	For each weak $\omega$-category $X$, we write $\bigl(\Sigma X, (\star,\star')\bigr)$ for its image under the left adjoint of $\mathrm{hom}\colon\C{0}+\C{0}/\WkCats\omega\to\WkCats\omega$.
    The components of the unit of this adjunction will be denoted by $\Scell \colon X\to\Sigma X(\star,\star')$.
    The \emph{suspension} functor $\Sigma \colon \WkCats{\omega} \to \WkCats{\omega}$ is the composite of this left adjoint with the forgetful functor $\C{0}+\C{0}/\WkCats\omega\to\WkCats\omega$.
\end{definition}

Note that the functor $\Sigma$ preserves all connected colimits.

\begin{proposition}
	\label{suspension-preserves-canonical-compositions}
	For each weak $\omega$-category $X$, the family of functions $(\Scell\colon X_n\to \Sigma X_{n+1})_{n\in\N}$ preserves binary compositions and identities.
    More precisely, we have
    \[
    \Scell\bigl(\id{n+1}{X}{z}\bigr)=\id{n+2}{\Sigma X}{\Scell z}
    \quad\text{and}\quad
    \Scell(u\comp{n}{X}v)=\Scell u\comp{n+1}{\Sigma X}\Scell v
    \]
	for any $n\geq0$, $n$-cell $z$, and composable pair of $n+1$-cells $u,v$ in $X$.
\end{proposition}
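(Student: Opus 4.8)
The plan is to recognise this as an immediate consequence of the fact that $\Scell$ is a strict $\omega$-functor, combined with the explicit description of the hom weak $\omega$-category structure given in \cref{hom-valued-in-graphs}. By construction, $\Scell$ is the component at $X$ of the unit of the hom--suspension adjunction of \cref{suspension-compatibility}, so in particular it is a strict $\omega$-functor $\Scell\colon X\to\Sigma X(\star,\star')$ between weak $\omega$-categories. It is degree-preserving as such; the indexing $\Scell\colon X_n\to\Sigma X_{n+1}$ appearing in the statement merely reflects the identification, noted in \cref{hom-valued-in-graphs}, of an $n$-cell of $\Sigma X(\star,\star')$ with an $(n+1)$-cell of $\Sigma X$ whose $0$-dimensional boundary is $(\star,\star')$.

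First I would apply \cref{str-fun-pres-comp-and-id} to the strict $\omega$-functor $\Scell\colon X\to\Sigma X(\star,\star')$. This yields, for every $m\geq1$, every $(m-1)$-cell $x$ of $X$, and every composable pair of $m$-cells $u,v$ of $X$,
\[
\Scell\bigl(\id{m}{X}{x}\bigr)=\id{m}{\Sigma X(\star,\star')}{\Scell x}
\qquad\text{and}\qquad
\Scell\bigl(u\comp{m-1}{X}v\bigr)=\Scell u\comp{m-1}{\Sigma X(\star,\star')}\Scell v .
\]
Next I would rewrite the right-hand sides using the final two bullet points of \cref{hom-valued-in-graphs}, according to which one has $\id{m}{\Sigma X(\star,\star')}{z}=\id{m+1}{\Sigma X}{z}$ and $z\comp{m-1}{\Sigma X(\star,\star')}z'=z\comp{m}{\Sigma X}z'$ once cells of $\Sigma X(\star,\star')$ are regarded as cells of $\Sigma X$ of one dimension higher. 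Substituting $m=n+1$ then gives precisely the two displayed identities of the statement.

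Since every step is a direct appeal to a result already established in the excerpt, there is no genuine obstacle to overcome. The only points requiring care are the observation that $\Scell$ really is a morphism in $\WkCats{\omega}$ (automatic, being a unit component of an adjunction living in that category) and the dimension bookkeeping when passing between $\Sigma X$ and its hom $\Sigma X(\star,\star')$.
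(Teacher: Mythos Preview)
Your proposal is correct and follows essentially the same approach as the paper: both arguments combine the fact that $\Scell\colon X\to\Sigma X(\star,\star')$ is a strict $\omega$-functor (hence preserves identities and compositions by \cref{str-fun-pres-comp-and-id}) with the dimension-shifting identities for hom weak $\omega$-categories recalled at the end of \cref{hom-valued-in-graphs}. Your write-up simply spells out the bookkeeping that the paper leaves implicit.
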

\begin{proof}
    This follows from \cite[Corollary~2.5.6]{FHM1} (recalled at the end of \cref{hom-valued-in-graphs}) and the fact that $\Scell\colon X\to\Sigma X(\star,\star')$ is a strict $\omega$-functor.
\end{proof}

For each $n\geq0$, the left adjoint of $\mathrm{hom}\colon G^0+G^0/\GSet\to\GSet$ sends $G^n$ and $\partial G^n$ to $G^{n+1}$ and $\partial G^{n+1}$ (with the obvious distinguished points) respectively.
Therefore, \cref{suspension-compatibility} implies that we have canonical isomorphisms $\Sigma\C{n}\cong\C{n+1}$ and $\Sigma\partial\C{n}\cong\partial\C{n+1}$.
For each weak $\omega$-category $X$ and an $n$-cell $x$ in $X$, the strict $\omega$-functor $\C{n+1}\to\Sigma X$ picking out $\Scell x$ is
the composite
\[
	\C{n+1}\cong\Sigma\C{n}\to\Sigma X
\]
where the first isomorphism is this canonical one and the second factor is obtained by applying $\Sigma$ to the the strict $\omega$-functor $\C{n} \to X$ picking out $x$.

\begin{definition}\label{def:suspension-marked}
    We define the \emph{suspension} of a marked weak $\omega$-category $(X,tX)$ to be the weak $\omega$-category $\Sigma X$ equipped with the marking $t\Sigma X=\{\Scell x\,|\, x\in tX\}$.
    This assignation extends to a functor $\Sigma\colon\mWkCats{\omega}\to\mWkCats{\omega}$ in the obvious manner.
\end{definition}

It easily follows from \cref{rem:colimits-in-m-WkCats-omega} that the suspension functor $\Sigma \colon \mWkCats{\omega} \to \mWkCats{\omega}$ preserves all connected colimits.
In particular, it preserves the marked weak $\omega$-categories $\FF^n$ in the following sense.

\begin{proposition}
	\label{suspension-compatibility-with-Fn}
	For each $n\geq1$,
	there is an isomorphism
	$\FF^{n+1}\cong\Sigma\FF^{n}$ in $\mWkCats{\omega}$ that sends the generating cells $\uF,\vF,\wF,\pF$, and $\qF$ of $\FF^{n+1}$ in \cref{def:Fn}
	to $\Scell \uF$, $\Scell\vF$, $\Scell\wF$, $\Scell \pF$, and $\Scell \qF$ respectively.
\end{proposition}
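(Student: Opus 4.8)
The plan is to apply the suspension functor $\Sigma\colon\mWkCats{\omega}\to\mWkCats{\omega}$ directly to the colimit presentation of $\FF^n$ from \cref{def:Fn} and verify that what comes out is the analogous presentation of $\FF^{n+1}$. Recall that $\Sigma$ on $\mWkCats{\omega}$ preserves connected colimits (as observed after \cref{def:suspension-marked}), and that $\FF^n$ is built from the marked weak $\omega$-categories $\mC{n}$, $(\C n)^\flat$, $(\partial\C n)^\flat$, $\mC{n+1}$, and $(\partial\C{n+1})^\flat$ by a finite sequence of pushouts along the inclusions $\iota^n$ and $\iota^{n+1}$ and along cell- and parallel-pair-picking maps. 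Hence it suffices to track the effect of $\Sigma$ on these data. Throughout, I use the same symbols $\uF,\vF,\wF,\pF,\qF$ for the generators of $\FF^n$ and of $\FF^{n+1}$, letting the context disambiguate.

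First I would record the effect of $\Sigma$ on the building blocks. By \cref{suspension-compatibility} and the discussion following it, there are canonical isomorphisms $\Sigma\C k\cong\C{k+1}$ and $\Sigma\partial\C k\cong\partial\C{k+1}$ in $\WkCats{\omega}$, under which $\Scell c_k$ corresponds to $c_{k+1}$, $\Sigma\iota^k$ becomes $\iota^{k+1}$, a strict $\omega$-functor picking out a cell $x$ becomes the one picking out $\Scell x$, and consequently a parallel-pair-picking map $\ppair{x,y}$ becomes $\ppair{\Scell x,\Scell y}$. Since the marking of a suspension $\Sigma X$ is $\{\Scell x\mid x\in tX\}$ by \cref{def:suspension-marked}, these upgrade to isomorphisms $\Sigma(\C k)^\flat\cong(\C{k+1})^\flat$, $\Sigma(\partial\C k)^\flat\cong(\partial\C{k+1})^\flat$, and $\Sigma\mC k\cong\mC{k+1}$ in $\mWkCats{\omega}$.

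Next I would apply $\Sigma$ to the two stages of \cref{def:Fn}. Applying $\Sigma$ to the span defining $\FFp^n$ (which consists of copies of $\iota^n$ and of $\ppair{\yF,\xF}$) and transporting along the canonical isomorphisms reproduces the span defining $\FFp^{n+1}$: the central $\C{n+1}$ now carries the fundamental $(n{+}1)$-cell $\Scell\uF$, the two outer copies of $\C{n+1}$ contribute $\Scell\vF$ and $\Scell\wF$, and one uses $\xF=s_{n-1}\uF$, $\yF=t_{n-1}\uF$, hence $\Scell\xF=s_n\Scell\uF$ and $\Scell\yF=t_n\Scell\uF$, to see that the attaching boundaries match. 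Thus $\Sigma\FFp^n\cong\FFp^{n+1}$ compatibly with these generators. Then $\FF^n$ is obtained from $\FFp^n$ by two pushouts along $\iota^{n+1}$, attaching $\pF$ along the parallel pair $(\uF\comp{n-1}{}\vF,\id{n}{}{\xF})$ and $\qF$ along $(\wF\comp{n-1}{}\uF,\id{n}{}{\yF})$. Using \cref{suspension-preserves-canonical-compositions}, which gives $\Scell(\uF\comp{n-1}{}\vF)=\Scell\uF\comp{n}{}\Scell\vF$ and $\Scell(\id{n}{}{\xF})=\id{n+1}{}{\Scell\xF}$ (and likewise for the other pair), the $\Sigma$-images of these two attaching maps are precisely the attaching maps used to build $\FF^{n+1}$ from $\FFp^{n+1}$. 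This produces an isomorphism $\Sigma\FF^n\cong\FF^{n+1}$ of underlying weak $\omega$-categories sending the generators $\uF,\vF,\wF,\pF,\qF$ of $\FF^{n+1}$ to $\Scell\uF,\Scell\vF,\Scell\wF,\Scell\pF,\Scell\qF$. It carries the marking $t\FF^{n+1}=\{\uF,\pF,\qF\}$ onto $\{\Scell\uF,\Scell\pF,\Scell\qF\}=t\Sigma\FF^n$, so it is an isomorphism in $\mWkCats{\omega}$, which is what we want.

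The only genuine work lies in the third step: confirming that $\Sigma$ carries each attaching map of \cref{def:Fn} to the corresponding attaching map for $\FF^{n+1}$. This is where the compatibility of $\Sigma$ with the objects $\C\bullet,\partial\C\bullet$, the inclusions $\iota^\bullet$ and the cell-picking maps (\cref{suspension-compatibility}) must be combined with its compatibility with binary composition and identities (\cref{suspension-preserves-canonical-compositions}); there is no conceptual obstacle, but one must keep the degree shift and the canonical isomorphisms consistent throughout.
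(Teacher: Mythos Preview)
Your proposal is correct and follows essentially the same approach as the paper: apply $\Sigma$ to the two-stage colimit description of $\FF^n$ from \cref{def:Fn}, use that $\Sigma$ preserves connected colimits together with the canonical isomorphisms $\Sigma\C{k}\cong\C{k+1}$ and $\Sigma\partial\C{k}\cong\partial\C{k+1}$, and invoke \cref{suspension-preserves-canonical-compositions} to match the attaching maps for $\pF$ and $\qF$. The paper's proof is organised identically, only with slightly less commentary on the marked structure.
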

\begin{proof}
	Recall that $\FF^{n}$ is constructed in the following two steps.
    Firstly we take the colimit of the solid part of
	\[
		\begin{tikzcd}[column sep =12ex]
			(\partial\C {n})^\flat
			\ar[r,"\ppair{\tau^{n-1},\sigma^{n-1}}"]
			\ar[d,hook,"\ppair{\sigma^{n-1},\tau^{n-1}}"']
			\ar[rd,dashed,"\ppair{\yF,\xF}"description]
				&
				\mC{n}
				\ar[d,dashed,"\uF"]
					&
					(\partial\C {n})^\flat
					\ar[l,"\ppair{\tau^{n-1},\sigma^{n-1}}"']
					\ar[d,hook,"\ppair{\sigma^{n-1},\tau^{n-1}}"]
			\\
			(\C {n})^\flat
			\ar[r,dashed,"\vF"]
				&
				\FFp^{n}
					&
					(\C {n})^\flat
					\ar[l,dashed,"\wF"']
		\end{tikzcd}
    \]
     which yields a marked weak $\omega$-category $\FFp^{n}$.
	Then we take the colimit of the solid part of
	\[
		\begin{tikzcd}[column sep = 20ex]
			(\partial\C{n+1})^\flat
			\ar[r,"\ppair{\uF\comp{n-1}{}\vF,\id{n}{}{\xF}}"]
			\ar[d,hook]
				&
				\FFp^{n}
				\ar[d,dashed]
					&
					(\partial\C{n+1})^\flat
					\ar[l,"\ppair{\wF\comp{n-1}{}\uF,\id{n}{}{\yF}}"']
					\ar[d,hook]
			\\
			\mC{n+1}
			\ar[r,dashed,"\pF"]
				&
				\FF^{n}
					&
					\mC{n+1}
					\ar[l,dashed,"\qF"']
		\end{tikzcd}
	\]
    which yields $\FF^{n}$.
    The suspension functor $\Sigma \colon \mWkCats{\omega} \to \mWkCats{\omega}$ preserves connected colimits, and in particular the above defining colimit diagram of $\FFp^{n}$.
    Combining the resulting diagram with the the canonical isomorphisms $\Sigma\partial\C{n}\cong\partial\C{n+1}$ and $\Sigma\C{n}\cong\C{n+1}$ yields the following colimit diagram.
    \[
    		\begin{tikzcd}[column sep =10ex]
			(\partial\C {n+1})^\flat
			\ar[r,"\ppair{\tau^{n},\sigma^{n}}"]
			\ar[d,hook,"\ppair{\sigma^{n},\tau^{n}}"']
				&
				\mC{n+1}
				\ar[d,dashed]
					&
					(\partial\C {n+1})^\flat
					\ar[l,"\ppair{\tau^{n},\sigma^{n}}"']
					\ar[d,hook,"\ppair{\sigma^{n},\tau^{n}}"]
			\\
			(\C {n+1})^\flat
			\ar[r,dashed]
				&
				\Sigma \FFp^{n}
					&
					(\C {n+1})^\flat
					\ar[l,dashed]
		\end{tikzcd}
    \]
    Comparing it with the defining colimit diagram of $\FFp^{n+1}$, we obtain an isomorphism $\Sigma\FFp^{n}\cong\FFp^{n+1}$ that sends $\Scell u,\Scell v,\Scell w$ to $u,v,w$ in $\FFp^{n+1}$.

    The desired isomorphism $\Sigma\FF^{n}\cong\FF^{n+1}$ can be obtained by applying a similar argument to the second diagram.
    The compatibility of the upper horizontal maps and the canonical isomorphisms $\Sigma\partial\C{n+1}\cong\partial\C{n+2}$ follows from \cref{suspension-preserves-canonical-compositions}.
\end{proof}

\subsection{Presenting \texorpdfstring{$\EF{n}$}{EFn} as the free system of witnesses}
    Fix $n \ge 1$.
    In this subsection, we provide another presentation of $\EF{n}$ which uses the suspension construction.
    The starting point is the following observation.

\begin{lemma}
	For each $m \ge 0$, there is a bijection
	\[
		\marking{n,m}
		\times\{p,q\}
		\cong
		\marking{n,m+1}
	\]
	which sends $(u,p)$ to $f^{m+1}_u(\pF)$ and $(u,q)$ to $f^{m+1}_u(\qF)$ where $\pF$ and $\qF$ are the (generating) marked $(n+m+1)$-cells of $\FF^{n+m}$ in \cref{def:Fn} and $f^{m+1}_u \colon \FF^{n+m} \to \EF{n+m+1}$ is the strict $\omega$-functor in \cref{def:EF-n-m}.
\end{lemma}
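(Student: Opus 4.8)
The plan is to read the statement off \cref{lem:E-n-m-pushout-markings}. Unravelling \cref{def:EF-n-m}, the set $\marking{n,m+1}$ of marked cells of $\EF{n,m+1}$ in dimension $n+m+1$ is precisely $(t\EF{n,m+1})_{n+m+1}$, and the second of the two functions appearing in \eqref{eqn:maps-between-marked-cells}, namely
\[
    (tf^{m+1})_{n+m+1}\colon \coprod_{u\in \marking{n,m}}(t\FF^{n+m})_{n+m+1}\longrightarrow \marking{n,m+1},
\]
is a bijection. So it will suffice to identify the domain of this map with $\marking{n,m}\times\{p,q\}$ in such a way that the copy of $\pF$ (resp.\ $\qF$) in the $u$-th summand corresponds to $(u,p)$ (resp.\ $(u,q)$).

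First I would pin down $(t\FF^{n+m})_{n+m+1}$. By \cref{def:Fn}, applied with $n$ there replaced by $n+m$, the marking of $\FF^{n+m}$ consists of exactly the cell $\uF$ in dimension $n+m$ together with the cells $\pF$ and $\qF$ in dimension $n+m+1$; hence $(t\FF^{n+m})_{n+m+1}=\{\pF,\qF\}$. I then need to know that this is a two-element set, i.e.\ that $\pF\neq\qF$. This is the one point that is not purely formal, and I would argue it by exhibiting a strict $\omega$-functor $\FF^{n+m}\to\cosep{n+m+1}$ that sends every cell of dimension at most $n+m$ to $\star$, sends $\pF$ to $w_{n+m+1}$, and sends $\qF$ to $\star$; this assignment is compatible with the colimit presentation of $\FF^{n+m}$ recalled in the proof of \cref{suspension-compatibility-with-Fn}, because in dimensions $n+m$ and $n+m+1$ the codomain $\cosep{n+m+1}$ only sees the common boundary $\ppair{\star,\star}$, so there is nothing to obstruct the construction. (The same kind of separating functor is already used in \cref{distinguishing-a-in-tF}.)

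Granting $(t\FF^{n+m})_{n+m+1}=\{\pF,\qF\}$ with $\pF\neq\qF$, the bijection $\{p,q\}\cong(t\FF^{n+m})_{n+m+1}$ given by $p\mapsto\pF$, $q\mapsto\qF$ induces
\[
    \marking{n,m}\times\{p,q\}\;\cong\;\coprod_{u\in\marking{n,m}}\{p,q\}\;\cong\;\coprod_{u\in\marking{n,m}}(t\FF^{n+m})_{n+m+1},
\]
and composing with the bijection $(tf^{m+1})_{n+m+1}$ yields the desired bijection $\marking{n,m}\times\{p,q\}\cong\marking{n,m+1}$. Tracing an element through this composite: $(u,p)$ corresponds to the element $\pF$ of the $u$-th summand of $\coprod_{u\in\marking{n,m}}\FF^{n+m}$, which $f^{m+1}$ sends to $f^{m+1}_u(\pF)$, and symmetrically $(u,q)$ is sent to $f^{m+1}_u(\qF)$, as claimed. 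Thus the whole argument is formal once the inequality $\pF\neq\qF$ — equivalently, the fact that $\FF^{n+m}$ carries two marked cells rather than one in its top dimension — has been verified, and that is the only real obstacle.
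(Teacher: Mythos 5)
Your proposal is correct and follows essentially the same route as the paper: the paper's proof is exactly the reduction to \cref{lem:E-n-m-pushout-markings} together with the identifications $(t\FF^{n+m})_{n+m+1}=\{\pF,\qF\}$ and $(t\EF{n,m+1})_{n+m+1}=\marking{n,m+1}$. Your additional verification that $\pF\neq\qF$ via a separating functor to $\cosep{n+m+1}$ (in the spirit of \cref{distinguishing-a-in-tF}) is a sound way of making explicit a point the paper treats as evident.
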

\begin{proof}
    This follows from \cref{lem:E-n-m-pushout-markings}, as we have $(t\FF^{n+m})_{n+m+1}=\{p_\FF,q_\FF\}$ and $(t\EF{n,m+1})_{n+m+1}=\marking{n,m+1}$.
\end{proof}

Applying this lemma iteratively, we obtain an isomorphism $\marking{n,m} \cong \{p,q\}^m$ and so the pushout square \cref{E-n-m-pushout} can be rewritten as
\[
	\begin{tikzcd}[column sep = large]
		\displaystyle\coprod_{\phi\in\{p,q\}^{m}}\mC{n+m}
		\ar[r,"{g^{m}}"]
		\ar[d]
		\ar[dr,phantom,"\ulcorner"very near end]
			&
			\EF{n,m}
			\ar[d,"{e^{m+1}}"]
		\\
		\displaystyle\coprod_{\phi\in\{p,q\}^{m}}\FF^{n+m}
		\ar[r,"f^{m+1}"']
			&
			\EF{n,m+1}
	\end{tikzcd}
\]
where $g^0\colon\mC{n}\to\EF{n,0}=\mC{n}$ is the identity, and
$g^{m+1}$ is characterised as the unique strict $\omega$-functor such that, for each $\phi \in \{p,q\}^m$, its restrictions
\[
g^{m+1}_{\phi p}, g^{m+1}_{\phi q}\colon \mC{n+m+1} \to \EF{n,m+1}
\]
onto the $\phi p$-th and the $\phi q$-th factors send $c_{n+m+1}$ to $f^m_\phi(\pF)$ and $f^m_\phi(\qF)$ respectively; here the expression $\phi p$ means the sequence obtained from $\phi$ by appending $p$, and similarly for $\phi q$.

It follows that unpacking the data of a cocone under the sequence \cref{dgm:EF-sequence}
with nadir $X$ (we borrowed this terminology from \cite[Definition~3.1.2]{Riehl-context}) yields precisely the following data.

\begin{definition}\label{system-of-witnesses}
    Let $X$ be a marked weak $\omega$-category and let $n\geq1$.
    A \emph{system of witnesses} of dimension $n$ in $X$ consists of
    \begin{itemize}
        \item a marked $(n+m)$-cell $x_\phi$ in $X$, and
        \item $(n+m)$-cells $x_{\phi v}, x_{\phi w}$ in $X$
    \end{itemize}
    for each sequence $\phi \in \{p,q\}^m$ of length $m \ge 0$, whose source and target are given by
    \[
    \begin{split}
        x_{\phi v}, x_{\phi w} & \colon t_{n+m-1}(x_\phi) \to s_{n+m-1}(x_\phi),\\
        x_{\phi p} & \colon x_{\phi}\comp{n+m-1}{} x_{\phi v} \to \id{n+m}{}{s_{n+m-1}(x_\phi)}, \text{ and}\\
        x_{\phi q} & \colon x_{\phi w}\comp{n+m-1}{} x_{\phi}\to \id{n+m}{}{t_{n+m-1}(x_\phi)}.
    \end{split}
    \]
    Equivalently, the quintuple $(x_\phi,x_{\phi v},x_{\phi w},x_{\phi p},x_{\phi q})$ must specify a (marking-preserving) strict $\omega$-functor $\FF^{n+m}\to X$ for each $\phi \in \{p,q\}^m$.
\end{definition}

We now describe a more elementary diagram in $\mWkCats{\omega}$ such that a cocone under that diagram with nadir $X$ is precisely a system of witnesses in $X$.

\begin{definition}
	\label{def:system-of-witnesses-as-cocone}
	We write $\sys$ for the free category generated by the following graph.
	\begin{itemize}
		\item %
            The set of vertices is $\bigl\{\zig{\phi} \mid \phi\in\{p,q\}^*\bigr\} \cup \bigl\{\zag{\phi} \mid \phi\in\{p,q\}^*\bigr\}$.
		\item %
            For each $\phi \in\{p,q\}^*$, there are edges
            \[
            \alpha_\phi \colon \zig{\phi} \to \zag{\phi}, \quad \beta_{\phi p}\colon \zig{\phi p} \to \zag{\phi}, \quad\text{and}\quad \beta_{\phi q} \colon \zig{\phi q}\to \zag{\phi}.\qedhere
            \]
	\end{itemize}
\end{definition}
    The generating graph of the category $\sys$ can be visualised as follows:
	\[\begin{tikzcd}[column sep=12pt]
		&&&&&&& {\zig{\varepsilon}} \\
		&&& {\zig{p}} &&&& {\zag{\varepsilon}} &&&& {\zig{q}} \\
		& {\zig{pp}} && {\zag{p}} && {\zig{pq}} &&&& {\zig{qp}} && {\zag{q}} && {\zig{qq}} \\
		{\zig\cdot} & {\zag\cdot} & {\zig\cdot} && {\zig\cdot} & {\zag\cdot} & {\zig\cdot} && {\zig\cdot} & {\zag\cdot} & {\zig\cdot} && {\zig\cdot} & {\zag\cdot} & {\zig\cdot} \\
		\vdots && \vdots && \vdots && \vdots && \vdots && \vdots && \vdots && \vdots
		\arrow["{\alpha_\varepsilon}", from=1-8, to=2-8]
		\arrow["{\beta_p}", from=2-4, to=2-8]
		\arrow["{\alpha_{p}}", from=2-4, to=3-4]
		\arrow["{\beta_q}"', from=2-12, to=2-8]
		\arrow["{\alpha_{q}}", from=2-12, to=3-12]
		\arrow["{\beta_{pp}}", from=3-2, to=3-4]
		\arrow["{\alpha_{pp}}", from=3-2, to=4-2]
		\arrow["{\beta_{pq}}"', from=3-6, to=3-4]
		\arrow["{\alpha_{pq}}", from=3-6, to=4-6]
		\arrow["{\beta_{qp}}", from=3-10, to=3-12]
		\arrow["{\alpha_{qp}}", from=3-10, to=4-10]
		\arrow["{\beta_{qq}}"', from=3-14, to=3-12]
		\arrow["{\alpha_{qq}}", from=3-14, to=4-14]
		\arrow["{\beta_{ppp}}", from=4-1, to=4-2]
		\arrow[from=4-1, to=5-1]
		\arrow["{\beta_{ppq}}"', from=4-3, to=4-2]
		\arrow[from=4-3, to=5-3]
		\arrow["{\beta_{pqp}}", from=4-5, to=4-6]
		\arrow[from=4-5, to=5-5]
		\arrow["{\beta_{pqq}}"', from=4-7, to=4-6]
		\arrow[from=4-7, to=5-7]
		\arrow["{\beta_{qpp}}", from=4-9, to=4-10]
		\arrow[from=4-9, to=5-9]
		\arrow["{\beta_{qpq}}"', from=4-11, to=4-10]
		\arrow[from=4-11, to=5-11]
		\arrow["{\beta_{qqp}}", from=4-13, to=4-14]
		\arrow[from=4-13, to=5-13]
		\arrow["{\beta_{qqq}}"', from=4-15, to=4-14]
		\arrow[from=4-15, to=5-15]
	\end{tikzcd}\]

	Note that specifying a diagram $D\colon\sys\to\mathbf C$ in a category $\mathbf C$ is equivalent to specifying the image of the full subcategory 
    \begin{equation}
		\label{dgm:basic-part-system}
		\begin{tikzcd}
				&
				\zig\phi
				\ar[d,"\alpha_\phi"]
					&
			\\
			\zig{\phi p}
			\ar[r,"\beta_{\phi p}"]
				&
				\zag\phi
					&
					\zig{\phi q}
					\ar[l,"\beta_{\phi q}"']
		\end{tikzcd}
    \end{equation}
    for each $\phi \in \{p,q\}^m$ in a coherent manner.
    More precisely, for each $\phi \in \{p,q\}^m$ with $m \ge 1$, the object $\zig{\phi}$ appears in exactly two of such full subcategories (once at the top and once in the bottom row), and the value $D(\zig \phi)$ must be consistent between these two appearances.
\begin{definition}\label{def:C-n-F}
	For each $n\geq1$, define a functor $\tw{n}\colon\sys\to\mWkCats\omega$ by sending \eqref{dgm:basic-part-system} to
    \begin{equation}\label{diagram-for-systems}
        \begin{tikzcd}
				&
				\mC{n+m}
                \ar[d,"\uF"]
                    &
            \\
			\mC{n+m+1}
            \ar[r,"\pF"]
                &
				\FF^{n+m}
                    &
					\mC{n+m+1}
                    \ar[l,"\qF"']\text{,}
        \end{tikzcd}
    \end{equation}
    for each $\phi \in \{p,q\}^m$
    where $\uF,\pF,\qF$ are the generating marked cells of $\FF^{n+m}$ in \cref{def:Fn}.
\end{definition}

The admittedly strange notation $\tw{n}$ is motivated by the following mnemonics.
\begin{itemize}
    \item The relative position $\substack{C\\F}$ suggests that the vertical map in \eqref{diagram-for-systems} has domain $\mC{n+m}$ and codomain $\FF^{n+m}$.
    \item The upper half $\frac{C}{}$ suggests that underlined objects $\zig{\phi}$ are sent to $\mC{n+m}$.
    \item The lower half $\frac{}{F}$ suggests that overlined objects $\zag{\phi}$ are sent to $\FF^{n+m}$.
\end{itemize}

We have defined the functor $\tw{n}$ so that the following holds.

\begin{proposition}\label{presentation-2-justification}
    The category of cocones under $\tw{n}$ is isomorphic to
    the category of cocones under the sequence \cref{dgm:EF-sequence}, and this isomorphism commutes with the evident forgetful functors to $\mWkCats{\omega}$.
    In particular, $\EF{n}$ is a colimit of $\tw{n}$.
\end{proposition}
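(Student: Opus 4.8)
The plan is to identify, naturally in the nadir, both categories of cocones with a common description in terms of \emph{systems of witnesses of dimension $n$} (\cref{system-of-witnesses}). A morphism of cocones under a fixed diagram in $\mWkCats{\omega}$ is a strict $\omega$-functor between the nadirs, unique among those commuting with the legs; consequently a bijection, natural in $X\in\mWkCats{\omega}$ (i.e.\ compatible with pushing cocones forward along strict $\omega$-functors), between the cocones under $\tw{n}$ with nadir $X$ and the cocones under the sequence \eqref{dgm:EF-sequence} with nadir $X$ assembles automatically into an isomorphism of the two cocone categories commuting with their forgetful functors to $\mWkCats{\omega}$. So it suffices to produce such a natural bijection, and I would do so by identifying each side separately with systems of witnesses.

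First I would treat the $\tw{n}$ side, which is immediate from freeness. Since $\sys$ is the free category on the graph of \cref{def:system-of-witnesses-as-cocone}, a cocone under $\tw{n}$ with nadir $X$ consists of strict $\omega$-functors $a_\phi\colon\mC{n+m}\to X$ (at the vertices $\zig{\phi}$, $m=|\phi|$) and $b_\phi\colon\FF^{n+m}\to X$ (at the vertices $\zag{\phi}$) making the images of the generating edges under $\tw{n}$ commute. The relations at $\alpha_\phi$, $\beta_{\phi p}$, $\beta_{\phi q}$ force $a_\phi=b_\phi\circ\uF$, $a_{\phi p}=b_\phi\circ\pF$, and $a_{\phi q}=b_\phi\circ\qF$, so the $a_\phi$ are redundant and the only remaining constraints are the consistency equations $b_{\phi p}\circ\uF=b_\phi\circ\pF$ and $b_{\phi q}\circ\uF=b_\phi\circ\qF$, arising because each $\zig{\psi}$ occurs in two of the basic subdiagrams \eqref{dgm:basic-part-system}. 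Writing each marking-preserving $b_\phi\colon\FF^{n+m}\to X$ as the quintuple $(x_\phi,x_{\phi v},x_{\phi w},x_{\phi p},x_{\phi q})$ of images of the generators $\uF,\vF,\wF,\pF,\qF$ of $\FF^{n+m}$ (\cref{def:Fn}), the consistency equations become precisely the identifications that glue these quintuples into a system of witnesses of dimension $n$ in $X$; this bijection is plainly natural in $X$.

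Next I would treat the sequence \eqref{dgm:EF-sequence}. Iterating the bijection $\marking{n,m}\times\{p,q\}\cong\marking{n,m+1}$ established just above, starting from the singleton $\marking{n,0}$, yields bijections $\marking{n,m}\cong\{p,q\}^m$ rewriting each pushout \eqref{E-n-m-pushout} as one indexed by $\{p,q\}^m$. An induction on $m$, applying the universal property of this pushout at each step, then identifies a cocone under \eqref{dgm:EF-sequence} with nadir $X$ with a family of marking-preserving strict $\omega$-functors $b_\phi\colon\FF^{n+m}\to X$, one for each $m\ge0$ and $\phi\in\{p,q\}^m$, subject exactly to the requirement that whenever $\phi$ arises from a shorter sequence $\phi'$ by appending $p$ (resp.\ $q$), the image of $\uF$ under $b_\phi$ coincides with the image of $\pF$ (resp.\ $\qF$) under $b_{\phi'}$---the compatibility intrinsic to the iterated pushout. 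This is again, by inspection, the data of a system of witnesses of dimension $n$ in $X$, naturally in $X$; it makes rigorous the identification sketched in the paragraph preceding \cref{system-of-witnesses}. (I deliberately use the same letters $b_\phi$ as in the previous paragraph: both bijections send a cocone to the same family $(b_\phi)_\phi$.)

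Composing the two natural bijections gives the isomorphism of cocone categories over $\mWkCats{\omega}$ asserted by the proposition. For the last sentence, a colimit of a diagram is an initial object of its category of cocones; by \cref{def:E-n-omega} and \cref{presentation-1-justification} the colimit of \eqref{dgm:EF-sequence} is $\EF{n,\omega}\cong\EF{n}$, so transporting this initial cocone across the isomorphism---which preserves nadirs since it commutes with the forgetful functors---exhibits $\EF{n}$ as the nadir of an initial, hence colimiting, cocone under $\tw{n}$. The step requiring genuine care is the matching in the third paragraph: one must keep the indexings $\marking{n,m}$ and $\{p,q\}^m$ and the dimension shift $m\mapsto n+m$ perfectly aligned, so that the marked $(n+m+1)$-cells produced by the $(m{+}1)$-st pushout are identified with exactly the cells $\{x_\phi\mid\phi\in\{p,q\}^{m+1}\}$ that a system of witnesses must carry; once that bookkeeping is in place, both identifications and their naturality are routine unwindings of universal properties.
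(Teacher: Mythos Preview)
Your proposal is correct and follows exactly the approach the paper intends: the paper does not give an explicit proof of this proposition, instead stating it immediately after having set up $\sys$ and $\tw{n}$ with the sentence ``We have defined the functor $\tw{n}$ so that the following holds,'' relying on the preceding discussion (which identifies cocones under the sequence \eqref{dgm:EF-sequence} with systems of witnesses via the bijections $\marking{n,m}\cong\{p,q\}^m$) together with the evident observation that cocones under $\tw{n}$ are also systems of witnesses. Your write-up simply makes these two identifications and their naturality explicit, which is precisely the argument the paper leaves to the reader.
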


\begin{corollary}
	\label{suspension-compatibility-with-En}
	For each $n\geq1$,
	the isomorphisms $(\FF^{n+1+m}\cong\Sigma\FF^{n+m})_{m\in\N}$ and $(\mC{n+1+m}\cong\Sigma\mC{n+m})_{m\in\N}$ in \cref{suspension-compatibility-with-Fn} induce an isomorphism $\EF{n+1}\cong\Sigma\EF{n}$.
\end{corollary}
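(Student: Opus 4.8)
The plan is to deduce the statement from the presentation $\EF{n}\cong\colim\tw{n}$ of \cref{presentation-2-justification}, together with the fact recorded just after \cref{def:suspension-marked} that $\Sigma\colon\mWkCats{\omega}\to\mWkCats{\omega}$ preserves all connected colimits. Since the indexing category $\sys$ is connected (each vertex is joined to $\zag\varepsilon$ by a zigzag of the generating edges $\alpha_\bullet,\beta_\bullet$), applying $\Sigma$ to a colimiting cocone over $\tw{n}$ yields a colimiting cocone over the composite $\Sigma\circ\tw{n}\colon\sys\to\mWkCats{\omega}$ with nadir $\Sigma\EF{n}$. Hence it suffices to produce a natural isomorphism $\Sigma\circ\tw{n}\cong\tw{n+1}$ built out of the given componentwise isomorphisms, for then $\Sigma\EF{n}\cong\colim(\Sigma\circ\tw{n})\cong\colim\tw{n+1}\cong\EF{n+1}$ by \cref{presentation-2-justification} again, and unravelling this comparison shows it is the isomorphism induced by the given ones, as claimed.

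To construct the natural isomorphism, note that on the basic subdiagram \eqref{dgm:basic-part-system} indexed by $\phi\in\{p,q\}^m$ the functor $\tw{n+1}$ takes the value
\[
\mC{(n+1)+m+1}\xrightarrow{\pF}\FF^{(n+1)+m}\xleftarrow{\qF}\mC{(n+1)+m+1}
\]
with vertical leg $\mC{(n+1)+m}\xrightarrow{\uF}\FF^{(n+1)+m}$, whereas $\Sigma\circ\tw{n}$ takes the value obtained by applying $\Sigma$ to the corresponding instance of \eqref{diagram-for-systems} for $\tw{n}$. One matches these up objectwise via the canonical isomorphisms $\Sigma\C{k}\cong\C{k+1}$ from just before \cref{def:suspension-marked} (which send $\Scell c_k$ to $c_{k+1}$, hence restrict to isomorphisms $\Sigma\mC{k}\cong\mC{k+1}$ of marked weak $\omega$-categories) and the isomorphisms $\Sigma\FF^{n+m}\cong\FF^{n+m+1}$ of \cref{suspension-compatibility-with-Fn}; the indices line up because $\Sigma\mC{n+m}\cong\mC{(n+1)+m}$, $\Sigma\mC{n+m+1}\cong\mC{(n+1)+m+1}$, and $\Sigma\FF^{n+m}\cong\FF^{(n+1)+m}$. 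Compatibility with the structure maps $\uF,\pF,\qF$ is then immediate from the other fact recalled just before \cref{def:suspension-marked}, namely that $\Sigma$ carries the strict $\omega$-functor picking out a cell $z$ to the one picking out $\Scell z$: for instance $\Sigma(\uF\colon\mC{n+m}\to\FF^{n+m})$ becomes, after transport along the above isomorphisms, the strict $\omega$-functor picking out $\uF$ in $\FF^{(n+1)+m}$, which is exactly the vertical leg of $\tw{n+1}$ on this subdiagram, and likewise for $\pF$ and $\qF$.

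It remains to check that these objectwise isomorphisms cohere into a natural transformation $\Sigma\circ\tw{n}\Rightarrow\tw{n+1}$. By the discussion after \cref{def:system-of-witnesses-as-cocone}, the only potential clash is at a vertex $\zig{\phi}$ with $|\phi|=m\geq1$, which appears both as the apex of the basic subdiagram indexed by $\phi$ and as a bottom corner of the one indexed by $\phi$ with its last letter removed; in both appearances $\tw{n}$ assigns the object $\mC{n+m}$ and we use the single canonical isomorphism $\Sigma\mC{n+m}\cong\mC{n+m+1}$, so there is nothing to reconcile. Each component being invertible, the transformation is an isomorphism, completing the argument. The only step demanding genuine care is this last coherence bookkeeping; everything else is a direct appeal to \cref{suspension-compatibility-with-Fn} and to the two stated properties of $\Sigma$ (preservation of connected colimits and its effect on picking-out maps).
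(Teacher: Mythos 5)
Your proposal is correct and follows essentially the same route as the paper: use $\EF{n}\cong\colim\tw{n}$ from \cref{presentation-2-justification}, the connectedness of $\sys$ and the fact that $\Sigma$ preserves connected colimits, and the isomorphism of diagrams $\Sigma\circ\tw{n}\cong\tw{n+1}$ supplied by \cref{suspension-compatibility-with-Fn}. The only difference is that you spell out the naturality/coherence bookkeeping (including the doubly-appearing vertices $\zig{\phi}$) that the paper leaves implicit.
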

\begin{proof}
    Since the category $\sys$ is connected, we have
    \[
    \Sigma\EF{n} \cong \Sigma (\colim \tw{n}) \cong \colim (\Sigma \circ \tw{n})
    \]
    by the comment just below \cref{def:suspension-marked}.
    By \cref{suspension-compatibility-with-Fn}, the diagram $\Sigma \circ \tw{n}$ is isomorphic to $\tw{n+1}$.
\end{proof}

\subsection{Comparison to the Ozornova--Rovelli model}\label{subsec:comparison-to-OR}
In this subsection, we prove that reflecting (the underlying weak $\omega$-category of) $\EF{1}$ to $\StrCats{\omega}$ yields the strict $\omega$-category $\ORst$, which was first introduced in \cite{OR} and later shown to be weakly contractible (in the folk model structure) in \cite{HLOR}.
We first define a weak $\omega$-category $\OR$ by imitating the original construction of $\ORst$.
\begin{definition}
	\label{defining-ORn}
	Let $\ORm{0} = \C{0} \amalg \C{0}$, whose $0$-cells we call $\ORx$ and $\ORy$.
	Let $\ORm{1}$ be the weak $\omega$-category obtained from $\ORm{0}$ by freely adjoining $1$-cells
	\[
	\ORu \colon \ORx \to \ORy \quad \text{and} \quad \ORv, \ORw \colon \ORy \to \ORx.
	\]
	Write $\ORe{1} \colon \ORm{0} \to \ORm{1}$ for the inclusion, and write
    $\ORp{1},\ORq{1}\colon\Sigma\ORm{0}\to\ORm{1}$ for the strict $\omega$-functors corresponding via the adjunction $\Sigma\dashv\mathrm{hom}$ to
	\[
		\bigl\langle \ORu \comp{0}{} \ORv, \id{1}{}{\ORx}\bigr\rangle\colon\ORm{0}\to\ORm{1}(\ORx,\ORx)
		\quad \text{and} \quad
		\bigl\langle \ORw\comp{0}{} \ORu, \id{1}{}{\ORy}\bigr\rangle\colon\ORm{0}\to\ORm{1}(\ORy,\ORy)
        \text{,}
	\]
    respectively,
    where we identify morphisms from $\ORm{0}$ with pairs of $0$-cells.
	For $m \ge 1$, define the weak $\omega$-category $\ORm{m+1}$ and strict $\omega$-functors
	\[
	\ORe{m+1} \colon \ORm{m} \to \ORm{m+1} \quad \text{and} \quad \ORp{m+1}, \ORq{m+1} \colon \Sigma\ORm{m} \to \ORm{m+1}
	\]
    to be the legs of the colimiting cocone
    \begin{equation}
		\label{dgm:OR-inductive-step}
		\begin{tikzcd}[column sep = huge]
			\Sigma\ORm{m-1}
			\arrow [r, "{\ORp{m}}"]
			\arrow [d, "\Sigma\ORe{m}"']
				&
				\ORm{m}
				\arrow [d, "\ORe{m+1}",dashed]
					&
					\Sigma\ORm{m-1}
					\arrow [d, "\Sigma\ORe{m}"]
					\arrow [l, "{\ORq{m}}", swap] 
			\\
			\Sigma\ORm{m}
			\arrow [r, "{\ORp{m+1}}"',dashed]
				&
				\ORm{m+1}
					&
					\Sigma\ORm{m}
					\arrow [l, "{\ORq{m+1}}",dashed] 
		\end{tikzcd}
    \end{equation}
	in $\WkCats{\omega}$.
	We write $\OR$ for the colimit of the sequence
	\begin{equation}
		\label{dgm:OR-sequence}
        \ORm{0} \xrightarrow{\ORe{1}} \ORm{1} \xrightarrow{\ORe{2}} \ORm{2} \xrightarrow{\ORe{3}} \dots
	\end{equation}
	in $\WkCats{\omega}$.
\end{definition}

\begin{remark}
    Although we have intentionally chosen the notations
	\[
	\ORe{m+1} \colon \ORm{m} \to \ORm{m+1} \quad \text{and} \quad \ORp{m+1}, \ORq{m+1} \colon \Sigma\ORm{m} \to \ORm{m+1}
	\]
    which resemble 
    \[
    e^{m+1} \colon \EF{n,m} \to \EF{n,m+1}
    \quad\text{and}\quad
    f^{m+1}_u \colon \FF^{n+m} \to \EF{n,m+1}
    \]
    from \cref{subsec:presentation-1}, we are not claiming any rigorous relationship between them beyond their playing similar roles.
    In particular, we are giving the same name $e^{m+1}$ to two different strict $\omega$-functors, but this should cause no serious confusion because $e^{m+1} \colon \EF{n,m} \to \EF{n,m+1}$ never appears in this subsection outside of this remark.
\end{remark}

\begin{proposition}\label{reflectiong-OR}
    The strict $\omega$-categorical reflection of $\OR$ is isomorphic to the coherent walking $\omega$-equivalence $\ORst$ constructed in \cite{OR}.
\end{proposition}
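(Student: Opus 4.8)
The plan is to apply the strict $\omega$-categorical reflection, which I will denote $(-)^{\mathrm{str}}\colon\WkCats{\omega}\to\StrCats{\omega}$, to the entire inductive construction of $\OR$ in \cref{defining-ORn}, and to check that it reproduces, stage by stage, the construction of $\ORst$ in \cite{OR}. Since $(-)^{\mathrm{str}}$ is a left adjoint it preserves all colimits, so the only points requiring attention are: (i) that it sends free weak $\omega$-categories on globular sets to the corresponding free strict ones, and (ii) that it intertwines the weak and strict suspension functors.

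For (i): the triangle \eqref{eqn:triangle-over-GSet} gives $U^T = U^L\circ(\text{inclusion})$, and taking left adjoints yields $(-)^{\mathrm{str}}\circ F^L\cong F^T$; hence $(\C{n})^{\mathrm{str}}\cong\Cst{n}$ and $(\partial\C{n})^{\mathrm{str}}\cong\partial\Cst{n}$, naturally in the boundary inclusions and the source/target maps (in particular $\C{0}=\Cst{0}$, being already strict). For (ii): I would take mates in the square of right adjoints formed by the two $\mathrm{hom}$ functors and the two inclusion functors in the top two rows of \eqref{dgm:sliced-suspension-compatibility-enriched-graph} (\cref{suspension-compatibility}). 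The left adjoints of the $\mathrm{hom}$ functors are the (pointed) suspensions and the left adjoints of the inclusions are the reflections; using $\C{0}=\Cst{0}$, the induced reflection $\C{0}+\C{0}/\WkCats{\omega}\to\Cst{0}+\Cst{0}/\StrCats{\omega}$ is just $(\C{0}+\C{0}\to X)\mapsto(\Cst{0}+\Cst{0}\to X^{\mathrm{str}})$ and commutes with the forgetful functors to $\WkCats{\omega}$ and $\StrCats{\omega}$. Composing the resulting mate square with these forgetful functors gives a natural isomorphism $(-)^{\mathrm{str}}\circ\Sigma\cong\Sigma\circ(-)^{\mathrm{str}}$, which one checks is compatible with the canonical isomorphisms $\Sigma\C{n}\cong\C{n+1}$, $\Sigma\Cst{n}\cong\Cst{n+1}$ and with the units $\Scell$.

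With (i) and (ii) in hand, I would prove by induction on $m\ge0$ that $(-)^{\mathrm{str}}$ carries $\ORm{m}$ to the $m$-th stage $\ORstm{m}$ of the construction of $\ORst$, and the maps $\ORe{m+1},\ORp{m},\ORq{m}$ to their strict analogues. The base case $m=0$ is $(\C{0}\amalg\C{0})^{\mathrm{str}}\cong\Cst{0}\amalg\Cst{0}$. The step $m=0\to 1$ uses that adjoining the $1$-cells $\ORu,\ORv,\ORw$ is a colimit along boundary inclusions, preserved by $(-)^{\mathrm{str}}$ by (i), together with the $\Sigma\dashv\mathrm{hom}$ descriptions of $\ORp{1},\ORq{1}$, preserved by (i) and (ii). The inductive step $m\to m+1$ applies $(-)^{\mathrm{str}}$ to the colimiting cocone \cref{dgm:OR-inductive-step}: using that $(-)^{\mathrm{str}}$ preserves colimits and, by the inductive hypothesis together with (ii), that it identifies $\Sigma\ORm{m}$ with $\Sigma\ORstm{m}$ over the legs $\Sigma\ORe{m}$, one reads off that $(\ORm{m+1})^{\mathrm{str}}$ with its structure maps is the corresponding strict colimit. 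Finally, applying $(-)^{\mathrm{str}}$ to the colimit \cref{dgm:OR-sequence} and invoking colimit-preservation once more (\cref{Wk-omega-Cat-LFP} ensures these sequential colimits are well-behaved) yields $(\OR)^{\mathrm{str}}\cong\colim_m\ORstm{m}\cong\ORst$.

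The main obstacle, I expect, is not any single categorical maneuver but the bookkeeping needed to pin down that \cref{defining-ORn} is the verbatim weak-$\omega$-categorical transcription of the construction in \cite{OR} — in particular that the latter is built using the strict suspension and the same pattern of adjoining a ``$p$''- and a ``$q$''-witness at each stage — and then verifying, for each of the three kinds of step above, that $(-)^{\mathrm{str}}$ carries the universal property defining the weak stage precisely onto the one defining the strict stage. The low-dimensional steps, where suspensions of the disconnected object $\C{0}\amalg\C{0}$ and of boundary inclusions intervene, will need the most care, since there $\Sigma$ does not distribute over the coproduct and one must fall back on the explicit description of the pointed-suspension left adjoint from \cref{suspension-compatibility}.
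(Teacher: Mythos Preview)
Your proposal is correct and takes essentially the same approach as the paper: apply the reflection to the inductive construction of $\OR$, use that it preserves colimits and intertwines the two suspensions via \cref{suspension-compatibility}, and observe that this carries each defining colimit of $\ORm{m}$ and of $\OR$ to the corresponding defining colimit of $\ORstm{m}$ and $\ORst$ from \cite{OR}. The paper compresses all of this into a single sentence, whereas you have carefully unwound the mate argument for $(-)^{\mathrm{str}}\circ\Sigma\cong\Sigma\circ(-)^{\mathrm{str}}$ and the induction on $m$; your closing worry about $\Sigma(\C{0}\amalg\C{0})$ is unnecessary, since nowhere do you need $\Sigma$ to preserve that coproduct --- only that reflection commutes with $\Sigma$, which \cref{suspension-compatibility} supplies uniformly.
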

\begin{proof}
	By virtue of \cref{suspension-compatibility}, the strict $\omega$-categorical reflection functor $\WkCats\omega\to\StrCats\omega$ sends the defining colimits of each $\ORm{m}$ and $\OR$ to those of $\ORstm{m}$ and $\ORst$ in \cite[Constructions 1.5.12 and 1.5.13]{OR} respectively.
\end{proof}

We now prove that $\OR$ is in fact isomorphic to $\EF{1}$. 
To this end, let $\twp\colon\sys\to\WkCats\omega$ be the unique diagram that sends \cref{dgm:basic-part-system} to
    \begin{equation}
		\label{dgm:tw'-basic-part}
        \begin{tikzcd}
                &
				\Sigma^m\ORm{0}
				\ar[d,"\Sigma^m\ORe{1}"]
                    &
            \\
			\Sigma^{m+1}\ORm{0}
			\ar[r,"\Sigma^m\ORp{1}"]
                &
                \Sigma^m\ORm{1}
                    &
					\Sigma^{m+1}\ORm{0}
                    \ar[l,"\Sigma^m\ORq{1}"']
        \end{tikzcd}
    \end{equation}
	for each $\phi\in\{p,q\}^m$.

\begin{lemma}
    \label{OR-as-colimit-of-tw'}
    The category of cocones under $\twp$ is isomorphic to the category of cocones under the sequence \cref{dgm:OR-sequence}, and this isomorphism commutes with the evident forgetful functors to $\WkCats{\omega}$.
    In particular, $\OR$ is a colimit of $\twp$.
\end{lemma}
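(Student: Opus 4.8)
The plan is to mimic the proof of \cref{presentation-2-justification}: I will describe explicitly the data of a cocone under $\twp$ and that of a cocone under the sequence \cref{dgm:OR-sequence}, match the two descriptions, and check the matching is natural in the nadir; the last assertion then follows at once since $\OR$ was \emph{defined} as the colimit of \cref{dgm:OR-sequence}.

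First I would unpack cocones under $\twp$. Since $\sys$ is freely generated by the graph of \cref{def:system-of-witnesses-as-cocone} and $\twp$ is specified by \cref{dgm:tw'-basic-part}, a cocone under $\twp$ with nadir $X$ is the same thing as a family of strict $\omega$-functors $b_\phi\colon\Sigma^{m}\ORm{1}\to X$, one for each $\phi\in\{p,q\}^m$ (this being the leg at $\zag{\phi}$; the leg at $\zig{\phi}$ is then forced by the edge $\alpha_\phi$ to equal $b_\phi\circ\Sigma^{m}\ORe{1}$), subject only to the relations imposed by the edges $\beta_{\phi p}$ and $\beta_{\phi q}$, namely
\[
b_{\phi p}\circ\Sigma^{m+1}\ORe{1}=b_\phi\circ\Sigma^{m}\ORp{1}
\quad\text{and}\quad
b_{\phi q}\circ\Sigma^{m+1}\ORe{1}=b_\phi\circ\Sigma^{m}\ORq{1}
\]
for every $\phi$ of length $m$. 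On the other side, a cocone under \cref{dgm:OR-sequence} with nadir $X$ is a family $(c_m\colon\ORm{m}\to X)_{m\ge0}$ subject to $c_{m+1}\circ\ORe{m+1}=c_m$.

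Next I would exhibit the bijection. For the direction from $(c_m)$ to $(b_\phi)$ I introduce, for each $\phi$ of length $m$, strict $\omega$-functors $\ell_\phi\colon\Sigma^{m}\ORm{1}\to\ORm{m+1}$ and $\ell'_\phi\colon\Sigma^{m}\ORm{0}\to\ORm{m}$, defined by recursion on $\phi$ via $\ell_\varepsilon=\ell'_\varepsilon=\mathrm{id}$, $\ell_{\phi p}=\ORp{m+2}\circ\Sigma\ell_\phi$, $\ell'_{\phi p}=\ORp{m+1}\circ\Sigma\ell'_\phi$, and symmetrically for $q$; then, using functoriality of $\Sigma$ and the squares built into \cref{dgm:OR-inductive-step}, one checks by induction on $\phi$ the identities $\ell_\phi\circ\Sigma^{m}\ORe{1}=\ORe{m+1}\circ\ell'_\phi$ and $\ell_\phi\circ\Sigma^{m}\ORp{1}=\ORp{m+1}\circ\Sigma\ell'_\phi$ (together with the $q$-analogue of the latter), from which $b_\phi:=c_{m+1}\circ\ell_\phi$ is readily seen to satisfy the displayed relations. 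For the opposite direction I reconstruct $(c_m)$ by induction on $m$, starting from $c_1=b_\varepsilon$ and $c_0=b_\varepsilon\circ\ORe{1}$; at the inductive step the colimit property of \cref{dgm:OR-inductive-step} yields a unique $c_{m+1}$ once we provide $c_m$ together with two strict $\omega$-functors out of $\Sigma\ORm{m}$, and these latter are manufactured by repackaging the length-$m$ part of the family $(b_\phi)$—using that the suspension functor preserves connected colimits, so that a map out of $\Sigma\ORm{m}$ is the same as a compatible family of maps out of the suspended pieces of $\ORm{m}$. One then checks these two assignments are mutually inverse. Naturality in $X$ is automatic since everything is pre-composition with fixed maps, so we obtain an isomorphism of the two cocone categories over $\WkCats\omega$, and therefore $\OR$ is a colimit of $\twp$.

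The step I expect to be the main obstacle is this reconstruction—more precisely, the bookkeeping showing that the binary-tree-indexed data $(b_\phi)_{|\phi|=m}$ assembles into exactly the two maps out of $\Sigma\ORm{m}$ needed to iterate \cref{dgm:OR-inductive-step}. This is the same combinatorial core already present in the passage from \cref{E-n-m-pushout} to \cref{presentation-2-justification}, here complicated only by having to carry suspensions along; since the suspension functor preserves all connected colimits and $\sys$ is connected, this presents no essentially new difficulty, and everything else is formal.
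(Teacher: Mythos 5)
Your overall strategy (unpacking both cocone categories and matching them explicitly) is workable and in the same spirit as the paper, but the explicit formulas you commit to in the direction from $(c_m)$ to $(b_\phi)$ are wrong: recursion on the \emph{last} letter, $\ell_{\phi p}=\ORp{m+2}\circ\Sigma\ell_\phi$, does not support the identities you claim. Already for $\phi=q$ your second identity $\ell_\phi\circ\Sigma^{m}\ORp{1}=\ORp{m+1}\circ\Sigma\ell'_\phi$ reads $\ORq{2}\circ\Sigma\ORp{1}=\ORp{2}\circ\Sigma\ORq{1}$ as maps $\Sigma^{2}\ORm{0}\to\ORm{2}$, and this is false: using the squares $\ORr{2}\circ\Sigma\ORe{1}=\ORe{2}\circ\ORr{1}$ from \cref{dgm:OR-inductive-step}, the left-hand side sends the generating $2$-cells of $\Sigma^{2}\ORm{0}$ to cells whose $1$-dimensional source is $\ORe{2}(\wF\comp{0}{}\uF)$, while the right-hand side produces cells with $1$-dimensional source $\ORe{2}(\id{1}{}{\xF})$, and these are distinct (their $0$-sources are $\yF$ and $\xF$). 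More generally your inductive step with mismatched letters needs $\ORp{m+1}\circ\Sigma\ORq{m}=\ORq{m+1}\circ\Sigma\ORp{m}$, a relation that is simply not present in \cref{dgm:OR-inductive-step} and fails (the two composites land in the two different glued branches). The defect is not confined to the proof of the identity: with your $\ell_\phi$ the family $b_\phi=c_{m+1}\circ\ell_\phi$ genuinely violates the $\beta$-relations, e.g.\ the relation for $\phi=q$, $r=p$ reduces (via the squares) to $c_2\circ\ORp{2}\circ\Sigma\ORq{1}=c_2\circ\ORq{2}\circ\Sigma\ORp{1}$, which fails already for the colimiting cocone into $\OR$.

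The repair is to recurse on the \emph{first} letter: for $\phi$ of length $m$ set $\ell_{p\phi}=\ORp{m+2}\circ\Sigma\ell_\phi$, $\ell'_{p\phi}=\ORp{m+1}\circ\Sigma\ell'_\phi$, and similarly for $q$. Then induction on the first letter, using only functoriality of $\Sigma$ and the squares in \cref{dgm:OR-inductive-step}, gives $\ell_\phi\circ\Sigma^{m}\ORe{1}=\ORe{m+1}\circ\ell'_\phi$ and $\ell_\phi\circ\Sigma^{m}\ORp{1}=\ell'_{\phi p}$ (appending; likewise for $q$), which is exactly what the relation check for $b_\phi:=c_{m+1}\circ\ell_\phi$ requires. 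This prepend-versus-append point is precisely what the paper's proof isolates: it works with the functors $r\concat\colon\sysm{m}\to\sysm{m+1}$ given by \emph{prepending} $r$, because these commute with $\Sigma$ on the nose at the level of the diagram $\twp$, whereas appending does not. Note also that your converse direction tacitly uses an identification of $\ORm{m}$ with the colimit of the truncated diagram (to know that a map out of $\Sigma\ORm{m}$ amounts to a compatible family on suspended pieces); making that precise is essentially the stage-by-stage induction the paper carries out, so once the forward formulas are corrected your argument becomes, in substance, the paper's proof rather than an alternative to it.
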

\begin{proof}
	For each $m\geq0$, we write $\sysm{m}$ for the full subcategory of $\sys$ spanned by the subset
    \[
    {\bigl\{\zig\phi\,|\,\phi\in\{p,q\}^{\leq m}\bigr\}\sqcup\bigl\{\zag\phi\,|\,\phi\in\{p,q\}^{<m}\bigr\}},
    \]
	and $\twpn{m}$ for the restriction of $\twp$ to $\sysm{m}$.
	The inclusion $\sysm{m}\hookrightarrow\sysm{m+1}$ induces a strict $\omega$-functor
    \[
    \ORep{m+1}\colon\colim(\twpn{m})\to\colim(\twpn{m+1})
    \]
    on their colimits.
	This defines a sequence
	\[
		\begin{tikzcd}
			\colim(\twpn{0})
			\ar[r,"\ORep{1}"]
			&
			\colim(\twpn{1})
			\ar[r,"\ORep{2}"]
			&
			\colim(\twpn{2})
			\ar[r,"\ORep{3}"]
			&
			\cdots
		\end{tikzcd}
	\]
    and clearly the category of cocones under this sequence is isomorphic to that of cocones under $\twp$.
    Therefore it suffices to show that this sequence is naturally isomorphic to the sequence \cref{dgm:OR-sequence}.
    We will do so by constructing suitable strict $\omega$-functors $\ORpp{m+1}, \ORqp{m+1} \colon \Sigma \colim(\twpn{m}) \to \colim(\twpn{m+1})$ and comparing the diagram
	\begin{equation}
		\label{dgm:OR-colimiting-cocone}
		\begin{tikzcd}
			\Sigma(\colim\twpn{m-1})
			\ar[r,"\ORpp{m}"]
			\ar[d,"\Sigma\ORep{m}"']
				&
				\colim\twpn {m}
				\ar[d,"\ORep{m+1}",dashed]
					&
					\Sigma(\colim\twpn{m-1})
					\ar[l,"\ORqp{m}"']
					\ar[d,"\Sigma\ORep{m}"]
			\\
			\Sigma(\colim\twpn {m})
			\ar[r,"\ORpp{m+1}"',dashed]
				&
				\colim\twpn {m+1}
					&
					\Sigma(\colim\twpn {m})
					\ar[l,"\ORqp{m+1}",dashed]
		\end{tikzcd}
	\end{equation}
    to \eqref{dgm:OR-inductive-step}.

    For each $r\in\{p,q\}$, the function $r \concat \colon\{p,q\}^*\to\{p,q\}^*$ given by $r \concat \phi = r\phi$ defines a fully faithful functor $\sysm{m}\to\sysm{m+1}$.
    We write $r\sysm{m}$ for its image, which is precisely the full subcategory of $\sysm{m+1}$ spanned by those objects of the form $\zig{r\phi}$ or $\zag{r\phi}$.
    Moreover, we have a commutative square
    \[
    \begin{tikzcd}
        \sysm{m}
        \arrow [r, "\twpn{m}"]
        \arrow [d, "r\concat"'] &
        \WkCats{\omega}
        \arrow [d, "\Sigma"] \\
        \sysm{m+1}
        \arrow [r, "\twpn{m+1}"'] &
        \WkCats{\omega}
    \end{tikzcd}
    \]
    which induces a map between colimits $\colim(\Sigma\circ\twpn{m})\to\colim(\twpn{m+1})$.
    Since $\sysm{m}$ is connected, the functor $\Sigma$ preserves $\sysm{m}$-shaped colimits, and we obtain a morphism
    \[
    \ORrp{m+1}\colon\Sigma\colim(\twpn{m})\to\colim(\twpn{m+1}).
    \]
    Clearly these strict $\omega$-functors fit into the commutative diagram \eqref{dgm:OR-colimiting-cocone}.

    Observe that the category $\sysm{m+1}$ can be written as the union of three full subcategories
    \[
    \sysm{m+1}=p\sysm{m}\cup q\sysm{m}\cup\sysm{m}
    \]
    and the intersection
	$r\sysm{m}\cap\sysm{m}$ is precisely $r\sysm{m-1}$ for each $r\in\{p,q\}$ whereas $p\sysm{m} \cap q\sysm{m}$ is empty.
    It follows that \eqref{dgm:OR-colimiting-cocone} presents $\colim(\twpn{m+1})$ as the colimit of the solid part.
    Since $\ORrp{1}$ and $\ORep{1}$ are isomorphic to $\ORr{1}$ and $\ORe{1}$ respectively, we can now prove by induction on $m$ that the diagram \cref{dgm:OR-colimiting-cocone} is isomorphic to \cref{dgm:OR-inductive-step} for each $m\geq1$.
    This completes the proof.
\end{proof}

Now we are ready to prove the desired isomorphism.

\begin{theorem}\label{OR-EF-coincidence}
    The weak $\omega$-category $\OR$ is isomorphic to (the underlying weak $\omega$-category of) the algebraically $\KF$-injective replacement $\EF{1}$ of $\mC{1}$.
    More generally, we have $\Sigma^n\OR \cong \EF{n+1}$ for each $n\geq0$.
\end{theorem}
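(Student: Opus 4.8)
The plan is to treat the case $n=0$ first and then bootstrap. Once we know that $\OR$ is isomorphic to the underlying weak $\omega$-category of $\EF{1}$, applying the suspension functor $n$ times and invoking \cref{suspension-compatibility-with-En} repeatedly gives $\Sigma^n\OR\cong\Sigma^n\EF{1}\cong\Sigma^{n-1}\EF{2}\cong\dots\cong\EF{n+1}$; here we use that the suspension functor on $\mWkCats{\omega}$ lies over the one on $\WkCats{\omega}$ (\cref{def:suspension-marked}), so these are isomorphisms of weak $\omega$-categories.

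For the case $n=0$, I would compare the colimit presentations $\OR\cong\colim\twp$ (\cref{OR-as-colimit-of-tw'}) and $\EF{1}\cong\colim\tw{1}$ (\cref{presentation-2-justification}); since the forgetful functor $\mWkCats{\omega}\to\WkCats{\omega}$ preserves colimits (\cref{rem:colimits-in-m-WkCats-omega}), the underlying weak $\omega$-category of $\EF{1}$ is the colimit of the composite of $\tw{1}$ with this forgetful functor. The first task is to identify the values of $\twp$ concretely. From $\ORm{0}=\C{0}\amalg\C{0}=\partial\C{1}$ and the canonical isomorphisms $\Sigma\partial\C{k}\cong\partial\C{k+1}$ one gets $\Sigma^m\ORm{0}\cong\partial\C{1+m}$; and since $\ORm{1}$ is generated by exactly the globular data (a parallel pair of $0$-cells together with three $1$-cells) defining the weak $\omega$-category $\FFp^{1}$ of the first step of \cref{def:Fn}, one gets $\ORm{1}\cong\FFp^{1}$, hence $\Sigma^m\ORm{1}\cong\FFp^{1+m}$ via the isomorphism $\Sigma\FFp^{k}\cong\FFp^{k+1}$ produced in the proof of \cref{suspension-compatibility-with-Fn}. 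Under these identifications, $\Sigma^m\ORe{1}$ is the map $\partial\C{1+m}\to\FFp^{1+m}$ picking out the $m$-dimensional source and target of the fundamental cell $\uF$ of $\FFp^{1+m}$, and $\Sigma^m\ORp{1}$, $\Sigma^m\ORq{1}$ are the maps $\ppair{\uF\comp{}{}\vF,\id{}{}{\xF}}$ and $\ppair{\wF\comp{}{}\uF,\id{}{}{\yF}}$ from the second step of \cref{def:Fn} (in dimension $1+m$), the identifications of which use \cref{suspension-preserves-canonical-compositions}.

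With these in place, I would unravel what a cocone under $\twp$ with nadir a weak $\omega$-category $X$ amounts to. A map $\FFp^{1+m}\to X$ is a triple $(u_\phi,v_\phi,w_\phi)$ of $(1+m)$-cells with $u_\phi\colon x_\phi\to y_\phi$ and $v_\phi,w_\phi\colon y_\phi\to x_\phi$; the maps out of the objects $\zig{\phi}$ carry no extra data (the edges $\alpha_\phi$ force them to be boundaries), and the edges $\beta_{\phi p}$, $\beta_{\phi q}$ force precisely that the fundamental cell $u_{\phi p}$ at level $\phi p$ has type $u_\phi\comp{}{}v_\phi\to\id{}{}{x_\phi}$ and that $u_{\phi q}$ has type $w_\phi\comp{}{}u_\phi\to\id{}{}{y_\phi}$. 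By the discussion preceding \cref{def:system-of-witnesses-as-cocone} and by \cref{presentation-2-justification}, a cocone under the composite of $\tw{1}$ with the forgetful functor, with nadir $X$, is exactly an unmarked system of witnesses in $X$: a family $(x_\phi,x_{\phi v},x_{\phi w})_\phi$ in which the triangle-witnesses $x_{\phi p}$, $x_{\phi q}$ are the fundamental cells $x_{(\phi p)}$, $x_{(\phi q)}$ at the deeper levels, subject to the same type constraints. So the two descriptions carry the same data under $u_\phi\leftrightarrow x_\phi$, $v_\phi\leftrightarrow x_{\phi v}$, $w_\phi\leftrightarrow x_{\phi w}$, and this correspondence is natural in $X$; hence $\OR$ and the underlying weak $\omega$-category of $\EF{1}$ corepresent the same functor $\WkCats{\omega}\to\Set$, and the case $n=0$ follows by the Yoneda lemma.

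The step I expect to be the main obstacle is the last one: $\twp$ uses the smaller object $\FFp^{1+m}$ at level $m$, whereas $\tw{1}$ uses $\FF^{1+m}$, which is $\FFp^{1+m}$ with the cells $\pF,\qF$ already adjoined, so the two diagrams are genuinely non-isomorphic and one must verify that this shift is invisible to the colimit---that the cells $\pF,\qF$ of $\FF^{1+m}$ reappear in $\colim\twp$ as precisely the data attached along $\beta_{\phi p},\beta_{\phi q}$. Checking the commutativity of all the triangles relating $\Sigma^m\ORe{1},\Sigma^m\ORp{1},\Sigma^m\ORq{1}$ to the structure maps of \cref{def:Fn} under the chosen isomorphisms, while routine, is where the bulk of the care is needed.
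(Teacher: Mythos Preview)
Your proposal is correct and follows essentially the same route as the paper. The paper likewise identifies $\ORm{0}=\partial\C{1}$ and $\ORm{1}=\FFp^1$, rewrites the basic cell of $\twp$ as a diagram in $\partial\C{m+1}$, $\partial\C{m+2}$, $\FFp^{m+1}$ via \cref{suspension-preserves-canonical-compositions}, and then observes in one line that a cocone under $\twp$ with nadir $X$ is exactly a system of witnesses of dimension $1$ in $X$; the general case is deduced from \cref{suspension-compatibility-with-En}. Your ``main obstacle'' paragraph about $\FFp^{1+m}$ versus $\FF^{1+m}$ is precisely the content of that one-line observation: the cells $\pF,\qF$ missing from $\FFp^{1+m}$ reappear as the fundamental cells $u_{\phi p},u_{\phi q}$ at the next level, so the cocone data match on the nose.
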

\begin{proof}
    Since we have $\ORm{0}=\partial\C{1}$ and $\ORm{1}=\FFp^1$, \cref{suspension-preserves-canonical-compositions} implies that we can identify the diagram \cref{dgm:tw'-basic-part} with the following diagram
    \[
	\begin{tikzcd}[column sep=17ex]
			&
			\partial\C{m+1}
			\ar[d,"\ppair{\xF,\yF}"]
				&
		\\
		\partial\C{m+2}
		\ar[r,"\ppair{\uF\comp{m}{}\vF,\id{m+1}{}{\xF}}"']
			&
			\FFp^{m+1}
				&
				\partial\C{m+2}
				\ar[l,"\ppair{\wF\comp{m}{}\uF,\id{m+1}{}{\yF}}"]
	\end{tikzcd}
    \]
    in $\WkCats{\omega}$.
    It is straightforward to see from this description that a cocone under $\twp$ with nadir $X$ is precisely a system of witnesses of dimension $1$ in $X$.
    Thus the universal property of $\OR \cong \colim(\twp)$ exhibited in \cref{OR-as-colimit-of-tw'} coincides with that of $\EF{1}$.
    The last assertion follows from \cref{suspension-compatibility-with-En}.
\end{proof}

\section{\texorpdfstring{$\omega$}{ω}-equifibrations and folk fibrations}\label{sec:fibrations-between-strict}
Let us recall the folk model structure on $\StrCats{\omega}$ constructed in \cite[Theorem 4.39]{Lafont_Metayer_Worytkiewicz_folk_model_str_omega_cat}.
As mentioned in \cref{subsec:marked-weak-omega-cats},
we will write $\Cst{n} = F^TG^n$ for the free strict $\omega$-category generated by a single $n$-cell, and call the generating $n$-cell $c_n$.
The cofibrations in the folk model structure are cofibrantly generated by
\[
\bigl\{\iota^n\colon \partial\Cst{n} \to \Cst{n} \mid n \ge 0\bigr\}.
\]
It follows that the trivial fibrations are precisely those defined in \cref{def:trivial-fibration} (but between strict $\omega$-categories).
The weak equivalences are the $\omega$-weak equivalences of \cref{def:weak-equivalence} (but again between strict $\omega$-categories).
The fibrations can be characterised as follows.
For $n \ge 1$, define the \emph{collapse map} $\partial\Cst{n} \to \Cst{n-1}$ by
\[
\begin{tikzcd}
    \partial \Cst{n-1}
    \arrow [r, hook]
    \arrow [d, hook]
    \arrow [dr, phantom, "\ulcorner" very near end] &
    \Cst{n-1}
    \arrow [d]
    \arrow [ddr, bend left, equal] & & \\
    \Cst{n-1}
    \arrow [r]
    \arrow [rrd, bend right, equal] &
    \partial\Cst{n}
    \arrow [dr, dashed] & \\
    & & \Cst{n-1}.
\end{tikzcd}
\]
Recall the notation \eqref{eqn:sigma-sigma-prime-cat}.

\begin{proposition}[{\cite[Corollary 4.38]{Lafont_Metayer_Worytkiewicz_folk_model_str_omega_cat}}]\label{generating-folk-trivial-cofibrations}
    Suppose that, for each $n \ge 1$, we are given a factorisation
    \[
    \partial\Cst{n} \xrightarrow{\igenb{n}} \Est{n} \xrightarrow{\rgen{n}} \Cst{n-1}
    \]
    of the collapse map such that $\igenb{n}$ is a folk cofibration and $\rgen{n}$ is a folk trivial fibration.
    Then
    \[
    \bigl\{\Cst{n-1} \xrightarrow{\sourceb{n-1}} \partial\Cst{n} \xrightarrow{\igenb{n}} \Est{n} \mid n \ge 1 \bigr\}
    \]
    is a generating set of folk trivial cofibrations.
\end{proposition}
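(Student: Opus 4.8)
The plan is to verify the two conditions that make the set $J$ displayed in the statement a generating set of folk trivial cofibrations: that every member of $J$ is itself a folk trivial cofibration, and that a strict $\omega$-functor lies in $J^\pitchfork$ if and only if it is a folk fibration. Since the folk fibrations are exactly the maps with the right lifting property against every folk trivial cofibration, the first condition immediately gives the inclusion of the class of folk fibrations into $J^\pitchfork$; so it remains to establish the first condition, and the reverse inclusion that every map in $J^\pitchfork$ is a folk fibration.

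First I would check that each $\igenb{n}\sourceb{n-1} \colon \Cst{n-1} \to \Est{n}$ is a folk trivial cofibration. The standard presentation of $\partial\Cst{n}$ as the pushout $\Cst{n-1}\amalg_{\partial\Cst{n-1}}\Cst{n-1}$ (the square appearing in the construction of the collapse map) exhibits $\sourceb{n-1}$ as a pushout of the generating folk cofibration $\iota^{n-1} \colon \partial\Cst{n-1} \to \Cst{n-1}$, so $\sourceb{n-1}$ is a folk cofibration; since $\igenb{n}$ is a folk cofibration by hypothesis, the composite $\igenb{n}\sourceb{n-1}$ is one too. For the weak equivalence claim, the collapse map $c^n \colon \partial\Cst{n} \to \Cst{n-1}$ satisfies $c^n \circ \sourceb{n-1} = \newid_{\Cst{n-1}}$ by its very definition, and by hypothesis $c^n = \rgen{n} \circ \igenb{n}$ with $\rgen{n}$ a folk trivial fibration, hence an $\omega$-weak equivalence. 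Applying the two-out-of-three property to the identity $\rgen{n} \circ \bigl(\igenb{n}\sourceb{n-1}\bigr) = \newid_{\Cst{n-1}}$ shows that $\igenb{n}\sourceb{n-1}$ is an $\omega$-weak equivalence, and therefore a folk trivial cofibration.

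For the reverse inclusion, the point is that $\cof{J}$ should contain every folk trivial cofibration. Here I would invoke the characterisation of folk fibrations built into \cite{Lafont_Metayer_Worytkiewicz_folk_model_str_omega_cat}, which detects the fibrations by the right lifting property against one \emph{fixed} family of (folk cofibration, folk trivial fibration) factorisations of the collapse maps. Lifting the folk cofibration $\igenb{n}$ against the folk trivial fibration of the fixed family — and conversely — yields comparison maps between $\Est{n}$ and the fixed model that are compatible with the structure maps to $\Cst{n-1}$ and from $\partial\Cst{n}$; feeding these into a retract argument identifies $\cof{J}$ with the weakly saturated class generated by the fixed family, namely the class of folk trivial cofibrations. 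The main obstacle is exactly this last step: the comparison maps between two distinct (cofibration, trivial fibration) factorisations of a single map are only mutually homotopy-inverse rather than mutually inverse, so passing from a mutual factorisation of the generators to an equality of weakly saturated classes requires the retract argument to be carried out with some care — this is precisely what \cite[Corollary~4.38]{Lafont_Metayer_Worytkiewicz_folk_model_str_omega_cat} packages, and on which I would ultimately rely. (Local presentability of $\StrCats{\omega}$, from \cref{Wk-omega-Cat-LFP}, guarantees that the small object argument underlying these factorisations is available.)
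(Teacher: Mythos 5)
This proposition is not proved in the paper at all: it is imported verbatim from \cite[Corollary~4.38]{Lafont_Metayer_Worytkiewicz_folk_model_str_omega_cat}, so the paper's ``proof'' is the citation. Your opening verification is correct ($\sourceb{n-1}$ is a pushout of $\iota^{n-1}$, hence a folk cofibration, and two-out-of-three applied to $\rgen{n}\circ\bigl(\igenb{n}\sourceb{n-1}\bigr)=\newid$ with $\rgen{n}$ a folk trivial fibration makes the composite a trivial cofibration), but for the substantive half---that having the right lifting property against the displayed set characterises the folk fibrations---you ultimately defer to the very corollary being stated. As an account of why the statement holds this coincides with what the paper does; as a self-contained proof it is circular, since the statement \emph{is} that corollary of Lafont--M\'etayer--Worytkiewicz.

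The one place where you sketch an independent argument for the hard direction is also where the genuine gap would lie if you tried to close it. Lifting one factorisation's cofibration against the other's trivial fibration (and conversely) gives comparison maps between the two middle objects, compatible with the maps from $\partial\Cst{n}$ and to $\Cst{n-1}$, but their composites are only endomorphisms under $\partial\Cst{n}$ and over $\Cst{n-1}$, not identities; so neither generator is exhibited as a retract of the other, and no identification of the two weakly saturated classes (nor of the classes $J^\pitchfork$) follows by a formal retract argument. You flag this obstacle yourself and fall back on the citation, which is consistent with the paper's treatment---but be aware that the missing content is exactly this step, and it is supplied in \cite{Lafont_Metayer_Worytkiewicz_folk_model_str_omega_cat} by a genuinely different argument rather than by comparing factorisations.
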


The purpose of this section is to give a much more explicit characterisation of the folk fibrations, namely that they are precisely the $\omega$-equifibrations between strict $\omega$-categories.

\subsection{Folk fibrations are \texorpdfstring{$\omega$}{ω}-equifibrations}

For each $n \ge 1$, let 
\[
\Cst{n} \xrightarrow{\ipt n} \Ept{n} \xrightarrow{\rpt n} \Cst{n-1}
\]
be a factorisation of the map picking out $\id{n}{}{c_{n-1}}$ into a folk cofibration $\ipt n$ followed by a folk trivial fibration $\rpt n$, obtained by using e.g.\ the small object argument.
(The notation is intended to suggest that $\Ept{n}$ is ``pointed'', i.e., it comes equipped with a distinguished $n$-cell $\ipt n$.)
Then the composite $\partial \Cst n \xrightarrow{\iota^n} \Cst n \xrightarrow{\ipt n} \Ept{n}$ is clearly a folk cofibration, so
\[
\{\Cst{n-1} \xrightarrow{\sigma^{n-1}} \Cst{n} \xrightarrow{\ipt{n}} \Ept{n} \mid n \ge 1\}
\]
is a generating set of folk trivial cofibrations by \cref{generating-folk-trivial-cofibrations}.
Note that, since the trivial fibration $\rpt{n} \colon \Ept{n} \to \Cst{n-1}$ reflects equivalences by \cite[Lemma~4.9]{Lafont_Metayer_Worytkiewicz_folk_model_str_omega_cat}, the $n$-cell $\ipt{n}(c_n)$ (and similarly every $k$-cell in $\Ept{n}$ with $k \ge n$) is an equivalence.

The statement of the following lemma appears in \cite[Proposition 4.22]{Henry_Loubaton_inductive}.
Although the authors of \cite{Henry_Loubaton_inductive} state that it is merely a reformulation of \cite[Lemma 4.36]{Lafont_Metayer_Worytkiewicz_folk_model_str_omega_cat}, we believe that this is not quite true, so we provide a separate proof.

\begin{lemma}\label{making-equivalence-LMW-coherent}
	Let let $u \colon x \to y$ be an $n$-cell in a strict $\omega$-category $X$ with $n \ge 1$.
	Then $u$ is an equivalence if and only if the map $\Cst{n} \to X$ picking out $u$ can be extended along $\ipt{n} \colon \Cst{n} \to \Ept{n}$.
\end{lemma}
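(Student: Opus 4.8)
The plan is to establish the two implications separately; the reverse one carries all the weight.

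For the ``if'' direction, suppose the strict $\omega$-functor $u\colon\Cst n\to X$ classifying $u$ extends along $\ipt n$ to some $g\colon\Ept n\to X$. Then $u=g\bigl(\ipt n(c_n)\bigr)$, and $\ipt n(c_n)$ is an equivalence in $\Ept n$ as already noted in the text: it is carried by the folk trivial fibration $\rpt n$ to the equivalence $\id n{}{c_{n-1}}$, and trivial fibrations reflect equivalences by \cite[Lemma~4.9]{Lafont_Metayer_Worytkiewicz_folk_model_str_omega_cat}. Since strict $\omega$-functors preserve equivalences (\cref{str-functor-pres-eq}), $u$ is an equivalence.

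For the ``only if'' direction, I would first note that the statement is independent of the chosen factorisation $\Cst n\xrightarrow{\ipt n}\Ept n\xrightarrow{\rpt n}\Cst{n-1}$: for any second such factorisation, lifting the folk cofibration $\ipt n$ against the folk trivial fibration of the other one produces a strict $\omega$-functor between the two middle objects commuting with the maps from $\Cst n$, so an extension of $u$ along one version of $\ipt n$ transports to one along the other. Hence I may take $\Ept n$ to be built by the small object argument applied to the map $\Cst n\to\Cst{n-1}$ classifying $\id n{}{c_{n-1}}$, written as a countable composite $\Cst n=Y_0\to Y_1\to Y_2\to\cdots$ of pushouts of coproducts of boundary inclusions $\iota^m$, with $\Ept n=\colim_k Y_k$ and $\ipt n$ the map out of $Y_0$. (Alternatively, since the suspension functor of \cref{subsec:suspension} preserves folk cofibrations and folk trivial fibrations and, by \cref{suspension-preserves-canonical-compositions}, takes the classifying map of $\id{n-1}{}{c_{n-2}}$ to that of $\id n{}{c_{n-1}}$, one may take $\Ept n=\Sigma^{n-1}\Ept1$ and, using that $\mathrm{hom}$ preserves and reflects equivalences, reduce to $n=1$.) Unravelling this presentation exactly as in \cref{subsec:presentation-1}, giving a strict $\omega$-functor $\Ept n\to X$ that extends $u$ is the same as giving a cocone under $Y_0\to Y_1\to\cdots$ with nadir $X$ restricting to $u$ on $Y_0$, and such a cocone amounts to a (strict) \emph{system of witnesses of dimension $n$ in $X$ based at $u$} in the sense of \cref{system-of-witnesses}. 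I would construct one by recursion on the length of $\phi\in\{p,q\}^*$: put $x_\varepsilon=u$, which is an equivalence by hypothesis; and given an equivalence $(n+\lvert\phi\rvert)$-cell $x_\phi$, unfolding the membership $x_\phi\in\nu\Phi=\Phi(\nu\Phi)$ (\cref{def:spherical-eq}) supplies a parallel-reversed cell $\bar{x}_\phi$ together with \emph{equivalence} cells $x_\phi\comp{}{}\bar{x}_\phi\to\id{}{}{\cdot}$ and $\bar{x}_\phi\comp{}{}x_\phi\to\id{}{}{\cdot}$, which I take as $x_{\phi v}=x_{\phi w}=\bar{x}_\phi$ and as $x_{\phi p},x_{\phi q}$ respectively. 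As $x_{\phi p}$ and $x_{\phi q}$ are again equivalences, the recursion continues and yields the desired extension $g$ with $g\circ\ipt n=u$.

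The main obstacle is making the phrase ``amounts to a system of witnesses'' precise. Turning a system of witnesses into a cocone is routine; the subtle direction is that the cells attached by the small object argument for $\Cst n\to\Cst{n-1}$ which lie \emph{outside} the sub-configuration forming a system of witnesses (these are fillers of spheres forced by the target $\Cst{n-1}$) can still be filled in $X$ once the witness data is in place, because each such sphere is the image of a sphere of $\Cst{n-1}$ bounding a cell, so under the partially-built $g$ its boundary is a parallel pair of cells of $X$ related by $\sim$ — this is where the relations $x_\phi\comp{}{}\bar{x}_\phi\sim\id{}{}{\cdot}$ and $\bar{x}_\phi\comp{}{}x_\phi\sim\id{}{}{\cdot}$ enter, through $\sim$ being a congruence (\cref{sim-is-congruence}) together with \cref{associativity,unit-law}. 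Organising this as an invariant carried along the transfinite induction (``every cell of dimension $\geq n$ is sent to an equivalence, and parallel cells to $\sim$-related cells'') is the step demanding genuine care. A tempting shortcut would be to identify $\Ept n$ with the strict $\omega$-categorical reflection of $\EF n$ via \cref{OR-EF-coincidence,reflectiong-OR}, so that maps out of it are literally systems of witnesses; but this identification passes through the folk-contractibility of $\ORst$, which is one of the results this very section is meant to yield (cf.\ \cite{HLOR}), so one would need to check that no circularity is introduced.
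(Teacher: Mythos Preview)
Your ``if'' direction is correct and matches the paper's.

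For the ``only if'' direction, the paper takes an entirely different and much shorter route, via the cylinder strict $\omega$-category $\Gamma(X)$ of \cite[\S4.4]{Lafont_Metayer_Worytkiewicz_folk_model_str_omega_cat}. Using only that $u$ is an equivalence, one writes down an explicit $n$-cylinder of type $\id n X x\cylarrow u$ and regards it as a strict $\omega$-functor $\Cst n\to\Gamma(X)$. This fits into a square
\[
\begin{tikzcd}
\Cst n \ar[r] \ar[d,"\ipt n"'] & \Gamma(X) \ar[d,"\proj{1}{X}"] \\
\Ept n \ar[r,"x\,\circ\,\rpt n"'] & X
\end{tikzcd}
\]
in which $\ipt n$ is a folk cofibration and $\proj{1}{X}$ a folk trivial fibration \cite[Corollary~4.23]{Lafont_Metayer_Worytkiewicz_folk_model_str_omega_cat}; postcomposing the resulting diagonal with the other projection $\proj{2}{X}$ (which sends the chosen cylinder to $u$) gives the extension. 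No analysis of the internal structure of $\Ept n$ is required.

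Your approach, by contrast, has a gap you correctly identify but do not close. The sequence $(Y_k)$ produced by the $I$-small object argument applied to $\Cst n\to\Cst{n-1}$ is \emph{not} the sequence $(\EF{n,m})$ of \cref{subsec:presentation-1}, and a cocone under it is \emph{not} a system of witnesses: at each step one attaches an $m$-cell for \emph{every} parallel pair of $(m{-}1)$-cells of $Y_k$ (together with a bounding cell in $\Cst{n-1}$), in every dimension, not just a quintuple $(v,w,p,q)$ per marked cell. Extending $g_k$ to $g_{k+1}$ therefore requires choosing a filler $e_{a,b}\colon g_k(a)\to g_k(b)$ for \emph{all} such pairs, and then your invariant at the next stage demands that any two parallel cells of $Y_{k+1}$---arbitrary pasting composites of the $e_{a,b}$ and their whiskerings---have $\sim$-related images in $X$. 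Since parallel equivalence cells in a general strict $\omega$-category need not be $\sim$-related, this forces the $e_{a,b}$ to be chosen coherently in a way that \cref{sim-is-congruence,associativity,unit-law} alone do not supply; carrying this out amounts to building, by hand and stage by stage, precisely the ``path from the constant map at $x$ to $g_k$'' that the cylinder object $\Gamma(X)$ encodes in one stroke. The shortcut you flag, replacing $\Ept n$ by $\EFst n$ so that extensions become literal systems of witnesses, is indeed circular in the paper's logical order: that $\EFst n\to\Cst{n-1}$ is a folk trivial fibration is essentially \cref{hat-trivial-cofibration}, whose proof goes through \cref{folk-fib-is-equifib}, which uses the present lemma.
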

\begin{proof}
    The ``if'' direction follows from \cref{str-functor-pres-eq}.
    The ``only if'' direction is deferred to \cref{appendix-on-cylinders}.
\end{proof}

\begin{proposition}\label{folk-fib-is-equifib}
	Any folk fibration is an $\omega$-equifibration.
\end{proposition}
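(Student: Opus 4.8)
The plan is to deduce this from \cref{making-equivalence-LMW-coherent} together with the characterisation of folk fibrations by the right lifting property against the generating trivial cofibrations $\{\Cst{n-1}\xrightarrow{\sigma^{n-1}}\Cst{n}\xrightarrow{\ipt{n}}\Ept{n}\mid n\geq1\}$. So let $f\colon X\to Y$ be a folk fibration between strict $\omega$-categories, fix $n\geq1$, let $x$ be an $(n-1)$-cell of $X$, and let $u\colon fx\to y$ be an equivalence $n$-cell of $Y$. We must produce an equivalence $n$-cell $\bar u\colon x\to\bar y$ in $X$ with $f\bar u=u$.

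First I would use \cref{making-equivalence-LMW-coherent}: since $u$ is an equivalence in $Y$, the strict $\omega$-functor $\Cst{n}\to Y$ picking out $u$ extends along $\ipt{n}\colon\Cst{n}\to\Ept{n}$ to some $v\colon\Ept{n}\to Y$. On the other hand, precomposing $v$ with $\ipt{n}\sigma^{n-1}$ gives a map $\Cst{n-1}\to Y$ that picks out $s_{n-1}(u)=fx$, so it equals $f$ composed with the map $x\colon\Cst{n-1}\to X$ picking out $x$. This produces a commutative square
\[
\begin{tikzcd}
\Cst{n-1}\ar[r,"x"]\ar[d,"\ipt{n}\sigma^{n-1}"']&X\ar[d,"f"]\\
\Ept{n}\ar[r,"v"']&Y
\end{tikzcd}
\]
in $\StrCats{\omega}$. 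Since $\ipt{n}\sigma^{n-1}$ is (a member of) the generating set of folk trivial cofibrations and $f$ is a folk fibration, this square admits a diagonal filler $w\colon\Ept{n}\to X$. Then $\bar u:=w(\ipt{n}(c_n))$ is an $n$-cell in $X$; its source is $w(\ipt{n}\sigma^{n-1}(c_{n-1}))=x(c_{n-1})=x$ by commutativity of the upper triangle, and $f\bar u=v(\ipt{n}(c_n))=u$ by commutativity of the lower triangle.

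It remains to check that $\bar u$ is an equivalence in $X$, and this follows because $\ipt{n}(c_n)$ is an equivalence $n$-cell in $\Ept{n}$ (as noted in the discussion above \cref{making-equivalence-LMW-coherent}, using that $\rpt{n}$ reflects equivalences by \cite[Lemma~4.9]{Lafont_Metayer_Worytkiewicz_folk_model_str_omega_cat}), and strict $\omega$-functors preserve equivalence cells by \cref{str-functor-pres-eq}. I expect the only subtle point to be the bookkeeping needed to identify $\ipt{n}\sigma^{n-1}$ with (a member of) the generating trivial cofibrations and to confirm that folk fibrations are exactly the maps with the right lifting property against that set — but this is precisely the content recalled right before \cref{making-equivalence-LMW-coherent}, combined with \cref{generating-folk-trivial-cofibrations}, so there is no real obstacle; everything else is a routine diagram chase.
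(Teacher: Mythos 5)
Your proof is correct and follows essentially the same route as the paper: extend the map picking out $u$ along $\ipt{n}$ via \cref{making-equivalence-LMW-coherent}, lift the resulting square against the folk trivial cofibration $\ipt{n}\sigma^{n-1}$, and observe that the lifted cell is an equivalence since $\ipt{n}(c_n)$ is one and strict $\omega$-functors preserve equivalences. The only difference is that you spell out the bookkeeping (source of $\bar u$, identification of the generating trivial cofibration) which the paper leaves implicit.
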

\begin{proof}
    Let $f\colon X\to Y$ be a folk fibration between strict $\omega$-categories.
    Let $n \ge 1$ and suppose that we are given an equivalence $n$-cell of the form $u \colon fx \to y$ in $Y$.
    Then by \cref{making-equivalence-LMW-coherent}, the map $\Cst{n} \to Y$ picking out $u$ can be extended to $\Ept{n}$, which fits into the commutative square
    \[
    \begin{tikzcd}[row sep = large]
		\Cst{n-1}
		\arrow [r, "x"]
		\arrow [d, "\ipt n\sigma^{n-1}", swap] &
		X
		\arrow [d, "f"] \\
		\Ept{n}
		\arrow [r] &
		Y.
    \end{tikzcd}
    \]
    Since the left vertical map is a folk trivial cofibration, this square admits a lift $\Ept{n} \to X$.
    In particular, it sends the $n$-cell picked out by $\ipt{n} \colon \Cst{n} \to \Ept{n}$ to an equivalence $n$-cell of the form $\bar u \colon x \to \bar y$ with $f\bar u = u$.
\end{proof}

\subsection{\texorpdfstring{$\omega$}{ω}-equifibrations are folk fibrations}
Recall from \cref{presentation-of-En} that, for each $n \ge 1$, $\iF{n} \colon \C{n} \to \EF{n}$ is (the underlying strict $\omega$-functor of) the unit map associated to the algebraically $\KF$-injective replacement (of $\mC{n}$).
We will write $\iFst{n} \colon \Cst{n} \to \EFst{n}$ for its image under the strict $\omega$-categorical reflection functor $\WkCats{\omega}\to\StrCats\omega$ defined in \cref{subsec:strict-and-weak-omega-cats}; in particular, we have $\EFst{1} \cong \ORst$ by \cref{reflectiong-OR,OR-EF-coincidence}.
The following is an immediate consequence of \cref{RLP-iff-fibration-version-F}.

\begin{lemma}\label{reflecting-En}
    A strict $\omega$-functor between strict $\omega$-categories is an $\omega$-equifibration if and only if it has the right lifting property with respect to
    \[
        \JFst=\{\,\Cst{n-1}\xrightarrow{\sigma^{n-1}}\Cst{n}\xrightarrow{\iFst{n}}\EFst{n}\mid n\geq 1\,\}.
    \]
\end{lemma}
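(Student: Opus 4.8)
The plan is to deduce this directly from \cref{RLP-iff-fibration-version-F} by transposing lifting problems along the strict $\omega$-categorical reflection adjunction. Recall from \cref{subsec:strict-and-weak-omega-cats} that the inclusion $\StrCats\omega\to\WkCats\omega$ is fully faithful and has a left adjoint (the strict $\omega$-categorical reflection). Write $\rho\colon\WkCats\omega\to\StrCats\omega$ for this left adjoint, and let $\eta$ denote its unit. First I would observe that, because the inclusion is fully faithful, for any weak $\omega$-category $A$ and any strict $\omega$-category $B$, post-composition with $\eta_A$ gives a bijection $\StrCats\omega(\rho A,B)\cong\WkCats\omega(A,B)$; moreover this bijection is natural in $B$, so a lifting problem of a strict $\omega$-functor $g\colon A\to A'$ (between weak $\omega$-categories) against a morphism $f\colon X\to Y$ between \emph{strict} $\omega$-categories transposes to a lifting problem of $\rho g\colon\rho A\to\rho A'$ against the same $f$. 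Consequently, for $f$ a strict $\omega$-functor between strict $\omega$-categories, $f\in J_\FF^\pitchfork$ (in $\WkCats\omega$) if and only if $f\in(\rho J_\FF)^\pitchfork$ (in $\StrCats\omega$), where $\rho J_\FF$ is the image of $J_\FF$ under $\rho$ applied to each morphism.

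Next I would identify $\rho J_\FF$ with $\JFst$. By definition, the generator of $J_\FF$ in dimension $n$ is the composite $\C{n-1}\xrightarrow{\sigma^{n-1}}\C{n}\xrightarrow{\iF{n}}\EF{n}$. Applying the left adjoint $\rho$ (which sends $\C{k}=F^L G^k$ to $\Cst{k}=F^T G^k$, since $\rho\circ F^L\cong F^T$, and sends $\sigma^{n-1}$ to $\sigma^{n-1}$) and using the very definition $\iFst{n}=\rho(\iF{n})$ and $\EFst{n}=\rho(\EF{n})$ stated just before the lemma, we get that $\rho$ of the $n$-th generator of $J_\FF$ is exactly $\Cst{n-1}\xrightarrow{\sigma^{n-1}}\Cst{n}\xrightarrow{\iFst{n}}\EFst{n}$, i.e.\ the $n$-th generator of $\JFst$. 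Hence $\rho J_\FF=\JFst$ as sets of morphisms in $\StrCats\omega$. Combining this with the previous paragraph and \cref{RLP-iff-fibration-version-F}, for a strict $\omega$-functor $f$ between strict $\omega$-categories we have: $f$ is an $\omega$-equifibration $\iff$ $f\in J_\FF^\pitchfork$ in $\WkCats\omega$ $\iff$ $f\in\JFst^\pitchfork$ in $\StrCats\omega$, which is the claim.

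One subtlety worth checking carefully is that the notion of ``$\omega$-equifibration'' for a strict $\omega$-functor between strict $\omega$-categories is unambiguous: \cref{def:equifibration} is phrased for weak $\omega$-categories, and one must confirm that applying it to a strict $\omega$-functor viewed as a morphism in $\WkCats\omega$ gives the same condition as the ``strict version'' alluded to at the start of \cref{sec:omega-equifibrations}. But this is immediate, since equivalence cells in a strict $\omega$-category (viewed as a weak one via the fully faithful inclusion) are precisely its equivalence cells in the usual sense, so the defining lifting condition is literally the same statement. I expect the main (very mild) obstacle to be purely bookkeeping: making sure the adjunction transposition is applied with the correct variance and that $\rho F^L\cong F^T$ and $\rho(\sigma^{n-1})=\sigma^{n-1}$ are invoked correctly, all of which follow formally from the fact that $\rho$ is a left adjoint and from the commuting triangle \eqref{eqn:triangle-over-GSet}. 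No genuinely hard step is involved; the lemma is, as stated, an immediate consequence of \cref{RLP-iff-fibration-version-F}.
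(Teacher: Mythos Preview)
Your proposal is correct and spells out precisely the adjunction-transposition argument that the paper leaves implicit when it declares the lemma ``an immediate consequence of \cref{RLP-iff-fibration-version-F}''. The identification $\rho\circ F^L\cong F^T$ (hence $\rho\C{k}\cong\Cst{k}$ and $\rho\sigma^{n-1}=\sigma^{n-1}$) and the full faithfulness of the inclusion are exactly the ingredients needed, and your remark about the unambiguity of ``$\omega$-equifibration'' for strict $\omega$-categories is the only point that deserves a word of care; there is nothing missing.
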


In order to prove that $\omega$-equifibrations are folk fibrations, it thus suffices to show that $\JFst$ is a generating set of folk trivial cofibrations.
The first step is straightforward.

\begin{lemma}[{See \cite[Theorem 1.33]{HLOR} for the second assertion}]\label{hat-trivial-cofibration}
	The map $\iFst{n} \sigma^{n-1} \colon \Cst{n-1} \to \EFst{n}$ is a folk trivial cofibration for each $n \ge 1$.
    In particular, the coherent walking $\omega$-equivalence $\ORst$ is weakly equivalent to the terminal strict $\omega$-category in the folk model structure.
\end{lemma}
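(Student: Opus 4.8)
The plan is to show that $\iFst{n}\sigma^{n-1}$ is both a folk cofibration and a folk weak equivalence; since in a model category the trivial cofibrations are precisely the cofibrations that are weak equivalences, this gives the first assertion, and the second assertion will be the instance $n=1$.

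\emph{That $\iFst{n}\sigma^{n-1}$ is a folk cofibration.} By \cref{presentation-1-justification} (and the lemma just before \cref{RLP-iff-fibration-version-F}), the morphism $\iF{n}\colon\mC{n}\to\EF{n}$ equals $\eomega{0}$, hence is obtained from coproducts of the maps $\kF^{n+m}$ by pushouts and transfinite composition. Since the forgetful functor $\mWkCats{\omega}\to\WkCats{\omega}$ preserves colimits (\cref{rem:colimits-in-m-WkCats-omega}), the underlying weak $\omega$-functor of $\iF{n}$ is built in the same way from the underlying maps $\C{n+m}\to\FF^{n+m}$ of the $\kF^{n+m}$, and by \cref{def:Fn} each of these is a finite composite of pushouts of maps $\iota^k\colon\partial\C{k}\to\C{k}$ (adjoining $\vF,\wF$, then $\pF,\qF$). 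Applying the strict $\omega$-categorical reflection $R$, which is a left adjoint with $R\circ F^L=F^T$ and so sends each $\iota^k\colon\partial\C{k}\to\C{k}$ to the generating folk cofibration $\iota^k\colon\partial\Cst{k}\to\Cst{k}$, we conclude that $\iFst{n}\colon\Cst{n}\to\EFst{n}$ is a relative cell complex for the generating folk cofibrations, i.e.\ a folk cofibration. As $\sigma^{n-1}$ factors as $\Cst{n-1}\xrightarrow{\sourceb{n-1}}\partial\Cst{n}\xrightarrow{\iota^n}\Cst{n}$ with $\sourceb{n-1}$ a pushout of $\iota^{n-1}$, the composite $\iFst{n}\sigma^{n-1}$ is a folk cofibration as well.

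\emph{That $\iFst{n}\sigma^{n-1}$ is a folk weak equivalence.} Using \cref{suspension-compatibility} and \cref{suspension-compatibility-with-En} one identifies $\iFst{n}\sigma^{n-1}$ with $\Sigma^{n-1}$ applied to $\iFst{1}\sigma^{0}$; since $\Sigma$ preserves folk cofibrations (because $\Sigma\iota^k\cong\iota^{k+1}$) and folk weak equivalences (which one extracts from the hom--suspension adjunction of \cref{suspension-compatibility}), it suffices to treat $n=1$. Here $\Cst{0}$ is the terminal strict $\omega$-category and, by \cref{reflectiong-OR} and \cref{OR-EF-coincidence}, $\EFst{1}\cong\ORst$, so the claim becomes: the unique map $1\to\ORst$ is a folk weak equivalence, equivalently (by two-out-of-three) $\ORst$ is weakly contractible. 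To prove this — which amounts to re-running the argument of \cite{HLOR} inside our framework — the plan is to use the explicit presentation of $\OR=\EF{1}$ as the free system of witnesses (\cref{def:C-n-F}, \cref{presentation-2-justification}, \cref{system-of-witnesses}), which after applying $R$ describes $\ORst$ as built from $\Cst{1}$ by coherently adjoining formal inverses and triangle witnesses in every dimension. One then verifies the two conditions of \cref{def:weak-equivalence} for $\ORst\to 1$ by producing the required cells from this data, relying on the strict-$\omega$-categorical analogues of \cref{whiskering-ess-0-surj}, \cref{unit-law} and \cref{associativity} (equivalently the corresponding results of \cite{Lafont_Metayer_Worytkiewicz_folk_model_str_omega_cat}), together with \cref{making-equivalence-LMW-coherent}, which transports coherent equivalence data between $\ORst$ and the model $\Ept{1}$. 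This verification is the only substantial step: individual cells can be filled essentially as in \cref{prop:linvclosed,prop:bi-inv-are-inv}, but organising the coinduction over dimension so that it meshes with the inductive presentation $\EFst{n+1}\cong\Sigma\EFst{n}$ is where the care lies, and it is the main obstacle.

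\emph{Conclusion.} Combining the two parts, $\iFst{n}\sigma^{n-1}$ is a folk trivial cofibration for every $n\ge 1$. In particular, the case $n=1$ says precisely that $1=\Cst{0}\to\EFst{1}\cong\ORst$ is a folk trivial cofibration, whence $\ORst$ is weakly equivalent to the terminal strict $\omega$-category in the folk model structure, recovering \cite[Theorem~1.33]{HLOR}.
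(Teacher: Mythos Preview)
Your cofibration argument is fine, but the weak-equivalence half is not actually proved: you reduce to showing that $\ORst$ is weakly contractible and then concede that ``organising the coinduction over dimension \dots\ is the main obstacle''. That is the entire content of \cite[Theorem~1.33]{HLOR}, and your sketch does not carry it out. Worse, the point of this lemma in the paper is precisely to \emph{recover} the HLOR result as a corollary, so invoking or re-running HLOR here would be circular (or at best would make the second assertion vacuous).

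The paper's proof avoids all of this. By \cref{folk-fib-is-equifib}, every folk fibration is an $\omega$-equifibration; by \cref{reflecting-En}, the $\omega$-equifibrations between strict $\omega$-categories are exactly the maps with the right lifting property against $\JFst$. Hence each $\iFst{n}\sigma^{n-1}$ has the left lifting property with respect to every folk fibration, and in any model category this forces it to be a trivial cofibration. The case $n=1$ then yields the weak contractibility of $\ORst \cong \EFst{1}$ for free. So the missing idea in your attempt is to use the already-established implication ``folk fibration $\Rightarrow$ $\omega$-equifibration'' to get the lifting property directly, rather than trying to verify the weak-equivalence axioms by hand.
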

\begin{proof}
        \cref{folk-fib-is-equifib,reflecting-En} imply that each $\iFst{n} \sigma^{n-1}$ has the left lifting property with respect to all folk fibrations, which proves the first assertion.
        The second assertion follows from the case $n=1$.
\end{proof}

Our next step is to modify \cref{generating-folk-trivial-cofibrations} using a seemingly trivial observation regarding $\omega$-equifibrations.

\begin{definition}
	We call a strict $\omega$-category $X$ \emph{gaunt} if every equivalence in $X$ is an identity.
\end{definition}

\begin{lemma}
	Let $f \colon X \to Y$ be a morphism in $\StrCats{\omega}$ with $Y$ gaunt.
	Then $f$ is an $\omega$-equifibration.
\end{lemma}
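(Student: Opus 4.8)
The plan is to observe that gauntness of $Y$ trivialises the lifting condition in \cref{def:equifibration}. Fix $n \ge 1$, an $(n-1)$-cell $x$ in $X$, and an equivalence $n$-cell $u \colon fx \to y$ in $Y$; I must produce an equivalence $n$-cell $\bar u \colon x \to \bar y$ in $X$ with $f\bar u = u$. First I would use the hypothesis that $Y$ is gaunt to conclude that $u$ is an identity cell, say $u = \id{n}{Y}{z}$ for some $(n-1)$-cell $z$ in $Y$. Comparing the $(n-1)$-dimensional source and target of $u$ then forces $fx = z = y$, so that in fact $y = fx$ and $u = \id{n}{Y}{fx}$.

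Next I would take $\bar u = \id{n}{X}{x} \colon x \to x$, and it remains to check that this is an equivalence $n$-cell in $X$ and that $f$ sends it to $u$. The latter is immediate from \cref{str-fun-pres-comp-and-id}, which gives $f\bigl(\id{n}{X}{x}\bigr) = \id{n}{Y}{fx} = u$. For the former I would invoke the general fact that identity cells are equivalences; in the strict setting this drops straight out of \cref{def:spherical-eq} via the coinductive principle, since the set $E$ of all identity cells of $X$ is closed under $\Phi$: given $\id{n}{X}{a} \in E$, one takes $\id{n}{X}{a}$ as its own two-sided inverse and the identity $(n+1)$-cells $\id{n+1}{X}{\id{n}{X}{a}} \in E$ as the two triangular witnesses, using that $\id{n}{X}{a} \comp{n-1}{X} \id{n}{X}{a} = \id{n}{X}{a}$ holds on the nose in a strict $\omega$-category; hence $E \subseteq \Phi(E)$ and therefore $E \subseteq \nu\Phi$. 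This verifies the condition of \cref{def:equifibration}.

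I do not expect any genuine obstacle here, since the statement is essentially a sanity check. The only points requiring a little care are the boundary bookkeeping in the first paragraph (ensuring $y = fx$, so that $\id{n}{X}{x}$ really does lie over $u$) and recording the short coinductive argument that identity cells count as equivalences; both are routine and use only facts already available in the excerpt.
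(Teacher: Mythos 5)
Your proposal is correct and follows essentially the same route as the paper: since $Y$ is gaunt, $u$ must be an identity (so $y = fx$), and one lifts it to $\id{n}{X}{x}$, which lies over $u$ by strictness of $f$. The extra details you supply (the boundary bookkeeping and the short coinductive check that identity cells are equivalences) are exactly the steps the paper leaves implicit.
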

\begin{proof}
	Let $x \in X_{n-1}$ and let $u \colon fx \to y$ be an equivalence $n$-cell in $Y$.
	Then $u$ is the identity on $fx$ by assumption, so we may simply lift it to the identity on $x$.
\end{proof}

\begin{lemma}\label{recipe}
	Suppose that, for each $n \ge 1$, we are given a factorisation
	\[
	\partial\Cst{n} \xrightarrow{\igenb{n}} \Est{n} \xrightarrow{\rgen{n}} \Cst{n-1}
	\]
	of the collapse map such that
	\begin{itemize}
		\item $\igenb{n}$ is a folk cofibration, and
		\item the composite $\Cst{n-1} \xrightarrow{\sourceb{n-1}} \partial \Cst{n} \xrightarrow{\igenb{n}} \Est{n}$ is a folk trivial cofibration.
	\end{itemize}
	Then
	\[
	\bigl\{\Cst{n-1} \xrightarrow{\sourceb{n-1}} \partial \Cst{n} \xrightarrow{\igenb{n}} \Est{n} \mid n \ge 1 \bigr\}
	\]
	is a generating set of folk trivial cofibrations.
\end{lemma}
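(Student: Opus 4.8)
The plan is to reduce Lemma~\ref{recipe} to \cref{generating-folk-trivial-cofibrations} by interpolating a third factorisation. Given the factorisation $\partial\Cst{n} \xrightarrow{\igenb{n}} \Est{n} \xrightarrow{\rgen{n}} \Cst{n-1}$ as in the hypothesis, we do \emph{not} know that $\rgen{n}$ is a folk trivial fibration, so \cref{generating-folk-trivial-cofibrations} does not apply directly. Instead, I would first factor $\rgen{n}$ (using the small object argument with respect to $\{\iota^k \colon \partial\Cst{k} \to \Cst{k}\}$) as $\Est{n} \xrightarrow{j^n} \widetilde{\Est{n}} \xrightarrow{p^n} \Cst{n-1}$ with $j^n$ a folk cofibration and $p^n$ a folk trivial fibration. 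Then $j^n \igenb{n} \colon \partial\Cst{n} \to \widetilde{\Est{n}}$ is a folk cofibration (composite of two), and $p^n \circ (j^n \igenb{n}) = \rgen{n}\igenb{n}$ is the collapse map, so the pair $(j^n\igenb{n}, p^n)$ satisfies the hypotheses of \cref{generating-folk-trivial-cofibrations}. Hence $\bigl\{\Cst{n-1} \xrightarrow{\sourceb{n-1}} \partial\Cst{n} \xrightarrow{j^n\igenb{n}} \widetilde{\Est{n}} \mid n \ge 1\bigr\}$ is a generating set of folk trivial cofibrations.

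Now I would compare the two candidate generating sets. Write $a^n = \igenb{n}\sourceb{n-1} \colon \Cst{n-1} \to \Est{n}$ and $b^n = j^n\igenb{n}\sourceb{n-1} = j^n \circ a^n \colon \Cst{n-1} \to \widetilde{\Est{n}}$, so $b^n = j^n \circ a^n$ with $j^n$ a folk cofibration. By hypothesis each $a^n$ is a folk trivial cofibration, and we have just shown $\{b^n \mid n \ge 1\}$ is a generating set of folk trivial cofibrations; the goal is to deduce that $\{a^n \mid n \ge 1\}$ is also a generating set of folk trivial cofibrations, i.e.\ that $\{a^n\}^\pitchfork$ coincides with the class of folk fibrations. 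One inclusion is immediate: since each $a^n$ is a folk trivial cofibration, every folk fibration lies in $\{a^n\}^\pitchfork$. For the converse, suppose $f \colon X \to Y$ lies in $\{a^n\}^\pitchfork$; I must show $f \in \{b^n\}^\pitchfork$, since the latter is exactly the class of folk fibrations. Given a lifting problem against $b^n = j^n a^n$, first use $f \in \{a^n\}^\pitchfork$ to fill against $a^n$, obtaining a diagonal $\Est{n} \to X$; then I need to extend this along the folk cofibration $j^n \colon \Est{n} \to \widetilde{\Est{n}}$. This extension exists provided $f$ has the right lifting property against $j^n$ --- and here is where I would invoke that $f$, being a folk fibration is too strong to assume, so instead I note $j^n$ is not merely a cofibration but is built by the small object argument as a \emph{relative} $\{\iota^k\}$-cell complex, hence $f \in \{a^n\}^\pitchfork$ does not obviously suffice.

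The main obstacle, therefore, is precisely this last extension step, and I would resolve it as follows. Rather than trying to lift against $j^n$ directly, observe that $p^n j^n = \rgen{n}$ and $p^n$ is a folk trivial fibration, so $j^n$ is a folk \emph{trivial} cofibration (it is a cofibration, and by 2-out-of-3 applied to $p^n j^n = \rgen{n}$ together with the fact that $a^n = \igenb{n}\sourceb{n-1}$ being a weak equivalence forces $\igenb{n}$, hence $\rgen{n}$, to be a weak equivalence --- using that $\rgen{n}\igenb{n}$ is the collapse map, which is a weak equivalence). Once $j^n$ is known to be a folk trivial cofibration, the class $\{a^n\}^\pitchfork$ being closed under the relevant structure lets me conclude; more cleanly, I would argue directly at the level of retracts: since $b^n = j^n a^n$ with both $a^n$ and $j^n$ folk trivial cofibrations, and $\{b^n \mid n\ge 1\}$ generates the folk trivial cofibrations, every folk trivial cofibration is a retract of a transfinite composite of pushouts of the $b^n$, hence of pushouts of $a^n$ and $j^n$; conversely every transfinite composite of pushouts of $a^n$ is a folk trivial cofibration by hypothesis, and $j^n$ is one too, so the two saturated classes coincide, giving $\{a^n\}^\pitchfork = \{b^n\}^\pitchfork = \{\text{folk fibrations}\}$. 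I expect verifying that $\rgen{n}$ (equivalently $\igenb{n}$) is a weak equivalence --- which is what makes $j^n$ trivial --- to be the one genuinely non-formal point, and it follows from the collapse map being a folk weak equivalence (it is a deformation retraction onto $\Cst{n-1}$, or alternatively $\rgen{n}\igenb{n}$ being the collapse map together with $\igenb{n}\sourceb{n-1}$ being a trivial cofibration and 2-out-of-3). This completes the proof.
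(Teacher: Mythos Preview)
Your detour through $\widetilde{\Est{n}}$ introduces a genuine gap that you do not close. First, a side error: the collapse map $\partial\Cst{n}\to\Cst{n-1}$ is \emph{not} a folk weak equivalence (for $n=1$ it is the fold map $\Cst{0}+\Cst{0}\to\Cst{0}$, which is not essentially surjective in reverse on $\pi_0$). Your alternative---$\rgen{n}\circ(\igenb{n}\sourceb{n-1})=\mathrm{id}$ together with 2-out-of-3---is the correct way to see that $\rgen{n}$ is a weak equivalence, and is exactly what the paper uses.

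The real problem is your comparison step. You establish that $\{b^n\}$ generates the trivial cofibrations and that each $j^n$ is a trivial cofibration, but from $b^n=j^n a^n$ you can only conclude that the saturated class of $\{a^n\}\cup\{j^n\}$ equals the trivial cofibrations; you never show $j^n\in\cof{\{a^n\}}$, so you cannot drop the $j^n$'s. Equivalently, in the lifting picture, knowing $f\in\{a^n\}^\pitchfork$ gives no leverage against $j^n$, since $j^n$ is an $\{\iota^k\}$-cell complex, not an $\{a^n\}$-cell complex. The retract you can extract ($s j^n=\mathrm{id}$) only shows $a^n$ is a retract of $b^n$, which yields the direction you already have.

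The paper avoids all of this by proving directly that $\rgen{n}$ is a trivial fibration, so that \cref{generating-folk-trivial-cofibrations} applies with no detour. The missing idea is that $\Cst{n-1}$ is \emph{gaunt}: every equivalence in it is an identity. Hence any strict $\omega$-functor with codomain $\Cst{n-1}$ is vacuously an $\omega$-equifibration. Since $\rgen{n}$ is also an $\omega$-weak equivalence (by the retraction argument above), \cref{triv-fib-iff-wk-eq-and-fib} gives that $\rgen{n}$ is a trivial fibration, and the lemma follows immediately.
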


\begin{proof}
	Since $\Cst{n-1}$ is gaunt, the map $\rgen{n}$ is automatically an $\omega$-equifibration.
	Moreover, since $\rgen{n}$ is a retraction of $\igenb{n} \sourceb{n-1}$, the 2-out-of-3 property implies that $\rgen{n}$ is an $\omega$-weak equivalence.
	It follows that $\rgen{n}$ is a folk trivial fibration.
        Hence the claim follows by \cref{generating-folk-trivial-cofibrations}.
\end{proof}

Now we are ready to prove the desired statement.

\begin{theorem}
	The set $\JFst$ is a generating set of folk trivial cofibrations.
	Consequently, the folk fibrations are precisely the $\omega$-equifibrations between strict $\omega$-categories.
\end{theorem}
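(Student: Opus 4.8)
The plan is to deduce this from \cref{recipe,reflecting-En,hat-trivial-cofibration}, the only real work being the construction of a suitable factorisation of the collapse map. By \cref{reflecting-En}, the strict $\omega$-functors with the right lifting property against $\JFst$ are exactly the $\omega$-equifibrations between strict $\omega$-categories; hence once we know that $\JFst$ is a generating set of folk trivial cofibrations, the second assertion is immediate, since in the cofibrantly generated folk model structure the fibrations are by definition the maps with the right lifting property against any generating set of trivial cofibrations. Now, since $\sigma^{n-1} = \iota^n\circ\sourceb{n-1}$ (see \cref{eqn:sigma-sigma-prime-cat}), writing $\igenb{n} := \iFst{n}\circ\iota^n\colon\partial\Cst{n}\to\EFst{n}$ we have $\iFst{n}\circ\sigma^{n-1} = \igenb{n}\circ\sourceb{n-1}$, so it suffices to exhibit, for each $n\ge1$, a factorisation $\partial\Cst{n}\xrightarrow{\igenb{n}}\EFst{n}\xrightarrow{\rgen{n}}\Cst{n-1}$ of the collapse map in which $\igenb{n}$ is a folk cofibration, and then invoke \cref{recipe}; its remaining hypothesis, that $\igenb{n}\circ\sourceb{n-1}=\iFst{n}\sigma^{n-1}$ is a folk trivial cofibration, is exactly \cref{hat-trivial-cofibration}.

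First I would check that $\igenb{n}$ is a folk cofibration. The underlying strict $\omega$-functor of each generator $\kF^{k}\colon\mC{k}\to\FF^{k}$ of $\KF$ is, by \cref{def:Fn}, obtained from $\Cst{k}$ by attaching two $k$-cells and two $(k+1)$-cells, and hence, after strict $\omega$-categorical reflection, lies in $\cof{\{\iota^{k},\iota^{k+1}\}}$. The composite functor $\mWkCats{\omega}\to\WkCats{\omega}\to\StrCats{\omega}$ (forget the marking, then reflect) is a left adjoint and therefore preserves pushouts, coproducts and transfinite composites; since $\iF{n}$ is built from $\KF$ by such operations (this is the content of the alternative proof that $\iF{n}\in\cof{\KF}$, or of \cref{presentation-1-justification} together with the pushouts \eqref{E-n-m-pushout}), its reflection $\iFst{n}\colon\Cst{n}\to\EFst{n}$ is a folk cofibration. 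Consequently $\igenb{n} = \iFst{n}\circ\iota^{n}$ is a composite of two folk cofibrations, hence a folk cofibration.

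Next I would construct $\rgen{n}$ and verify that the triple factors the collapse map. Since $\id{n}{}{c_{n-1}}$ is an equivalence $n$-cell in $\C{n-1}$, it determines a marking-preserving strict $\omega$-functor $\mC{n}\to(\C{n-1})^{\natural}$. As $(\C{n-1})^{\natural}$ is $\KF$-injective by \cref{injectivity-detects-equivalences}(2) and $\iF{n}\in\cof{\KF}$, this extends along $\iF{n}$ to a marking-preserving strict $\omega$-functor $\EF{n}\to(\C{n-1})^{\natural}$ sending $\iF{n}(c_n)$ to $\id{n}{}{c_{n-1}}$. Forgetting the marking and applying the strict $\omega$-categorical reflection (which carries $\C{n-1}$ to $\Cst{n-1}$ and $\iota^{n}$, $\iF{n}$ to their strict counterparts) produces $\rgen{n}\colon\EFst{n}\to\Cst{n-1}$ with $\rgen{n}\iFst{n}(c_n) = \id{n}{}{c_{n-1}}$ (using that the unit of the reflection is a strict $\omega$-functor and hence preserves identities, cf.\ \cref{str-fun-pres-comp-and-id}). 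A strict $\omega$-functor out of $\partial\Cst{n}$ is determined by a parallel pair of $(n-1)$-cells in its codomain; the collapse map corresponds to $(c_{n-1},c_{n-1})$, whereas $\rgen{n}\circ\igenb{n}=\rgen{n}\iFst{n}\iota^{n}$ corresponds to $\bigl(s_{n-1}\rgen{n}\iFst{n}(c_n),\,t_{n-1}\rgen{n}\iFst{n}(c_n)\bigr)=\bigl(s_{n-1}\id{n}{}{c_{n-1}},\,t_{n-1}\id{n}{}{c_{n-1}}\bigr)=(c_{n-1},c_{n-1})$. Thus $\partial\Cst{n}\xrightarrow{\igenb{n}}\EFst{n}\xrightarrow{\rgen{n}}\Cst{n-1}$ is indeed a factorisation of the collapse map.

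With these two points in place, \cref{recipe} immediately yields that $\{\igenb{n}\sourceb{n-1}\mid n\ge1\}=\{\iFst{n}\sigma^{n-1}\mid n\ge1\}=\JFst$ is a generating set of folk trivial cofibrations, and the characterisation of folk fibrations follows as explained above via \cref{reflecting-En}. The step I expect to require the most care is the construction of $\rgen{n}$ and the verification that the triple factors the collapse map, since it involves tracking how the strict $\omega$-categorical reflection acts on fundamental cells and boundary inclusions; checking that $\iFst{n}$ is a folk cofibration is conceptually routine but also demands the bookkeeping of passing the (algebraic) small object argument through the reflection functor.
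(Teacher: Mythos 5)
Your proposal is correct and follows essentially the same route as the paper's proof: apply \cref{recipe} with $\igenb{n}$ the strict $\omega$-categorical reflection of $\iF{n}\iota^n$ (a folk cofibration because that composite is a relative cell complex built from boundary inclusions and the forget-then-reflect functor is a left adjoint), use \cref{hat-trivial-cofibration} for the trivial-cofibration hypothesis, and obtain $\rgen{n}$ by extending the map $\mC{n}\to(\C{n-1})^\natural$ picking out $\id{n}{}{c_{n-1}}$ along $\iF{n}\in\cof{\KF}$ and reflecting. The only differences are cosmetic (you verify that $\iFst{n}$ itself is a folk cofibration and spell out the collapse-map check that the paper dismisses as "clear").
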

\begin{proof}
    We apply \cref{recipe} to the strict $\omega$-functors $\igenb{n}$ and $\rgen{n}$ defined as follows.
    The former is the strict $\omega$-categorical reflection of $\partial\C{n} \xrightarrow{\iota^n} \C{n} \xrightarrow{\iF{n}} \EF{n}$.
    Note that this composite in $\WkCats{\omega}$ can be written as a transfinite composite of pushouts of maps of the form $\partial\C{m} \hookrightarrow \C{m}$, so its image $\igenb{n}$ is a transfinite composite of pushouts of $\partial\Cst{m} \hookrightarrow \Cst{m}$, and in particular it is a folk cofibration.
    Moreover, the composite
    \[
    (\Cst{n-1} \xrightarrow{\sourceb{n-1}} \partial \Cst{n} \xrightarrow{\igenb{n}} \EFst{n}) = 
    (\Cst{n-1} \xrightarrow{\sigma^{n-1}} \Cst{n} \xrightarrow{\iFst{n}} \EFst{n})
    \]
    is a folk trivial cofibration by \cref{hat-trivial-cofibration}.
    
    Recall that the map $\iF{n}\colon \mC{n} \to \EF{n}$ in $\mWkCats{\omega}$ is in $\cof{\KF}$.
    It follows that the map $\mC{n} \to (\C{n-1})^\natural$ picking out $\id{n}{}{c_{n-1}}$ can be extended along $\iF{n} \colon \mC{n} \to \EF{n}$.
    The strict $\omega$-functor $\rgen{n} \colon \EFst{n} \to \Cst{n-1}$ is the strict $\omega$-categorical reflection of the underlying map of this extension.
    Clearly the composite $\rgen{n}\igenb{n}$ is the collapse map by construction, and this completes the proof.
\end{proof}

\subsection*{Acknowledgements}
The first-named author is supported by ASPIRE Grant No.\ JPMJAP2301, JST.
The second-named author is supported by JSPS Research Fellowship for Young Scientists and JSPS KAKENHI Grant Number JP23KJ1365.
The third-named author is grateful to the JSPS for the support of KAKENHI Grant Numbers JP23K12960 and JP24KJ0126.

\appendix
\section{Proof of \texorpdfstring{\cref{making-equivalence-LMW-coherent}}{TheTheoremName}} 
\label{appendix-on-cylinders}

In this appendix, we complete the proof of \cref{making-equivalence-LMW-coherent}.
Our proof below is essentially a ``proof-relevant'' version of that of \cite[Lemma 4.36]{Lafont_Metayer_Worytkiewicz_folk_model_str_omega_cat}, and in particular, we will use their notion of \emph{cylinder}.
(In the following definition of cylinder, we adopt (and slightly modify) the notations $(-)^\flat$, $(-)^\sharp$, and $(-)^\natural$ from \cite{Lafont_Metayer_Worytkiewicz_folk_model_str_omega_cat}, but note that they are unrelated to the functors $\WkCats{\omega}\to \mWkCats{\omega}$ introduced in \cref{subsec:marked-weak-omega-cats}.)

\begin{definition}
	Let $X$ be a strict $\omega$-category and let $x$ be a cell of dimension $\geq 1$ in $X$.
	We write $x^\flat = s_0(x)$ and $x^\sharp = t_0(x)$.
\end{definition}
\begin{definition}[{\cite[Definition~4.14]{Lafont_Metayer_Worytkiewicz_folk_model_str_omega_cat}}]\label{cylinder-definition}
	The notion of \emph{$n$-cylinder} $U \colon x \cylarrow y$ in a strict $\omega$-category $X$, where $x$ and $y$ are $n$-cells in $X$, is defined inductively as follows.
	\begin{itemize}
		\item A $0$-cylinder $U \colon x \cylarrow y$ is an equivalence $1$-cell $U \colon x \to y$.
		\item For $n \ge 1$, an $n$-cylinder $U \colon x \cylarrow y$ is a triple $U = (U^\flat,U^\sharp,U^\natural)$ where
		\begin{itemize}
			\item $U^\flat \colon x^\flat \to y^\flat$ is an equivalence $1$-cell,
			\item $U^\sharp \colon x^\sharp \to y^\sharp$ is an equivalence $1$-cell, and
			\item $U^\natural \colon (x \comp{X}{0} U^\sharp) \cylarrow (U^\flat \comp{X}{0} y)$ is an $(n-1)$-cylinder in the hom strict $\omega$-category $X(x^\flat,y^\sharp)$.
		\end{itemize}
	\end{itemize}
	For $n \ge 1$, the \emph{source} of such $U$ is the $(n-1)$-cylinder $s_{n-1}(U) \colon s_{n-1}(x) \cylarrow s_{n-1}(y)$ defined inductively as follows.
	\begin{itemize}
		\item If $n = 1$, then $s_0(U) = U^\flat$.
		\item If $n \ge 2$, then $s_{n-1}(U)^\flat = U^\flat$, $s_{n-1}(U)^\sharp = U^\sharp$, and $s_{n-1}(U)^\natural = s_{n-2}(U^\natural)$.
	\end{itemize}
	The \emph{target} $(n-1)$-cylinder $t_{n-1}(U) \colon t_{n-1}(x) \cylarrow t_{n-1}(y)$ is defined similarly.
	These operations make the collection of all cylinders in $X$ into a globular set, which we denote by $\Gamma(X)$.
\end{definition}

\begin{theorem}[{\cite[Theorem~4.21 and Corollary~4.23]{Lafont_Metayer_Worytkiewicz_folk_model_str_omega_cat}}]\label{Gamma}
	The globular set $\Gamma(X)$ can be made into a strict $\omega$-category such that the projections
	\[
	\proj{1}{X}, \proj{2}{X} \colon \Gamma(X) \to X
	\]
	given by $\proj{1}{X}(U \colon x \cylarrow y) = x$ and $\proj{2}{X}(U \colon x \cylarrow y) = y$ are folk trivial fibrations (and in particular they are strict $\omega$-functors).
\end{theorem}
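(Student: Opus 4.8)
The plan is to build the strict $\omega$-category structure on $\Gamma(X)$ by induction on dimension, mirroring the recursive shape of \cref{cylinder-definition}, and then to read off the statements about $\proj{1}{X}$ and $\proj{2}{X}$; since the inductive step replaces $X$ by a hom-$\omega$-category, the statement is really proved for all strict $\omega$-categories simultaneously. The key input is that for strict $\omega$-categories the hom construction $X\mapsto X(a,b)$ is functorial and \emph{strictly} compatible with the globular structure, identities, and compositions; this is the strict-categorical instance of \cref{suspension-compatibility,suspension-preserves-canonical-compositions}, and in particular (\cref{hom-valued-in-graphs}) the $(k-1)$-cells of $X(a,b)$ are the $k$-cells of $X$ with $0$-dimensional boundary $(a,b)$, with all structure inherited strictly. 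Using this, for a $k$-cylinder $U=(U^\flat,U^\sharp,U^\natural)\colon x\cylarrow y$ one sets $\id{k+1}{\Gamma(X)}{U}:=\bigl(U^\flat,U^\sharp,\id{k}{\Gamma(X(x^\flat,y^\sharp))}{U^\natural}\bigr)$ — the $\flat$- and $\sharp$-components being legitimate because $\id{k+1}{X}{x}\comp{0}{}U^\sharp=\id{k+1}{X}{x\comp{0}{}U^\sharp}$ by the strict unit and whiskering laws — and one defines compositions of cylinders by a similar recursion combining the compositions and whiskerings of $X$ in the bottom dimensions with compositions in the relevant hom-$\omega$-categories for the $\natural$-components. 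The strict $\omega$-category axioms for $\Gamma(X)$ are then verified by the same induction: each axiom decomposes into axioms for $X$ in dimensions $\le1$ together with the inductive hypothesis applied to hom-$\omega$-categories. This step is routine but is the bulk of the work.

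That $\proj{1}{X}$ and $\proj{2}{X}$ are strict $\omega$-functors is then immediate, since by design every operation on cylinders projects to the corresponding operation on the first (resp.\ second) endpoint in $X$, and by \cref{cylinder-definition} the source and target of a cylinder project to those of its endpoints.

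It remains to show $\proj{1}{X}$ is a folk trivial fibration; the case of $\proj{2}{X}$ follows by the evident symmetry $\Gamma(X)\cong\Gamma(X)$ reversing every cylinder (which uses that equivalence $1$-cells are invertible and which swaps $\proj{1}{X}$ and $\proj{2}{X}$). By \cref{def:trivial-fibration} we must solve two kinds of lifting problems. Against $\iota^0$: given a $0$-cell $x$ of $X$, the identity $\id{1}{X}{x}$ is an equivalence $1$-cell, hence a $0$-cylinder, with $\proj{1}{X}(\id{1}{X}{x})=x$. Against $\iota^n$ with $n\ge1$: given parallel $(n-1)$-cylinders $U\colon x\cylarrow y$ and $U'\colon x'\cylarrow y'$ and an $n$-cell $u\colon x\to x'$ of $X$, we must produce an $n$-cylinder $V\colon U\cylarrow U'$ with $\proj{1}{X}(V)=u$. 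We take $\proj{1}{X}(V):=u$; for $n\ge2$, parallelism of $U,U'$ forces $U^\flat=U'^\flat$ and $U^\sharp=U'^\sharp$ and we set $V^\flat:=U^\flat$, $V^\sharp:=U^\sharp$, whereas for $n=1$ we set $V^\flat:=U$, $V^\sharp:=U'$. The remaining datum $V^\natural$ is an $(n-1)$-cylinder in the hom-$\omega$-category $X(x^\flat,y^\sharp)$; unwinding \cref{cylinder-definition} one checks that its prescribed source $U^\natural$, target $U'^\natural$, and first projection $u\comp{0}{}U^\sharp$ constitute a lifting problem of exactly the same type one dimension lower, solved by the inductive hypothesis. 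For $n=1$ there is no recursion: $V^\natural$ is an equivalence $2$-cell obtained by transporting $u\comp{0}{}U'$ along an inverse $\bar U$ of $V^\flat=U$ — concretely, with $b:=\bar U\comp{0}{}u\comp{0}{}U'$ one has $V^\flat\comp{0}{}b=(U\comp{0}{}\bar U)\comp{0}{}u\comp{0}{}U'$ together with an equivalence $2$-cell to $\id{1}{X}{x}\comp{0}{}u\comp{0}{}U'=u\comp{0}{}U'$, whose inverse serves as $V^\natural$. All cells produced along the way are equivalences since equivalence cells in a strict $\omega$-category are closed under composition, whiskering, and inverses (cf.\ \cref{sim-is-eq-rel,prop:right-inv-iff-left-inv,str-functor-pres-eq}).

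The principal obstacle is twofold. First, erecting the strict $\omega$-category structure on $\Gamma(X)$ and verifying the associativity, unit, and interchange axioms calls for a careful double induction — on dimension and on the combinatorics of the operations — threaded through the hom-$\omega$-category construction, and this bookkeeping is the main labour. Second, in the trivial-fibration argument one must check that the reduced problem for $V^\natural$ is genuinely well-posed, i.e.\ that its prescribed source, target, and first projection are mutually compatible cells of $X(x^\flat,y^\sharp)$; this is precisely where the strictness of the unit and whiskering laws is used, and where the corresponding construction would break down for general weak $\omega$-categories (which is why their ``coherent'' analogue $\EF{n}$, rather than a naive cylinder object, is needed there). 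Everything else is a matter of unwinding \cref{cylinder-definition}.
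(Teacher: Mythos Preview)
The paper does not prove this theorem at all: it is stated with attribution to \cite[Theorem~4.21 and Corollary~4.23]{Lafont_Metayer_Worytkiewicz_folk_model_str_omega_cat} and then immediately used. So there is no ``paper's own proof'' to compare against; your proposal is an attempt to reconstruct the argument of Lafont--M\'etayer--Worytkiewicz, and in broad outline it follows their approach (recursive definition of the $\omega$-category structure via the hom decomposition, and an inductive lifting argument for the projections).

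Your sketch is largely sound, but one step is not correct as stated. You claim that the case of $\proj{2}{X}$ follows from ``the evident symmetry $\Gamma(X)\cong\Gamma(X)$ reversing every cylinder''. There is no such strict automorphism: reversing a $0$-cylinder requires choosing an inverse equivalence $1$-cell, which is neither canonical nor functorial, and in higher dimensions the $\natural$-component of a cylinder lives in $X(x^\flat,y^\sharp)$ while a putative reversed cylinder would need its $\natural$-component in $X(y^\flat,x^\sharp)$, a different hom $\omega$-category. In \cite{Lafont_Metayer_Worytkiewicz_folk_model_str_omega_cat} the two projections are handled by parallel (dual but not identical) inductive arguments, not by an involution on $\Gamma(X)$. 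Your argument for $\proj{1}{X}$ adapts to $\proj{2}{X}$ with the obvious changes (post-compose rather than pre-compose with the inverse equivalence in the base case), so the conclusion survives, but the justification you gave does not.

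A minor remark: your acknowledgement that the $\omega$-category axioms on $\Gamma(X)$ constitute ``the bulk of the work'' is accurate, and your sketch of that part is too thin to count as more than a plan; in \cite{Lafont_Metayer_Worytkiewicz_folk_model_str_omega_cat} this occupies several pages of explicit formulae for the compositions $\comp{k}{}$ of cylinders and verification of the interchange laws.
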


\begin{lemma}
	Let $X$ be a strict $\omega$-category and let
	\[
	\begin{tikzcd}
		x
		\arrow [r, "u"] &
		y
		\arrow [r, "v"] &
		x'
		\arrow [r, "u'"] &
		y'
	\end{tikzcd}
	\]
	be a chain of equivalence $n$-cells in $X$ with $n \ge 1$.
	Then there exists an $n$-cylinder
	\[
	\bridge{n}{X}{u}{v}{u'} \colon u \cylarrow u'.
	\]
\end{lemma}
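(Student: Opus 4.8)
The plan is to construct $\bridge{n}{X}{u}{v}{u'}$ by induction on $n$, exploiting the recursive shape of \cref{cylinder-definition}: an $n$-cylinder consists of a pair of equivalence $1$-cells together with an $(n-1)$-cylinder living inside a hom strict $\omega$-category, so the induction will push the problem down into $X(u^\flat,u^\sharp)$.

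For the base case $n=1$, I would put $U^\flat = u\comp{0}{}v$ and $U^\sharp = v\comp{0}{}u'$, and take $U^\natural = \id{2}{}{u\comp{0}{}v\comp{0}{}u'}$. The cells $U^\flat\colon x\to x'$ and $U^\sharp\colon y\to y'$ are equivalence $1$-cells because equivalences are closed under composition (an instance of closure under pasting, \cref{cor:bi-inv-closed-under-pst-and-linv}), where the hypothesis that $v$ is an equivalence is used. The datum $U^\natural$ is required to be a $0$-cylinder $(u\comp{0}{}U^\sharp)\cylarrow(U^\flat\comp{0}{}u')$; but its prescribed source and target $1$-cells, $u\comp{0}{}(v\comp{0}{}u')$ and $(u\comp{0}{}v)\comp{0}{}u'$, coincide by strict associativity, so the identity $2$-cell on their common value is indeed such a $0$-cylinder, and, being an identity, it is reversible as required by \cref{cylinder-definition}.

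For the inductive step $n\geq 2$, I would first observe that, because $v\colon y\to x'$ has dimension $n\geq 2$, the globular identities force its $(n-1)$-source $y = t_{n-1}(u)$ and $(n-1)$-target $x' = s_{n-1}(u')$ to be parallel, and hence $u^\flat = (u')^\flat =: a$ and $u^\sharp = (u')^\sharp =: b$. One may then take $U^\flat = \id{1}{}{a}$ and $U^\sharp = \id{1}{}{b}$, both of which are equivalence $1$-cells. By the strict unit laws, $u\comp{0}{}U^\sharp = u$ and $U^\flat\comp{0}{}u' = u'$, so what remains is to supply an $(n-1)$-cylinder $u\cylarrow u'$ in the hom strict $\omega$-category $X(a,b)$. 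Since identities, binary compositions, and the dimension grading all transfer between $X$ and $X(a,b)$ (\cref{hom-valued-in-graphs}), the cells $u,v,u'$ form a chain of equivalence $(n-1)$-cells in $X(a,b)$, and I would set $U^\natural := \bridge{n-1}{X(a,b)}{u}{v}{u'}$ by the inductive hypothesis. Then $U = (U^\flat,U^\sharp,U^\natural)$ is the sought $n$-cylinder $\bridge{n}{X}{u}{v}{u'}\colon u\cylarrow u'$.

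I expect the only delicate points to be the bookkeeping of boundaries — verifying in each case that the $U^\natural$ just constructed really has the source and target demanded by \cref{cylinder-definition} (strict associativity entering in the base case, the strict unit laws in the inductive step) — together with the small lemma that a cell of $X$ with $0$-boundary $(a,b)$ which is an equivalence in $X$ remains an equivalence when regarded in $X(a,b)$; this last transfer is immediate from the coinductive definition of equivalence once the compatibility of $X\mapsto X(a,b)$ with identities and composition recorded in \cref{hom-valued-in-graphs} is in hand. Neither ingredient involves a genuinely new idea, so the induction should go through without difficulty.
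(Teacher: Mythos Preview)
Your proof is correct and follows essentially the same approach as the paper: induction on $n$, with the base case using $U^\flat = u\comp{0}{}v$, $U^\sharp = v\comp{0}{}u'$, and the identity $2$-cell as $U^\natural$ (exploiting strict associativity), and the inductive step using identity $1$-cells for $U^\flat,U^\sharp$ and the inductive hypothesis in the hom strict $\omega$-category for $U^\natural$. The paper's write-up is slightly terser (it does not spell out the transfer of equivalences to the hom, which you rightly flag as a small lemma), but the construction is the same.
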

\begin{proof}
	We proceed by induction on $n$.
	In the case $n=1$, we must provide a $1$-cylinder $u \cylarrow u'$, which amounts to a commutative-up-to-equivalence-$2$-cell square such that
	\begin{itemize}
		\item the top edge is $u$,
		\item the bottom edge is $u'$, and
		\item both vertical edges are equivalences.
	\end{itemize}
	We indeed have such a square, namely:
	\[
	\begin{tikzpicture}
		\node (ll) at (0,0) {$x'$};
		\node (lr) at (3,0) {$y'$};
		\node (ul) at (0,3) {$x$};
		\node (ur) at (3,3) {$y$};

		\draw[->] (ll) to node[labelsize, auto, swap] {$u'$} (lr);
		\draw[->] (ul) to node[labelsize, auto] {$u$} (ur);
		\draw[->] (ul) to node[labelsize, auto, swap] {$u \comp{1}{X} v$} (ll);
		\draw[->] (ur) to node[labelsize, auto] {$v \comp{1}{X} u'$} (lr);

		\draw[->, 2cell] (2.5,2.5) to node[labelsize, fill = white] {$\id{3}{X}{u \comp{1}{X} v \comp{1}{X} u'}$} (0.5,0.5);
	\end{tikzpicture}
	\]

	For $n \ge 2$, we define the $n$-cylinder $\bridge{n}{X}{u}{v}{u'}$ by
	\begin{itemize}
		\item $\bridge{n}{X}{u}{v}{u'}^\flat = \id{1}{X}{x^\flat}$,
		\item $\bridge{n}{X}{u}{v}{u'}^\sharp = \id{1}{X}{x^\sharp}$, and
		\item $\bridge{n}{X}{u}{v}{u'}^\natural = \bridge{n-1}{X(x^\flat,x^\sharp)}{u}{v}{u'}$.\qedhere
	\end{itemize}
\end{proof}

\begin{proof}[Proof of \cref{making-equivalence-LMW-coherent} continued]
    Suppose that $u \colon x \to y$ is an equivalence $n$-cell in $X$.
	Observe that the $n$-cylinder $\bridge{n}{X}{\id{n}{X}{x}}{\id{n}{X}{x}}{u}$ has type $\id{n}{X}{x} \cylarrow u$, so the strict $\omega$-functor $\Cst{n} \to \Gamma(X)$ picking out this cylinder fits into the following commutative diagram:
	\[
	\begin{tikzcd}[column sep = huge]
		\Cst{n}
		\arrow [rr, "\bridge{n}{X}{\id{n}{X}{x}}{\id{n}{X}{x}}{u}"]
		\arrow [rrr, "u", bend left]
		\arrow [d, "\ipt{n}", swap] & &
		\Gamma(X)
		\arrow [r, "\proj{2}{X}"]
		\arrow [d, "\proj{1}{X}"] &
		X \\
		\Ept{n}
		\arrow [r, "\rpt{n}", swap]
		\arrow [rru, dashed]&
		\Cst{n-1}
		\arrow [r, "x", swap] &
		X &
	\end{tikzcd}
	\]
	Since $\ipt{n}$ is a folk cofibration by construction, and $\proj{1}{X}$ is a folk trivial fibration by \cref{Gamma}, we obtain a lift as indicated.
	Composing this lift with $\proj{2}{X}$ yields the desired extension.
\end{proof}

\section{Quotient bicategory of a weak \texorpdfstring{$\omega$}{ω}-category}\label{appendix-on-quotient-bicategory}

Recall from \cref{sim-is-eq-rel} that the binary relation $\sim$ defined by the existence of equivalence cells is an equivalence relation on the set of cells in a given weak $\omega$-category $X$.
The purpose of this appendix is to show that quotienting the $2$-cells by this relation $\sim$ yields a bicategory $\tau_2(X)$.
This construction will be used in \cref{appendix-on-halfadjoint-lift-one-step} in which we provide another example of a set $K$ of maps in $\mWkCats{\omega}$ satisfying \ref{K1}--\ref{K3} based on coinductive half-adjoint equivalences.

In \cref{appendix-on-quotient-bicategory,appendix-on-halfadjoint-lift-one-step}, we will need to open up the black box presented in \cref{subsec:strict-and-weak-omega-cats} and deal with more intricate operations in a weak $\omega$-category.
Since these appendices are somewhat auxiliary and not needed for understanding the main body of this paper, we will refrain from reviewing the preliminaries, and refer the reader to \cite[Section 2]{FHM2} (and only to that section; in particular, the reader should not have to consult \cite{FHM1} unless they are interested in seeing full proofs) as needed.

We will define the quotient \emph{bicategory} construction by applying the quotient \emph{category} construction to the hom weak $\omega$-categories, which is in turn defined by applying the quotient \emph{set} construction to the hom.
(Recall from \cref{subsec:suspension} that we can take hom weak $\omega$-categories of a weak $\omega$-category.)
The seemingly strange choice of letters in the definitions and propositions below is intended to accord with this context.

\begin{definition}
	Let $Z$ be a weak $\omega$-category.
	We write $\tau_0(Z)$ for the quotient set $Z_0/{\sim}$, and $\eqclass{p} \in \tau_0(Z)$ for the equivalence class containing $p \in Z_0$.
\end{definition}

\begin{definition}
	Let $Y$ be a weak $\omega$-category.
	We write $\tau_1(Y)$ for the graph whose vertex set is $Y_0$ and whose edges are given by
	\[
	\tau_1(Y)(u,v) = \tau_0\bigl(Y(u,v)\bigr).\qedhere
	\]
\end{definition}

\begin{proposition}
	Let $Y$ be a weak $\omega$-category.
	Then setting
	\begin{itemize}
		\item $\id{1}{\tau_1(Y)}{u} = \eqclass{\id{1}{Y}{u}}$ for each $0$-cell $u$ in $Y$, and
		\item $\eqclass{p} \comp{0}{\tau_1(Y)} \eqclass{q} = \eqclass{p \comp{0}{Y} q}$ for each composable pair of $1$-cells $p,q$ in $Y$
	\end{itemize}
	makes $\tau_1(Y)$ into a category.
\end{proposition}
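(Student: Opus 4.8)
The statement to prove is that the graph $\tau_1(Y)$, with the prescribed identities and composition, forms a category. I would structure the proof around three things: well-definedness of the composition operation, and then the unit and associativity axioms.

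\textbf{Plan.} First I would check that $\comp{0}{\tau_1(Y)}$ is well-defined on equivalence classes. Given $1$-cells $p, p' \colon u \to v$ and $q, q' \colon v \to w$ in $Y$ with $p \sim p'$ and $q \sim q'$ (as cells of $Y$, equivalently as $0$-cells of the relevant hom weak $\omega$-categories $Y(u,v)$ and $Y(v,w)$), I must verify that $p \comp{0}{Y} q \sim p' \comp{0}{Y} q'$. This is exactly \cref{sim-is-congruence} applied with $n = 1$ (noting that $\sim$ on the $0$-cells of $Y(u,v)$ coincides with $\sim$ on the corresponding $1$-cells of $Y$, by \cref{hom-valued-in-graphs} and the identifications recorded at the end of \cref{hom-valued-in-graphs}). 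Hence $\eqclass{p} \comp{0}{\tau_1(Y)} \eqclass{q} = \eqclass{p'} \comp{0}{\tau_1(Y)} \eqclass{q'}$, so the operation descends to the quotient. The identities $\eqclass{\id{1}{Y}{u}}$ require no well-definedness check since they are given by specific cells.

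\textbf{Axioms.} For the unit axioms, I would use \cref{unit-law} with $n = 1$: for any $1$-cell $p \colon u \to v$ in $Y$ we have $\id{1}{Y}{u} \comp{0}{Y} p \sim p \sim p \comp{0}{Y} \id{1}{Y}{v}$, which upon passing to equivalence classes gives $\eqclass{\id{1}{Y}{u}} \comp{0}{\tau_1(Y)} \eqclass{p} = \eqclass{p} = \eqclass{p} \comp{0}{\tau_1(Y)} \eqclass{\id{1}{Y}{v}}$; these are precisely the left and right unit laws in $\tau_1(Y)$. For associativity, I would use \cref{associativity} with $n = 1$: for composable $1$-cells $p, q, r$ in $Y$ we have $(p \comp{0}{Y} q) \comp{0}{Y} r \sim p \comp{0}{Y} (q \comp{0}{Y} r)$, hence $(\eqclass{p} \comp{0}{\tau_1(Y)} \eqclass{q}) \comp{0}{\tau_1(Y)} \eqclass{r} = \eqclass{p} \comp{0}{\tau_1(Y)} (\eqclass{q} \comp{0}{\tau_1(Y)} \eqclass{r})$ in $\tau_1(Y)$. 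Transitivity of $\sim$ (\cref{sim-is-eq-rel}) is used implicitly throughout to chain these relations and to know that equivalence classes are well-behaved.

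\textbf{Main obstacle.} There is no genuinely hard step here; the proof is a routine transport of the axioms \cref{unit-law,associativity,sim-is-congruence,sim-is-eq-rel} through the quotient. The one point requiring a little care — and the only place I would slow down — is the bookkeeping identifying the relation $\sim$ on $0$-cells of a hom weak $\omega$-category $Y(u,v)$ with the relation $\sim$ on the corresponding $1$-cells of $Y$, and likewise matching $\comp{0}{Y(u,v)}$-style data with $\comp{0}{Y}$ via \cref{hom-valued-in-graphs}. Once that identification is made explicit, each axiom is a one-line consequence of the cited propositions, so I would present the proof compactly: state well-definedness via \cref{sim-is-congruence}, then dispatch the unit and associativity laws by quoting \cref{unit-law,associativity} respectively.
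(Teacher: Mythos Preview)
Your proposal is correct and follows exactly the paper's approach: well-definedness via \cref{sim-is-congruence}, then the category axioms via \cref{unit-law} and \cref{associativity}. The paper's proof is the one-liner you anticipated; your extra care about matching $\sim$ in $Y(u,v)$ with $\sim$ in $Y$ is a reasonable elaboration but is left implicit there.
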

\begin{proof}
	The composition is well defined by \cref{sim-is-congruence}, and the category axioms follow from \cref{unit-law,associativity}.
\end{proof}

The following straightforward observation will be useful later.

\begin{proposition}
	\label{equivalences-in-quotient-category}
	Let $Y$ be a weak $\omega$-category and let $p \colon u \to v$ be a $1$-cell in $Y$.
	Then $p$ is an equivalence in $Y$ if and only if $\eqclass{p}$ is an isomorphism in $\tau_1(Y)$.
\end{proposition}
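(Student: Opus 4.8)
The plan is to show that each of the two conditions unwinds to one and the same concrete statement about $Y$, namely that there exists a $1$-cell $q\colon v\to u$ in $Y$ with $p\comp{0}{Y}q\sim\id{1}{Y}{u}$ and $q\comp{0}{Y}p\sim\id{1}{Y}{v}$.

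For the categorical side, I would unravel the definition of the category $\tau_1(Y)$. By construction its morphisms $u\to v$ are the $\sim$-classes of $1$-cells $u\to v$ of $Y$, and its composition and identities are induced from those of $Y$; hence $\eqclass{p}$ is an isomorphism in $\tau_1(Y)$ precisely when there is a $1$-cell $q\colon v\to u$ in $Y$ with $\eqclass{p\comp{0}{Y}q}=\eqclass{\id{1}{Y}{u}}$ in $\tau_0(Y(u,u))$ and $\eqclass{q\comp{0}{Y}p}=\eqclass{\id{1}{Y}{v}}$ in $\tau_0(Y(v,v))$, that is, with $p\comp{0}{Y}q\sim\id{1}{Y}{u}$ and $q\comp{0}{Y}p\sim\id{1}{Y}{v}$. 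The only step here that deserves a word of care is the \emph{locality} of equivalences, i.e.\ that the relation $\sim$ on the $0$-cells of a hom weak $\omega$-category such as $Y(u,u)$ is the restriction of $\sim$ on $Y$; this holds because, by \cite[Corollary~2.5.6]{FHM1}, the $\omega$-precategory structure on $Y(u,u)$ is the dimension-shifted restriction of that of $Y$, and whether a cell is an equivalence depends only on this structure (see \cref{def:spherical-eq}).

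For the weak $\omega$-categorical side, I would use that $\nu\Phi$ is a fixed point of $\Phi$, so that $p$ is an equivalence in $Y$ if and only if $p\in\Phi(\nu\Phi)$; by \cref{def:spherical-eq} this says exactly that there exist a $1$-cell $q\colon v\to u$ and equivalence $2$-cells $p\comp{0}{}q\to\id{1}{}{u}$ and $q\comp{0}{}p\to\id{1}{}{v}$ in $Y$. Since two parallel cells are related by $\sim$ precisely when there is an equivalence cell between them (by definition of the relation $\sim$; see the discussion preceding \cref{sim-is-eq-rel}), this is again the concrete condition displayed above. Combining the two unwindings gives the proposition. I do not expect any genuine obstacle: the whole argument is a matter of matching up definitions, the only slightly delicate point being the locality of equivalence cells along the hom construction used in the first step.
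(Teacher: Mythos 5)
Your proof is correct and takes essentially the same route as the paper's: unwinding the definition of $\tau_1(Y)$ shows that $\eqclass{p}$ is invertible precisely when there exists $q\colon v\to u$ with $p\comp{0}{Y}q\sim\id{1}{Y}{u}$ and $q\comp{0}{Y}p\sim\id{1}{Y}{v}$, which is exactly the coinductive characterisation of $p$ being an equivalence. Your explicit treatment of the locality of $\sim$ along the hom construction (via \cite[Corollary~2.5.6]{FHM1}) is a point the paper's one-line proof leaves implicit, and it is handled correctly.
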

\begin{proof}
	By definition of $\tau_1(Y)$, the morphism $\eqclass{p}$ is an isomorphism if and only if there exists a $1$-cell $q \colon v \to u$ such that $p \comp{0}{Y} q \sim \id{1}{Y}{u}$ and $q \comp{0}{Y} p \sim \id{1}{Y}{v}$.
\end{proof}

Now we can define the underlying category-enriched graph of the desired quotient bicategory.

\begin{definition}
	Let $X$ be a weak $\omega$-category.
	We write $\tau_2(X)$ for the category-enriched graph whose vertex set is $X_0$ and whose hom categories are given by
	\[
	\tau_2(X)(a,b) = \tau_1\bigl(X(a,b)\bigr).\qedhere
	\]
\end{definition}

Before describing the entire bicategory structure, we need to give names to certain instances of what we call \emph{coherence} \cite[Proposition 2.4.6]{FHM2}.

\begin{definition}
Given an $n$-cell $u \colon a \to b$ in a weak $\omega$-category $X$, we write:
\begin{itemize}
	\item $\lunit{n+1}{X}{u} \colon \id{n}{X}{a} \comp{n-1}{X} u \to u$
	\item $\lunitinv{n+1}{X}{u} \colon u \to \id{n}{X}{a} \comp{n-1}{X} u$
	\item $\runit{n+1}{X}{u} \colon u \comp{n-1}{X}\id{n}{X}{b} \to u$
	\item $\runitinv{n+1}{X}{u} \colon u \to u \comp{n-1}{X}\id{n}{X}{b}$
\end{itemize}
for equivalence $(n+1)$-cells in $X$ witnessing the unit law (\cref{unit-law}).

Given a composable triple of $n$-cells $u_i \colon a_{i-1} \to a_i$ for $1 \le i \le 3$, we write
\begin{itemize}
	\item $\assoc{n+1}{X}{u_1}{u_2}{u_3} \colon (u_1 \comp{n-1}{X} u_2) \comp{n-1}{X} u_3 \to u_1 \comp{n-1}{X} (u_2 \comp{n-1}{X} u_3)$
	\item $\associnv{n+1}{X}{u_1}{u_2}{u_3} \colon u_1 \comp{n-1}{X} (u_2 \comp{n-1}{X} u_3) \to (u_1 \comp{n-1}{X} u_2) \comp{n-1}{X} u_3$
\end{itemize}
for equivalence $(n+1)$-cells in $X$ witnessing the associative law (\cref{associativity}).
\end{definition}

The following proposition uses binary compositions of higher codimension (namely composing $2$- and $3$-cells along a $0$-cell) which was introduced in \cite[Definition~2.3.5]{FHM2}.
\begin{proposition}
	Let $(X,\xi \colon LX \to X)$ be a weak $\omega$-category.
        For each $0$-cell $a$ in $X$, define $\id{1}{\tau_2(X)}{a} = \id{1}{X}{a}$.
        For each triple of $0$-cells $a,b,c$ in $X$, define a graph morphism
    \[
    -\comp{0}{\tau_2(X)}-\colon\tau_1\bigl(X(a,b)\bigr)\times\tau_1\bigl(X(b,c)\bigr)\to\tau_1\bigl(X(a,c)\bigr)
    \]
    as follows.
	\begin{itemize}
		\item %
			For $1$-cells $u\colon a\to b$ and $v\colon b\to c$ in $X$, the $1$-cell $u\comp{0}{\tau_2(X)}v\colon a\to c$ is $u\comp{0}{X}v$.
		\item %
			For $2$-cells $p\colon u\to u'\colon a\to b$ and $q\colon v\to v'\colon b\to c$ in $X$,
			the equivalence class
            \[
            \eqclass{p}\comp{0}{\tau_2(X)}\eqclass{q}\in\tau_1\bigl(X(a,c)\bigr)(u\comp{0}{X}v,u'\comp{0}{X}v')
            \]
            is $\eqclass{p\comp{0}{X}q}$.
	\end{itemize}
	This is well defined and moreover defines a functor $-\comp{0}{\tau_2(X)}-\colon\tau_1\bigl(X(a,b)\bigr)\times\tau_1\bigl(X(b,c)\bigr)\to\tau_1\bigl(X(a,c)\bigr)$.
	
	Furthermore, the morphisms
	\[
		\alpha_2^{\tau_2(X)}(u,v,w)=\eqclass{\alpha_2^X(u,v,w)}\colon\left(u\comp{0}{\tau_2(X)}v\right)\comp{0}{\tau_2(X)}w\to u\comp{0}{\tau_2(X)}\left(v\comp{0}{\tau_2(X)}w\right)
		\text{,}
	\]
	\[
		\lambda_2^{\tau_2(X)}(u)=\eqclass{\lambda_2^X(u)}\colon\id{1}{\tau_2(X)}{x}\comp{0}{\tau_2(X)}u\to u
		\text{, and}
	\]
	\[
		\rho_2^{\tau_2(X)}(u)=\eqclass{\rho_2^X(u)}\colon u\comp{0}{\tau_2(X)}\id{1}{\tau_2(X)}{y}\to u
	\]
	in the hom-categories of $\tau_2(X)$ form suitable natural isomorphisms, making $\tau_2(X)$ into a bicategory.
\end{proposition}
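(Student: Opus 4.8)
The plan is to verify the bicategory axioms for $\tau_2(X)$ by systematically transporting the corresponding coherence data from $X$. The heart of the argument is that every structural $2$-cell in $\tau_2(X)$ (the associators $\alpha_2^{\tau_2(X)}$, the unitors $\lambda_2^{\tau_2(X)}$, $\rho_2^{\tau_2(X)}$) is the $\sim$-class of an \emph{equivalence} $3$-cell in $X$ witnessing the relevant coherence law, so by \cref{equivalences-in-quotient-category} each such class is an isomorphism in the hom-category $\tau_1(X(a,c))$. First I would establish well-definedness and functoriality of $-\comp{0}{\tau_2(X)}-$: well-definedness on $2$-cells is \cref{sim-is-congruence} (applied in the hom weak $\omega$-category $X(a,c)$, noting from \cref{hom-valued-in-graphs} that $\comp{0}{\tau_2(X)}$ on these cells is really $\comp{1}{X}$ after suspension-style reindexing), preservation of identities follows from \cref{suspension-preserves-canonical-compositions} together with $\id{1}{\tau_1(Y)}{u}=\eqclass{\id{1}{Y}{u}}$, and preservation of composition (interchange) comes from the interchange law for the $\comp{0}{X}$ and $\comp{1}{X}$ operations, which holds up to equivalence and hence strictly in the quotient.

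Next I would check naturality of the three families of structural isomorphisms. For the associator, naturality in the three arguments amounts to showing that for $2$-cells $p_i\colon u_i\to u_i'$ the square relating $(p_1\comp{0}{}p_2)\comp{0}{}p_3$ and $p_1\comp{0}{}(p_2\comp{0}{}p_3)$ commutes in $\tau_1(X(a,d))$; this is precisely the naturality of $\alpha_2^X$ as an equivalence $3$-cell (an instance of coherence \cite[Proposition 2.4.6]{FHM2}), so after quotienting by $\sim$ the square commutes on the nose. Naturality of $\lambda_2^{\tau_2(X)}$ and $\rho_2^{\tau_2(X)}$ is handled the same way using the $3$-cells witnessing \cref{unit-law} and their naturality. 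The point to be careful about is that in each case I need the relevant higher cell to exist and to be invertible in the quotient: existence is \cref{associativity}/\cref{unit-law}, and the fact that $\eqclass{\alpha_2^X(u,v,w)}$ etc.\ are isomorphisms follows because these are equivalence cells (they have inverses $\associnv{}{}{}{}{}$, $\lunitinv{}{}{}$, $\runitinv{}{}{}$, whose composites with them are $\sim$-equivalent to identities again by coherence).

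Then I would verify the two bicategory coherence axioms, namely the pentagon (for four composable $1$-cells) and the triangle (for two composable $1$-cells with an interposed identity). Each becomes an equation between two morphisms in a hom-category $\tau_1(X(a,e))$, i.e.\ between two $\sim$-classes of $2$-cells in $X$. But both sides are $\sim$-classes of equivalence $3$-cells in $X$ with the same source and target, built by pasting the cells $\alpha_2^X$, $\lambda_2^X$, $\rho_2^X$; by coherence for weak $\omega$-categories (any two parallel cells built from coherence data are $\sim$-equivalent --- this is the content of the relevant results in \cite[Section 2.4]{FHM2} underlying \cref{associativity} and \cref{unit-law}, used in the hom weak $\omega$-category and pushed down one dimension), the two $3$-cells are themselves connected by an equivalence $4$-cell, hence become equal after quotienting. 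I expect the main obstacle to be bookkeeping: carefully matching the indices of the $\comp{}{}$ and $\id{}{}{}$ operations under the identification of $\tau_2(X)(a,b)$ with $\tau_1$ of the hom, and confirming that the specific $3$-cells named $\alpha_2^X$, $\lambda_2^X$, $\rho_2^X$ can indeed be chosen compatibly so that the pasting composites appearing on the two sides of pentagon and triangle are genuinely parallel in $X$ before one invokes coherence. Once the reindexing is pinned down, every axiom reduces to ``two parallel coherence cells are $\sim$-equivalent,'' which is available from the cited results.
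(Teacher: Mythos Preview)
Your overall strategy matches the paper's: every axiom is reduced to a statement of the form ``two parallel cells built from the structure of $X$ are related by an equivalence cell of one dimension higher,'' which then follows from coherence \cite[Proposition~2.4.6]{FHM2} (or \cite[Proposition~3.2.5]{FHM1}). The paper proceeds exactly this way for functoriality, naturality, pentagon, and triangle.

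There are, however, two concrete mis-citations in your proposal. For well-definedness of the horizontal composition on $2$-cells, you invoke \cref{sim-is-congruence}, but that result concerns the codimension-$1$ composition $\comp{n-1}{}$ of $n$-cells; here the relevant operation is $p\comp{0}{X}q$ for $2$-cells, which is codimension $2$. Your parenthetical attempt to recast this as $\comp{1}{X}$ in a hom weak $\omega$-category does not work: $p$ lives in $X(a,b)$ and $q$ in $X(b,c)$, so they are not composable in any single hom. The paper instead observes that given equivalence $3$-cells $h\colon p_1\to p_2$ and $k\colon q_1\to q_2$, the $3$-cell $h\comp{0}{X}k$ has the right type and is an equivalence by \cite[Theorem~2.4.8]{FHM2} (closure of equivalences under arbitrary pasting). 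Similarly, your citation of \cref{suspension-preserves-canonical-compositions} for preservation of identities is not relevant; that proposition is about the suspension functor $\Sigma$, not about horizontal composition in $X$.

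The part you underestimate is the ``bookkeeping'' you allude to at the end. The paper's proof spends most of its effort on precisely this: it rewrites each side of (for example) $\id{2}{X}{u}\comp{0}{X}\id{2}{X}{v}\sim\id{2}{X}{u\comp{0}{X}v}$ explicitly as $\xi$ applied to a pair $(\phi,\uu)$ with $\phi\in L1$, using \cite[Proposition~2.3.7]{FHM2} and the monad axioms, until both sides share the same pasting diagram $\uu$ and differ only in the pasting instruction $\phi$; only then does coherence apply. Your proposal asserts that this reduction goes through but does not indicate how, and this is where the actual work lies.
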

\begin{proof}
    We first verify that $-\comp{0}{\tau_2(X)}-$ is well defined.
    Suppose we are given
    \begin{itemize}
        \item $1$-cells $u,u'\colon a\to b$ and $v,v'\colon b\to c$,
        \item $2$-cells $p_1,p_2\colon u\to u'$ and $q_1,q_2\colon v\to v'$, and
        \item  equivalence $3$-cells $h\colon p_1\to p_2$ and $k\colon q_1\to q_2$
    \end{itemize}
    in $X$. 
    We must provide an equivalence $3$-cell $p_1\comp{0}{X}q_1\to p_2\comp{0}{X}q_2$.
    A $3$-cell of correct type can be constructed as $h\comp{0}{X}k$, and it is moreover an equivalence in $X$ by \cite[Theorem~2.4.8]{FHM2}.

    To see that $-\comp{0}{\tau_2(X)}-$ is a functor, suppose we are given
    \begin{itemize}
        \item $1$-cells $u,u',u''\colon a\to b$ and $v,v',v''\colon b\to c$, and
        \item $2$-cells $p\colon u\to u'$, $p'\colon u'\to u''$, $q\colon v\to v'$, and $q'\colon v\to v'$
    \end{itemize}
    in $X$.
    We wish to prove that
    \[
        \id{1}{X(a,b)}{u}\comp{0}{X}\id{1}{X(b,c)}{v} \sim \id{1}{X(a,c)}{u\comp{0}{X}v}
    \]
    and
    \[
	\left(p\comp{0}{X(a,b)}p'\right)\comp{0}{X}\left(q\comp{0}{X(b,c)}q'\right) \sim (p\comp{0}{X}q)\comp{0}{X(a,c)}(p'\comp{0}{X}q')
    \]
    hold.
    By the last paragraph of \cref{hom-valued-in-graphs}, we can rewrite these expressions as
    \begin{equation}\label{horizontal-composition-preserves-units}
		\id{2}{X}{u}\comp{0}{X}\id{2}{X}{v}\sim\id{2}{X}{u\comp{0}{X}v}
	\end{equation}
    and
	\[
		(p\comp{1}{X}p')\comp{0}{X}(q\comp{1}{X}q')
		\sim
		(p\comp{0}{X}q)\comp{1}{X}(p'\comp{0}{X}q').
	\]
    Because their proofs are similar, we will only spell out the former.
    
    The left-hand side of \eqref{horizontal-composition-preserves-units} can be rewritten in terms of the corresponding operations in $L1$ (as opposed to $X$) as follows; see \cite[Subsection 2.2 and 2.3]{FHM2} for the notation.
    \begin{align*}
        \id{2}{X}{u}\comp{0}{X}\id{2}{X}{v} &= \xi\left(\widetilde e_2 \comp{0}{L1} \widetilde e_2, \begin{bmatrix}
            \id{2}{X}{u} & & \id{2}{X}{v} \\ & b &
        \end{bmatrix}\right) \tag*{\text{(\cite[Proposition 2.3.7 (2)]{FHM2})}}\\
        &= \xi\left(\widetilde e_2 \comp{0}{L1} \widetilde e_2, \begin{bmatrix}
            \xi\bigl(\id{2}{L1}{\tilde e_1}, [u]\bigr) & & \xi\bigl(\id{2}{L1}{\tilde e_1}, [v]\bigr) \\ & \xi\bigl(\widetilde e_0, [b]\bigr) &
        \end{bmatrix}\right) \tag*{\text{(\cite[Proposition 2.3.7 (1)]{FHM2} and unit law for $L$-algebra)}}\\
        &= \xi \circ L\xi \left(\widetilde e_2 \comp{0}{L1} \widetilde e_2, \begin{bmatrix}
            \bigl(\id{2}{L1}{\tilde e_1}, [u]\bigr) & & \bigl(\id{2}{L1}{\tilde e_1}, [v]\bigr) \\ & \bigl(\widetilde e_0, [b]\bigr) &
        \end{bmatrix}\right) \tag*{\text{(action of $L\xi$)}}\\
        &= \xi \circ \mu^L_X \left(\widetilde e_2 \comp{0}{L1} \widetilde e_2, \begin{bmatrix}
            \bigl(\id{2}{L1}{\tilde e_1}, [u]\bigr) & & \bigl(\id{2}{L1}{\tilde e_1}, [v]\bigr) \\ & \bigl(\widetilde e_0, [b]\bigr) &
        \end{bmatrix}\right) \tag*{\text{(associative law for monad $L$)}}\\
        &= \xi\left(\mu^L_1\left(\widetilde e_2 \comp{0}{L1} \widetilde e_2, \begin{bmatrix} \id{2}{L1}{\tilde e_1} & & \id{2}{L1}{\tilde e_1} \\ & \widetilde e_0 & \end{bmatrix}\right), \mu^T_X \begin{bmatrix} [u] & & [v] \\ & [b] & \end{bmatrix}\right) \tag*{\text{(justified below)}}\\
        &= \xi\left(\id{2}{L1}{\tilde e_1} \comp{0}{L1} \id{2}{L1}{\tilde e_1}, \begin{bmatrix} u & & v \\ & b & \end{bmatrix}\right) \tag*{\text{(\cite[Proposition 2.3.7 (2)]{FHM2} and action of $\mu^T_X$)}}
    \end{align*}
    Here the penultimate equality follows from the fact that $\mu^L_X$ fits into the commutative cube
    \[
    \begin{tikzcd}[row sep = large]
        LLX
        \arrow [rr, "\mu^L_X"]
        \arrow [dr, "LL!"']
        \arrow [dd, "\arity\arity_X"'] & &
        LX
        \arrow [dr, "L!"]
        \arrow [dd, "\arity_X" near end] & \\
        & LL1
        \arrow [rr, "\mu^L_1" near start]
        \arrow [dd, "\arity\arity_1"' near start] & &
        L1
        \arrow [dd, "\arity_1"] \\
        TTX
        \arrow [rr, "\mu^T_X"' near end]
        \arrow [dr, "TT!"'] & &
        TX
        \arrow [dr, "T!"] & \\
        & TT1
        \arrow [rr, "\mu^T_1"'] & & 
        T1
    \end{tikzcd}
    \]
    whose left and right faces are pullbacks.
    Similarly, the right-hand side of \eqref{horizontal-composition-preserves-units} can be rewritten as
    \[
    \id{2}{X}{u\comp{0}{X}v} = \xi\left(\id{2}{L1}{\widetilde{e}_1 \comp{0}{L1} \widetilde{e}_1},
    \begin{bmatrix}
            u & & v \\ & b & 
        \end{bmatrix}\right).
    \]
    Since both $\id{2}{L1}{\widetilde{e}_1}\comp{0}{L1}\id{2}{L1}{\widetilde{e}_1}$ and $\id{2}{L1}{\widetilde{e}_1 \comp{0}{L1} \widetilde{e}_1}$ are $2$-cells in $L1$ of type $\widetilde{e}_1 \comp{0}{L1} \widetilde{e}_1 \to \widetilde{e}_1 \comp{0}{L1} \widetilde{e}_1$ and arity $\begin{bmatrix} 1 & & 1 \\ & 0 & \end{bmatrix}^{(2)}$, the existence of the desired invertible $3$-cell can be deduced from \cite[Proposition~3.2.5]{FHM1}.

        For the second part of the proposition, that they are invertible follow from \cref{equivalences-in-quotient-category}.
	Their naturality, the pentagon identity, and the triangle identity can all be proved using the same strategy as above.
	To give a little more details, by \cite[Corollary~2.3.9]{FHM2}, we can reduce the desired commutativity conditions to the following statements: given
    \begin{itemize}
        \item $0$-cells $a,b,c,d,e$,
        \item $1$-cells $u,u'\colon a\to b$, $v,v'\colon b\to c$, $w,w'\colon c\to d$, $x\colon d\to e$, and
        \item $2$-cells $p\colon u\to u'$, $q\colon v\to v'$, $r\colon w\to w'$
    \end{itemize} 
    in $X$, there exist equivalence $3$-cells
	\[
		\alpha_2^X(u,v,w)\comp{1}{X}\bigl(p\comp{0}{X}(q\comp{0}{X}r)\bigr)
		\sim
		((p\comp{0}{X}q)\comp{0}{X}r)\comp{1}{X}\alpha_2^X(u',v',w'),
	\]
	\[
		\lambda_2^X(u)\comp{1}{X}p
		\sim
		\bigl(\id{1}{X}{a}\comp{0}{X}p\bigr)\comp{1}{X}\lambda_2^X(u'),
	\]
	\[
		\rho_2^X(u)\comp{1}{X}p
		\sim
		\bigl(p\comp{0}{X}\id{1}{X}{b}\bigr)\comp{1}{X}\rho_2^X(u')
	\]
	\[
		\alpha_2^X(u\comp{0}{X}v,w,x)\comp{1}{X}\alpha_2^X(u,v,w\comp{0}{X}x)
		\sim
		\Bigl(\bigl(\alpha_2^X(u,v,w)\comp{0}{X}\id{2}{X}{x}\bigl)\comp{1}{X}\alpha_2^X(u,v\comp{0}{X}w,x)\Bigr)\comp{1}{X}\bigl(\id{2}{X}{u}\comp{0}{X}\alpha_2^X(v,w,x)\bigr),
	\]
	\[
		\alpha_2^X(u,\id{1}{X}{b},v)\comp{1}{X}\bigl(\id{2}{X}{u}\comp{0}{X}\lambda_2^X(v)\bigr)
		\sim
		\rho_2^X(u)\comp{0}{X}v.
	\]
    Each of these can be proved by rewriting both sides in terms of the corresponding operations in $L1$ using \cite[Proposition 2.3.7]{FHM2} and then applying \cite[Proposition 2.4.6]{FHM2}.
\end{proof}
\begin{proposition}\label{equivalences-in-quotient-bicategory}
	Let $X$ be a weak $\omega$-category, and let $u \colon x \to y$ be a $1$-cell in $X$.
	Then $u$ is an equivalence in $X$ if and only if $u$ is an equivalence when regarded as a $1$-cell in $\tau_2(X)$.
\end{proposition}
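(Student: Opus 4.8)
The plan is to unwind the definition of ``equivalence $1$-cell'' in the bicategory $\tau_2(X)$ and compare it with the coinductive definition of equivalence $1$-cell in $X$, using the quotient constructions established above. Recall that in any bicategory, a $1$-cell $u \colon x \to y$ is an equivalence if and only if there exist a $1$-cell $v \colon y \to x$ together with invertible $2$-cells $u \comp{0}{} v \cong \id{1}{}{x}$ and $v \comp{0}{} u \cong \id{1}{}{y}$; moreover, by the standard fact that equivalences in bicategories upgrade to adjoint equivalences, one may always arrange these to satisfy the triangle identities, but we will not need this refinement. In $\tau_2(X)$, the $2$-cells from $u \comp{0}{\tau_2(X)} v = u \comp{0}{X} v$ to $\id{1}{\tau_2(X)}{x} = \id{1}{X}{x}$ are by definition the elements of $\tau_0\bigl(X(x,x)(u\comp{0}{X}v, \id{1}{X}{x})\bigr)$, i.e.\ equivalence classes $\eqclass{p}$ of $2$-cells $p\colon u\comp{0}{X}v \to \id{1}{X}{x}$ in $X$ under the relation $\sim$. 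By \cref{equivalences-in-quotient-category} (applied to the hom weak $\omega$-category $X(x,x)$), such an $\eqclass{p}$ is invertible in $\tau_1\bigl(X(x,x)\bigr)$ if and only if $p$ is an equivalence $2$-cell in $X(x,x)$, which by the last paragraph of \cref{hom-valued-in-graphs} is the same as $p$ being an equivalence $2$-cell in $X$.

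With this dictionary in place, the argument is short. First I would prove the ``only if'' direction: suppose $u$ is an equivalence $1$-cell in $X$. By \cref{def:spherical-eq} (which, via \cref{prop:bi-inv-are-inv}, we may use freely), there exist a $1$-cell $v \colon y \to x$ in $X$ and equivalence $2$-cells $p \colon u \comp{0}{X} v \to \id{1}{X}{x}$ and $q \colon v \comp{0}{X} u \to \id{1}{X}{y}$. Regarding $v$ as a $1$-cell in $\tau_2(X)$ and $\eqclass{p}$, $\eqclass{q}$ as $2$-cells in $\tau_2(X)$ (of the correct source and target, using that horizontal composition in $\tau_2(X)$ is given by that of $X$), the remarks above show that $\eqclass{p}$ and $\eqclass{q}$ are invertible in the respective hom-categories. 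Hence $v$ witnesses $u$ as an equivalence in the bicategory $\tau_2(X)$.

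For the ``if'' direction, suppose $u$ is an equivalence $1$-cell in $\tau_2(X)$. Then there is a $1$-cell $v \colon y \to x$ in $X$ together with invertible $2$-cells in $\tau_2(X)$; these are of the form $\eqclass{p}$ and $\eqclass{q}$ for some $2$-cells $p \colon u \comp{0}{X} v \to \id{1}{X}{x}$ and $q \colon v \comp{0}{X} u \to \id{1}{X}{y}$ in $X$, and by the dictionary above their invertibility means precisely that $p$ and $q$ are equivalence $2$-cells in $X$. This is exactly the data required by \cref{def:spherical-eq} to witness $u$ as a (spherical, hence ordinary) equivalence $1$-cell in $X$. I expect the only slightly delicate point—and hence the ``main obstacle''—to be bookkeeping: making sure that the source and target of $\eqclass p$ in $\tau_2(X)$ really are $u\comp{0}{\tau_2(X)}v$ and $\id{1}{\tau_2(X)}{x}$ as opposed to something merely equivalent, and that the identification of equivalence $2$-cells of $X(x,x)$ with equivalence $2$-cells of $X$ (needed to invoke \cref{equivalences-in-quotient-category}) is applied in the right hom weak $\omega$-category. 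Both are handled by \cref{hom-valued-in-graphs} and the explicit definitions of $\comp{0}{\tau_2(X)}$ and $\id{1}{\tau_2(X)}{-}$, so no real difficulty arises; the proof is essentially a chain of definitional unwindings together with two invocations of \cref{equivalences-in-quotient-category}.
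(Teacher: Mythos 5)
Your proof is correct and follows essentially the same route as the paper: unwind the definition of equivalence in $\tau_2(X)$ into the existence of $v$ with invertible $2$-cells $\eqclass{p}$, $\eqclass{q}$, apply \cref{equivalences-in-quotient-category} in the hom weak $\omega$-categories $X(x,x)$ and $X(y,y)$, and match the result against the (spherical) definition of equivalence in $X$; the hom-compatibility point you flag is exactly the implicit use of \cref{hom-valued-in-graphs} in the paper's argument. No gaps.
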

\begin{proof}
	By definition of $\tau_2(X)$,
	the $1$-cell $u$ is an equivalence in $\tau_2(X)$ if and only if
	there exists a 1-cell $v\colon y \to x$
	such that there are isomorphisms $u\comp{0}{X}v\cong\id{1}{X}{x}$ and $v\comp{0}{X}u\cong\id{1}{X}{y}$ in $\tau_1(X(x,x))$ and $\tau_1(X(y,y))$ respectively,
	which is by \cref{equivalences-in-quotient-category}, equivalent to $u \comp{0}{X} v \sim \id{1}{X}{x}$ and $v \comp{0}{X} u \sim \id{1}{X}{y}$.
\end{proof}

\section{An example of \texorpdfstring{$K$}{K} based on half-adjoint equivalences}
\label{appendix-on-halfadjoint-lift-one-step}

Here we give another example of a set $K$ of morphisms in $\mWkCats{\omega}$ satisfying conditions \ref{K1}--\ref{K3} stated in \cref{subsec:marked-weak-omega-cats}. 
By the results of \cref{subsec:omega-equifib-via-RLP}, it gives rise to a set of morphisms in $\WkCats{\omega}$ characterising $\omega$-equifibrations via the right lifting property.
Whereas the example in \cref{subsec:example-of-K} is based on the notion of flat equivalence, the example given here is based on half-adjoint equivalences. 
See \cref{sec:intro} for the homotopical intuition.

\begin{remark}
	We declare each $\comp{n}{X}$ to be notationally left associative.
	That is, given enough cells $u_i$ of dimension $>n$ in a weak $\omega$-category $X$ with each consecutive pair being composable along the $n$-dimensional boundary, the expression $u_1 \comp{n}{X} u_2 \comp{n}{X} u_3$ is understood as $(u_1 \comp{n}{X} u_2) \comp{n}{X} u_3$, and $u_1 \comp{n}{X} u_2 \comp{n}{X} u_3 \comp{n}{X} u_4$ is understood as $\bigl((u_1 \comp{n}{X} u_2) \comp{n}{X} u_3\bigr) \comp{n}{X} u_4$ etc.
\end{remark}

\begin{definition}
    For each $n \ge 1$, we define a marked weak $\omega$-category $(\HH^n,t\HH^n)$.
    (Here H stands for ``half-adjoint''.)
    The underlying weak $\omega$-category $\HH^n$ is constructed as follows:
    \begin{itemize}
        \item start with $\C{n}$, and call its fundamental $n$-cell $\uH \colon \xH \to \yH$,
        \item freely adjoin an $n$-cell $\vH \colon \yH \to \xH$, that is, take the pushout
        \[
        \begin{tikzcd}[row sep = large]
            \partial\C{n}
            \arrow [r, "{(\yH,\xH)}"]
            \arrow [d, hook] &
            \C{n}
            \arrow [d, "\uH"] \\
            \C{n}
            \arrow [r, "\vH", swap] &
            \bullet
        \end{tikzcd}
        \]
        \item freely adjoin $(n+1)$-cells $\pH \colon \id{n}{}{\xH} \to \uH \comp{n-1}{} \vH$ and $\qH \colon \vH \comp{n-1}{} \uH \to \id{n}{}{\yH}$, and
        \item freely adjoin an $(n+2)$-cell $\rH$ from $\id{n+1}{}{\uH}$ to
        \[
        \lunitinv{n+1}{}{\uH} \comp{n}{} (\pH \comp{n-1}{}\uH) \comp{n}{} \assoc{n+1}{}{\uH}{\vH}{\uH} \comp{n}{} (\uH \comp{n-1}{} \qH) \comp{n}{} \runit{n+1}{}{\uH}.
        \]
    \end{itemize}
    The marked cells are $\uH$, $\pH$, $\qH$, and $\rH$.

    We define the marking-preserving strict $\omega$-functor $\kH^n\colon \mC n\to \HH^n$ as the one picking out $\uH$.
\end{definition}

We claim that the set 
\[
\KH=\{\,\kH^n\colon \mC{n}\to \HH^n\mid n\geq 1\,\}
\]
of morphisms in $\mWkCats{\omega}$ satisfies conditions \ref{K1}--\ref{K3}.

We first show that $\KH$ satisfies \ref{K1}. The proof of the following proposition (especially the use of \cref{whiskering-ess-0-surj} in it) is inspired by \cite[Proof of Proposition~6]{Lack-bicat}. 

\begin{proposition}
	\label{lift-one-step-apx}
    \label{K1-half-adjoint-eq}
	Let $f \colon X \to Z$ be an $\omega$-equifibration between weak $\omega$-categories, and let $n \ge 1$.
	Suppose that we are given a commutative square
	\[
	\begin{tikzcd}[row sep = large]
		\mC{n}
		\arrow [d, "\kH^n"']
		\arrow [r, "u"] &
		X^\natural
		\arrow [d, "f^\natural"] \\
		\HH^n
		\arrow [r] &
		Z^\natural
	\end{tikzcd}
	\]
	in $\mWkCats{\omega}$.
	Then this square admits a lift.
\end{proposition}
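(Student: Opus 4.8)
The commutative square amounts to the following data: an equivalence $n$-cell $u \colon x \to y$ in $X$; an $n$-cell $v \colon fy \to fx$ in $Z$; equivalence $(n+1)$-cells $p \colon \id{n}{Z}{fx} \to fu \comp{n-1}{Z} v$ and $q \colon v \comp{n-1}{Z} fu \to \id{n}{Z}{fy}$ in $Z$; and an equivalence $(n+2)$-cell $r$ in $Z$ exhibiting one half-adjoint triangle identity relating $fu$, $v$, $p$, $q$. We must lift all of $v,p,q,r$ along $f$ to cells of the same types in $X$ (with $x,y,u$ fixed), so as to obtain a marking-preserving strict $\omega$-functor $\HH^n \to X^\natural$. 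The plan is to proceed in stages, rectifying a rough lift at each stage, exactly in the spirit of the proof of \cref{K-satisfies-K1-flat} but with the extra $3$-dimensional datum $r$ handled by the argument of \cite[Proof of Proposition~6]{Lack-bicat}.

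\textbf{Step 1: lift $v$, $p$, $q$ up to coherence.} Since $u$ is an equivalence in $X$, we may choose in $X$ an inverse $\bar v' \colon y \to x$ together with equivalence $(n+1)$-cells $\bar p' \colon \id{n}{X}{x} \to u \comp{n-1}{X} \bar v'$ and $\bar q' \colon \bar v' \comp{n-1}{X} u \to \id{n}{X}{y}$. Applying \cref{str-functor-pres-eq} and the dual of \cref{whiskering-ess-0-surj}(1) to the equivalence $f\bar p'$, and then \cref{whiskering-ess-0-surj}(2) to the equivalence $fu$, exactly as in the proof of \cref{K-satisfies-K1-flat}, we obtain an equivalence $(n+1)$-cell $v^\ddag \colon v \to f\bar v'$ in $Z$ together with a coherence $\sim$ relating $fu \comp{n-1}{Z} v^\ddag$, $f\bar p'$, and $p$ (up to the standard unitors). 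Using that $f$ is an $\omega$-equifibration (and \cref{prop:fibration-symmetric}), lift $v^\ddag$ to an equivalence $\bar v^\ddag \colon \bar v \to \bar v'$ in $X$; set $\bar v$ as the source. Rectifying $\bar p'$ by whiskering with $\bar v^\ddag$ and composing, then lifting the resulting $(n+2)$-cell coherence along the $\omega$-equifibration $f$ as in \cref{K-satisfies-K1-flat}, produces $\bar p \colon \id{n}{X}{x} \to u \comp{n-1}{X} \bar v$ in $X$ with $f\bar p = p$ and $\bar p$ an equivalence (via \cref{prop:invariance-of-equivalence}). Similarly rectify $\bar q'$ to get $\bar q \colon \bar v \comp{n-1}{X} u \to \id{n}{X}{y}$ with $f\bar q = q$.

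\textbf{Step 2: lift $r$.} Now $\bar u = u$, $\bar v$, $\bar p$, $\bar q$ are in place with the correct $f$-images. Form in $X$ the composite $(n+1)$-cell
\[
\bar s := \lunitinv{n+1}{X}{u} \comp{n}{X} (\bar p \comp{n-1}{X} u) \comp{n}{X} \assoc{n+1}{X}{u}{\bar v}{u} \comp{n}{X} (u \comp{n-1}{X} \bar q) \comp{n}{X} \runit{n+1}{X}{u},
\]
which is parallel to $\id{n+1}{X}{u}$ and is an equivalence. By \cref{str-fun-pres-comp-and-id} and the fact that $f$ preserves the chosen coherence cells up to $\sim$ (using \cref{prop:uniqueness-of-inverse} together with $fu,v,p,q$ coinciding with their intended images), $f\bar s$ and the given target of $r$ are $\sim$-related; hence there is an equivalence $(n+2)$-cell in $Z$ between $f\bar s$ and $\mathrm{tgt}(r)$, and composing with $r$ yields an equivalence $(n+2)$-cell $f(\id{n+1}{X}{u}) = \id{n+1}{Z}{fu} \to f\bar s$ — wait, more carefully: we obtain an equivalence $(n+2)$-cell $\id{n+1}{Z}{fu} \to f\bar s$ in $Z$. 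Since $f$ is an $\omega$-equifibration (applied in dimension $n+2$ with the fixed $(n+1)$-cell $\id{n+1}{X}{u}$), lift this to an equivalence $(n+2)$-cell $\bar r' \colon \id{n+1}{X}{u} \to \bar s$ in $X$. Finally rectify: the discrepancy between $\bar s$ and the exact target demanded for $\rH$ is itself (the lift of) a coherence equivalence $(n+2)$-cell, so one more application of $\omega$-equifibration in dimension $n+3$ against $\id{n+2}{X}{}$ of the appropriate cell, together with \cref{prop:invariance-of-equivalence}, upgrades $\bar r'$ to the required $\bar r$ of exactly the right type with $f\bar r = r$.

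\textbf{Main obstacle.} The routine-but-delicate part is Step 2: bookkeeping the coherence $(n+2)$-cells so that, after lifting $r$, the result has \emph{exactly} the type prescribed for $\rH$ (with the specific unitors $\lunitinv{n+1}{}{}$, $\assoc{n+1}{}{}{}{}$, $\runit{n+1}{}{}$), not merely one $\sim$-equivalent to it. This requires chaining \cref{prop:fibration-symmetric}, \cref{prop:invariance-of-equivalence}, \cref{prop:uniqueness-of-inverse}, and the congruence/coherence facts (\cref{sim-is-congruence}, \cref{associativity}, \cref{unit-law}) carefully, and using that the $f$-images of the chosen coherence cells in $X$ are related to the chosen coherence cells in $Z$ only up to higher equivalence — each such discrepancy costs one further $\omega$-equifibration lift one dimension up. The argument terminates because $\rH$ is the top-dimensional generator, so only finitely many rectification steps are needed.
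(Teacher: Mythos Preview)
There is a genuine gap in Step~2. When you write ``lift this to an equivalence $(n+2)$-cell $\bar r' \colon \id{n+1}{X}{u} \to \bar s$ in $X$'', you are using more than the $\omega$-equifibration property provides. An $\omega$-equifibration lets you lift an equivalence cell with a prescribed \emph{source}; the target of the lift is merely some $(n+1)$-cell $t$ with $ft = f\bar s$, and there is no reason to expect $t = \bar s$. Your subsequent ``rectification'' step does not fix this: the discrepancy you would need to bridge is between $t$ and $\bar s$, two parallel $(n+1)$-cells in $X$ with equal image under $f$, and nothing you have established gives an equivalence $(n+2)$-cell between them. Concretely, what you would need is that $\bar s \sim \id{n+1}{X}{u}$ in $X$, i.e.\ that the triangle identity holds for your lifted data $(u,\bar v,\bar p,\bar q)$. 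But since you lifted $\bar p$ and $\bar q$ \emph{independently} in Step~1, there is no reason this should hold; an equivalence with an arbitrary unit and counit need not satisfy the triangle identity, and an $\omega$-equifibration need not reflect such relations.

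The paper avoids this by \emph{not} lifting $q$ independently. After lifting $v$ and $p$ as you do, it invokes the quotient bicategory $\tau_2$ of the relevant hom weak $\omega$-category (constructed in \cref{appendix-on-quotient-bicategory}) to promote $(u,\bar v,\eqclass{\bar p})$ to an adjoint equivalence. This produces $\bar q'$ and an equivalence witness $\bar r'$ \emph{in $X$} satisfying the triangle identity by construction. Only then does it rectify $\bar q'$ to have image $q$ under $f$, using \cref{whiskering-ess-0-surj} (pre- and post-composition with the equivalences $fu$, $p$, and the coherence cells) to find $\counit^\dag \colon f\bar q' \to q$ compatible with $r$ up to a higher equivalence, and lifts $\counit^\dag$ and the remaining $(n+3)$-dimensional coherence through $f$. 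The key point is that the triangle identity is first arranged to hold in $X$, and only afterwards is the $f$-image corrected; your approach reverses this order and thereby loses the ability to land on the prescribed target.
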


\begin{proof}
	The data of the given commutative square are equivalent to:
	\begin{itemize}
		\item an equivalence $n$-cell $u \colon x \to y$ in $X$,
		\item an $n$-cell $v \colon fy \to fx$ in $Z$,
		\item equivalence $(n+1)$-cells $\unit \colon \id{n}{Z}{fx} \to fu \comp{n-1}{Z} v$ and $\counit \colon v \comp{n-1}{Z} fu \to \id{n}{Z}{fy}$ in $Z$, and
		\item an equivalence $(n+2)$-cell $\tri$ from $\id{n+1}{Z}{fu}$ to
		\[
		\lunitinv{n+1}{Z}{fu} \comp{n}{Z} (\unit \comp{n-1}{Z} fu) \comp{n}{Z} \assoc{n+1}{Z}{fu}{v}{fu} \comp{n}{Z} (fu \comp{n-1}{Z} \counit) \comp{n}{Z} \runit{n+1}{Z}{fu}
		\]
		in $Z$.
	\end{itemize}
	We wish to extend $u$ to a quintet $(u,\bar v, \bar \unit, \bar \counit, \bar \tri)$ of suitable type in $X$
	(so that it corresponds to a marking-preserving map $\HH^n \to X^\natural$)
	such that $f$ sends it to $(fu,v,\unit,\counit,\tri)$.

        We lift $v$ and $\unit$ using an argument similar to that in the proof of \cref{K-satisfies-K1-flat}; the details are as follows.

	Since $u$ is an equivalence, we can choose an $n$-cell $\bar v' \colon y \to x$ and an equivalence $(n+1)$-cell $\bar \unit' \colon \id{n}{X}{x} \to u \comp{n-1}{X} \bar v'$ in $X$.
	Since $f\bar \unit'\colon \id{n}{Z}{fx}\to fu\comp{n-1}{Z}f\bar v'$ is an equivalence in $Z$ by \cref{str-functor-pres-eq}, there exists an equivalence $(n+1)$-cell
	\[
	v^\dag \colon fu \comp{n-1}{Z} f \bar v' \to fu \comp{n-1}{Z} v
	\]
	in $Z$ such that $f \bar \unit' \comp{n}{Z} v^\dag \sim \unit$ by (1) of \cref{whiskering-ess-0-surj}.
	Similarly, since $fu$ is an equivalence in $Z$, there exists an equivalence $(n+1)$-cell
	\[
	v^\ddag \colon f \bar v' \to v
	\]
	such that $fu \comp{n-1}{Z} v^\ddag \sim v^\dag$ by (2) of \cref{whiskering-ess-0-surj}.
	Since $f$ is an $\omega$-equifibration, we can lift $v^\ddag$ to an equivalence $(n+1)$-cell in $X$, which we denote as $\bar v^\ddag \colon \bar v' \to \bar v$.
	This completes the lifting of $v$.
	
	\begin{figure}
	$\begin{tikzpicture}[baseline = 40]
		\node (1ul) at (0,3) {$fx$};
		\node (1ur) at (3,3) {$fy$};
		\node (1lr) at (3,0) {$fx$};

		\draw[->] (1ul) to node [labelsize, auto] {$fu$} (1ur);
		\draw[->] (1ul) to node [labelsize, auto, swap] {$\id{n}{Z}{fx}$} (1lr);
		\draw[->, bend right = 30] (1ur) to node [labelsize, auto, swap] {$f\bar v'$} (1lr);
		\draw[->, bend left = 30] (1ur) to node [labelsize, auto] {$v$} (1lr);
		\draw[->, 2cell] (1.7,2) to node [labelsize, auto] {$f\bar \unit'$} (2.2,2.5);
		\draw[->, 2cell, dashed] (2.7,1.5) to node [labelsize, auto] {$\exists v^\ddag$} node [labelsize, auto, swap] {$\sim$} (3.3,1.5);
	\end{tikzpicture}
	\qquad\sim\qquad
	\begin{tikzpicture}[baseline = 40]
		\node (1ul) at (0,3) {$fx$};
		\node (1ur) at (3,3) {$fy$};
		\node (1lr) at (3,0) {$fx$};

		\draw[->] (1ul) to node [labelsize, auto] {$fu$} (1ur);
		\draw[->] (1ul) to node [labelsize, auto, swap] {$\id{n}{Z}{fx}$} (1lr);
		\draw[->, bend left = 30] (1ur) to node [labelsize, auto] {$v$} (1lr);
		\draw[->, 2cell] (2.1,1.9) to node [labelsize, auto] {$\unit$} (2.6,2.4);
	\end{tikzpicture}$
		\caption{Lifting \texorpdfstring{$v$}{v}}
		\label{lifting-v-apx}
	\end{figure}

	Next, we lift the $(n+1)$-cell $\unit$.
	Observe that we have an equivalence $(n+1)$-cell
	\[
	\bar \unit' \comp{n}{X}(u \comp{n-1}{X} \bar v^\ddag) \colon \id{n}{X}{x} \to u \comp{n-1}{X} \bar v
	\]
	in $X$, and equivalence $(n+2)$-cells witnessing
	\[
	f\bigl(\bar \unit' \comp{n}{X}(u \comp{n-1}{X} \bar v^\ddag)\bigr) = f\bar \unit' \comp{n}{Z}(fu \comp{n-1}{Z} v^\ddag) \sim f \bar \unit' \comp{n}{Z} v^\dag \sim \unit
	\]
	in $Z$.
	Since $f$ is an $\omega$-equifibration, we can lift the composite of the latter to an equivalence $(n+2)$-cell
	\[
	\bar \unit' \comp{n}{X}(u \comp{n-1}{X} \bar v^\ddag) \sim \bar \unit
	\]
	in $X$.
	Note that $\bar \unit$ is an equivalence in $X$ by \cref{prop:invariance-of-equivalence}.

	The way we will lift $\counit$ and $\tri$ will be similar to how we lifted $v$ and $\unit$, and \cref{lifting-e-apx} provides its summary.
	Consider the quotient bicategory $\tau_2\bigl(X(s_{n-2}(x),t_{n-2}(x))\bigr)$, where $X(s_{n-2}(x),t_{n-2}(x))$ is the suitable (iterated) hom weak $\omega$-category of $X$. (When $n=1$, we interpret $X(s_{n-2}(x),t_{n-2}(x))$ as $X$.)
	In this bicategory, $u$ and $\bar v'$ are pseudo-inverse equivalence $1$-cells, and moreover we have $\bar v' \cong \bar v$.
	It follows that $\bar v$ is also a pseudo-inverse of $u$, and so the triple $\bigl(u,\bar v, \eqclass{\bar \unit}\bigr)$ can be completed to an adjoint equivalence.
	In other words, we can pick an equivalence $(n+1)$-cell $\bar \counit' \colon \bar v \comp{n-1}{X} u \to \id{n}{X}{y}$ in $X$ and an equivalence $(n+2)$-cell $\bar \tri'$ from $\id{n+1}{X}{u}$ to
	\[
		\lunitinv{n+1}{X}{u} \comp{n}{X} (\bar \unit \comp{n-1}{X} u) \comp{n}{X} \assoc{n+1}{X}{u}{\bar v}{u} \comp{n}{X} (u \comp{n-1}{X} \bar \counit') \comp{n}{X} \runit{n+1}{X}{u}
	\]
	in $X$.
	Making use of \cref{whiskering-ess-0-surj} with various equivalence cells, we can obtain an equivalence $(n+2)$-cell $\counit^\dag \colon f \bar \counit' \to \counit$ such that there is an equivalence $(n+3)$-cell from
	\[
	f \bar \tri' \comp{n+2}{Z}\bigl(\lunitinv{n+1}{Z}{fu} \comp{n}{Z} (\unit \comp{n-1}{Z} fu) \comp{n}{Z} \assoc{n+1}{Z}{fu}{v}{fu} \comp{n}{Z} (fu \comp{n-1}{Z} \counit^\dag) \comp{n}{Z} \runit{n+1}{Z}{fu}\bigr)
	\]
	to $\tri$ in $Z$.
	Since $f$ is an $\omega$-equifibration, we can lift $\counit^\dag$ to an equivalence $(n+2)$-cell $\bar \counit^\dag \colon \bar \counit' \to \bar \counit$.
	Finally, we observe that the domain of the above equivalence $(n+3)$-cell is in the image of $f$, and we take $\bar \tri$ to be the codomain of its lift through $f$.
	\begin{figure}
		$\left[\begin{tikzpicture}[baseline = 40]
			\node at (-4,2) {$\id{n+1}{Z}{fu}$};

			\draw[->, 3cell] (-2.5,2) to node [labelsize, auto] {$f\bar \tri'$} (-0.5,2);
			
			\node (ul) at (0,4) {$fx$};
			\node (um) at (4,4) {$fy$};
			\node (lm) at (4,0) {$fx$};
			\node (lr) at (8,0) {$fy$};
			
			\draw[->] (ul) to node [labelsize, auto] {$fu$} (um);
			\draw[->] (ul) to node [labelsize, auto, swap] {$\id{n}{Z}{fx}$} (lm);
			\draw[->] (um) to node [labelsize, auto, swap] {$v$} (lm);
			\draw[->] (um) to node [labelsize, auto] {$\id{n}{Z}{fy}$} (lr);
			\draw[->] (lm) to node [labelsize, auto, swap] {$fu$} (lr);
			
			\draw[->, 2cell] (2.4,2.4) to node [labelsize, auto] {$\unit$} (3.2,3.2);
			\draw[->, 2cell, bend left = 40] (4.2,1) to node [labelsize, auto, near end] {$f \bar \counit'$} (5.4,2.2);
			\draw[->, 2cell, bend right = 40] (5,0.2) to node [labelsize, auto, swap] {$\counit$} (6.2,1.4);

			\draw[->, 3cell, dashed] (4.8,1.6) to node [labelsize, auto] {$\exists \counit^\dag$} (5.6,0.8);
		\end{tikzpicture}\right]
		\qquad\sim\qquad \tri$
		\caption{Lifting \texorpdfstring{$e$}{\counit} (with \texorpdfstring{$\check \lambda$}{ˇλ}, \texorpdfstring{$\alpha$}{α} and \texorpdfstring{$\rho$}{ρ} omitted)}
		\label{lifting-e-apx}
	\end{figure}
\end{proof}

We then check that $\KH$ satisfies \ref{K2} by showing an analogue of \cref{injectivity-detects-equivalences}. 

\begin{proposition}
    \leavevmode
    \begin{enumerate}
        \item If a marked weak $\omega$-category $(X,tX)$ is $\KH$-injective, then all cells in $tX$ are equivalences in the underlying weak $\omega$-category $X$.
        \item For any weak $\omega$-category $X$, the marked weak $\omega$-category $X^\natural$ is $\KH$-injective.
    \end{enumerate}
\end{proposition}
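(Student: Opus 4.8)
The plan is to mirror the proof of \cref{injectivity-detects-equivalences}, the only difference being that the relevant monotone operator is now the half-adjoint-equivalence version of $\Psi$ rather than $\Psi$ itself. Concretely, for a marked weak $\omega$-category $(X,tX)$, giving a marking-preserving lift in the square defining $\KH$-injectivity for the $n$-th morphism $\kH^n$ amounts, by the definition of $\HH^n$ (and \cref{hom-valued-in-graphs} for the higher-codimensional composites), to supplying for each $u\in tX_n$ an $n$-cell $v$, equivalence-witness $(n+1)$-cells $p\colon \id{n}{}{x}\to u\comp{n-1}{}v$ and $q\colon v\comp{n-1}{}u\to\id{n}{}{y}$, and an $(n+2)$-cell $r$ from $\id{n+1}{}{u}$ to the prescribed zig-zag composite, all of which are required to be \emph{marked}; here I need to be slightly careful that in the definition of $\HH^n$ only $\uH,\pH,\qH,\rH$ are marked, so the lift only constrains $p,q,r$ (not $v$) to lie in $tX$. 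Thus I would introduce the operator $\Theta\colon\Pow\bigl(\coprod_n X_n\bigr)\to\Pow\bigl(\coprod_n X_n\bigr)$ sending $S$ to the set of $(u\colon x\to y)\in X_n$ ($n\ge1$) admitting such $v$, $p,q\in S\cap X_{n+1}$ and $r\in S\cap X_{n+2}$, and observe exactly as in \cref{injectivity-detects-equivalences} that $tX\subseteq\Theta(tX)$ holds if and only if $(X,tX)$ is $\KH$-injective.

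For part (1), assume $(X,tX)$ is $\KH$-injective, so $tX\subseteq\Theta(tX)$. The key point is that $\Theta(S)\subseteq\Psi(S)$ for every $S$: a half-adjoint-equivalence datum in particular exhibits $v$ as both a right inverse (via $p$, after inverting) and a left inverse (via $q$) of $u$ up to $\sim$, using that $p,q\in S$. More precisely, from $p\colon\id{n}{}{x}\to u\comp{n-1}{}v$ in $S$ one extracts, via \cref{cor:bi-inv-closed-under-pst-and-linv} or just \cref{prop:right-inv-iff-left-inv} after passing to $\nu\Psi\subseteq$ (the relevant post-fixed set), a witness that $u\comp{n-1}{}v\sim\id{n}{}{x}$; together with $q$ this shows $u\in\Psi(S)$. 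Hence $tX\subseteq\Theta(tX)\subseteq\Psi(tX)$, and therefore $tX\subseteq\nu\Psi$, i.e.\ every marked cell is a flat equivalence, hence an equivalence by \cref{prop:bi-inv-are-inv}.

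For part (2), take $X$ arbitrary and $tX=eX$ the set of all equivalence cells, so that $X^\natural=(X,eX)$; I must show $eX\subseteq\Theta(eX)$. Let $u\colon x\to y$ be an equivalence $n$-cell. Choose an inverse $v$ (in the sense of the definition after \cref{prop:right-inv-iff-left-inv}); then $u\comp{n-1}{}v\sim\id{n}{}{x}$ and $v\comp{n-1}{}u\sim\id{n}{}{y}$, so there are equivalence $(n+1)$-cells $p'\colon u\comp{n-1}{}v\to\id{n}{}{x}$ and $q\colon v\comp{n-1}{}u\to\id{n}{}{y}$; take $p$ to be an inverse of $p'$, which is again an equivalence by \cref{prop:right-inv-iff-left-inv}, so $p,q\in eX$. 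It remains to produce the half-adjoint $(n+2)$-cell $r\in eX$ from $\id{n+1}{}{u}$ to the zig-zag. The cleanest route is via the quotient bicategory $\tau_2\bigl(X(s_{n-2}(x),t_{n-2}(x))\bigr)$ of \cref{appendix-on-quotient-bicategory} (interpreting the hom as $X$ when $n=1$): in this bicategory $u$ becomes an equivalence $1$-cell by \cref{equivalences-in-quotient-bicategory}, and any equivalence in a bicategory upgrades to an adjoint equivalence; transporting the two triangle-identity equalities back to $X$ along the definition of $\tau_2$ yields that the two composites are $\sim$-related, so the desired $r$ exists and, being built from coherence cells and equivalences, is itself an equivalence by the closure properties (\cref{whiskering-ess-0-surj}, \cref{str-functor-pres-eq}, \cref{prop:invariance-of-equivalence}). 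The main obstacle is this last step: matching the bicategorical adjoint-equivalence data precisely against the rigid zig-zag $\lunitinv{n+1}{}{u}\comp{n}{}(\pH\comp{n-1}{}\uH)\comp{n}{}\assoc{n+1}{}{}{}{}\comp{n}{}(\uH\comp{n-1}{}\qH)\comp{n}{}\runit{n+1}{}{u}$ appearing in the definition of $\HH^n$ requires one to check that the particular choice of associator/unitor witnesses is immaterial up to $\sim$, which is exactly the kind of statement that \cref{associativity}, \cref{unit-law}, \cref{sim-is-congruence}, and \cref{prop:uniqueness-of-inverse} are designed to handle, but it needs to be spelled out with some care.
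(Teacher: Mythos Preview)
For part~(1), your claimed inclusion $\Theta(S)\subseteq\Psi(S)$ does not hold as stated: in $\HH^n$ the unit $\pH$ points from $\id{n}{}{\xH}$ to $\uH\comp{n-1}{}\vH$, whereas both $\Phi$ and $\Psi$ demand a witness in $S$ pointing \emph{from} the composite \emph{to} the identity. Your repair ``via $p$, after inverting'' fails because an inverse of $p$ need not lie in $S$, and invoking properties of $\nu\Psi$ in the middle of an argument meant to establish $tX\subseteq\nu\Psi$ is circular. Curiously, the paper's own one-line proof asserts $tX\subseteq\Phi(tX)$ and therefore has exactly the same direction mismatch; the authors are evidently treating the orientation of the unit witness as immaterial. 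A clean fix (for either argument) is to work with the variant $\Phi'$ of $\Phi$ in which $p$ is reversed: $\KH$-injectivity genuinely yields $tX\subseteq\Phi'(tX)$, and one then checks separately that $\nu\Phi'=\nu\Phi$.

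For part~(2), your approach is correct but takes a genuine detour relative to the paper. The paper's proof is a single line: it invokes \cref{lift-one-step-apx}, which has already established that $f^\natural\in\KH^\pitchfork$ for every $\omega$-equifibration $f$, and applies it with $f$ the unique map $X\to 1$ (trivially an $\omega$-equifibration). Your direct construction of the half-adjoint data via the quotient bicategory and the adjoint-equivalence upgrade is precisely the content of the proof of \cref{lift-one-step-apx} specialised to $Z=1$, so you are redoing work already carried out in the preceding proposition in greater generality.
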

\begin{proof}
    If a marked weak $\omega$-category $(X,tX)$ is $\KH$-injective, then we have $tX\subseteq \Phi(tX)$, where $\Phi$ is defined in \cref{def:spherical-eq}. This proves (1). 

    (2) follows from \cref{lift-one-step-apx}.
\end{proof}

Finally, that $\KH$ satisfies \ref{K3} is obvious. 

Applying the construction of \cref{subsec:omega-equifib-via-RLP} to $\KH$, we obtain a set $J_\mathcal{H}$ of strict $\omega$-functors characterising the $\omega$-equifibrations.

\bibliographystyle{plain}
\bibliography{mybib}
\end{document}